\documentclass[11pt,oneside,reqno]{amsart}
\usepackage{mathpazo} 
\linespread{1.05}        
\normalfont
\usepackage[T1]{fontenc}

\usepackage[utf8]{inputenc}
\usepackage{amsmath}
\usepackage{amsfonts}
\usepackage{amssymb}
\usepackage{amsthm}
\usepackage{mathtools}
\usepackage{stmaryrd}
\usepackage{tikz}
\usepackage{tikz-cd}
\usepackage[all]{xy}
\usepackage{enumitem}
\usepackage[margin=1.2in]{geometry} 
\usepackage{multicol}

\makeatletter
\let\origsection\section
\renewcommand\section{\@ifstar{\starsection}{\nostarsection}}

\newcommand\nostarsection[1]
{\sectionprelude\origsection{#1}\sectionpostlude}

\newcommand\starsection[1]
{\sectionprelude\origsection*{#1}\sectionpostlude}

\newcommand\sectionprelude{%
  \vspace{1em}
}

\newcommand\sectionpostlude{%
  \vspace{1em}
}
\makeatother


\usepackage{skak}
\usepackage{accents}
\newcommand{\crown}[1]{\overset{\symking}{#1}}

\newtheorem{thm}{Theorem}[section]
\newtheorem{theorem}[thm]{Theorem}
\newtheorem{lemma}[thm]{Lemma}

\newtheorem{coro}[thm]{Corollary}

\newtheorem{prop}[thm]{Proposition}
\newtheorem{proposition}[thm]{Proposition}

\theoremstyle{definition}
\newtheorem{defi}[thm]{Definition}
\newtheorem{definition}[thm]{Definition}
\newtheorem{remark}[thm]{Remark}
\newtheorem{example}[thm]{Example}
\newtheorem{notation}[thm]{Notation}

\newtheorem{app}[thm]{Application}
\newtheorem{keyAssumption}[thm]{Key Assumption}

\newcommand\xqed[1]{%
  \leavevmode\unskip\penalty9999 \hbox{}\nobreak\hfill
  \quad\hbox{#1}}
\newcommand\exEnd{\xqed{$\diamondsuit$}}

\newcommand{\A}{\mathbb{A}}
\newcommand{\B}{\mathbb{B}}
\newcommand{\BB}{\B}
\newcommand{\R}{\mathbb{R}}
\newcommand{\T}{\mathbb{T}}
\newcommand{\TT}{\T}
\newcommand{\Z}{\mathbb{Z}}
\newcommand{\ZZ}{\Z}
\newcommand{\N}{\mathbb{N}}
\newcommand{\Q}{\mathbb{Q}}

\newcommand{\calG}{\mathcal{G}}

\newcommand{\calK}{\mathcal{K}}
\newcommand{\calk}{\calK}

\newcommand{\propStar}{\text{property } (*)}

\newcommand{\comd}[1]{$$\xymatrix{#1}$$}
\newcommand{\inj}[0]{\ar@{^{(}->}}
\newcommand{\surj}[0]{\ar@{->>}}
\newcommand{\bij}[0]{\ar@{^{(}->>}}
\newcommand{\lbij}[0]{\ar@{_{(}->>}}
\newcommand{\linj}[0]{\ar@{_{(}->}}
\newcommand{\parr}[0]{\ar@{.>}}
\newcommand{\pinj}[0]{\ar@{^{(}.>}}
\newcommand{\psurj}[0]{\ar@{.>>}}
\newcommand{\pbij}[0]{\ar@{^{(}.>>}}
\newcommand{\lin}[0]{\ar@{-}}
\newcommand{\llin}[0]{\ar@{=}}
\newcommand{\usu}[0]{\ar@{->}}
\newcommand{\cmapsto}{\ar@{|->}}

\newcommand{\toup}[1]{\stackrel{#1}{\longrightarrow}}
\newcommand{\calT}{\mathcal{T}}
\newcommand{\frakT}{\mathfrak{T}}
\newcommand{\id}{\mathrm{id}}

\newcommand{\dsum}{\displaystyle\sum}
\newcommand{\dprod}{\displaystyle\prod}
\newcommand{\dcup}{\displaystyle\bigcup}
\newcommand{\doplus}{\displaystyle\bigoplus}
\newcommand{\sdrop}{\backslash}
\newcommand{\into}{\hookrightarrow}
\newcommand{\onto}{\twoheadrightarrow}
\newcommand{\eps}{\varepsilon}
\newcommand{\ph}{\varphi}
\newcommand{\angbra}[1]{\left\langle #1\right\rangle}


\newcommand{\Spec}{\operatorname{Spec}}

\newcommand{\Cnvg}[1]{\operatorname{Cnvg}(#1)}
\newcommand{\cnvg}[1]{\Cnvg{#1}}
\newcommand{\CLS}[1]{\Cnvg{#1}}
\newcommand{\supp}{\operatorname{supp}}
\newcommand{\Frac}{\operatorname{Frac}}
\newcommand{\Terms}[1]{\operatorname{Terms}(#1)}

\newcommand{\nocontentsline}[3]{}
\newcommand{\tocless}[2]{\bgroup\let\addcontentsline=\nocontentsline#1{#2}\egroup}

\newcommand{\SpecBase}[2]{\ContBase{#1} #2}

\newcommand{\Spa}{\Cont}
\newcommand{\Cont}{\operatorname{Cont}}
\newcommand{\ContInt}{\Cont^{\circ}}
\newcommand{\ContBase}[1]{\Cont_{#1}}
\newcommand{\ContBaseInt}[1]{\ContInt_{#1}}
\newcommand{\ContIntBase}[1]{\ContBaseInt{#1}}
\newcommand{\BasicOpen}[2]{U(#1,#2)}
\newcommand{\basicopen}[2]{\BasicOpen{#1}{#2}}

\newcommand{\Qnorm}[1]{|#1|_{P}}

\def\Mon{M}
\def\base{S}
\newcommand{\GeneralSeries}[2]{#1\llbracket#2\rrbracket}

\newcommand{\GeorgeStory}[1]{}
\newcommand{\Hom}{\operatorname{Hom}}
\newcommand{\RnLexSemifield}[1]{\T^{(#1)}}
\newcommand{\rk}{\operatorname{rk}}
\newcommand{\fred}{Z}
\newcommand{\dlim}{\displaystyle\lim}
\newcommand{\Ima}{\operatorname{Im}}



\newcommand{\trdeg}{\operatorname{tr.deg}}

\newcommand{\Top}{\mathcal{T}\text{op}}
\newcommand{\Alg}{\mathcal{A}\text{lg}}
\newcommand{\TopAlg}{\mathcal{T}\text{op}\mathcal{A}\text{lg}}
\newcommand{\AlgInt}{\Alg^\circ}
\newcommand{\TopAlgInt}{\TopAlg^\circ}
\newcommand{\TOS}{\mathcal{TOS}}
\newcommand{\TOAG}{\mathcal{TOAG}}
\newcommand{\bigpi}{\text{\LARGE\(\pi\)}}
\newcommand{\diagbigpi}{\text{\Large\(\pi\)}}
\newcommand{\HT}{\operatorname{ht}}


\newcommand{\wt}[1]{\widetilde{#1}}
\newcommand{\lex}{\mathrm{lex}}


\usepackage{lipsum}

\makeatletter
\renewcommand\subsection{\@startsection{subsection}{2}%
  \z@{-.5\linespacing\@plus-.7\linespacing}{.5\linespacing}%
  {\centering\normalfont\scshape}}
\renewcommand\subsubsection{\@startsection{subsubsection}{3}%
  \z@{.5\linespacing\@plus.7\linespacing}{-.5em}%
  {\normalfont\scshape}}
\makeatother

\makeatletter
\let\origsubsection\subsection
\renewcommand\subsection{\@ifstar{\starsubsection}{\nostarsubsection}}

\newcommand\nostarsubsection[1]
{\subsectionprelude\origsubsection{#1}\subsectionpostlude}

\newcommand\starsubsection[1]
{\subsectionprelude\origsubsection*{#1}\subsectionpostlude}

\newcommand\subsectionprelude{%
  \vspace{0.5em}
}

\newcommand\subsectionpostlude{%
  \vspace{0.5em}
}
\makeatother


\title{Tropical adic spaces I: \\ \scriptsize{Topological semirings}
}
\title[Tropical adic spaces I:\quad The continuous spectrum of a topological semiring]{Tropical adic spaces I: \\ \scriptsize{The continuous spectrum of a topological semiring}}

\author{Netanel Friedenberg }
\address{Department of Mathematics, Tulane University, New Orleans, LA 70118, USA}
\email{nfriedenberg@tulane.edu}

\author[K.~Mincheva]{Kalina~Mincheva}
\address{Department of Mathematics, Tulane University, New Orleans, LA 70118, USA}
\email{kmincheva@tulane.edu}


\begin{document}


\begin{abstract}
Towards building tropical analogues of adic spaces, we study certain spaces of prime congruences as a topological semiring replacement for the space of continuous valuations on a topological ring. This requires building the theory of topological idempotent semirings, and we consider semirings of convergent power series as a primary example. We consider the semiring of convergent power series as a topological space by defining a metric on it. 
We check that, in tropical toric cases, the proposed objects carry meaningful geometric information. In particular, we show that the dimension behaves as expected. 
We give an explicit characterization of the points in terms of classical polyhedral geometry in a follow up paper. 

\ \\
\end{abstract}
\maketitle
\vspace{-2em}
\tableofcontents

\section{Introduction}

\vspace{-.5em}

\subsection{Context of this work}

Tropical varieties are often described as combinatorial shadows of algebraic varieties. However, they do not naturally carry any scheme structure. In tropical geometry most algebraic computations are done on the classical side, using the algebra of the original variety. The theory developed so far has explored the geometric aspect of tropical varieties as opposed to the underlying (semiring) algebra and there are still many commutative algebra tools and notions without a tropical analogue. 

In recent years, there has been a lot of effort dedicated to developing the necessary tools for semiring commutative algebra and tropical scheme theory using many different frameworks.
 Among these are prime congruences \cite{JM17}, blueprints \cite{Lor15},  tropical ideals \cite{MR18}, tropical schemes \cite{GG16}, super-tropical algebra \cite{IR10}, and systems \cite{R18}.
The goal of these programs is to endow the varieties with extra structure analogous to the extra structure schemes carry over classical varieties. 
This, in turn, would allow for the further exploration of the properties of tropicalized spaces independent of their classical algebraic counterparts.


One peculiarity of these frameworks is that the framework itself is purely algebraic, but the theory does not recover the scheme points or classical points of a variety. Instead, some of these theories can reconstruct the underlying topological space of the Berkovich analytification of an affine variety. Motivated by this discrepancy, we work towards developing an analytic theory of tropical spaces, namely tropical adic spaces. The present paper is the first stage of this program.

Tropical methods have gained a lot of popularity in recent years because they provide a new set of purely combinatorial tools which have been used to approach classical problems, notably in the study of curves and their moduli spaces \cite{FJP20}, \cite{JR21}, \cite{Mik05}, \cite{Mik06}, motivic integration \cite{NP19}, \cite{NPS18}, toric degenerations and Newton-Okounkov bodies \cite{BMNc21}, \cite{Bos21}, \cite{EH22}, \cite{KM16}, mirror symmetry \cite{Gro11}, \cite{HJMM22}, \cite{NXY19}, and tropical approach to a generalized Hodge conjecture \cite{BH17}.
Even though tropical geometry has proved useful in many cases, there are some natural limitations to the theory. The goal of developing tropical commutative algebra and scheme theory is to push the methods further, beyond the study of curves, and to help control the combinatorics of the classical tropical varieties. 
The commutative algebra of semirings has also been shown to be relevant in other situations, such as the classification of torus equivariant flat families of finite type over a toric variety \cite{KM19}.
Connes and Consani's program to prove the Riemann Hypothesis \cite{CC14}, \cite{CC16} has worked deeply with the commutative algebra of semirings.
Additionally, the connections between semiring algebra and classical tropical geometry have been further developed in \cite{JS21}. 

One of the motivations for seeking scheme structure on a tropical variety is to get better control over the tropical objects via a functorial construction. Some potential applications of this would be as follows.

(1) While tropical geometry has been very useful in enumerative geometry, there are situations where there are tropical sub-objects that cannot be realized as tropicalizations of algebraic sub-objects and situations where multiple algebraic sub-objects tropicalize to the same tropical sub-object. By building a tropical theory with more nuanced algebraic information, one would be able both to distinguish non-realizable tropical sub-objects and to count tropical sub-objects with multiplicities.

(2) Tropical methods have been very successful in showing that specific subsets of moduli spaces of curves are nonempty by constructing algebraic curves with certain properties. These constructions proceed by first finding an abstract tropical curve with corresponding properties. However, the existing notions of abstract tropical varieties in higher dimensions (such as those found in \cite{DC20} and \cite{KS15}) have not been successfully used since the combinatorially given gluing data is cumbersome. Our construction of tropical adic spaces will replace this complicated gluing data with a sheaf of semirings.

(3) One fundamental limitation on the use of tropical geometry so far has been that the tropicalization of a variety is dependent upon an embedding of that variety in a toric variety. Therefore, not every property of the tropicalization is an inherent property of the variety. However, the Abel-Jacobi map of a totally degenerate pointed curve canonically embeds it into its Jacobian, which in turn can be tropicalized. Unfortunately, no existing tropical scheme theory encompasses this tropicalization, as it inherently uses non-archimedean analytic geometry. Our theory, however, will be analytic in nature and so should enable us to recover more information about (and invariants of) the curve.


(4) Toroidal morphisms between toroidal embeddings are a ubiquitous class of morphisms of varieties. They are used, for example, in semisistable reduction results (see \cite{KKMSD73}, \cite{AK00}, and \cite{ALT19}) which are key for the study of compactifications of moduli problems. In this and other contexts, the way toroidal embeddings are used is to transform a question about morphisms of varieties into a question about morphisms of cone complexes. However, it is not known what property of a map of cone complexes should correspond to properness of the map of toroidal embeddings. We expect that our theory will provide a semiring version of the valuative criteria of properness which will allow us to translate properness of a toroidal morphism into a condition on the proposed tropical adic spaces.

Much like our work, another, very recent, paper gives a polyhedral-geometric interpretation of a space of valuations.
Motivated by potential applications to certain moduli spaces and to Newton-Okounkov bodies, Amini and Iriarte study in \cite{AI22} spaces of quasi-monomial valuations to $\R^k_{\mathrm{lex}}$. 
They study these spaces from the tropical perspective, and show that they can be interpreted as tangent cones of classical tropical spaces. 
In toric cases, the valuations considered in \cite{AI22} are analogous to the valuations we consider in this paper, but there are several important differences. 
First, Amini and Iriarte work over a trivially valued field, whereas we consistently suppose that our base semifield is infinite, which corresponds to imposing that the valuation on the base field is nontrivial. 
As the previous sentence alludes, they treat valuations as maps from a ring to a particular totally ordered abelian group while our valuations are morphisms in the category of semirings.
Second, they consider valuations to $\R^k_{\lex}$. In contrast, while our value groups always admit an embedding in some $\R^k_{\lex}$, we consider equivalence classes of arbitrary rank valuations and so work independently of both $k$ and the particular embedding into $\R^k_{\lex}$.
This makes the spaces considered in \cite{AI22} more akin to the Hahn spaces of \cite{FR15} than adic spaces. 



While a notion of adic tropicalization has been developed in \cite{FP19} by Foster and Payne it bears no relation to ours. Their adic tropicalizations impose extra structure on the exploded tropicalizations of originally defined in \cite{Pay09b} and carry sheaves of rings based on the particular variety being tropicalized. Our spaces live entirely in the framework of idempotent semirings and the points are of a completely different nature.

The relation of tropical geometry to Berkovich spaces (and so Huber adic spaces) has been well-studied \cite{BS15}, \cite{FGP14}, \cite{GRW16}. This connection motivates us to look at adic geometry to build a tropical scheme theory.  

Another motivation for working towards a scheme theory with analytic flavor is the work of Gubler. In \cite{Gub07A} he constructs tropicalization of analytic subvarieties of tori and uses it in \cite{Gub07B} to prove the Bogomolov conjecture for an abelian variety over a function field which is totally degenerate at a place. A tropical scheme theory that is completely algebraic will never be able to include these tropicalizations, but a tropical scheme theory with an analytic side will.

The main idea of our approach is to use prime congruences as points of the tropical scheme rather than prime ideals. Unlike in the case of rings, there is no bijection between the sets of ideals and congruences.
In \cite{MR18}, the authors define a notion of tropical ideals and show that sending a tropical ideal to its bend congruence gives a correspondence between these and certain tropical schemes as defined in \cite{GG16}. However, in the Zariski topology on the set of prime ideals in the sense of \cite{GG16}, the open sets are too large - the condition that a semiring element does not evaluate to zero is much less restrictive than in algebraic geometry over fields and rings. 
We make first steps towards a more suitable topology, which is reminiscent of the topology of adic space. While there have been some attempts to localize by certain congruences \cite{Izh19}, the intuition from analytic geometry is that we should look for a completion rather than localization. 

These kinds of prime congruences have shown up, in their incarnation as monomial (pre)orders, in recent works on tropical differential equations \cite{MG23}, integral models \cite{BFGN}, and Gr\"{o}bner theory 
\cite{VV23}.


When working over a trivially valued base field, the space of valuations on a finitely generated algebra becomes the Riemann-Zariski space, which has a lot of algebraic flavor. 
Trivial valuations can be viewed as taking values in the semifield $\B=\{0,1\}$ whose multiplication operation is usual multiplication and whose addition operation is $\max(\cdot,\cdot)$.
Just as in the ring case, the set of prime congruences on a toric monoid $\B$-algebra form the combinatorial Riemann-Zariski space of \cite{EI06}. 
We therefore restrict our attention to working with totally ordered semifields different from $\B$. 


\subsection{Constructions and results}

In constructing adic spaces, before defining the adic spectrum of a pair, one defines the continuous spectrum of a topological ring. 
We recall that the continuous spectrum of a topological (usually f-adic\footnote{An f-adic or Huber adic ring $A$ is is a topological ring admitting an open subring $A_0 \subset A$ on which the subspace topology is the $I$-adic topology, for some finitely generated ideal $I$ of $A_0$. F-adic rings are of central importance to the theory of adic spaces.}) ring $R$, denoted $\operatorname{Cont}R$, is the set of equivalence classes of continuous valuations $v$ on $R$. 
The valuation $v$ is considered continuous if the map $R\to\Gamma_{v}\cup\{0\}$ is continuous, where $\Gamma_{v}$ is the totally ordered abelian group generated by $\Ima(v)\sdrop\{0\}$ and $\Gamma_v\cup\{0\}$ is given a topology first defined by Huber in \cite{Hub93}. The structure on $\Gamma_v\cup\{0\}$ is that of a totally ordered additively idempotent semifield\footnote{See Theorem~\ref{thm:TOSAndTOAGEquiv} in Appendix~\ref{app:TotOrdAbGps1} for further explanation.}, and we refer to the topology on it as the Huber topology. 
The (idempotent) semiring analogue of a valuation is a homomorphism to a totally ordered semifield, and the data of an equivalence class of valuations is given by a prime congruence. 
As a first step towards building tropical adic spaces, for any topological semiring $A$ we consider the set of prime congruences $P$ on $A$ for which $A/P\neq\B$ and the map from $A$ to the residue semifield $\kappa(P)$ is continuous. 
Here $\kappa(P)$ is the semifield generated by $A/P$, equipped with a Huber topology; we write $|\bullet|_P$ for the induced map $A\to\kappa(P)$. 
We call that set the continuous spectrum of $A$, and denote it by $\Cont A$.
There is also a direct link with adic spaces: a continuous generalized valuation in the sense of \cite[Definition 2.5.1]{GG16} from a ring $R$ to a semiring $A$ induces a map from $\Cont{A}$ to $\Cont{R}$. 

Since we are motivated by tropical geometry, and because it greatly simplifies the theory, we often restrict to $\base$-algebras as opposed to general topological semirings. Here $\base\neq\B$ is a sub-semifield of the tropical semifield $\T$. In future work, we will relax this hypothesis to consider semiring analogues of Tate rings in the sense of \cite[Section~6.2]{Wed12} and possibly more general topological semirings.

In Section~\ref{ch:ContTopSpace} we specify a topology on $\Cont A$ that is a direct analogue of the topology on the continuous spectrum of a topological ring. That is, we declare that the sets of the form $U(f, g) = \{P \in \Cont A: |f|_P \leq |g|_P \neq 0_{\kappa(P)}\}$ with $f, g \in A$ are a subbase for the topology. Combining the results of Section~~\ref{ch:ContTopSpace} and Proposition~\ref{prop:Cont_is_functor} we get our first result:

\begin{theorem}\label{thm: Intro-Cont-Functor}
    The collection of sets $U(f,g)$ is a base for the topology on $\Cont A$ which is closed under intersection. Moreover, $\Cont$ is a contravariant functor from the category of topological $\base$-algebras to the category of topological spaces. 
\end{theorem}
This result allows us, in upcoming work, to define tropicalization as a morphism in a category of tropical adic spaces.



Just as in the case of the continuous spectrum of a topological ring, one does not expect there to be a nice sheaf on $\Cont A$; 
in Huber's theory, the correct pre-sheaf is defined on the adic spectrum of an affinoid ring which is a pair $(A, A^+)$ where $A$ is an f-adic ring and $A^+\subseteq A$ a ring of integral elements\footnote{In the theory of adic spaces, a subring $A^+\subseteq A$ is called a \emph{ring of integral elements of $A$} if it is integrally closed and open in $A$ and consists of power bounded elements in $A$.} of $A$.

If $\base$ is a totally ordered semifield and $A$ is an $\base$-algebra, we also consider the set $\ContBase{\base}A$ of prime congruences $P$ on $A$ such that the map from $\base$ to the residue semifield of $P$ is continuous. By endowing $A$ with a universal topology introduced in Appendix~\ref{sect: universal-topology}, we interpret this as $\Cont A$. For technical reasons it is also convenient to consider the subset $\ContBaseInt{\base} A$ consisting of those $P\in\ContBase{\base}A$ such that the inverse image of $0\in A/P$ is just $0\in A$.
We prove analogous results to Theorem~\ref{thm: Intro-Cont-Functor} for $\ContBase{\base}$ and $\ContIntBase{\base}$. 

\vspace{0.2em}
Our next focus is to get a concrete handle on the points of the space $\ContBase{\base}A$. When $\Mon$ is a toric monoid corresponding to a cone $\sigma$, we say that $\ContBase{\TT}\TT[\Mon]$ is the set of adic points of the tropical toric variety $N_{\R}(\sigma)$. To justify this, we recall what happens in the classical scheme case. Suppose that $X$ is an affine variety over a rank one valued field $k\to\T$ with valuation ring $k^\circ$ and that $R$ is the coordinate ring of $X$. In this case the Berkovich analytification of $X$ can be identified with the set of valuations (multiplicative seminorms) $v:R\to\T$ extending the valuation on $k$ \cite[Remark~3.4.2]{Ber90}. On the other hand, the Huber adic analytification of $X$ consists of\footnote{While this is well-known to experts, we could not find this statement in the literature. We have provided a proof in Appendix~\ref{app:AdicAnalytification}.} equivalence classes of those valuations on $R$ (with target an arbitrary totally ordered additively idempotent semifield) which are continuous when restricted to $k$ and which are bounded above by $1$ on $k^\circ$. The semiring analogue of this last condition always holds because the ``valuation semiring'' of $\base$ is $\{a\in\base\;:\; a\leq 1_S\}$ and semiring homomorphisms preserve the order. So, because $N_{\R}(\sigma)$ can be viewed as $\Hom(\T[\Mon],\T)$ (see \cite[Theorem 6.3.1]{GG16}), we find that $N_{\R}(\sigma)$ is analogous to a Berkovich analytification and $\ContBase{\TT}\TT[\Mon]$ would be the corresponding adic analytification.

In \cite{JM17} it is shown that, since quotients of $\T[\N^n]$ and $\T[\Z^n]$ by a prime congruence $P$ are totally ordered, there is an equivalence between prime congruences and valuated monomial (pre)oreders, which in turn can be described by (not generally unique) real-valued matrices with $n+1$ columns. More precisely given such a matrix $C$, for a monomial $m = t^ax_1^{k_1} \dots x_n^{k_n}$ (where $t^a$ is the tropical number corresponding to the real number $a$), we give $m$ the weight $C \begin{pmatrix} a \\ k_1 \\ \vdots \\ k_n \end{pmatrix}$ and compare the weights using the lexicographic order. Two polynomials are then equivalent modulo $P$ if their leading terms have the same weight. 
We explore this relation in depth in \cite{FM23}.

For the case of $\T[\Mon]$ with $\Mon$ a general toric monoid, where matrices are not a viable option for understanding prime congruences (see Remark~\ref{rem:NotMatrices,HahnEmbeddings}), we have to use the more technical machinery of Hahn embeddings developed in Section~\ref{subsec:HahnEmbeddings}.
Specific cases of Corollary~\ref{coro: matrix_form} and Section~\ref{subsec:HahnEmbeddings} give us the following result.

\begin{theorem}[How to give an adic point of a tropical toric variety]\label{thm:IntroGiveAdicPoint}
For $\Mon=\N^n$ or $\Z^n$ the prime corresponding to a matrix $C$ is in $\ContBase{\T}\T[\Mon]$ if and only if the $(1,1)$-entry of $C$ is $1$. For $\Mon$ a general toric monoid, every point $P$ of $\ContBase{\T}\T[\Mon]$ can be given by a monoid homomorphism $\T^\times\times\Mon\to(\RnLexSemifield{k},\times)$ 
satisfying certain properties.
\end{theorem}
\vspace{-0.2cm}\noindent Here, $\RnLexSemifield{k}$ is the semifield whose multiplicative group is $\R^k$ ordered lexicographically.\\

The results on the continuous spectrum in Section~\ref{subsec:HahnEmbeddings} also allow us to describe specializations of points in a simple way. Unlike in the theory of adic spaces, where specializations in $\Cont R$ have to be understood by decomposing them into a horizontal specialization and a vertical specialization, we are able to give a simple characterization of specializations in $\Cont A$ as inclusions of prime congruences; see Proposition~\ref{prop:SpecializationsInCont}.



Our primary focus is on understanding the continuous spectra of topological semirings of convergent power series.
In these series we allow coefficients in our sub-semifield $\base\neq\B$ of $\T$ and exponents in a toric monoid $\Mon$. While the set of all power series is not necessarily a semiring, the set $\Cnvg{P}$ of power series which converge at $P\in\ContBaseInt{\base}\base[\Mon]$ is. 
{We equip $\Cnvg{P}$ with a metric and show that $\Cnvg{P}$ is a topological semiring with the topology coming from the metric. }

\smallskip

The next main result of the paper characterizes the continuous spectrum of the semiring of convergent power series (at a point) and relates it to the continuous spectrum of a {generalized polynomial} 
semiring. 

\begin{theorem}[Theorem~\ref{thm:crown}]\label{IntroThm:Crown}
Let $P \in\ContIntBase{\base}{\base[\Mon]}$ and let $\iota: \base[\Mon] \rightarrow \Cnvg{P}$ be the inclusion map. The induced map $\iota^*:\Cont(\Cnvg{P}) \rightarrow \SpecBase{\base}{\base[\Mon]}$ 
is a bijection onto the set of those prime congruences  $P'\in\SpecBase{\base}{\base[\Mon]}$ for which there exists a nonzero $c \in \base$ such that for any term $m \in \base[\Mon]$, $c|m|_P \geq |m|_{P'}$.
\end{theorem}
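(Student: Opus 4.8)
The plan is to analyze the map $\iota^*$ by carefully tracking what a prime congruence on $\Cnvg{P}$ restricts to on the dense subalgebra $\base[\Mon]$, using the metric structure on $\Cnvg{P}$ to control which congruences extend. The key observation is that a prime congruence $P'$ on $\base[\Mon]$ is in the image of $\iota^*$ precisely when the associated seminorm $|\cdot|_{P'}$ is dominated, up to a constant, by $|\cdot|_P$ on terms — which is exactly a continuity condition for the composite $\base[\Mon] \xrightarrow{\iota} \Cnvg{P} \to \kappa(P')$ relative to the metric topology on $\Cnvg{P}$.

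\medskip

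\textbf{Step 1: The image lands in the prescribed set.} Given $Q \in \Cont(\Cnvg{P})$, set $P' = \iota^*(Q) = \iota^{-1}(Q)$. I need to produce the constant $c$. Since $Q$ is a continuous prime congruence, the map $\Cnvg{P} \to \kappa(Q)$ is continuous for the metric topology; in particular, bounded subsets of $\Cnvg{P}$ have bounded image. The metric on $\Cnvg{P}$ is built so that a power series $f = \sum m_i$ has $|f|$ controlled by $\sup_i |m_i|_P$ (this is what convergence at $P$ means). Using that $\base \subseteq \T$ and comparing the image of a term $m$ under $\Cnvg{P}\to\kappa(Q)$ with the image of a suitable "unit ball" element, continuity forces the existence of a nonzero $c \in \base$ with $c|m|_P \ge |m|_{P'}$ for every term $m$. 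The main work here is making precise how the metric controls term-size and translating "continuous at $0$" into the multiplicative bound by $c$.

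\medskip

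\textbf{Step 2: Surjectivity onto the prescribed set.} Conversely, suppose $P' \in \SpecBase{\base}{\base[\Mon]}$ satisfies $c|m|_P \ge |m|_{P'}$ for all terms $m$ and some nonzero $c \in \base$. I want to extend the quotient map $\base[\Mon] \to \kappa(P')$ to a continuous map on $\Cnvg{P}$. The bound on terms shows that if $f = \sum_i m_i$ converges at $P$ then its image $\sum_i \overline{m_i}$ converges in $\kappa(P')$ (the partial sums are Cauchy because the tails are $P$-small, hence $P'$-small up to the factor $c$), so one gets a well-defined semiring homomorphism $\widehat{\ph}: \Cnvg{P} \to \kappa(P')$, which is continuous by the same estimate. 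Its kernel congruence $Q := \ker(\widehat\ph)$ is then a prime congruence on $\Cnvg{P}$ (prime because it maps into a totally ordered semifield), it is continuous by construction, $\Cnvg{P}/Q \neq \B$ since it contains the image of $\base \neq \B$, and $\iota^{-1}(Q) = P'$ because $\widehat\ph$ restricts to the quotient map defining $P'$. Thus $\iota^*(Q) = P'$.

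\medskip

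\textbf{Step 3: Injectivity.} Finally, if $Q_1, Q_2 \in \Cont(\Cnvg{P})$ have $\iota^*(Q_1) = \iota^*(Q_2) = P'$, then $Q_1$ and $Q_2$ both restrict to $P'$ on the dense subalgebra $\base[\Mon]$; since the residue maps to $\kappa(Q_j)$ are continuous and $\base[\Mon]$ is dense in $\Cnvg{P}$ for the metric, the relations in $Q_j$ are determined by their restriction to $\base[\Mon]$ together with continuity, forcing $Q_1 = Q_2$. The argument is: $(f,g) \in Q_j$ iff the images of $f$ and $g$ agree in $\kappa(Q_j)$, and by density and continuity this is detected by approximating sequences from $\base[\Mon]$. \textbf{The main obstacle} I anticipate is Step 1: extracting the uniform multiplicative constant $c \in \base$ from the mere topological continuity of $\Cnvg{P} \to \kappa(Q)$ — this requires a careful analysis of the metric's definition (in particular how it scales under multiplication by elements of $\base$ and how the "radius of convergence" is encoded) and a nontrivial use of the totally-ordered-semifield structure of $\kappa(Q)$ to convert an additive/topological estimate into the stated multiplicative inequality, possibly going through the standard dichotomy between archimedean (rank one) and non-archimedean (higher rank) behavior of $P'$.
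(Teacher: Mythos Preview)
Your Steps 2 and 3 are essentially correct and close in spirit to the paper's argument: the paper also extends $P'$ to $\Cnvg{P}$ by showing (via a leading-term lemma) that the closure $\bar{P}'$ of $P'$ in $\Cnvg{P}^2$ is a prime congruence with $\iota^*(\bar{P}')=P'$ and $\base[\Mon]/P'\cong\Cnvg{P}/\bar{P}'$, and uniqueness follows from this isomorphism together with closedness. Your density sketch for injectivity can be made rigorous (the key extra observation is that any subsemifield of a totally ordered semifield is closed in the canonical topology, so the image of $|\cdot|_Q$ actually lands in $\kappa(P')$), but you should be aware that ``density plus continuity'' alone is not enough without that surjectivity-on-quotients fact.

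Step 1 is where there is a genuine gap, and the paper's approach is quite different from what you outline. You propose to extract the constant $c$ directly from continuity of $\Cnvg{P}\to\kappa(Q)$, but you correctly flag that you do not see how to do this, and indeed the comparison $c|m|_P\geq |m|_{P'}$ takes place only after fixing joint Hahn embeddings of $\kappa(P)$ and $\kappa(P')$ into a common $\RnLexSemifield{I}$, so ``bounded subsets have bounded image'' does not translate directly. The paper avoids this entirely by arguing the contrapositive. First it reduces to $P$ maximal, so $\kappa(P)\hookrightarrow\T$. If property $(*)$ fails, then for every $c>1_\base$ there is a term $\mu$ with $c|\mu|_P<|\mu|_{P'}$; taking a suitable power $\mu^\beta$ and scaling by some $a\in\base^\times$ (using that $\base^\times$ is dense in $\T^\times$ in the relevant sense) produces a single term $m$ with $|m|_P<1_\base<|m|_{P'}$. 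Then the explicit sequence $(1_\base+m^n,\,m^n)$ lies in $P'$ for all $n$ but converges in $\Cnvg{P}^2$ to $(1_\base,0_\base)$, so every closed congruence on $\Cnvg{P}$ containing $P'$ must contain $(1_\base,0_\base)$ and hence cannot be prime. This concrete ``blow-up'' construction is the missing idea in your Step~1; without something like it, the passage from topological continuity to the uniform multiplicative bound remains unjustified.
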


We also give an explicit construction of any $Q\in\Cont(\Cnvg{P})$ from the corresponding point of $\ContBase{\base}\base[\Mon]$; see Corollary~\ref{coro:propStarImpliesSpa}.



%

We give a geometric interpretation to the algebraic characterization of $\Cont(\Cnvg{P})$ in Theorem~\ref{thm:crown_geom}. Let $\sigma$ be the cone giving rise to the toric monoid $\Mon$. 
Recall that the adic points of the tropical toric variety $N_\R(\sigma)$ are the points of $\ContBase{\T}\TT[\Mon]$.
We define a natural map $\ContBase{\T}\TT[\Mon]\to N_{\R}(\sigma)$ that is is a tropical analogue of the map from the Huber adic analytification of a variety to the Berkovich analytification of that variety.

\begin{theorem}[Analytic regions in tropical toric varieties]
The map $\iota^*$ gives a bijection from $\Cont\Cnvg{P}$ to the set of adic points of the tropical toric variety $N_{\R}(\sigma)$ that map to a region of $N_{\R}(\sigma)$ given by imposing inequalities on the coordinates.
\end{theorem}

\begin{example}[The unit disc in tropical affine space]
Let $P$ be the prime congruence on $\T[x_1,\ldots,x_n]$ given by the matrix $\begin{pmatrix}1&0&\cdots&0\end{pmatrix}$. Then $\iota^*$ gives a bijection from $\Cont\Cnvg{P}$ to the set of adic points of tropical affine space that map to the tropical unit disc in (classical) tropical affine space.
\end{example}

In fact, Theorem~\ref{thm:crown_geom} works with $\T$ replaced by any sub-semifield $\base\neq\B$ and also explicitly describes the region that is the range of $\iota^*$. 
Specifically, it states that $P'$ is in the image of $\iota^*$ if and only if the image of $P'$ in the tropical toric variety $N_\R(\sigma)$ lies in $\omega + \bar{\sigma}$. Here $\omega$ is the image of $P$ in $N_\R(\sigma)$ and $\bar{\sigma}$ is the closure of $\sigma$ in $N_{\R}(\sigma)$.
In the polynomial and Laurent polynomial cases, corresponding to affine space and the torus, we give alternative explicit descriptions in terms of defining matrices for the primes; see Corollaries~\ref{coro:crown_torus} and \ref{coro:crown_affine}.


\begin{app}
An immediate application of the tools developed here appears in our follow up paper \cite{FM23}, where we give an explicit geometric characterization of the points of $\ContBase{\base}\base[\Mon]$ when $\Mon$ is a toric monoid. When combined with Theorem~\ref{thm:crown_geom}, this gives an interpretation of the points of $\Cont(\Cnvg{P})$ in terms of classical polyhedral geometry. This interpretation gives an application of our theory to Gr\"{o}bner theory over valued fields by classifying the ``valuated term orders'' used in that context. 
Our geometric classification in \cite{FM23} can be seen as complementary to an earlier algebraic result of Robbiano in \cite{Rob85} in the non-valuated case which classified usual monomial orders in terms of their defining matrices. 
\end{app}

Classically, the dimension of a Noetherian local ring and its completion are the same. { In Proposition~\ref{prop:ConvergentSeriesAreLimitsOfPolynomials} we show that, for $P\in\ContBase{\base}\base[\Mon]$, $\Cnvg{P}$ is the completion of $\base[\Mon]$ with respect to a metric induced by $P$. }
Corollary~\ref{coro:DimWithTCoeffs} shows a direct analog of the classical dimension result in the case $\base=\T$; the following result combines that with a geometric interpretation from \cite[Theorem 7.2.1]{Min16}.
\begin{theorem} Let $\Mon$ be a toric monoid corresponding to a cone $\sigma$ and let $P $ a prime congruence in $ \ContBaseInt{\T}{\T[\Mon]}$, then
$$\dim_{\mathrm{top}}\Cnvg{P}=\dim_\T \T[\Mon].$$ 
This is also equal to the (usual geometric) dimension of the tropical toric variety $N_\R(\sigma)$. 
\end{theorem}
\noindent Here, the dimension $\dim_\base A$ is the number of strict inclusions in the longest chain of prime congruences in $\ContBase{\base}{A}$ and $\dim_{\mathrm{top}} A$ is the number of strict inclusions in the longest chain of prime congruences in $\Cont {A}$, when $A$ is a topological $\base$-algebra.

Without the restriction on the coefficients we have the following more general statement. 

\begin{theorem}[Corollary~\ref{coro:DimIneqsAroundCont}]\label{IntroThm:Dimension}
Let $\base\neq\B$ be a sub-semifield of $\T$, $\Mon$ be a toric monoid, and $P $ a prime congruence in $ \ContBaseInt{\base}{\base[\Mon]}$. Then 
$$
\dim_{\base}\base[\Mon]\geq\dim_{\mathrm{top}}\Cnvg{P}\geq\dim_{\base}\base[\Mon] - \rk(\kappa(P)^\times/\base^\times).
$$

\end{theorem}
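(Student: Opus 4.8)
The plan is to deduce both inequalities from Theorem~\ref{thm:crown}, the description of specializations in Proposition~\ref{prop:SpecializationsInCont}, and (for the lower bound) the geometric reformulation in Theorem~\ref{thm:crown_geom}. By Theorem~\ref{thm:crown}, $\iota^*$ identifies $\Cont(\Cnvg{P})$ with the set $Z\subseteq\SpecBase{\base}{\base[\Mon]}$ of prime congruences satisfying property $(*)$ (existence of a nonzero $c\in\base$ with $c|m|_P\ge|m|_{P'}$ for every term $m$). Since $\iota^*$ is continuous by functoriality and, by Proposition~\ref{prop:SpecializationsInCont}, specializations on both sides are just inclusions of prime congruences, one checks that this identification is a homeomorphism onto $Z$ compatible with inclusion. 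In particular $\dim_{\mathrm{top}}\Cnvg{P}$ equals the supremum of the lengths of chains of prime congruences contained in $Z$, so it suffices to bound such chains from above and below.

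The left inequality is then immediate: a chain inside $Z$ is in particular a chain of prime congruences in $\SpecBase{\base}{\base[\Mon]}$, whose supremal length is $\dim_\base\base[\Mon]$ (equivalently, Krull dimension is monotone under passing to the subspace $Z$). So no real work is needed there.

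For the right inequality I would argue geometrically. Write $\Mon$ as the toric monoid of a cone $\sigma$, let $\lambda\colon\SpecBase{\base}{\base[\Mon]}\to N_\R(\sigma)$ be the tropicalization map of Theorem~\ref{thm:crown_geom}, and set $\omega=\lambda(P)$, so that $Z=\lambda^{-1}(\omega+\bar\sigma)$. The first step is to locate $\omega$: it lies in the stratum $N_\R(\sigma)_\tau$ of the face $\tau\preceq\sigma$ spanned by the monomials whose $P$-value has infinite rank-one part, and one shows $\dim\tau\le\rk(\kappa(P)^\times/\base^\times)=:r$, since each independent such ``infinite direction'' of $\omega$ contributes an independent class to $\kappa(P)^\times/\base^\times$. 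The second step is that $\omega+\bar\sigma$ meets $N_\R(\sigma)_\tau$ in the translated subcone $\omega+(\sigma/\langle\tau\rangle)$, which is full dimensional in that stratum; it therefore remains to produce a chain of prime congruences of length $\dim_\base\base[\Mon]-\dim\tau$ inside $\lambda^{-1}\bigl(\omega+\relint(\sigma/\langle\tau\rangle)\bigr)\subseteq Z$. I would build such a chain using the paper's dimension computations for $\SpecBase{\base}{\base[\Mon]}$, by concatenating a maximal ``horizontal'' chain of primes whose $\lambda$-images run through $\omega+\relint(\sigma/\langle\tau\rangle)$ with a maximal ``vertical'' chain inside a single fibre of $\lambda$ over such a point, verifying that property $(*)$ persists at each step; the total length is $\dim N_\R(\sigma)_\tau$ plus the fibre dimension, which equals $\dim_\base\base[\Mon]-\dim\tau\ge\dim_\base\base[\Mon]-r$.

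The main obstacle is this last step. One must show both that the fibre dimension of $\lambda$ over the toric boundary stratum $N_\R(\sigma)_\tau$ is as large as over the dense stratum — so that restricting to the region $\omega+\bar\sigma$ costs only the $\dim\tau$ absorbed by passing to that stratum — and that $\dim\tau\le r$. Both points rely on the precise dictionary between prime congruences on $\base[\Mon]$ and polyhedral–valuative data, together with the exact fibre dimensions of $\lambda$; this is where the substance lies, and it is why the statement appears as a corollary of the finer results developed earlier.
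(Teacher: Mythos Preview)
Your argument for the left inequality is fine and matches the paper's: injectivity of $\iota^*$ (Theorem~\ref{thm:crown}) sends chains to chains, so $\dim_{\mathrm{top}}\Cnvg{P}\le\dim_\base\base[\Mon]$.

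The lower bound argument, however, does not work as written. Two structural points break it. First, since $P\in\ContIntBase{\base}\base[\Mon]$ has trivial ideal-kernel, the image $\omega=\Phi(P)$ lies in the dense torus $N_\R\subseteq N_\R(\sigma)$; thus the face $\tau$ you introduce is always $\{0\}$, so your inequality $\dim\tau\le r$ is vacuous and your target chain length becomes $\dim_\base\base[\Mon]-\dim\tau=\rk\Lambda$, which is generally too large (take $\Mon=\Z^n$, where by Corollary~\ref{coro:crown_torus} the image of $\iota^*$ is exactly $\{P':P'\subseteq P\}$ and the longest chain has length $n-r$, not $n$). Second, there are no ``horizontal'' chains in $\ContBase{\base}\base[\Mon]$: by Proposition~\ref{lemma:MatrixDeterminesMaximalPrimeAbove} and the definition of $\Phi$, any two comparable primes share the same maximal prime above them and hence the same $\Phi$-image. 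So one cannot concatenate a horizontal piece with a vertical piece; every chain sits over a single point of $N_\R(\sigma)$.

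The paper's route is considerably more direct and avoids all of this. One computes the height of $P$ purely algebraically: for the natural map $\pi_P\colon\Terms{\base[\Lambda]}\to\kappa(P)^\times$ the short exact sequence $0\to\ker\pi_P\to\Mon\to\kappa(P)^\times/\base^\times\to0$ gives $\rk(\ker\pi_P)=\rk\Lambda-\rk(\kappa(P)^\times/\base^\times)$, and by explicitly refining the preorder $\leq_P$ (Proposition~\ref{prop: side_claim}) one builds a chain of primes $P\supsetneq P'\supsetneq\cdots$ of exactly this length. Each such $P'\subseteq P$ automatically satisfies property~$(*)$ (indeed $\Cnvg{P'}=\Cnvg{P}$ by Lemma~\ref{lemma:CnvgInChain}), and its canonical extension $Q_{P'}$ lies in $\Cont(\Cnvg{P})$ (Example~\ref{ex:CanonicalExtUnderP}). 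This yields a chain in $\Cont(\Cnvg{P})$ of length $\HT(P)=\dim_\base\base[\Mon]-\rk(\kappa(P)^\times/\base^\times)$, which is the lower bound. No appeal to the tropical toric variety or to strata of $N_\R(\sigma)$ is needed.
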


\noindent These inequalities are sharp; see Examples~\ref{ex:DimEx1}, \ref{ex:DimEx2}, and \ref{ex:DimEx3} and Proposition~\ref{prop:dim-group}.


In Appendix~\ref{app: trdeg} we show that the term $\rk(\kappa(P)^\times/\base^\times)$ can be interpreted as the transcendence degree of the semifield extension $\kappa(P)/\base$. This allows us to restate Abhyankar's inequality so that all three terms are transcendence degrees as follows.

\begin{coro}[Corollary to Proposition \ref{prop:TrDegIsRk} and Abhyankar's inequality]
Let $K_2/K_1$ be an extension of valued fields and let $\wt{K}_2/\wt{K}_1$ be the extension of residue fields. Also, let $\kappa_2/\kappa_1$ be the extension of value semifields, i.e., the extension of semifields corresponding the extension of value groups. Then
$$\trdeg(K_2/K_1)\geq\trdeg(\wt{K}_2/\wt{K}_1)+\trdeg(\kappa_2/\kappa_1).$$ 
\end{coro}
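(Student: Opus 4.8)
The plan is to deduce the inequality directly by combining the classical Abhyankar inequality for an extension of valued fields with Proposition~\ref{prop:TrDegIsRk}, which identifies the rank of an extension of value semifields with its transcendence degree. If $\trdeg(K_2/K_1)=\infty$ the asserted inequality is vacuous, so the first step is to reduce to the case where $K_2/K_1$ has finite transcendence degree, which is the setting in which Abhyankar's inequality is standardly stated.

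Write $\Gamma_i$ for the value group of $K_i$, so $\Gamma_1\subseteq\Gamma_2$; in multiplicative notation the value semifield $\kappa_i$ has $\kappa_i^\times=\Gamma_i$, and the extension $\kappa_1\subseteq\kappa_2$ is by construction the one attached to $\Gamma_1\subseteq\Gamma_2$. Abhyankar's inequality gives
$$\trdeg(K_2/K_1)\ \geq\ \trdeg(\wt{K}_2/\wt{K}_1)\ +\ \dim_\Q\big((\Gamma_2/\Gamma_1)\otimes_\Z\Q\big),$$
and the final summand is exactly $\rk(\kappa_2^\times/\kappa_1^\times)$, since $\rk$ of a quotient of abelian groups is by definition its rational rank $\dim_\Q(\,\cdot\,\otimes_\Z\Q)$. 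Applying Proposition~\ref{prop:TrDegIsRk} to rewrite $\rk(\kappa_2^\times/\kappa_1^\times)=\trdeg(\kappa_2/\kappa_1)$ and substituting into the display yields the statement.

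The only point demanding care is the interface between the two inputs: one must confirm that the rational rank of $\Gamma_2/\Gamma_1$ occurring in Abhyankar's inequality is literally the quantity $\rk(\kappa_2^\times/\kappa_1^\times)$ to which Proposition~\ref{prop:TrDegIsRk} applies, and that the dictionary between extensions of totally ordered abelian groups and extensions of value semifields is the identity on unit groups. If Proposition~\ref{prop:TrDegIsRk} has been proved only for semifields of the form $\kappa(P)$, then one additionally uses Hahn's embedding theorem to realize $\kappa_1\subseteq\kappa_2$ inside some $\R^k_\lex$ and applies the proposition there. Beyond this bookkeeping the argument is an immediate substitution, so I anticipate no real obstacle.
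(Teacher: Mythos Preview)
Your proposal is correct and matches the paper's intended argument exactly: the corollary is stated in the introduction without proof, its title already indicating that it follows by combining Abhyankar's inequality with Proposition~\ref{prop:TrDegIsRk}. Your final caveat about Hahn embeddings is unnecessary, since Proposition~\ref{prop:TrDegIsRk} is proved in Appendix~\ref{app: trdeg} for an arbitrary extension $L/S$ of totally ordered semifields, not merely for residue semifields $\kappa(P)$; so the identification $\rk(\kappa_2^\times/\kappa_1^\times)=\trdeg(\kappa_2/\kappa_1)$ applies directly.
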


\subsection{Organization of the paper}

The paper is organized as follows: in Section~\ref{sect:prelim} we review the necessary background, starting with the basics of semirings and prime congruences on them. 
We give concrete examples and recall how to specify prime congruences on the polynomial and Laurent polynomial semirings in a concrete way. 
In addition, we discuss some of the basics of tropical toric varieties. In Section~\ref{sect: Cont} we introduce the continuous spectrum of a topological semiring and realize it both as a topological space and a functor. 
In Section~\ref{sect: Cnvg} we study the semiring of convergent power series at a point $P$.
We define a metric on $\Cnvg{P}$ in Section~\ref{sect: topology-Cnvg} and prove that the topology induced by this metric makes $\Cnvg{P}$ into a topological semiring.
In Section~\ref{sect: canExt} we study a canonical extension of any prime $P \in \ContBaseInt{\base}{(\base[\Mon])}$ to a prime on $\Cnvg{P}$. 
We then use these canonical extensions in Section~\ref{section:TheoremCrown} to give algebraic and geometric descriptions of $\Cont{(\Cnvg{P})}$. Specifically, it is there that we prove Theorem~\ref{IntroThm:Crown}. In Section~\ref{sect: dim-Cnvg} we investigate an analogue of Krull dimension for $\Cnvg{P}$, compute it exactly in many cases, and prove Theorem~\ref{IntroThm:Dimension}. 

We also have several short appendices. In Appendix~\ref{app:TotOrdAbGps1} we discuss some  basics of totally ordered abelian groups and establish notation {for them}. In Appendix~\ref{app:TotOrdAbGps2} {we} recall Hahn's embedding theorem and describe how to modify one proof of it to give a slightly stronger result which we use in our study of $\Cont A$. In Appendix~\ref{sect: universal-topology} we define a topology with a certain universal property on any semiring. In Appendix~\ref{app: trdeg} we introduce the transcendence degree of a totally ordered semifield extension, and observe that it is the same as a quantity that arose in our study of dimension in Section~\ref{sect: dim-Cnvg}. 
In Appendix~\ref{app:AdicAnalytification} we justify the relation between our theory and the adic analytifications of varieties stated between Theorems~\ref{thm: Intro-Cont-Functor} and \ref{thm:IntroGiveAdicPoint}.


\addtocontents{toc}{\protect\setcounter{tocdepth}{-1}}
\section*{Acknowledgements}

The authors thank the anonymous referee for the insightful comments, corrections, and suggestions how to make the paper better. The authors thank Sam Payne for pointing them in the direction of this project and Hernán Iriarte for exciting discussions. 

The first author worked on this project while at MSRI for the program on Birational Geometry and Moduli Spaces in spring 2019 and while at ICERM for the program on Combinatorial Algebraic Geometry in spring 2021. He thanks the organizers of those programs for the hospitality. The second author is partially supported by Louisiana Board of Regents Targeted Enhancement Grant 090ENH-21.

\addtocontents{toc}{\protect\setcounter{tocdepth}{2}}
\section{Preliminaries}\label{sect:prelim}


\begin{definition}
By a {\it semiring} $R$ we mean a commutative semiring with multiplicative unit. That is, $R$ is a set which is a commutative monoid with respect to each of two binary operations: an addition operation $+_R$, whose identity is denoted $0_R$, and a multiplication operation $\cdot_R$, whose identity is denoted $1_R$. Furthermore, we require that $0_R\neq1_R$, $0_R$ is multiplicatively absorbing, and multiplication distributes over addition. We omit the subscripts from the operations whenever this will not cause ambiguity. 
\end{definition}

\begin{definition}
    A {\it semifield} is a semiring in which all nonzero elements have a multiplicative inverse.
\end{definition}

\begin{defi}
We call a semiring $R$ {\it additively idempotent} if $a+a = a$ for all $a \in R$. We refer to additively idempotent semirings as just {\it idempotent}.
\end{defi}

\noindent If $R$ is an idempotent semiring, then the addition defines a partial order on $R$ in the following way: $a \geq b \Leftrightarrow a + b = a$.
With respect to this order, $a+b$ is the least upper bound of $a$ and $b$. Any time we mention an order on an idempotent semiring, it is this partial order. When we consider a totally ordered idempotent semiring, we mean an idempotent semiring for which the order coming from addition is a total order. 
\par\medskip
\noindent Some of the idempotent semifields that we use throughout the paper are:
\begin{itemize}
\item The {\it Boolean semifield} $\mathbb{B}$ is the semifield with two elements $\{1,0\}$, where $1$ is the multiplicative identity, $0$ is the additive identity and $1+1 = 1$. 
    \item The {\it tropical semifield} $\T$ is defined on the set $\R  \cup \{-\infty\} $, by setting the $+$ operation to be the usual maximum and the multiplication operation to be the usual addition, with $-\infty = 0_\T$.
    \item The semifield $\Q_{\max}$ is a sub-semifield of $\T$. As a set it is $\Q \cup \{-\infty\} $ and the operations are the restrictions of the operations on $\T$. We also use the same notation when $\Q$ is replaced by any other additive subgroup of $\R$.
    \item The semifield $\RnLexSemifield{n}$ is defined on the set $\R^n  \cup \{-\infty\} $, by setting the addition operation to be lexicographical maximum and the multiplication operation to be the usual pointwise addition.
\end{itemize}

Throughout this paper all monoids, rings and semirings are assumed to be commutative. All rings and semirings have a multiplicative unit. 
Whenever we use the word semiring, 
we refer to an additively idempotent semiring.
Our focus will be on totally ordered semifields which are different from $\B$. 

\begin{keyAssumption}
Henceforth, when we refer to totally ordered semifields, we implicitly assume that they are not isomorphic to $\B$. 
\end{keyAssumption}

\noindent In particular, whenever we refer to a sub-semifield of $\T$, we implicitly assume that it is not $\B$. Totally ordered semifields can be seen as the image of a non-archimedean valuation. The semifield $\B$ is then the image of the trivial valuation.


In order to distinguish when we are considering a real number as being in $\R$ or being in $\T$ we introduce some notation. For a real number $a$, we let $t^a$ denote the corresponding element of $\T$. In the same vein, given $a\in\T$, we write $\log(a)$ for the corresponding element of $\R\cup\{-\infty\}$. This notation is motivated as follows.
Given a non-archimedean valuation $\nu: K \rightarrow \R\cup\{-\infty\}$ on a field and $\lambda \in \R$ with $\lambda>1$, we get a non-archimedean absolute value $|\cdot|_\nu:K\rightarrow[0, \infty)$ by setting $|x|_{\nu}=\lambda^{\nu(x)}$. Since $\T$ is isomorphic to the semifield $\big([0,\infty),\max,\cdot_{\R}\big)$, we use a notation for the correspondence between elements of $\R\cup\{-\infty\}$ and elements of $\T$ that is analogous to the notation for the correspondence between $\nu(x)$ and $|x|_{\nu}$. This notation is also convenient as we get many familiar identities such as $\log(1_{\T})=0_{\R}$ and $t^a t^b=t^{a+_{\R}b}$.

As defined in \cite[Section 4]{Gol92} a semifield $S$ is called \textit{algebraically closed} if, for all $a\in S$ and all $n\in\Z_{>0}$, the equation $x^n=a$ has a solution in $S$. Note that, if $S$ is totally ordered, $x^n=a$ has at most one solution in $S$. If $L/S$ is an extension of semifields, the algebraic closure of $S$ in $L$ is $\{x\in L\ :\ \text{for some }n\in\Z_{>0},\  x^n\in S\}$.

\begin{defi}
Let $R$ be a semiring and let $a \in R\sdrop\{0\}$. We say that $a$ is a \emph{cancellative element} if for all $b, c \in R$, whenever $ab=ac$ then $b=c$. If all elements of $R\sdrop\{0\}$ are cancellative, then we say that $R$ is a \textit{cancellative semiring}.
\end{defi}


\begin{definition}\label{def: prime_domain}
A \textit{congruence} on a semiring $R$ is an equivalence relation on $R$ that respects the operations of $R$.
The \emph{trivial congruence} on $R$ is the diagonal $\Delta\subseteq R\times R$, for which $R/\Delta\cong R$.
We call a proper congruence $P$ of a semiring $R$  {\it prime} if $R/P$ is totally ordered and cancellative (cf.\ Definition 2.3 and Proposition 2.10 in \cite{JM17}).
\end{definition}

In view of the previous definition we introduce the following notation. 
\begin{notation}
Let $R$ be semiring and let $P$ be a prime on $R$. For two elements $r_1, r_2 \in R$ we say that $r_1 \leq_P r_2$ (resp. $r_1 <_P r_2$, resp. $r_1 \equiv_P r_2$) whenever $\overline{r_1} \leq \overline{r_2}$ (resp. $\overline{r_1} <  \overline{r_2}$, resp. $\overline{r_1} = \overline{r_2}$) in $R/P$. Here by $\overline{r}$ we mean the image of $r$ in $R/P$.
\end{notation}

Note that $P$ is determined by the relation $\leq_P$ on $R$. Moreover, if $\mathcal{G}$ is a set of additive generators for $R$, then $P$ is determined by the restriction of $\leq_P$ to $\mathcal{G}$. 

Given a homomorphism $\ph:R\to R'$ of semirings and a congruence $E$ on $R'$, the \emph{pullback of $E$ to $R$} is the congruence $\ph^*(E):=(\ph\times\ph)^{-1}(E)\subseteq R\times R$. The homomorphism $\ph$ induces an injective homomorphism $R/\ph^*(E)\into R'/E$. In particular, if $E$ is prime, then $\ph^*(E)$ is also prime.

The following is a restatement in the language of congruences of a widely known fact.
\begin{theorem}[The correspondence theorem for congruences]
Let $R$ be a semiring and let $E$ be a congruence on $R$. Pulling back along the quotient map $R\to R/E$ gives an inclusion preserving bijection between (prime) congruences on $R/E$ and (prime) congruences on $R$ that contain $E$. In particular, there exists a unique congruence on $R/E$, namely $\Delta$, whose pullback to $R$ is $E$.
\end{theorem}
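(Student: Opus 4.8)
The plan is to transcribe the classical correspondence theorem from universal algebra (quotients of algebraic structures by congruences) into this particular setting, being careful about the extra conditions that ``prime'' imposes. First I would set up the two maps. Given the quotient map $q\colon R\to R/E$, pulling back along $q$ sends a congruence $F$ on $R/E$ to $q^*(F)=(q\times q)^{-1}(F)\subseteq R\times R$; since $q$ is surjective, $(x,y)\in q^*(F)$ precisely when $(q(x),q(y))\in F$, and because $E\subseteq q^*(F)$ always (as $q(x)=q(y)$ whenever $(x,y)\in E$), this lands in the set of congruences on $R$ containing $E$. In the other direction, given a congruence $D$ on $R$ with $E\subseteq D$, define $D/E:=(q\times q)(D)\subseteq (R/E)\times(R/E)$; one checks this is an equivalence relation (here the inclusion $E\subseteq D$ is exactly what is needed for well-definedness and transitivity of the image) that respects the two operations of $R/E$, hence a congruence. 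I would then verify $q^*(D/E)=D$ (using $E\subseteq D$) and $(q^*(F))/E=F$ (using surjectivity of $q$), establishing the bijection, and note that both assignments visibly preserve inclusions.

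The second step is to check that this bijection restricts to a bijection on \emph{prime} congruences. The key observation is that for $E\subseteq D$ there is a canonical isomorphism of semirings $R/D\cong (R/E)\big/(D/E)$ — this is the ``third isomorphism theorem'' for semirings, and it follows directly from the universal property of the quotient (both sides corepresent homomorphisms out of $R$ that are constant on $D$-classes). Since primeness of a congruence $C$ on a semiring $S$ is by Definition~\ref{def: prime_domain} a property purely of the quotient semiring $S/C$ (namely that it is totally ordered and cancellative, and that $C$ is proper), and since $R/D$ and $(R/E)/(D/E)$ are isomorphic as semirings, $D$ is prime on $R$ if and only if $D/E$ is prime on $R/E$. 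Properness also transfers: $D\neq R\times R$ iff $R/D\neq 0$ iff $(R/E)/(D/E)\neq 0$ iff $D/E$ is a proper congruence on $R/E$.

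Finally, for the ``in particular'' clause: the congruences on $R$ containing $E$ which pull back \emph{to} $E$ are exactly those $D$ with $q^*(D/E)=E$, i.e.\ $D=E$; under the bijection the congruence $E$ itself corresponds to $E/E=\Delta$, the diagonal of $R/E$, and $R/\Delta\cong R/E$ — consistent with the remark in Definition~\ref{def: prime_domain}. I don't anticipate a genuine obstacle here; the only point requiring a little care is the well-definedness of $D\mapsto D/E$ as an equivalence relation on $R/E$, where transitivity uses $E\subseteq D$ in an essential way, together with making sure all verifications (respecting $+$ and $\cdot$, the isomorphism $R/D\cong(R/E)/(D/E)$) are genuinely just the standard universal-algebra arguments applied to semirings, for which nothing about idempotency or the semiring axioms beyond what is shared with general algebras is needed.
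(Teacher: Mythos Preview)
The paper does not actually prove this theorem: it is introduced with the sentence ``The following is a restatement in the language of congruences of a widely known fact'' and no proof is given. Your proposal is the standard universal-algebra argument and is correct; there is nothing to compare against.

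One very small wording slip: in the final paragraph you speak of ``the congruences on $R$ containing $E$ which pull back to $E$,'' but the clause to be proved concerns congruences on $R/E$ whose pullback to $R$ is $E$. Your conclusion is right (by the bijection, $q^*(F)=E$ forces $F=\Delta$), just rephrase that sentence so it refers to congruences on $R/E$ rather than on $R$.
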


A large class of semirings that we will be interested in are the monoid $\base$-algebras $\base[\Mon]$ where $\base$ is a totally ordered semifield and $\Mon$ a monoid.  Elements of $\base[\Mon]$ are finite sums of expressions of the form $a\chi^u$ with $a\in\base$ and $u\in\Mon$, which are called \emph{monomials} or \emph{terms}.
We call elements of $\base[\Mon]$ \emph{generalized semiring polynomials} or \emph{polynomials} when no confusion will arise.

Consider a prime congruence $P$ on $\base[\Mon]$ and $f=\dsum_{i=1}^na_i\chi^{u_i}\in\base[\Mon]$. We say that $a_j\chi^{u_j}$ is a \emph{$P$-leading term} of $f$ if $a_j\chi^{u_j}\geq_{P}a_i\chi^{u_i}$ for all $i$. Note that if $a_j\chi^{u_j}$ is a $P$-leading term of $f$, then $f\equiv_P a_j\chi^{u_j}$.


When $R = \base[\Mon]$, where $\base \subseteq \TT$ and $\Mon = \N^n$ or $\ZZ^n$ we can explicitly describe all of the prime congruences on $R$. 
By letting $x_i=\chi^{e_i}$ with $e_i$ the $i^\text{th}$ standard basis vector, 
$\base[\N^n]$ and $\base[\Z^n]$ are identified with a polynomial semiring and a Laurent polynomial semiring, respectively, each over $\base$ and in $n$ variables.

\begin{example}\label{ex: primes}
%

Let $R = \base[\ZZ^n]$ where $\base$ is a subsemifield of $\TT$. 
A $k\times (n+1)$ real valued matrix $C$ such that the first column of $C$ is lexicographically greater than or equal to the zero vector gives a prime congruence on $R$ as follows.
For any monomial $m=t^a\chi^u\in\base[\ZZ^n]$ we let $\Psi(m)=C\begin{pmatrix}a \\ u\end{pmatrix}\in\R^k$, where we view $u\in\Z^n$ as a column vector. We call $\begin{pmatrix}a \\ u\end{pmatrix}$ the \emph{exponent vector} of the monomial $m$. For any nonzero $f\in\base[\ZZ^n]$, write $f$ as a sum of monomials $m_1,\ldots,m_r$ and set $\Psi(f)=\max_{1\leq i\leq r}\Psi(m_i)$, where the maximum is taken with respect to the lexicographic order. Finally, set $\Psi(0)=-\infty$. We specify the prime congruence $P$ by saying that $f$ and $g$ are equal modulo $P$ if $\Psi(f)=\Psi(g)$. In this case we say that $C$ is a \emph{defining matrix} for $P$.

In this situation, we say that the first column of $C$ is the \emph{column corresponding to the coefficient} or the \emph{column corresponding to $\base$}. For $1\leq i\leq n$, we say that the $(i+1)^{\text{st}}$ column of $C$ is the \emph{column corresponding to $x_i$}.

Every prime congruence on $\base[\Z^n]$ has a defining matrix; see \cite{JM17}, where it is shown that the matrix can be taken to be particularly nice. \exEnd\end{example}

Before we discuss the corresponding construction for $\base[\N^n]$, we introduce a few more concepts.

Let $E$ be a congruence on a semiring $R$. The \emph{ideal-kernel} of $E$ is the set of elements $a \in R$ such that $(a,0_R) \in E$. It is easy to see that the ideal-kernel of $E$ is an ideal of $R$. If the ideal-kernel of $E$ is the zero ideal, then we say that $E$ has \emph{trivial ideal-kernel}.

Let $\ph:R\to R'$ be a homomorphism of semirings. The \emph{congruence-kernel} of $\ph$ 
, denoted $\ker(\ph)$, is the pullback $\ph^*(\Delta)$ of the trivial congruence on $R'$.
If $R'$ is totally ordered and cancellative, then $\ker(\ph)$ is prime. The \emph{ideal-kernel} of $\ph$ is the set of those $a\in R$ such that $\ph(a)=0_{R'}$. Clearly, the ideal-kernel of $\ph$ is the same as the ideal-kernel of $\ker(\ph)$. 

\begin{remark}\label{remark:IdealKernelAndGenerators}
If $\calG$ is a set of additive generators for $R$ and $E$ is a congruence on $R$, then the ideal-kernel $I$ of $E$ is determined by $I\cap\calG$. Specifically, $I$ is the additive closure of $I\cap\calG$.
\end{remark}

\begin{example}
When $R = \base[\ZZ^n]$ every prime congruence has trivial ideal-kernel. More generally, if $R$ is generated by units, then every prime congruence on $R$ has trivial ideal-kernel.
\end{example}

\begin{example}\label{ex: primes-affine}
When $R = \base[\N^n]$ the prime congruences with trivial ideal-kernel are restrictions of primes in $\base[\ZZ^n]$. The primes with nontrivial ideal-kernel correspond to prime congruences with trivial ideal-kernel on $\base[\ZZ^m]$ for some $m < n$, together with a choice of $m$ of $x_1,\ldots,x_n$. For more details we refer the reader to \cite{JM17}.\footnote{ While this is true for all sub-semifields $\base$ of $\T$, this was originally proved for $\base = \T$ in \cite{JM17}.} 
Alternatively, if $P$ is prime congruence on $\base[\N^n]$ with nontrivial ideal-kernel, then we can define it by a matrix in the following way. 
We allow some entries of the matrix $C$ to be $-\infty$, but if $-\infty$ occurs as one entry, then it must occur as all entries in the same column.
One defines the multiplication of a matrix $C$ with the exponent vector of a monomial as usual by extending the multiplication in the following way: $0\cdot(-\infty) = 0$ and for all $a > 0$, $a\cdot(-\infty) = -\infty$.
We then define $\Psi$ as in the Laurent polynomial case; if $C\begin{pmatrix}a\\u\end{pmatrix}$ has all of its entries $-\infty$, then we set $\Psi(t^a\chi^u)=-\infty$. We then let $P$ be defined as before, and call $C$ a defining matrix for $P$.

In this situation, $x_i$ is in the ideal-kernel of $P$ if and only if the column of $C$ corresponding to $x_i$ is $-\infty$. The ideal-kernel of $P$ is the ideal of $\base[\N^n]$ generated by such $x_i$. \exEnd\end{example}

\begin{example}\label{ex: order-primes}
When $R = \base[\Mon]$, where $\base \subseteq \TT$ and $\Mon = \N^n$ or $\ZZ^n$ we can explicitly describe the order that $P$ defines. Let $C$ be a defining matrix for $P$ as in Example~\ref{ex: primes} or Example~\ref{ex: primes-affine} and let $\Psi$ be the corresponding map. 
For any $f,g\in R$, $f\leq_{P}g$ exactly if $\Psi(f)\leq_{lex}\Psi(g)$. \exEnd\end{example}


It will be convenient to know that we can choose a defining matrix with a particular form. To accomplish this, we will use the following lemma.

\begin{lemma}[Downward gaussian elimination]\label{lemma:rref}
Let $\base \subseteq \T$ and $\Mon = \N^n$ or $\ZZ^n$ and let $P$ be a prime congruence of $\base[\Mon]$ with defining matrix $C$. Then the following elementary row operations do not change the prime.
\begin{itemize}
    \item multiplying a row by a positive constant.
    \item adding a multiple of a row to any row below it. 
\end{itemize}
\end{lemma}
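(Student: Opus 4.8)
The plan is to verify the claimed invariance directly from the definition of the prime congruence $P$ in terms of its defining matrix $C$, recalling from Examples~\ref{ex: primes}, \ref{ex: primes-affine}, and \ref{ex: order-primes} that $P$ is determined by the function $\Psi$ sending a monomial $t^a\chi^u$ to $C\left(\begin{smallmatrix}a\\u\end{smallmatrix}\right)\in\R^k$ (with the lexicographic convention) and an arbitrary element $f$ to the lexicographic maximum of $\Psi$ over its monomials. Since $P$ declares $f\equiv g$ exactly when $\Psi(f)=\Psi(g)$, and more generally $f\le_P g$ exactly when $\Psi(f)\le_{\lex}\Psi(g)$, it suffices to show that each of the two row operations produces a matrix $C'$ with the property that $C'\left(\begin{smallmatrix}a\\u\end{smallmatrix}\right)$ and $C\left(\begin{smallmatrix}a\\u\end{smallmatrix}\right)$ compare the same way under $\le_{\lex}$ for every exponent vector — equivalently, that the order relation $\le_{\lex}$ on the image of $\Psi$ is unchanged. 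A harmless technical point is that we must also check $C'$ still satisfies the standing hypotheses on a defining matrix (first column lexicographically $\ge 0$; and, in the $\N^n$ case, the all-or-nothing $-\infty$ columns), so that it genuinely defines a prime; this is immediate for both operations.

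First I would handle multiplication of a row by a positive constant $\lambda>0$. Scaling the $j$-th row of $C$ by $\lambda$ replaces the $j$-th coordinate of every vector $C\left(\begin{smallmatrix}a\\u\end{smallmatrix}\right)$ by $\lambda$ times itself. Because $\lambda>0$, this is an order-preserving bijection of $\R$ in that coordinate, and a coordinatewise order-preserving map on $\R^k$ preserves the lexicographic order: $v\le_{\lex} w$ iff (the scaled) $v'\le_{\lex} w'$. In the $\N^n$ setting one notes additionally that $\lambda\cdot(-\infty)=-\infty$ under the stated extended arithmetic, so $-\infty$ entries stay $-\infty$ and the set of $-\infty$ columns is unchanged. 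Hence $\Psi'=\Psi$ up to this order-isomorphism, the relation $\le_P$ is unchanged, and so is $P$.

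Next I would handle adding $\lambda$ times row $j$ to row $i$ for some $i>j$ (a row strictly below row $j$), $\lambda\in\R$. This replaces coordinate $i$ of $C\left(\begin{smallmatrix}a\\u\end{smallmatrix}\right)$ by (old coordinate $i$) $+\ \lambda\cdot$(coordinate $j$), leaving all other coordinates — in particular all coordinates $1,\dots,j$ — untouched. Given two exponent vectors with images $v,w$, compare lexicographically: if $v$ and $w$ first differ in some coordinate $\ell\le j$, then that coordinate is unaffected, so the comparison is unchanged; if $v$ and $w$ agree in coordinates $1,\dots,j$, then in particular they agree in coordinate $j$, so the modification adds the \emph{same} quantity $\lambda\cdot v_j=\lambda\cdot w_j$ to coordinate $i$ of both, and again the first point of disagreement and its sign are preserved (for coordinates between $j{+}1$ and $i{-}1$ nothing changed, and at coordinate $i$ we added equal amounts). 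Either way $v\le_{\lex}w \iff v'\le_{\lex}w'$. One must extend this bookkeeping to general elements $f$, not just monomials: but $\Psi(f)$ is a lexicographic maximum of the monomial images, and since the coordinatewise map just described commutes with taking lexicographic maxima on the relevant set (it is an order-isomorphism onto its image when restricted appropriately, using that coordinate $j$ is fixed), we get $\Psi'(f)$ related to $\Psi(f)$ by the same map, hence $f\equiv_P g \iff f\equiv_{P'} g$. Finally, the first column of $C$ is unchanged by an ``add-below'' operation (it only alters a row $i>1$, or if $j=1$ then row $i>1$, never row $1$... more carefully: the first \emph{column} entries are $C_{\bullet,1}$; adding row $j$ to row $i$ changes $C_{i,1}$ to $C_{i,1}+\lambda C_{j,1}$, so the first column \emph{can} change) — so I would instead argue that the hypothesis ``first column $\ge_{\lex}0$'' is preserved because it is exactly the condition that $\Psi(t^a)=\Psi(\chi^0\cdot t^a)$ is $\ge_{\lex}$-monotone in $a>0$... no: the cleanest route is that the first column being $\ge 0$ is equivalent to $t\cdot 1 \ge_P 1$, i.e. $t^1\chi^0 \ge_P t^0\chi^0$, a relation in $P$ itself, which we have just shown is unchanged; so $C'$ automatically satisfies the hypothesis.

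\textbf{Main obstacle.} The only real subtlety, and the step I expect to take the most care, is the interaction with the lexicographic maximum when passing from monomials to arbitrary $f,g$: one needs that the modified matrix still computes $\Psi(f)$ as ``the lex-max of monomial images'' consistently, i.e. that the coordinatewise operation I described is order-compatible with lex-max on the image set. For the scaling operation this is transparent (a genuine order-isomorphism of $\R^k$ with the lex order). For the ``add row $j$ to row $i>j$'' operation it requires the observation that on any set of vectors the operation permutes by preserving the lex order — which is exactly what the coordinate-by-coordinate comparison above establishes — so it commutes with $\max_{\lex}$. Once that is in hand, everything else is the routine verification that the standing hypotheses on defining matrices survive, which I would dispatch by re-expressing those hypotheses as membership of specific pairs in the congruence, a property we have already shown to be preserved.
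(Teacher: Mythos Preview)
Your proposal is correct and follows essentially the same approach as the paper: both arguments reduce to checking that the lexicographic comparison of $C\begin{pmatrix}a\\u\end{pmatrix}$ against $C\begin{pmatrix}a'\\u'\end{pmatrix}$ is unchanged by each row operation, via a case split on the position of the first coordinate where the two image vectors differ. The paper's version is terser---it works only at the level of monomials (relying implicitly on the earlier remark that $P$ is determined by $\le_P$ on the additive generators) and does not separately verify that the first-column and $-\infty$-column hypotheses survive---while you spell out these auxiliary checks and the passage from monomials to arbitrary $f$; but the substance of the argument is the same.
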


\begin{proof}
Consider the prime congruence $P$ with defining matrix $C$ with rows some vectors $v_1, \ldots, v_k \in \R^{n+1}$. The prime (via matrix multiplication by $C$) defines an order on the set of monomials of $\base[\Mon]$ as in Example~\ref{ex: order-primes}. Let $m$ and $m'$ be monomials of $\base[\Mon]$ and denote their exponent vectors by $w$ and $w'$. Without loss of generality assume that $m \geq_P m'$. This means that $Cw \geq_{lex} Cw'$. Multiplying any of the rows of $C$ by a positive constant preserves the inequality. 

Consider $i < j \leq k$, so $v_i$ is a row of $C$ that is higher than $v_j$. If $Uw \geq_{lex} Uw'$, then there exists $q$, such that $v_lw = v_lw'$, for all $l<q$ and $v_q w \geq v_q w'$. If $j > q$, then adding a multiple of $v_i$ to $v_j$ does not affect $Cw \geq_{lex} Cw'$, because $v_j$ does not contribute to the above condition. If $j \leq q$ then $v_j w \geq v_j w'$ and $v_i w = v_i w'$. Then for any $a\in \R$ we still have that $(av_i+v_j)w \geq (av_i +v_j)w'$.
\end{proof}

Note that, if a row of the matrix is all zero, then removing the row does not change the prime congruence that the matrix defines. Thus, using downward gaussian elimination, we can always choose a defining matrix in which the rows are linearly independent (over $\R$).\\



Let $R$ be a semiring and $A$ a submonoid of the multiplicative monoid of cancellative elements in $R$. We define an equivalence relation $\sim$ on $A\times R$ as follows:  $(a, r) \sim (a', r')$ if and only if there exist $u, u' \in R$, such that $ur = u'r'$ and  $ua = u'a' \in A$. We think of the equivalence class of $(a,r)$ as the fraction $\frac{r}{a}$.

\begin{defi}\label{def: semiring-of-fractions}
Let $R$ be a semiring and $A$ the set of all cancellative elements in $R$. The usual operations on fractions make $(A \times R)/\sim$ into a semiring, called the \textit{total semiring of fractions} and denoted by $\Frac(R)$. There is a natural injective homomorphism $\eta:R\to\Frac(R)$ which we consider as an inclusion map.
\end{defi}

This semiring is well-defined and satisfies the usual universal property, namely: if $R$ is a semiring and $\ph:R\to B$ is a morphism of semirings, then $\ph$ factors through $\eta:R\to\Frac(R)$ if and only if $\ph$ maps every cancellative element of $R$ to an invertible element of $B$. Moreover, in this case it factors uniquely. For a detailed study of the semiring of fractions and its properties we refer the reader to \cite{Gol92} or \cite{HW98}. Note that $\Frac(R)$ is a semifield if and only if $R$ is cancellative.

\begin{example}
Let $\base$ be a totally ordered semifield and let $\Mon$ be a cancellative monoid. Then $\Frac(\base[\Mon])\cong\base[\Lambda]$, where $\Lambda$ is the groupification of $\Mon$.
\end{example}

The following result will be used in Section~\ref{sect: dim-Cnvg}. While we do not use the result in its full generality, this setting makes the proof much more straightforward.

\begin{proposition}\label{prop:primes-total-sFrac}
Let $R$ be a semiring generated by a set of cancellative elements. Let $\Frac(R)$ be the total semiring of fractions. Denote by $\eta:R \rightarrow \Frac(R)$ be the natural inclusion and 
and let $\eta^*$ be the pullback map on prime congruences.
Then $\eta^*$ is an injection and its image is the set of primes on $R$ with trivial ideal-kernel.
\end{proposition}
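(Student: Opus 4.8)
The plan is to verify the three claims separately: that $\eta^*$ is injective, that every prime in its image has trivial ideal-kernel, and that conversely every prime with trivial ideal-kernel arises this way.

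First I would establish injectivity of $\eta^*$. Since $R$ is generated by cancellative elements, every element of $\Frac(R)$ is a fraction $r/a$ with $r\in R$ and $a$ a cancellative element, and moreover $a$ can be taken to be a product of generators. Given two primes $E_1, E_2$ on $\Frac(R)$ with $\eta^*(E_1) = \eta^*(E_2)$, I want $E_1 = E_2$. A prime $E$ on $\Frac(R)$ is determined by the total order it induces on $\Frac(R)$, equivalently by the relation $\leq_E$. For fractions $r/a$ and $r'/a'$ we have $r/a \leq_E r'/a'$ iff $ra' \leq_E r'a$ (multiplying by the cancellative, hence $\Frac(R)$-invertible, positive elements $a, a'$ — note these are nonzero in the domain $\Frac(R)/E$ since $\eta^*$ of a prime is proper and cancellative, so $a$ maps to a nonzero element), and $ra', r'a \in R$, so this is controlled by $\eta^*(E)$. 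Hence $\leq_{E_1}$ and $\leq_{E_2}$ agree on all of $\Frac(R)$, giving $E_1 = E_2$.

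Next, that primes in the image have trivial ideal-kernel: if $E$ is a prime on $\Frac(R)$ and $a \in R$ with $(a, 0_R) \in \eta^*(E)$, then $a = 0$ in $\Frac(R)/E$. But $\Frac(R)/E$ is a semifield (quotient of a semifield by a prime — or just: it is totally ordered and cancellative, and $\Frac(R)$ is a semifield since $R$ is cancellative, being generated by cancellative elements, so the quotient is again a semifield once one checks inverses descend). In a semifield $0$ has no nonzero element equal to it, and $a$ being in the image of cancellative (nonzero) elements... more carefully: since $R$ is cancellative, $R$ injects into the semifield $\Frac(R)$, and $\Frac(R)/E$ receives an injection from $R/\eta^*(E)$; if $a \neq 0$ in $R$ then $a \neq 0$ in $\Frac(R)$, hence $a$ is invertible there, so its image in $\Frac(R)/E$ is either invertible (if nonzero) — but it is $0$, and $0$ is not invertible since $0 \neq 1$ in the quotient semiring. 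This forces $a = 0_R$, so the ideal-kernel of $\eta^*(E)$ is trivial.

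Finally, and this is the step I expect to be the main obstacle, I would show every prime $P$ on $R$ with trivial ideal-kernel is $\eta^*(E)$ for some prime $E$ on $\Frac(R)$. The idea: $R/P$ is totally ordered and cancellative, so by the universal property of the total semiring of fractions (Definition~\ref{def: semiring-of-fractions} and the discussion after it), $\Frac(R/P)$ makes sense and is a semifield. The composite $R \to R/P \to \Frac(R/P)$ sends every cancellative element of $R$ to an invertible element — here is where trivial ideal-kernel is needed, to ensure cancellative elements of $R$ have nonzero (hence invertible) image in $R/P$; one must also check they remain cancellative in $R/P$, which holds because $R/P$ is cancellative — so it factors through $\eta: R \to \Frac(R)$, giving $\psi: \Frac(R) \to \Frac(R/P)$. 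Set $E = \ker(\psi)$; since $\Frac(R/P)$ is totally ordered and cancellative, $E$ is prime. Then $\eta^*(E) = \ker(\psi \circ \eta) = \ker(R \to \Frac(R/P))$, and since $R \to R/P$ is surjective and $R/P \hookrightarrow \Frac(R/P)$ is injective, this kernel is exactly $P$. The care needed is in checking that cancellative elements of $R$ map to cancellative elements of $R/P$ and that the total semiring of fractions behaves well under the relevant maps; these follow from $R/P$ being cancellative and the universal property, but should be spelled out.
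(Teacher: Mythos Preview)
Your third step (primes with trivial ideal-kernel lie in the image) matches the paper's construction essentially verbatim: both build the map $\Frac(R)\to\Frac(R/P)$ via the universal property and take its kernel.

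Your injectivity argument is correct and in fact cleaner than the paper's. The paper first introduces an auxiliary congruence $Q_P=\{(f,g):(af,ag)\in P\text{ for some cancellative }a\}$, identifies it with $\ker\phi$, and then argues uniqueness via the correspondence theorem together with an external reference (\cite[Proposition~3.8]{JM15}). Your observation that the total preorder $\leq_E$ on $\Frac(R)$ is already determined by its restriction to $R$---because $r/a\leq_E r'/a'$ iff $ra'\leq_E r'a$ with $ra',r'a\in R$---cuts all of this out.

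There is, however, a genuine gap in your second step. You assert that ``$R$ is cancellative, being generated by cancellative elements'' and conclude that $\Frac(R)$ is a semifield, so that any nonzero $a\in R$ is invertible in $\Frac(R)$. This implication is false. For instance, $\B[x^{\pm1}]$ is generated by the units $x,x^{-1}$, yet $f=1+x+x^2+x^3$ is not cancellative: one checks $f\cdot(1+x^4)=f\cdot(1+x+x^3+x^4)$. So your argument that an arbitrary nonzero $a$ in the ideal-kernel of $\eta^*(E)$ is invertible in $\Frac(R)$ does not go through.

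The fix is the one the paper uses: since $R$ is \emph{additively} generated by products of the given generators, and products of cancellative elements are cancellative, the ideal-kernel is additively generated by its intersection with this set of cancellative monomials (Remark~\ref{remark:IdealKernelAndGenerators}). Hence a nontrivial ideal-kernel contains some \emph{cancellative} $a$, which \emph{is} invertible in $\Frac(R)$, and then your contradiction (the image of a unit cannot be $0$ in a nonzero quotient) applies.
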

%
%
%
\begin{proof}
For any prime $P$ on $R$, define 
$$Q_P=\{(f,g)\in\Frac(R)^2\;:\; (af,ag)\in P\text{ for some cancellative }a\in R\}.$$
It is elementary to show that $Q_P$ is a congruence on $\Frac(R)$. Since each cancellative element of $R$ is a unit in $\Frac(R)$, then any prime congruence $Q$ on $\Frac(R)$ for which $\eta^*(Q)=P$ contains $Q_P$.

Suppose $P$ is a prime on $R$ which has nontrivial ideal-kernel. Because the ideal-kernel of $P$ is determined by its intersection with any set of additive generators, the ideal-kernel of $P$ contains some cancellative element $a\in R$, i.e., $(0_R,a)\in P$. Since $(0_R,a)=(a\cdot 0_R,a\cdot 1_R)$, we have $(0_R,1_R)\in Q_P$. So there is no prime $P$ on $\Frac(R)$ containing $Q_P$. Thus, $P$ is not in the image of $\eta^*$.

Now suppose that $P$ is a prime on $R$ which has trivial ideal-kernel. Then the images in $R/P$ of cancellative elements of $R$ are again cancellative. Thus, cancellative elements of $R$ map to units under the map $R\to R/P\to\Frac(R/P)$, so, by the universal property of $\Frac(R)$, we get a unique commutative diagram
\comd{
R\ar[d]_{\pi}\ar[r]^{\eta}&\Frac(R)\ar[d]^{\ph}\\
R/P\ar[r]_{\eta'}&\Frac(R/P).
}
The congruence-kernel $Q=\ker\ph$ of $\ph$ is a prime on $\Frac(R)$, and, by the commutativity of the diagram, $\eta^*(Q)=P$. In particular, $P$ is in the image of $\eta^*$. 

We claim that $Q=Q_P$. We already know that $Q\supseteq Q_P$. So say $(f,g)\in Q$, i.e., $\ph(f)=\ph(g)$. Write $f=\frac{f'}{a}$ and $g=\frac{g'}{b}$ for some $a,b,f',g'\in R$ with $a,b$ cancellative. Note that $abf=bf'\in R$, $abg=ag'\in R$, and 
$$\eta'\circ\pi(bf')=\ph\circ\eta(bf')=\ph\circ\eta(ag')=\eta'\circ\pi(ag').$$
Since $\eta'$ is injective, this gives us that $\pi(bf')=\pi(ag')$, so $(bf',ag')\in\ker\pi=P$. Thus $(abf,abg)=(bf',ag')\in P$ with $ab$ cancellative in $R$, so $(f,g)\in Q_P$. Thus, $Q=Q_P$. Also, note that $\Frac(R)/Q$ is isomorphic to $\Frac(R/P)$ as a $\Frac(R)$-algebra.

Now suppose $Q'$ is another prime on $\Frac(R)$ with $\ph^*(Q')=P$. Then $Q'\supseteq Q_P=Q=\ker\ph$. 
Therefore there is a prime $\wt{Q}$ on $\Frac(R/P)$ such that $\ph^*(\wt{Q})=Q'$. Let $\wt{P}=(\eta')^*(\wt{Q})$. By the commutativity of the diagram, $\pi^*(\tilde{P})=P$. But there is only one prime on $R/P$ whose pullback along $\pi$ is $P$, so $\wt{P}=\Delta_{R/P}$. By \cite[Proposition~3.8]{JM15}, $(\eta')^*$ is injective, so $\wt{Q}=\Delta_{\Frac(R/P)}$. Thus $Q'=\ph^*(\wt{Q})=\ph^*(\Delta_{\Frac(R/P)})=\ker\ph=Q$.
\end{proof}
%
%


\subsection{Basics of tropical toric varieties}
In this section we recall the definition of classical tropical toric varieties. 
These arise as the target of the extended Kajiwara-Payne tropicalization map from Berkovich analytifications of toric varieties \cite{Kaj08}, \cite{Pay09}. In Section~\ref{sect: geom-crown} we use a relationship between our spaces and tropical toric varieties that is analogous to the relationship between adic spaces and Berkovich spaces.

\par\medskip  
For a general introduction to toric varieties see \cite{CLS} or \cite{Ful93}. 
In this paper we follow conventions from \cite{Rab10}.
Let $\Lambda$ be a finitely generated free abelian group and denote by $\Lambda_\R $ the vector space $ \Lambda\otimes_{\Z}\R$. We write $N:=\Lambda^*=\Hom(\Lambda,\Z)$ and $N_\R=\Hom(\Lambda,\R)\cong N\otimes_{\Z}\R$. We will use the pairing $N_\R\times\Lambda_{\R}\to\R$ given by $(v,u)\mapsto\angbra{v,u}:=v(u)$. 
A cone $\sigma$ is a \textit{strongly convex rational polyhedral cone} in $N_\R$ if $\sigma = \sum\limits_{i=1}^{r}\R_{\geq 0}v_i$ for $v_i \in N$ and $\sigma$ contains no line. By a \textit{cone} we will always mean a strongly convex rational polyhedral cone. We denote by $\sigma^\vee$ the dual cone of $\sigma$,
$$\sigma^\vee = \{u\in \Lambda_\R : \left< v,u \right> \leq 0, \forall v\in \sigma\} = \bigcap\limits_{i = 1}^r \{u\in \Lambda_\R : \left< v_i,u \right> \leq 0\}.$$

\begin{definition}
    We call a monoid of the form $\Mon = \sigma^\vee \cap \Lambda$ a \textit{toric monoid}. When we want to specify $\sigma$, we say that $\Mon$ is the \textit{toric monoid corresponding to} $\sigma$.
\end{definition}

Because $\sigma$ is strongly convex, we can identify $\Lambda$ with the groupification of $M$.

\begin{definition} 
Let $\sigma$ be a cone in $N_\R$. We denote by $N_{\R}(\sigma)$ the set of monoid maps $\Hom(\sigma^\vee \cap\Lambda, \T)$, where we consider $\T$ as a multiplicative monoid. We call $N_{\R}(\sigma)$ the \emph{tropical toric variety} corresponding to the cone $\sigma$.
\end{definition}

The tropical toric variety $N_{\R}(\sigma)$ is endowed with a topology, given as follows. 
First, observe that $(\T, \cdot_{\T}) \cong (\R_{\geq 0}, \cdot_\R)$  as monoids. 
Thus, we can use the usual topology on $\R_{\geq0}$ to make $\T$ into a topological monoid. So we can give $N_{\R}(\sigma)=\Hom(\sigma^\vee\cap\Lambda,\T)\subseteq\T^{\sigma^\vee\cap\Lambda}$ the subspace topology where the space $\T^{\sigma^\vee\cap\Lambda}$ of all functions $\sigma^\vee\cap\Lambda\to\T$ carries the product topology. This makes $N_{\R}(\sigma)$ into a topological monoid using the operation $\cdot_{\T}$, although we will denote this operation $+_{\R}$. 
For any face $\tau$ of $\sigma$, the inclusion $\sigma^\vee\cap\Lambda\subseteq\tau^\vee\cap\Lambda$ induces a map $N_{\R}(\tau)\to N_{\R}(\sigma)$, which identifies $N_{\R}(\tau)$ with an open topological submonoid of $N_{\R}(\sigma)$.
The copy of $N_{\R}\cong N_{\R}(\{0\})$ in $N_{\R}(\sigma)$ is exactly the set of units for the operation $+_{\R}$.

\begin{remark}
More generally, for any fan $\Sigma$ in $N_{\R}$, there is a tropical toric variety $N_{\R}(\Sigma)$. However, in this paper, we only use tropical toric varieties corresponding to cones.
\end{remark}

\section{The continuous spectrum $\Cont A$ of a topological semiring $A$}\label{sect: Cont}

In this section we define the continuous spectrum of a topological semiring $A$ in terms of continuity of a morphism $A\to K$ with $K$ a totally ordered semifield. We start by defining a topology on any totally ordered semifield. The results in this section rely heavily on the assumption that the semifields we consider are infinite, i.e., not equal to $\BB$.

On any totally ordered semifield $K$ 
we define the \emph{Huber topology} 
as follows.\footnote{This is a standard construction for totally ordered abelian groups in adic geometry first defined in \cite{Hub93}; one recent reference is Remark~1.17 in \cite{Wed12}. This is the finest topology that makes a surjective valuation on a field continuous when we give the field the valuation/metric topology. {That is, if $F$ is a field with a surjective valuation to $\base$ (i.e., $F$ has value group $\base^\times$) and valuation ring $R$, then the Huber topology is the quotient topology on $\base\cong F/R^\times$.}\label{footnote:HuberTopology}} A subset $U$ of $K$ is open if and only if either $0_K$ is not in $U$ or if $U$ contains $[0_K,a]$ for some $a\in K^\times$. A base for this topology is given by the sets $\{a\}$ and $[0_K,a]$ for all $a\in K^\times$. It is routine to check that this topology makes $K$ into a topological semiring, and that it is Hausdorff. 
Whenever we consider a totally ordered semifield as a topological semiring, it will be using 
the Huber topology 
unless specifically stated otherwise. We start by considering when a homomorphism of totally ordered semifields is continuous.



\begin{lemma}\label{lemma:ContsHomOfTotOrdSemifields}
Let $\ph:\base\to K$ be a homomorphism of totally ordered semifields. The following are equivalent.
\begin{enumerate}[label=(\arabic*)]
    \item\label{LemmaContsHomSemifieldsItem:HomIsCont} The homomorphism $\ph$ is continuous.
    \item\label{LemmaContsHomSemifieldsItem:LessOrEqual} For every $a\in K^\times$ there is some $b\in\base^\times$ such that $\ph(b)\leq a$.
    \item\label{LemmaContsHomSemifieldsItem:StrictlyLess} For every $a\in K^\times$ there is some $b\in\base^\times$ such that $\ph(b)< a$.
\end{enumerate}
\end{lemma}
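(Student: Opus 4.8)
The plan is to prove $\ref{LemmaContsHomSemifieldsItem:HomIsCont}\Leftrightarrow\ref{LemmaContsHomSemifieldsItem:LessOrEqual}$ directly from the definition of the canonical topology, and then close the circle with the (essentially trivial) implications between \ref{LemmaContsHomSemifieldsItem:LessOrEqual} and \ref{LemmaContsHomSemifieldsItem:StrictlyLess}. Before starting I would record two preliminary observations. First, since $\base$ and $K$ are idempotent, the semiring homomorphism $\ph$ automatically preserves the canonical order: if $a+b=a$ then $\ph(a)+\ph(b)=\ph(a)$, so $a\geq b$ implies $\ph(a)\geq\ph(b)$; and $\ph$ maps $0_\base$ to $0_K$ and $\base^\times$ into $K^\times$. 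Second, because $K\neq\B$ (our standing convention), the totally ordered abelian group $K^\times$ is nontrivial, so it contains an element $c$ with $c<1_K$ — take any $c\neq 1_K$ and replace it by $c^{-1}$ if necessary. This second point is the only place where the hypothesis $K\neq\B$ is used, and it is really the only subtlety in an otherwise routine point-set argument.

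For $\ref{LemmaContsHomSemifieldsItem:HomIsCont}\Rightarrow\ref{LemmaContsHomSemifieldsItem:LessOrEqual}$: given $a\in K^\times$, the set $[0_K,a]$ is open, so $\ph^{-1}([0_K,a])=\{x\in\base:\ph(x)\leq a\}$ is open and contains $0_\base$; by the definition of the topology on $\base$ it therefore contains $[0_\base,b]$ for some $b\in\base^\times$, and in particular $\ph(b)\leq a$. For the converse, I would use that a function is continuous as soon as preimages of the base elements $\{a\}$ ($a\in K^\times$) and $[0_K,a]$ ($a\in K^\times$) are open. If $a\in K^\times$ then $\ph^{-1}(\{a\})\subseteq\base^\times$, so it does not contain $0_\base$ and is hence open by definition, with no hypothesis needed. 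If instead we consider $[0_K,a]$, pick $b\in\base^\times$ with $\ph(b)\leq a$ as provided by \ref{LemmaContsHomSemifieldsItem:LessOrEqual}; since $\ph$ is order-preserving, $[0_\base,b]\subseteq\ph^{-1}([0_K,a])$, and this preimage contains $0_\base$, so it is open. Thus $\ph$ is continuous.

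Finally, $\ref{LemmaContsHomSemifieldsItem:StrictlyLess}\Rightarrow\ref{LemmaContsHomSemifieldsItem:LessOrEqual}$ is immediate, and for $\ref{LemmaContsHomSemifieldsItem:LessOrEqual}\Rightarrow\ref{LemmaContsHomSemifieldsItem:StrictlyLess}$, given $a\in K^\times$ I would apply \ref{LemmaContsHomSemifieldsItem:LessOrEqual} to the element $ca\in K^\times$, where $c<1_K$ is as above, to get $b\in\base^\times$ with $\ph(b)\leq ca<a$ (multiplying $c<1_K$ by $a>0_K$). The expected ``main obstacle'' is merely bookkeeping: being careful that the topology is only interesting near $0$, so that singleton preimages are free and all the content lives in the preimages of the intervals $[0_K,a]$ — together with remembering to invoke $K\neq\B$ to separate the $\leq$ and $<$ formulations.
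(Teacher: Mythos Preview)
Your proof is correct and follows essentially the same approach as the paper: both argue $\ref{LemmaContsHomSemifieldsItem:HomIsCont}\Leftrightarrow\ref{LemmaContsHomSemifieldsItem:LessOrEqual}$ by unwinding what openness of $\ph^{-1}([0_K,a])$ means at $0_\base$, and both deduce $\ref{LemmaContsHomSemifieldsItem:LessOrEqual}\Rightarrow\ref{LemmaContsHomSemifieldsItem:StrictlyLess}$ by first producing an element of $K^\times$ strictly below $a$. The only cosmetic difference is that the paper finds such an element by a case split on whether $a>1_K$, $a<1_K$, or $a=1_K$, whereas you fix a single $c<1_K$ once and use $ca<a$ uniformly; your version is slightly tidier but the content is identical.
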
\begin{proof}
We first show that \ref{LemmaContsHomSemifieldsItem:HomIsCont} and \ref{LemmaContsHomSemifieldsItem:LessOrEqual} are equivalent. 
Because any homomorphism of semifields has trivial ideal-kernel, for any $a\in K^\times$ we automatically have that $\ph^{-1}(a)$ is open. So $\ph$ is continuous if and only if $\ph^{-1}([0_K,a])$ is open for all $a\in K^\times$. Since $\ph^{-1}([0_K,a])$ contains $0_{\base}$, this happens exactly if $\ph^{-1}([0_K,a])$ contains $[0_{\base},b]$ for some $b\in \base^\times$. This, in turn, is equivalent to having $\ph([0_{\base},b])\subset [0_K,a]$. Because $\ph$, as a homomorphism of semirings, sends $0_{\base}$ to $0_K$ and respects the ordering, this happens if and only if $\ph(b)\leq a$.

Clearly \ref{LemmaContsHomSemifieldsItem:StrictlyLess} implies \ref{LemmaContsHomSemifieldsItem:LessOrEqual}. For the other direction, note that, for any $a\in K^\times$, there is some $a'\in K^\times$ with $a'<a$. If $a>1_K$ we can take $a'=a^{-1}$, if $a<1_K$ we can take $a'=a^2$, and if $a=1_K$ such an $a'$ exists because $K\neq\B$. Then by \ref{LemmaContsHomSemifieldsItem:LessOrEqual} there is some $b\in \base^\times$ such that $\ph(b)\leq a'<a$.
\end{proof}

\begin{coro}\label{coro:SurjSemifieldHomIsConts}
Any surjective homomorphism of totally ordered semifields is continuous.
\end{coro}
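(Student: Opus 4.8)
The plan is to deduce this immediately from Lemma~\ref{lemma:ContsHomOfTotOrdSemifields}. Let $\ph:\base\to K$ be a surjective homomorphism of totally ordered semifields. By the equivalence of \ref{LemmaContsHomSemifieldsItem:HomIsCont} and \ref{LemmaContsHomSemifieldsItem:LessOrEqual} in that lemma, it suffices to verify that for every $a\in K^\times$ there exists $b\in\base^\times$ with $\ph(b)\leq a$. First I would use surjectivity to pick $b\in\base$ with $\ph(b)=a$. Then I would observe that $b\neq 0_\base$: indeed, any homomorphism of semifields has trivial ideal-kernel, so if $b=0_\base$ we would get $a=\ph(b)=0_K$, contradicting $a\in K^\times$ (alternatively, $b$ nonzero means $b$ is a unit, so $\ph(b)$ is a unit, hence nonzero). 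Thus $b\in\base^\times$, and trivially $\ph(b)=a\leq a$, which is exactly condition \ref{LemmaContsHomSemifieldsItem:LessOrEqual}.

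There is essentially no obstacle here; the only point requiring a word of justification is that a nonzero element of $\base$ maps to a nonzero (hence invertible) element of $K$, which is the standard fact that semifield homomorphisms have trivial kernel in the ideal sense. One could equally well invoke \ref{LemmaContsHomSemifieldsItem:StrictlyLess} instead of \ref{LemmaContsHomSemifieldsItem:LessOrEqual} by first replacing $a$ with a strictly smaller element $a'\in K^\times$ and pulling that back, but this is unnecessary since \ref{LemmaContsHomSemifieldsItem:LessOrEqual} already suffices. So the write-up will be only two or three sentences.
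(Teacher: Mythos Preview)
Your proposal is correct and matches the paper's proof essentially verbatim: the paper simply notes that any surjective homomorphism satisfies condition~\ref{LemmaContsHomSemifieldsItem:LessOrEqual} of Lemma~\ref{lemma:ContsHomOfTotOrdSemifields}, which is exactly what you spell out. The only difference is that you make explicit the one-line check that a preimage of $a\in K^\times$ lies in $\base^\times$.
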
\begin{proof}
This follows because any surjective homomorphism satisfies \ref{LemmaContsHomSemifieldsItem:LessOrEqual} of Lemma \ref{lemma:ContsHomOfTotOrdSemifields}.
\end{proof}

\begin{coro}\label{coro:ContsHomFromSubsemifieldOfT}
Let $\base$ be a sub-semifield of $\T$. 
Then any continuous homomorphism $\ph:\base\to K$ of totally ordered semifields is injective.
\end{coro}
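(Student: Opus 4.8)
The plan is to split injectivity of $\ph$ into two pieces: triviality of the ideal-kernel together with injectivity of the restriction $\ph^{\times}:=\ph|_{\base^{\times}}\colon\base^{\times}\to K^{\times}$. Triviality of the ideal-kernel is automatic for a homomorphism of semifields: if $a\in\base$ is nonzero it is a unit, and $\ph(a)\ph(a^{-1})=\ph(1_{\base})=1_{K}\neq 0_{K}$ forces $\ph(a)\neq 0_{K}$, so $\ph^{-1}(0_{K})=\{0_{\base}\}$. Since $\base=\base^{\times}\sqcup\{0_{\base}\}$ as $\base$ is a semifield, it then suffices to show that $\ph^{\times}$ is injective.

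Next I would pass the kernel of $\ph^{\times}$ through the dictionary between totally ordered semifields and totally ordered abelian groups. A homomorphism of totally ordered semifields restricts to a homomorphism of the unit groups (the computation above), and this restriction is order-preserving because $\ph$ respects $+$ and $a\leq b\Leftrightarrow a+b=b$; equivalently, $\ph^{\times}$ is the image of $\ph$ under the functor of Theorem~\ref{thm:TOSAndTOAGEquiv}. Hence $H:=\ker(\ph^{\times})$ is a convex subgroup of $\base^{\times}$. Now, as $\base$ is a sub-semifield of $\T$, the group $\base^{\times}$ embeds as a subgroup of $(\R,+_{\R})$ and is therefore archimedean; by the recalled fact that archimedean totally ordered abelian groups have no nontrivial proper convex subgroups, $H$ is either trivial or all of $\base^{\times}$.

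The remaining point — and the only place where continuity is actually used — is to rule out $H=\base^{\times}$. If $H=\base^{\times}$, then $\ph$ is identically $1_{K}$ on $\base^{\times}$; but applying item~\ref{LemmaContsHomSemifieldsItem:StrictlyLess} of Lemma~\ref{lemma:ContsHomOfTotOrdSemifields} to the continuous homomorphism $\ph$ with $a=1_{K}\in K^{\times}$ produces some $b\in\base^{\times}$ with $\ph(b)<1_{K}$, so $\ph(b)\neq 1_{K}$ and $b\notin H$ — a contradiction. Therefore $H$ is trivial, $\ph^{\times}$ is injective, and hence $\ph$ is injective. I do not expect a genuine obstacle here: the only care needed is the routine verification that $\ph$ restricts to an order-preserving homomorphism of unit groups, so that Theorem~\ref{thm:TOSAndTOAGEquiv} and the convex-subgroup fact apply, and the conceptual observation that continuity's sole role is to forbid the ``collapse to $\B$'' of $\base$ under $\ph$.
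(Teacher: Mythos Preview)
Your argument is correct and follows essentially the same route as the paper: reduce to injectivity of $\ph^{\times}$ via triviality of the ideal-kernel, observe that $\ker(\ph^{\times})$ is a convex subgroup of the archimedean group $\base^{\times}\subseteq\T^{\times}$, and rule out the case $\ker(\ph^{\times})=\base^{\times}$ by applying part~\ref{LemmaContsHomSemifieldsItem:StrictlyLess} of Lemma~\ref{lemma:ContsHomOfTotOrdSemifields} with $a=1_{K}$.
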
\begin{proof}
It suffices to show that, for $\base$ a sub-semifield of $\T$, \ref{LemmaContsHomSemifieldsItem:StrictlyLess} of Lemma \ref{lemma:ContsHomOfTotOrdSemifields} implies that $\ph$ is injective. Since $\ph$ is a homomorphism of semifields it has trivial ideal-kernel, so $\ph$ is injective if and only if the corresponding map $\ph^\times:\base^\times\to K^\times$ is injective. Since $\base\subseteq\T$, the only proper convex subgroup of $\base^\times$ is $\{1_{\base}\}$. So if $\ph^\times$ were  not injective, it would have to map all of $\base^\times$ to $1_K$. But this contradicts \ref{LemmaContsHomSemifieldsItem:StrictlyLess} of Lemma \ref{lemma:ContsHomOfTotOrdSemifields} applied with $a=1_K$.
\end{proof}


The following proposition concerns the continuity of an arbitrary function $\ph:X\to K$ with $K$ a totally ordered semifield and $X$ a topological space. In practice we will apply it when $\ph$ is a homomorphism of totally ordered semifields. While we only need this result for these cases, the greater generality makes it easier to state and prove.

\begin{prop}\label{prop:ContinuityDependsOnlyOnImage}
Let $X$ be a topological space, and let $K$ and $K'$ be totally ordered semifields. Suppose $\ph:X\to K$ is a function, $\psi:K\to K'$ is an injective homomorphism of totally ordered semifields, and $\psi\circ\ph:X\to K'$ is continuous. Then $\ph$ is continuous.
\end{prop}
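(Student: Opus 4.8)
The plan is to verify continuity of $\ph$ by checking that the preimage under $\ph$ of every set in a base for the canonical topology on $K$ is open in $X$. As recalled above, such a base is given by the singletons $\{a\}$ and the down-sets $[0_K,a]$ for $a\in K^\times$, so it suffices to show that $\ph^{-1}(\{a\})$ and $\ph^{-1}([0_K,a])$ are open in $X$ for each $a\in K^\times$. Note that here one genuinely needs the explicit description of the canonical topology (it is not the order topology), and this is what the whole argument rests on.

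First I would record two elementary properties of $\psi$. Since $\psi$ is a homomorphism of semifields, it carries units to units, so $\psi(a)\in(K')^\times$ whenever $a\in K^\times$. Second, being a semiring homomorphism between additively idempotent semirings, $\psi$ is order-preserving for the canonical orders; combined with injectivity of $\psi$ and the fact that $K$ is totally ordered, this upgrades to an \emph{order reflection}: for all $x,y\in K$ one has $x\leq y\iff\psi(x)\leq\psi(y)$ and $x=y\iff\psi(x)=\psi(y)$. (For the nontrivial implication of the first: if $x>y$ then $\psi(x)\geq\psi(y)$ by monotonicity, while $\psi(x)\neq\psi(y)$ by injectivity, hence $\psi(x)>\psi(y)$.)

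With these in hand the two computations are immediate. Injectivity of $\psi$ gives $\ph^{-1}(\{a\})=(\psi\circ\ph)^{-1}(\{\psi(a)\})$; since $\psi(a)\in(K')^\times$, the set $\{\psi(a)\}$ is open in $K'$, and since $\psi\circ\ph$ is continuous, $\ph^{-1}(\{a\})$ is open. Likewise, the order-reflection property gives $\ph^{-1}([0_K,a])=(\psi\circ\ph)^{-1}([0_{K'},\psi(a)])$; since $\psi(a)\in(K')^\times$, the set $[0_{K'},\psi(a)]$ is open in $K'$, and continuity of $\psi\circ\ph$ again makes $\ph^{-1}([0_K,a])$ open. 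This exhausts a base for the topology of $K$, so $\ph$ is continuous. I do not expect a real obstacle; the only points requiring care are to use the explicit base for the canonical topology rather than an order-topology intuition, and to invoke injectivity of $\psi$ (not merely monotonicity) in both the singleton case and the down-set case. The topology on $X$ enters only through the hypothesis that $\psi\circ\ph$ is continuous.
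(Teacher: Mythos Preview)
Your proof is correct and follows essentially the same approach as the paper: both verify continuity on the base $\{\{a\},[0_K,a]:a\in K^\times\}$ by showing $\ph^{-1}(\{a\})=(\psi\circ\ph)^{-1}(\{\psi(a)\})$ via injectivity and $\ph^{-1}([0_K,a])=(\psi\circ\ph)^{-1}([0_{K'},\psi(a)])$ via the fact that $\psi$ is injective and order-preserving. Your explicit remark that injectivity plus monotonicity yields order-reflection is a nice elaboration of what the paper leaves implicit.
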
\begin{proof}
Consider $a\in K^\times$. 
Since $\psi^{-1}(\psi(a))=\{a\}$, $\ph^{-1}(a)=(\psi\circ\ph)^{-1}(\psi(a))$, which is open in $X$ because $\psi(a)\neq0_{K'}$ and $\psi\circ\ph$ is continuous.

So it suffices to show that $\ph^{-1}([0_K,a])$ is open in $X$ for each $a\in K^\times$. Since $\psi$ is injective and order-preserving, $[0_K,a]=\psi^{-1}([0_{K'},\psi(a)])$. So $\ph^{-1}([0_K,a])=(\psi\circ\ph)^{-1}([0_{K'},\psi(a)])$, which is open in $X$ because $\psi\circ\ph$ is continuous.
\end{proof}

In what follows, we make use of classical results on totally ordered abelian groups. For the basics of, and our choice of notation for, totally ordered abelian groups we refer the reader to Appendix~\ref{app:TotOrdAbGps1}.
Let $G$ be a totally ordered abelian group, written multiplicatively. A \emph{lexicographic product decomposition of $G$} is a way of writing $G=G_1\times G_2$ as a product of abelian groups with $G_1$ and $G_2$ totally ordered abelian groups such that the order on $G$ is the lexicographic order on $G_1\times G_2$.

\begin{prop}\label{prop:ContinuityUsingLexico}
Suppose we have a commutative diagram 
\comd{
K\ar[rr]^{\ph}&&K'\\
&\base\ar[ul]^{\psi}\ar[ur]_{\psi'}
}
of homomorphisms of totally ordered semifields. Suppose there are lexicographic product decompositions of $K^\times$ and $(K')^\times$ such that $\psi$ and $\psi'$ identify $\base^\times$ with the first factor in each decomposition. Then $\ph$ is continuous.
\end{prop}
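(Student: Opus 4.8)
The plan is to reduce the continuity of $\ph$ to an application of Lemma~\ref{lemma:ContsHomOfTotOrdSemifields}, namely to verify condition \ref{LemmaContsHomSemifieldsItem:LessOrEqual}: for every $a\in K^\times$ we must produce $b\in\base^\times$ with $\ph(b)\le a$. Write the given lexicographic decompositions as $K^\times=K_1^\times\times K_2^\times$ and $(K')^\times=K_1'^\times\times K_2'^\times$, where $\psi$ identifies $\base^\times$ with $K_1^\times$ and $\psi'$ identifies $\base^\times$ with $K_1'^\times$. By the structural remark preceding the proposition, $\ph$ is given in these coordinates by $\ph(b_1,b_2)=(\phi_1(b_1),\phi_2(b_2)\phi_{21}(b_1))$ for suitable order homomorphisms $\phi_1:K_1^\times\to K_1'^\times$, $\phi_2:K_2^\times\to K_2'^\times$, $\phi_{21}:K_1^\times\to K_2'^\times$. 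The commutativity of the diagram forces $\phi_1\circ(\psi^\times)=\psi'^\times$ under the identifications, i.e.\ on the first factor $\ph$ restricts to the identity of $\base^\times$; in particular $\phi_1=\id_{\base^\times}$ and $\phi_2,\phi_{21}$ may be ignored once we work on the first factor.

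First I would treat the case $a=(a_1,a_2)$ with $a_1>1_{K_1}$ (equivalently $a>1_K$ coordinate-wise in the first slot, so $a>(1_{K_1},0_{K_2})$-part dominates): here I take $b\in\base^\times$ corresponding to $a_1\in K_1^\times=\base^\times$ itself, giving $\ph(b)=(a_1,\phi_{21}(a_1))$, and since $a_1>1_{K_1}$ we have — wait, this does not immediately give $\ph(b)\le a$ unless the second coordinates compare correctly. The cleaner route is: since $a\in K^\times$, write $a=(a_1,a_2)$. If $a_1<1_{K_1}$, pick $b\in\base^\times$ with image $a_1$ under the identification $\base^\times\cong K_1^\times$; then $\ph(b)$ has first coordinate $a_1$ and, because $a_1<1_{K_1}$ means the first coordinate strictly dominates in the lexicographic order, $\ph(b)<a$ regardless of the second coordinate — no, again the first coordinates are equal. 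Let me restate the genuinely correct argument: given $a=(a_1,a_2)$, choose in $K_1^\times\cong\base^\times$ an element $c<\min(a_1,1_{K_1})$ strictly, which exists since $\base\ne\B$ (as in the proof of \ref{LemmaContsHomSemifieldsItem:StrictlyLess}$\Leftrightarrow$\ref{LemmaContsHomSemifieldsItem:LessOrEqual} in Lemma~\ref{lemma:ContsHomOfTotOrdSemifields}); let $b\in\base^\times$ be the corresponding element. Then $\ph(b)=(c,\phi_{21}(c))$, and since $c<a_1$ in $K_1$, lexicographic order gives $\ph(b)<a$ in $K'$ outright. Hence condition \ref{LemmaContsHomSemifieldsItem:LessOrEqual} holds and $\ph$ is continuous.

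The main obstacle — and the step to be careful with — is making precise the passage between the external lexicographic decomposition, the coordinate form of $\ph$, and the use of the commutative triangle to pin down the first-coordinate component of $\ph$ as (the identification of) $\base^\times$, so that comparison of first coordinates alone suffices. Once that bookkeeping is done, the existence of an element of $\base^\times$ mapping strictly below any prescribed first coordinate is exactly the $\base\ne\B$ input already isolated in Lemma~\ref{lemma:ContsHomOfTotOrdSemifields}, and the conclusion is immediate. I would also remark that this proposition is the tool that will later let us verify continuity of the canonical extension of a prime to $\Cnvg{P}$ by exhibiting an appropriate lexicographic splitting.
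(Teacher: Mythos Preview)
Your argument is essentially the paper's, and it is correct once a notational slip at the outset is fixed: in applying Lemma~\ref{lemma:ContsHomOfTotOrdSemifields} to $\ph:K\to K'$ you must take $a\in(K')^\times$ (the target) and produce an element of $K^\times$ (the source), not ``$a\in K^\times$'' and ``$b\in\base^\times$'' as you wrote; as stated, $\ph(b)\in K'$ and $a\in K$ are not even comparable. What you actually do in the final paragraph is produce $\psi(c)\in K^\times$ whose image under $\ph$ has first coordinate strictly less than that of a given $a=(a_1,a_2)\in(K')^\times$, and this is exactly what is needed (the $\min$ with $1_{K_1}$ is harmless but unnecessary).

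One minor difference from the paper: the paper uses the commutativity of the triangle to deduce not only $\phi_1=\id_{\base^\times}$ but also that $\phi_{21}$ is the constant map $1_{G'}$, so that $\ph(\psi(c))=(c,1_{G'})$ on the nose. You skip this and instead rely on a \emph{strict} inequality $c<a_1$ in the first coordinate, which decides the lexicographic comparison regardless of $\phi_{21}(c)$; this is a legitimate shortcut. Your closing remark is off, however: in the paper the continuity of $|\cdot|_P$ on $\Cnvg{P}$ is established via the ultrapseudometric (Lemma~\ref{lemma:DistIsContinuous}), not via this proposition, which is instead used to control the Hahn-embedding maps $j:\T\to\RnLexSemifield{I}$ and the injections $\RnLexSemifield{I_P}\to\RnLexSemifield{I_{P,P'}}$.
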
\begin{proof}
Identifying $\base^\times$ with its image in each of $K^\times$ and $(K')^\times$, we can write the lexicographic product decompositions of $K^\times$ and $(K')^\times$ as $K^\times=\base^\times\times G$ and $(K')^\times=\base^\times\times G'$ for some totally ordered abelian groups $G$ and $G'$. Since $\ph|_{K^\times}:K^\times\to (K')^\times$ is a homomorphism of totally ordered abelian groups, there are homomorphisms $\phi_{11}:\base^\times\to\base^\times$, $\phi_{22}:G\to G'$, $\phi_{21}:\base^\times\to G',$ and $\phi_{12}:G\to\base^\times$ such that, for all $b_1\in\base^\times$ and $b_2\in G$, $\ph(b_1,b_2)=(\phi_{11}(b_1)\phi_{12}(b_2),\phi_{21}(b_1)\phi_{22}(b_2))$. Because the above diagram commutes and we have used $\psi$ and $\psi'$ to identify $\base^\times$ with subgroups of $K^\times$ and $(K')^\times$, we have $\phi_{11}=\id_{\base^\times}$ and $\phi_{21}$ is the constant function $1_{G'}$. Thus,
$\ph(b_1,b_2)=(b_1\cdot\phi_{12}(b_2),\phi_{22}(b_2))$.

We claim that $\phi_{12}=1_{\base^\times}$. To see this, fix $g\in G$.  For any $s\in\base^\times$ which is greater than $1_\base$, we have $(s,1_G)>(1_{\base},g)$, so $(s,1_{G'})=\ph(s,1_G)\geq\ph(1_{\base},g)=(\phi_{12}(g),\phi_{22}(g))$. By the definition of the lexicographic order this tells us that $s\geq\phi_{12}(g)$; since this is true for all $s>1_\base$ in $\base^\times$, we must have $1_{\base}\geq\phi_{12}(g)$. The same statement is also true if we replace $g$ with $g^{-1}$, so $1_\base\geq\phi_{12}(g)^{-1}$. Together, these show that $\phi_{12}(g)=1_{\base}$.

Say $a=(a_1,a_2)\in (K')^\times=\base^\times\times G'$; we want to show that there is some $b=(b_1,b_2)\in K^\times=\base^\times\times G$ such that $\ph(b)\leq a$, i.e., $(b_1,\phi_2(b_2))\leq_{lex}(a_1,a_2)$. Pick any $b_1\in\base^\times$ which is less than $a_1$ and set $b=(b_1,1_G)$. Then $\ph(b)=(b_1,\phi_{22}(1_G))=(b_1,1_{G'})\leq_{lex}(a_1,a_2)$, as desired.
\end{proof}

\begin{coro}\label{coro:ContinuityUsingLexico}
Suppose that $\ph:\base\to K$ is a homomorphism of totally ordered semifields and there is a lexicographic product decomposition of $K^\times$ such that $\ph$ identifies $\base^\times$ with the first factor. Then $\ph$ is continuous. 
\end{coro}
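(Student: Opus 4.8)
The plan is to deduce Corollary~\ref{coro:ContinuityUsingLexico} as a special case of Proposition~\ref{prop:ContinuityUsingLexico}. The idea is that a homomorphism $\ph\colon\base\to K$ together with the identity $\id_\base\colon\base\to\base$ fits into a commutative triangle, and then the hypotheses of the proposition can be arranged to hold.

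\begin{proof}
Apply Proposition~\ref{prop:ContinuityUsingLexico} with $K'=\base$, with $\psi=\ph$, and with $\psi'=\id_\base$. The relevant diagram
\comd{
K\ar[rr]^{\ph}&&\base\\
&\base\ar[ul]^{\ph}\ar[ur]_{\id_\base}
}
does not quite make sense as stated, so instead we take $K'=\base$ in the role of the bottom semifield and reorganize: consider the diagram with top row $\base\to K$ given by $\ph$. Concretely, in the notation of Proposition~\ref{prop:ContinuityUsingLexico} set the bottom vertex to be $\base$, let $\psi=\id_\base\colon\base\to\base$ be the identity into $K'':=\base$ on the right, and let the map on the left be $\ph\colon\base\to K$. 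For the lexicographic product decompositions: on $K^\times$ use the given decomposition in which $\ph$ identifies $\base^\times$ with the first factor, and on $(K'')^\times=\base^\times$ use the trivial decomposition $\base^\times=\base^\times\times\{1\}$, in which $\id_{\base^\times}$ identifies $\base^\times$ with the first factor. The triangle
\comd{
K\ar[rr]^{\id}&&K\\
&\base\ar[ul]^{\ph}\ar[ur]_{\ph}
}
commutes trivially, and so the hypotheses of Proposition~\ref{prop:ContinuityUsingLexico} are satisfied with $K$ itself playing the role of both $K$ and $K'$, $\psi=\psi'=\ph$, and the horizontal map the identity. Hence $\id_K\colon K\to K$ is continuous, which is not what we want.

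We therefore argue directly, mimicking the proof of Proposition~\ref{prop:ContinuityUsingLexico}. Write the given lexicographic product decomposition of $K^\times$ as $K^\times=\base^\times\times G$ for some totally ordered abelian group $G$, where $\ph$ identifies $\base^\times$ with the first factor, i.e., $\ph(b)=(b,1_G)$ for all $b\in\base^\times$. To check continuity of $\ph$ it suffices, by Lemma~\ref{lemma:ContsHomOfTotOrdSemifields}, to show that for every $a\in K^\times$ there is some $b\in\base^\times$ with $\ph(b)\leq a$. Write $a=(a_1,a_2)\in\base^\times\times G$. Pick any $b_1\in\base^\times$ with $b_1\leq a_1$ (for instance $b_1=a_1$ if $a_1\in\base^\times$, but in any case $a_1\in\base^\times$ since it is the first coordinate). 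Then $\ph(b_1)=(b_1,1_G)\leq_{\lex}(a_1,a_2)=a$ by definition of the lexicographic order. By Lemma~\ref{lemma:ContsHomOfTotOrdSemifields}, $\ph$ is continuous.
\end{proof}

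Remark: the cleanest route is the direct one in the second paragraph; the attempted reduction in the first paragraph does not actually simplify anything, since the ``target'' $K'$ must be genuinely different from $K$ for Proposition~\ref{prop:ContinuityUsingLexico} to give new information, and here we have no such target. The one-line observation that $\ph(b)=(b,1_G)\leq_{\lex}a$ for a small enough first coordinate $b$ is the whole content, and the main (minor) obstacle is simply to record carefully that the first coordinate of any $a\in K^\times$ already lies in $\base^\times$ under the identification, so a suitable $b$ exists because $\base\neq\B$ and hence $\base^\times$ has no least element.
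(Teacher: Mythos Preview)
Your direct argument in the second paragraph is almost right but has a genuine slip: from $b_1\leq a_1$ you cannot conclude $(b_1,1_G)\leq_{\lex}(a_1,a_2)$. If $b_1=a_1$ and $a_2<1_G$ the inequality fails. You need $b_1<a_1$ strictly, which is exactly where the standing hypothesis $\base\neq\B$ enters (so $\base^\times$ has no least element). Your closing remark gestures at this, but the proof itself asserts a false inequality; fix the proof to choose $b_1<a_1$ and it goes through.

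More importantly, your remark that the reduction to Proposition~\ref{prop:ContinuityUsingLexico} ``does not actually simplify anything'' is mistaken, and your confusion in the first paragraph stems from assigning the roles backwards. The paper's proof is exactly this reduction: in the proposition take the \emph{source} semifield to be $\base$ itself, with $\psi=\id_\base:\base\to\base$ and the trivial decomposition $\base^\times=\base^\times\times\{1\}$; take the target to be $K$ with $\psi'=\ph$ and the given decomposition. The top map making the triangle commute is then $\ph:\base\to K$, and the proposition says it is continuous. Your attempts placed $K$ in the source slot, which forced you either into a nonexistent map $K\to\base$ or into the trivial conclusion that $\id_K$ is continuous. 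Once the roles are set correctly the corollary really is a one-line specialization.
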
\begin{proof}
This is the particular case of Proposition~\ref{prop:ContinuityUsingLexico} in which $\psi$ is the identity map.
\end{proof}

\begin{defi}\label{def:ResidueSemifield}
Let $P$ be a prime congruence on a semiring $A$. The {\it residue semifield of $A$ at $P$}, denoted $\kappa(P)$, is the total semiring of fractions of $A/P$. 
We denote the canonical homomorphism $A\to A/P\to\kappa(P)$ by $a\mapsto|a|_P$.
\end{defi}

\begin{example}\label{example:MatricesAreHahnEmbeddings}
Let $P$ be a prime congruence on $R = \base[\Mon]$, where $\base \subseteq \TT$ and $\Mon = \N^n$ or $\ZZ^n$. Let $C$ be a defining matrix for $P$ with $k$ rows. 
There is a corresponding embedding $\ph:\kappa(P)\to\RnLexSemifield{k}$ with the following property: if $m = t^a x_1^{v_1}\cdots x_n^{v_n}$, then $\ph(|m|_P)$ is  
$C\begin{pmatrix}a\\ v_1\\ \vdots\\ v_n\end{pmatrix}$. 
For a polynomial $f = \sum_i m_i$ in $R$ we have $|f|_P=\max_i \{ |m_i|_P \}$. For the existence of such embeddings in greater generality, see Section \ref{subsec:HahnEmbeddings}. \exEnd\end{example}

\begin{remark}
Because $|\cdot|_P$ is a semiring homomorphism, for any $f,g\in A$ we have that $|f+g|_P=|f|_P+|g|_P$, rather than just $|f+g|_P\leq|f|_P+|g|_P$.
\end{remark}

\begin{defi}\label{def:ContinuousSpectrumOfSemiring}
Let $A$ be a topological semiring. The \emph{continuous spectrum of $A$}, denoted $\Cont A$, is the set of prime congruences on $A$ for which $A/P\neq\B$ and the map $A\to\kappa(P)$ is continuous.
\end{defi}

\begin{remark}
Let $A$ be a topological semiring. For technical reasons, we assume that $A$ has a topologically nilpotent unit. That is, there is a unit $a\in A$ such that $\dlim_{n\to\infty}a^n=0_A$.
Let $\ph:A\to K$ be a continuous semiring homomorphism to a totally ordered semifield, and let $P$ be the congruence-kernel of $\ph$. The hypothesis that $A$ is has a topologically nilpotent unit implies that $A/P\neq\B$. Since the induced map $\kappa(P)\to K$ is injective, Proposition~\ref{prop:ContinuityDependsOnlyOnImage} then gives us that $P\in\Cont A$. Thus, $\Cont A$ is also the space of all continuous homomorphisms $\ph:A\to K$ to a totally ordered semifield, up to equivalence. Here we say that $\ph:A\to K$ and $\ph':A\to K'$ are equivalent if there is some $A\to K''$ such that $\ph$ and $\ph'$ factor as $A\to K''\to K$ and $A\to K''\to K'$, respectively.
\end{remark}

\begin{example}\label{ex: congSemif}
Let $K$ be a totally ordered semifield. By Corollary~\ref{coro:SurjSemifieldHomIsConts}, $\Cont K$ consists of all prime congruences on $K$ except for the unique maximal congruence $\mathfrak{M}$ on $K$ for which $K/\mathfrak{M}=\B$. Using Theorem~\ref{thm:TOSAndTOAGEquiv}, this can be seen to be in natural bijection with the set of proper convex subgroups of $K^\times$.
\end{example}

Let $A$ be a topological semiring. A \emph{topological $A$-algebra} is a topological semiring $B$ together with a continuous homomorphism $A\to B$.

\begin{lemma}\label{lemma:BasicPropertiesOfPInSpaA}
Let $\base$ be a totally ordered semifield and let $A$ be a topological $\base$-algebra. For any $P\in\Cont A$, we have the following.
\begin{enumerate}[label=(\arabic*)]
    \item As a subset of $A^2$, $P$ is closed.
    \item\label{itemBasicPropertiesOfPInSpaA:OrderProperty} For every $a\in \kappa(P)^\times$ there is some $b\in\base^\times$ such that $|b|_P<a$.
    \item If $\base$ is a sub-semifield of $\T$, then the composition $S\to A\to \kappa(P)$ is injective.
\end{enumerate}
\end{lemma}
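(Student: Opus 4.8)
The plan is to treat the three items in turn, each by unwinding the relevant definitions and appealing to the structural results already established.

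For item (1), I would use the fact that $\kappa(P)$ carries the canonical topology of a totally ordered semifield, which is Hausdorff. The diagonal $\Delta_{\kappa(P)}\subseteq\kappa(P)\times\kappa(P)$ is therefore closed. Since $P\in\Cont A$, the map $|\cdot|_P:A\to\kappa(P)$ is continuous, hence so is the product map $|\cdot|_P\times|\cdot|_P:A\times A\to\kappa(P)\times\kappa(P)$. By definition $P=\ker(|\cdot|_P)=(|\cdot|_P\times|\cdot|_P)^{-1}(\Delta_{\kappa(P)})$, so $P$ is the preimage of a closed set under a continuous map, hence closed in $A\times A$.

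For item (2), the point is simply that $A$ is a topological $\base$-algebra, so the structure map $\base\to A$ is continuous, and $P\in\Cont A$ means $A\to\kappa(P)$ is continuous; composing, $\base\to\kappa(P)$ is a continuous homomorphism of totally ordered semifields (both different from $\B$ by our standing convention, noting $A/P\neq\B$ forces $\kappa(P)\neq\B$). Now I would invoke the equivalence of conditions in Lemma~\ref{lemma:ContsHomOfTotOrdSemifields}: continuity of $\base\to\kappa(P)$ is equivalent to condition~\ref{LemmaContsHomSemifieldsItem:StrictlyLess}, which says exactly that for every $a\in\kappa(P)^\times$ there is $b\in\base^\times$ with $|b|_P<a$. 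For item (3), with $\base$ a sub-semifield of $\T$, the composition $\base\to\kappa(P)$ is a continuous homomorphism of totally ordered semifields, so Corollary~\ref{coro:ContsHomFromSubsemifieldOfT} immediately gives that it is injective.

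I do not anticipate a genuine obstacle here; the only point requiring a little care is the implicit claim that $\kappa(P)\neq\B$, which follows because $A/P\neq\B$ embeds in $\kappa(P)=\Frac(A/P)$, so that the ambient conventions on totally ordered semifields (in particular the hypotheses of Lemma~\ref{lemma:ContsHomOfTotOrdSemifields} and Corollary~\ref{coro:ContsHomFromSubsemifieldOfT}) apply. Everything else is a direct translation through the definition of $\Cont A$ and the cited lemmas, so the proof should be short.
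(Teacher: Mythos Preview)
Your proposal is correct and follows essentially the same approach as the paper: use Hausdorffness of $\kappa(P)$ and continuity of $|\cdot|_P\times|\cdot|_P$ for (1), then observe that the composite $\base\to A\to\kappa(P)$ is continuous and invoke Lemma~\ref{lemma:ContsHomOfTotOrdSemifields} and Corollary~\ref{coro:ContsHomFromSubsemifieldOfT} for (2) and (3). Your explicit remark that $\kappa(P)\neq\B$ (ensuring the cited lemmas apply) is a nice clarification that the paper leaves implicit.
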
\begin{proof}
Since $\kappa(P)$ is Hausdorff, the diagonal in $\kappa(P)^2$ is closed. 
Consider $A^2\to\kappa(P)^2$ which is the canonical map $A\to\kappa(P)$ on each component. This map is continuous, and the inverse image of the diagonal under this map is $P$. Thus, $P$ is closed in $A^2$.

Note that the composition $S\to A\to\kappa(P)$ is continuous. The remaining claims now follow from Lemma \ref{lemma:ContsHomOfTotOrdSemifields} and Corollary \ref{coro:ContsHomFromSubsemifieldOfT}.
\end{proof}

\begin{defi}
Let $\base$ be a topological semiring and let $A$ be an $\base$-algebra. Let $\tau$ be the universal topology on $A$ from Theorem \ref{thm:UniversalTopOnSemiring} applied to the function $\base\to A$. Then we define $\ContBase{\base} A:=\Cont (A,\tau)$.
\end{defi}

\begin{lemma}\label{lemma:ContinuityGivesGeorge}
Let $\base$ be a sub-semifield of $\T$. For any $\base$-algebra $A$, $\ContBase{\base} A$ is the set of those prime congruences $P$ on $A$ which satisfy the following property: for all $a\in\kappa(P)^\times$ there is some $b\in \base^\times$ such that $|b|_P<a$. For any such prime, the composition $\base\to A\toup{|\cdot|_P}\kappa(P)$ is injective.
\end{lemma}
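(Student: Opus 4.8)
The plan is to unwind the definition of $\ContBase{\base}A$ and apply the universal topology of Theorem~\ref{thm:UniversalTopOnSemiring} together with Lemma~\ref{lemma:ContsHomOfTotOrdSemifields}. Recall that $\ContBase{\base}A = \Cont(A,\tau)$, where $\tau$ is the canonical topology on $A$ associated to the structure map $\base\to A$. So a prime $P$ on $A$ lies in $\ContBase{\base}A$ exactly when $A/P\neq\B$ and the residue map $|\cdot|_P:(A,\tau)\to\kappa(P)$ is continuous. Since $\base\subseteq\T$, the semifield $\base$ is not $\B$, so once we know $|\cdot|_P$ is continuous we will automatically get $A/P\neq\B$ from the fact that the structure map $\base\to A\to\kappa(P)$ is continuous (by the universal property) and nontrivial, forcing $\kappa(P)$, hence $A/P$, to be different from $\B$ --- this is the same reasoning as in the Tate-semiring remark following Definition~\ref{def:ContinuousSpectrumOfSemiring}.

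First I would show that, for any prime $P$ on $A$, the map $|\cdot|_P:(A,\tau)\to\kappa(P)$ is continuous if and only if the composite $\base\to A\toup{|\cdot|_P}\kappa(P)$ is continuous. This is precisely the "if and only if" clause of Theorem~\ref{thm:UniversalTopOnSemiring} applied to the function $\psi:\base\to A$ and the semiring homomorphism $\ph = |\cdot|_P:A\to\kappa(P)$, noting that $\kappa(P)$ with its canonical topology is a topological semiring. Thus membership of $P$ in $\ContBase{\base}A$ reduces to continuity of the single homomorphism of totally ordered semifields $\base\to\kappa(P)$ (together with $\kappa(P)\neq\B$, which we have handled above).

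Next I would apply Lemma~\ref{lemma:ContsHomOfTotOrdSemifields} to the homomorphism $\base\to\kappa(P)$: its continuity is equivalent to condition~\ref{LemmaContsHomSemifieldsItem:StrictlyLess}, namely that for every $a\in\kappa(P)^\times$ there is some $b\in\base^\times$ with $|b|_P<a$. This is exactly the stated property, so we conclude that $\ContBase{\base}A$ is the set of primes $P$ satisfying it. Finally, for the injectivity claim: since $\base$ is a sub-semifield of $\T$ and the composite $\base\to\kappa(P)$ is a continuous homomorphism of totally ordered semifields, Corollary~\ref{coro:ContsHomFromSubsemifieldOfT} gives that it is injective.

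I do not expect a serious obstacle here; the statement is essentially a bookkeeping consequence of the universal topology and the earlier lemmas. The one point that needs a little care is making sure that the condition "$A/P\neq\B$" in Definition~\ref{def:ContinuousSpectrumOfSemiring} is subsumed by the order property --- this is where one uses that $\base\subseteq\T$ is strictly bigger than $\B$, so that property~\ref{LemmaContsHomSemifieldsItem:StrictlyLess} (applied, say, with $a=1_{\kappa(P)}$, using that $\kappa(P)^\times$ is nontrivial once $\base^\times$ injects into it) forces $\kappa(P)\neq\B$ and hence $A/P\neq\B$. Alternatively, and more cleanly, injectivity of $\base\to\kappa(P)$ together with $\base\neq\B$ immediately yields $\kappa(P)\neq\B$.
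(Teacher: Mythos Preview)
Your proposal is correct and follows essentially the same route as the paper: reduce continuity of $|\cdot|_P:(A,\tau)\to\kappa(P)$ to continuity of the composite $\base\to\kappa(P)$ via Theorem~\ref{thm:UniversalTopOnSemiring}, then invoke Lemma~\ref{lemma:ContsHomOfTotOrdSemifields} for the order characterization and Corollary~\ref{coro:ContsHomFromSubsemifieldOfT} for injectivity. The paper's proof is organized slightly more cleanly on the $A/P\neq\B$ point: it first observes that the order property applied with $a=1_{\kappa(P)}$, together with the fact that a semifield homomorphism has trivial ideal-kernel, directly yields some $b\in\base^\times$ with $0_{\kappa(P)}<|b|_P<1_{\kappa(P)}$, hence $\kappa(P)\neq\B$; only then does it invoke the other lemmas. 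Your ``alternative'' route through injectivity is mildly circular (Corollary~\ref{coro:ContsHomFromSubsemifieldOfT} presupposes continuity, and Lemma~\ref{lemma:ContsHomOfTotOrdSemifields} is stated for totally ordered semifields, which by the paper's convention already excludes $\B$), so stick with your primary argument via $a=1_{\kappa(P)}$, establishing $\kappa(P)\neq\B$ before appealing to those results.
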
\begin{proof}
For any prime congruence $P$ on $A$, $\base\to A\toup{|\cdot|_P}\kappa(P)$ is a homomorphism of semifields, so it has trivial ideal-kernel. Thus, the above property implies that there is some $b\in \base^\times$ with $0_{\kappa(P)}<|b|_P<1_{\kappa(P)}$, so $\kappa(P)\neq\B$.

For any prime congruence $P$ on $A$ with $\kappa(P)\neq\B$, $(A,\tau)\toup{|\cdot|_P}\kappa(P)$ is continuous if and only if the composition $\base\to A\toup{|\cdot|_P}\kappa(P)$ is continuous. The result now follows from Lemma \ref{lemma:ContsHomOfTotOrdSemifields} and Corollary \ref{coro:ContsHomFromSubsemifieldOfT}.
\end{proof}

\begin{coro}\label{coro: matrix_form}
Let $\base$ be a subsemifield of $\T$ and let $\Mon$ be $\N^n$ or $\Z^n$. Let $P$ be a prime on $\base[\Mon]$ and let $C$ be any defining matrix for $P$. Then $P\in\ContBase{\base}\base[\Mon]$ if and only if the $(1,1)$ entry of $C$ is positive. In particular, if $P\in\ContBase{\base}\base[\Mon]$, then we can choose a defining matrix for $P$ whose first column is the first standard basis vector $e_1$.
\end{coro}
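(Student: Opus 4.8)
The plan is to translate the continuity criterion of Lemma~\ref{lemma:ContinuityGivesGeorge} into a statement about the defining matrix $C$, using the explicit Hahn embedding of Example~\ref{example:MatricesAreHahnEmbeddings}. Recall that $\base\subseteq\T$, so $\base^\times$ is an archimedean totally ordered abelian group, embedded in $\R$ via $\log$; concretely, for $t^a\in\base$ with $a\neq-\infty$ we have $|t^a|_P=\ph^{-1}$ of the vector $C\begin{pmatrix}a\\0\\\vdots\\0\end{pmatrix}=a\cdot c_1$, where $c_1\in\R^k$ is the first column of $C$. So the image of $\base^\times$ in $\kappa(P)^\times\hookrightarrow\RnLexSemifield{k}$ is exactly the line $\R\cdot c_1$ (with the lexicographic order on $\R^k$), parametrized by $a\mapsto a\,c_1$ for the generator corresponding to $a>0$. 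Meanwhile $\kappa(P)^\times$ sits inside $(\R^k,\le_{\lex})$ as the subgroup generated by the columns of $C$.

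First I would show that the $(1,1)$ entry of $C$ being positive does not depend on the choice of defining matrix, given that $P\in\ContBase{\base}\base[\Mon]$ or not. Actually it is cleaner to argue the equivalence directly for a fixed $C$. The key observation: in the lexicographic order on $\R^k$, a nonzero vector $v$ satisfies $0<v\ll w$ for all $w$ in some fixed archimedean class precisely according to where its first nonzero coordinate sits. For the line $\R c_1$: if the first coordinate $(c_1)_1$ of $c_1$ is positive, then every positive element $a\,c_1$ (with $a>0$) has positive first coordinate, hence for \emph{any} target $v\in\R^k$ with $v>0$ lexicographically we can choose $a>0$ small enough that $a\,c_1<_{\lex}v$: indeed if $v$ has positive first coordinate, shrink $a$ so that $a(c_1)_1<v_1$; if $v$ has first coordinate zero (and later coordinates making it positive), then any $a>0$ gives $a(c_1)_1>0=v_1$, which is the wrong direction — so one must be careful here.

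This subtlety is the main obstacle, and resolving it is where I would spend the effort. The point is that $\ph(|b|_P)$ ranges over $\{a c_1 : a\in\R_{>0}\}$ as $b$ ranges over $\{t^a: a<0\}\subseteq\base^\times$ (taking $b<1_\base$), but we also get $\{a c_1 : a<0\}$ from $b>1_\base$, i.e.\ the full line minus origin. Lemma~\ref{lemma:ContinuityGivesGeorge} asks: for every $v\in\kappa(P)^\times$ with $v>0$, is there $b\in\base^\times$ with $|b|_P<v$, i.e.\ $a c_1<_{\lex}v$ for some $a\in\R$? If $(c_1)_1>0$, take $a$ very negative so $a(c_1)_1<v_1$ when $v_1$ could be any real; since $v_1\ge 0$ (as $v>0$ lexicographically forces first nonzero coordinate positive, but $v_1$ itself might be $0$), pick $a<0$ with $a(c_1)_1<0\le v_1$ — wait, if $v_1=0$ we need $a(c_1)_1<0$, true for $a<0$, then done; if $v_1>0$, also $a(c_1)_1<0<v_1$. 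So $(c_1)_1>0$ suffices. Conversely, if $(c_1)_1\le 0$: if $(c_1)_1<0$ then $a c_1$ for $a<0$ has positive first coordinate and we could never beat a $v$ with $v_1=0$; more importantly, by Downward Gaussian elimination (Lemma~\ref{lemma:rref}) and Corollary~\ref{coro:SurjSemifieldHomIsConts}-style normalization I can reduce to the case where $C$ has linearly independent rows, and then $(c_1)_1=0$ means the whole first row has its $\base$-entry zero; one then exhibits a $v$ (e.g.\ a suitable column of $C$, or $e_1\in\R^k$ if it lies in $\kappa(P)^\times$ — using that $\Psi$ surjects appropriately) that cannot be beaten. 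I would also invoke the observation after Example~\ref{ex: order-primes} that we may assume the rows of $C$ are $\R$-linearly independent, so the first row is nonzero; combined with $(c_1)_1=0$ this gives a genuine obstruction to continuity. The negation of the archimedean/lexicographic condition is: $\R c_1$ is \emph{not} cofinal below $0$ in $(\R^k,\le_{\lex})$ relative to $\kappa(P)^\times$, which is exactly $(c_1)_1\le 0$.

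Finally, for the ``in particular'' clause: assume $P\in\ContBase{\base}\base[\Mon]$, so $(c_1)_1>0$ for any defining matrix $C$. Scale the first row by $1/(c_1)_1>0$ (allowed by Lemma~\ref{lemma:rref}) to make the $(1,1)$ entry equal to $1$; then for each $i\ge 2$, subtract $(c_1)_i$ times the (new) first row from the $i$th row (allowed since we are adding a multiple of the top row to rows below it) to clear the rest of the first column. The resulting matrix still defines $P$ and has first column $e_1$. I expect the only real work is the careful lexicographic bookkeeping in the equivalence, and in particular producing the explicit obstructing element $v\in\kappa(P)^\times$ in the $(c_1)_1\le 0$ direction; everything else is a direct application of the quoted lemmas.
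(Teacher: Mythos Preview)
Your approach is essentially the paper's: translate Lemma~\ref{lemma:ContinuityGivesGeorge} into a lexicographic cofinality condition on the line $\log(\base^\times)\cdot c_1$ inside $\R^k$, then use Lemma~\ref{lemma:rref} for the final normalization. Two points would clean it up. First, you need not treat $(c_1)_1<0$: by the definition of a defining matrix (Example~\ref{ex: primes}) the first column is lexicographically $\ge 0$, so $(c_1)_1\ge 0$ automatically. Second, your acknowledged gap---the obstructing element when $(c_1)_1=0$---is filled exactly as the paper does: assume $C$ has no zero row, let $c_{1,i}$ be the first nonzero entry of the first row (necessarily $i\ge 2$), and take $v=|x_{i-1}|_P$ or $|x_{i-1}|_P^{-1}$ according to the sign of $c_{1,i}$; this $v$ has nonzero first coordinate in $\R^k$ while every $|b|_P=a\,c_1$ has first coordinate $0$, so no $|b|_P$ can beat it from below. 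Also, a small correction: Lemma~\ref{lemma:ContinuityGivesGeorge} requires the cofinality condition for \emph{every} $v\in\kappa(P)^\times$, not only those with $v>0$; your argument for the $(c_1)_1>0$ direction already handles this (pick $a\in\log(\base^\times)$ negative enough that $a(c_1)_1<v_1$), but the restriction you wrote is not the correct hypothesis.
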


\begin{proof}
We may assume without loss of generality that $C=(c_{i,j})$ has no row of zeros. First note that $c_{1,1}$ cannot be negative, because the first column of $C$ must be lexicographically greater than or equal to $0$.
%
Suppose that $c_{1,1}$ is 0, and let $c_{1,i}$ be the first nonzero entry in the first row of $C$. 
Note that the first row has to have a nonzero entry by assumption. If $c_{1,i}>0$, then take $a = |x_{i-1}|_P^{-1}$ and if $c_{1,i}<0$, then take $a = |x_{i-1}|_P$. In either case $a < |b|_P$ for any $b\in \base^\times$, so Lemma~\ref{lemma:ContinuityGivesGeorge} tells us that $P\notin\ContBase{\base}\base[\Mon]$.

Now suppose $c_{1,1}>0$. By Lemma~\ref{lemma:ContinuityGivesGeorge}, it suffices to show that, for any column vector $v=\begin{pmatrix}v_1\\\vdots\\v_k\end{pmatrix}$, there is some $a\in\base^\times$ such that $C\begin{pmatrix}\log(a)\\0\\\vdots\\0\end{pmatrix}<_{lex} v$. Since $c_{1,1}$ is positive, this is accomplished by picking any $a\in\base^\times\subseteq\T^\times$ less than $t^{v_1/c_{1,1}}$, which is possible because $\base\neq\B$.

The final claim follows because, by Lemma~\ref{lemma:rref}, we can scale $c_{1,1}$ to $1$ and use it to eliminate all entries below. 
\end{proof}

\begin{remark}\label{rem:NotMatrices,HahnEmbeddings}
Unfortunately, when $\Mon$ is a toric monoid other than $\N^n$ and $\Z^n$, there is no canonical notion of matrices for the primes on $\base[\Mon]$ even in the case $\base=\T$; any notion of matrices in this context would depend on a presentation $\N^n\onto\Mon$ and there would be algebraic conditions on the columns of the matrices corresponding to the kernel of this presentation.

As such, instead of trying to use matrices to get a handle on the primes of $\ContBase{\base}\base[\Mon]$, we generalize the corresponding notion from Example~\ref{example:MatricesAreHahnEmbeddings} - embeddings $\kappa(P)\into\RnLexSemifield{k}$. We do this in Section~\ref{subsec:HahnEmbeddings} using Hahn's embedding theorem.
\end{remark}


\begin{example}
The above Corollary gives us a way to produce many examples of points of $\Cont(A)$. 
We delay looking at these examples in detail until the follow up paper \cite{FM23}, where we interpret these points in terms of polyhedral geometry and classify when two matrices give the same point of $\ContBase\base[\Mon]$.
\exEnd\end{example}

\subsection{Semiring consequences of Hahn's embedding theorem}
\label{subsec:HahnEmbeddings}
In order to get embeddings of totally ordered semifields, we use the equivalence of categories from Theorem~\ref{thm:TOSAndTOAGEquiv} to take advantage of known results from the theory of totally ordered abelian groups. To be able to take full advantage of those results to understand $P\in\Cont A$, we start with a lemma about the totally ordered abelian group $\kappa(P)^\times$.

\begin{lemma}\label{lemma:LeastArchClassOfResidueSemifield}
Let $\base$ be a sub-semifield of $\T$, and let $A$ be a topological $\base$-algebra. For any prime congruence $P\in\Cont A$, the totally ordered abelian group $\kappa(P)^\times$ has a smallest archimedean class, which is the archimedean class of $|a|_P$ for any $a\in\base$ greater than $1_{\base}$.
\end{lemma}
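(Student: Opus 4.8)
The plan is to produce a distinguished element of $\base$, transport questions about archimedean classes in $\kappa(P)^\times$ to comparisons of the form ``$|a|_P$ versus an arbitrary positive element'', and then invoke the continuity hypothesis packaged in Lemma~\ref{lemma:ContinuityGivesGeorge}. First I would fix any $a\in\base$ with $a>1_{\base}$; such an element exists because $\base\neq\B$ is a sub-semifield of $\T$ (indeed $\base^\times$ is a nontrivial subgroup of $\R$, so it has arbitrarily large positive elements). By Lemma~\ref{lemma:ContinuityGivesGeorge} the composition $\base\to A\toup{|\cdot|_P}\kappa(P)$ is injective, so $|a|_P>1_{\kappa(P)}$; write $\alpha=|a|_P$ for its archimedean class representative. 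The goal is to show that for every $\beta\in\kappa(P)^\times$ with $\beta>1_{\kappa(P)}$ we have $[\,\beta\,]\geq[\,\alpha\,]$ in the ``backwards'' ordering on archimedean classes, i.e.\ that $\alpha$ is \emph{not} infinitely large compared to $\beta$ — equivalently that some power of $\beta$ dominates $\alpha$, which in particular says $[\alpha]$ is the smallest archimedean class.

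The key step is to apply the order property from Lemma~\ref{lemma:ContinuityGivesGeorge} (equivalently part~\ref{itemBasicPropertiesOfPInSpaA:OrderProperty} of Lemma~\ref{lemma:BasicPropertiesOfPInSpaA}) to the element $\beta^{-1}\in\kappa(P)^\times$: there is $b\in\base^\times$ with $|b|_P<\beta^{-1}$, hence $1_{\kappa(P)}<|b^{-1}|_P<\beta$. Now $b^{-1}\in\base$ with $b^{-1}>1_{\base}$ (since $|b^{-1}|_P>1$ and $\base\to\kappa(P)$ is injective and order-preserving), so inside $\base^\times\subseteq\R$ both $\log(a)$ and $\log(b^{-1})$ are positive reals; because $\R$ is archimedean there is $n\in\Z_{>0}$ with $n\log(b^{-1})\geq\log(a)$, i.e.\ $(b^{-1})^n\geq a$ in $\base$. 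Applying the order-preserving map $|\cdot|_P$ gives $|b^{-1}|_P^{\,n}\geq\alpha$, and since $|b^{-1}|_P<\beta$ we get $\beta^n\geq\alpha$. This exhibits $\alpha$ as archimedean-equivalent-or-smaller than every positive $\beta$, so $[\alpha]$ is the least archimedean class; and the same computation with the roles of $a$ and an arbitrary $a'\in\base$, $a'>1_{\base}$, reversed shows $[\,|a'|_P\,]=[\alpha]$, giving the ``for any $a$'' clause.

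The one point that needs a little care — and the closest thing to an obstacle — is the reduction that lets me compare inside $\base$: I use that $\base^\times$ is (order-isomorphic to) a subgroup of $(\R,+)$, so that ``archimedean in $\base$'' is automatic and the integer $n$ exists; this is exactly the hypothesis $\base\subseteq\T$, and it is also what guarantees $\base$ has elements above $1_{\base}$ in the first place, so the statement is non-vacuous. Everything else is a routine application of order-preservation of semiring homomorphisms and of Lemma~\ref{lemma:ContinuityGivesGeorge}; no appeal to Hahn's embedding theorem itself is needed for this particular lemma, though it is the reason one cares about least archimedean classes in the first place. I would also remark that uniqueness of the least archimedean class is automatic (archimedean classes are totally ordered), so it suffices to produce one, which is what the argument does.
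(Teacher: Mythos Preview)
Your overall strategy is right, but there are two sign errors that happen to reinforce each other, so the argument looks coherent while actually proving the wrong inequality.

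First, you unpack the goal incorrectly. Saying $[\beta]\geq[\alpha]$ in the backwards order means $\neg\,[\beta]<[\alpha]$, i.e.\ $\neg(\beta\gg\alpha)$: \emph{$\beta$ is not infinitely large compared to $\alpha$}, equivalently some power of $\alpha$ dominates $\beta$. You wrote the reverse (``$\alpha$ is not infinitely large compared to $\beta$'', ``some power of $\beta$ dominates $\alpha$''), which is the statement that $[\alpha]$ is the \emph{largest} class. That statement is in fact false in general: take $\kappa(P)^\times$ with more than one archimedean class and choose $\beta$ in a strictly smaller-magnitude class than $\alpha$; then no power of $\beta$ reaches $\alpha$.

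Second, the algebra step is inverted. From $|b|_P<\beta^{-1}$ you conclude $|b^{-1}|_P<\beta$; but inversion reverses order in a totally ordered abelian group, so the correct conclusion is $|b^{-1}|_P>\beta$.

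If you fix both at once, your argument becomes correct and is essentially the paper's: from continuity applied to $\beta^{-1}$ you get $b\in\base^\times$ with $|b|_P<\beta^{-1}$, hence $|b^{-1}|_P>\beta$ and $b^{-1}>1_\base$; since $\base^\times\subseteq\R$ is archimedean there is $n$ with $a^n\geq b^{-1}$ in $\base$, whence $\alpha^n=|a|_P^{\,n}\geq|b^{-1}|_P>\beta$. This rules out $\beta\gg\alpha$, so $[\alpha]\leq[\beta]$ for every $\beta>1_{\kappa(P)}$, as desired. The paper phrases the same computation as a contradiction (assume $|a|_P\ll b$, deduce $b^{-1}<|c|_P$ for all $c\in\base^\times$, contradict Lemma~\ref{lemma:BasicPropertiesOfPInSpaA}\ref{itemBasicPropertiesOfPInSpaA:OrderProperty}); the content is identical.
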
\begin{proof}
Since $\base^\times$ is archimedean, all $|a|_P$ for $a>1_{\base}$ are in the same archimedean class.

Fix $a>1_{\base}$ and suppose there were some $b\in\kappa(P)^\times$ with $|a|_P\ll b$. That is, $|a|_P ^n<b$ for all natural numbers $n$. Then $b^{-1}<|a^{-n}|_P$ for all natural numbers $n$. Since $\base^\times$ is archimedean, this means that $b^{-1}<c$ for all $c\in\base^\times$. But this is impossible by Lemma \ref{lemma:BasicPropertiesOfPInSpaA} part \ref{itemBasicPropertiesOfPInSpaA:OrderProperty}.
\end{proof}

Let $I$ be a totally ordered set. 
As in Appendix \ref{app:TotOrdAbGps2}, we let $\R^{(I)}$ denote the additive subgroup of $\R^I$ consisting of those $f\in\R^{I}$ such that $\supp(f):=\{i\in I\;:\; f(i)\neq0\}$ is well-ordered. We endow $\R^{(I)}$ with the lexicographic order, making it a totally ordered abelian group.
We let $\RnLexSemifield{I}$ denote the totally ordered semifield corresponding to $\R^{(I)}$. Note that, if $I$ has a smallest element $i_0$, then projection to the $i_0$ factor gives a homomorphism of totally ordered abelian groups $\R^{(I)}\to\R$, so we get a homomorphism of totally ordered semifields $\bigpi:\RnLexSemifield{I}\to\T$. By Corollary \ref{coro:SurjSemifieldHomIsConts}, $\bigpi$ is continuous. Similarly, the inclusion of the $i_0$ factor is a homomorphism of totally ordered abelian groups $\R\to\R^{(I)}$, and so gives a homomorphism of totally ordered semifields $j:\T\to\RnLexSemifield{I}$. Since the map $\R\to\R^{(I)}$ is the inclusion of the first factor in a lexicographic product, Corollary~\ref{coro:ContinuityUsingLexico} tells us that $j$ is continuous.

\begin{remark}
The goal of this section is to develop tools for the study of $\Cont A$, where $A$ is a semiring of convergent power series. At first glace, the above set-up may appear overly general, especially since our results in Section~\ref{section:TheoremCrown} imply that one can take $I$ to be finite {in all of our cases of interest}. However, those results rely heavily on the existence of the embeddings into $\T^{(I)}$, without being able to assume the finiteness of $I$.
\end{remark}

\begin{prop}\label{prop:HahnForSpaSemifields}
Let $\base$ be a sub-semifield of $\T$, let $A$ be a topological $\base$-algebra, and let $P\in\Cont A$. Let $I$ be the set of archimedean classes of $\kappa(P)^\times$, and let $i_0\in I$ be the common archimedean class of $|a|_P$ for all $a\in\base^\times$ which is greater than $1_\base$, which, by Lemma \ref{lemma:LeastArchClassOfResidueSemifield}, is the smallest element of $I$. Let $\bigpi:\RnLexSemifield{I}\to\T$ and $j:\T\to\RnLexSemifield{I}$ be the continuous homomorphisms of totally ordered semifields defined above. Let $\iota$ be the composition $S\to A\toup{|\cdot|_P}\kappa(P)$ and let $\psi:S\to\T$ be the inclusion map. Then there is an embedding $\ph:\kappa(P)\to\RnLexSemifield{I}$ such that the diagrams
$$\vcenter{\vbox{\xymatrix{
\kappa(P)\ar[r]^{\ph}&\RnLexSemifield{I}\ar[d]^{\diagbigpi}\\
\base\ar[u]^{\iota}\ar[r]^{\psi}&\T
}}}
\text{\hspace*{1cm} and \hspace*{1cm}}
\vcenter{\vbox{\xymatrix{
\kappa(P)\ar[r]^{\ph}&\RnLexSemifield{I}\\
\base\ar[u]^{\iota}\ar[r]^{\psi}&\T\ar[u]_{j}
}}}
$$
commute.
\end{prop}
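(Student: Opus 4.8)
The plan is to construct the embedding $\ph$ by invoking Hahn's embedding theorem (in the slightly strengthened form referenced in Appendix~\ref{app:TotOrdAbGps}) applied to the totally ordered abelian group $\kappa(P)^\times$, and then to verify that the resulting map is compatible with both diagrams. Passing through the equivalence of categories $\TOS\simeq\TOAG$ from Theorem~\ref{thm:TOSAndTOAGEquiv}, it suffices to produce an order-preserving injective group homomorphism $\ph^\times:\kappa(P)^\times\to\R^{(I)}$, where $I$ is the set of archimedean classes of $\kappa(P)^\times$, such that on the subgroup $\iota(\base^\times)$ it agrees with the composite $\base^\times\toup{\psi}\T^\times=\R\toup{j}\R^{(I)}$, and such that composing with the projection $\bigpi:\R^{(I)}\to\R$ recovers $\psi$ on $\base^\times$. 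Note that the second compatibility is actually a formal consequence of the first, since $\bigpi\circ j=\id_\T$; so the real content is building $\ph^\times$ so that it restricts correctly on $\base^\times$.

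First I would recall from Lemma~\ref{lemma:LeastArchClassOfResidueSemifield} that $\kappa(P)^\times$ has a smallest archimedean class $i_0$, namely the common class of all $|a|_P$ with $a\in\base^\times$, $a>1_\base$; in particular $I$ is a totally ordered set with minimum $i_0$, so $\R^{(I)}$, $\bigpi$, and $j$ are all defined as in the discussion preceding the proposition. The standard Hahn embedding produces \emph{some} order-embedding $\kappa(P)^\times\hookrightarrow\R^{(I)}$ which, in the $i_0$-coordinate, is the canonical archimedean-class-to-$\R$ map (this is exactly the "slightly stronger result" the excerpt says Appendix~\ref{app:TotOrdAbGps} establishes: one can pin down the behaviour on a chosen archimedean class, or on an archimedean convex subgroup). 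Since $\iota$ embeds $\base^\times$ as an archimedean subgroup lying entirely in the class $i_0$, Theorem~\ref{thm: rank1-emb} says its embedding into $\R$ is unique up to positive scaling; so after rescaling the $i_0$-coordinate of the Hahn embedding I can arrange that $\ph^\times$ restricted to $\iota(\base^\times)$ is precisely $\psi$ followed by $j$. This settles the left-hand diagram on the level of unit groups, hence on semifields after reattaching $-\infty=0$; and then $\bigpi\circ\ph\circ\iota=\bigpi\circ j\circ\psi=\psi$ gives the right-hand diagram.

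The main obstacle I expect is getting the normalization on the distinguished archimedean class to come out \emph{exactly} equal to $j\circ\psi$ rather than merely proportional to it — i.e. making sure the strengthened Hahn theorem in the appendix is stated in a form that lets one prescribe the embedding on the archimedean convex subgroup generated by $\iota(\base^\times)$, not just on a single element. Concretely, one needs that $\base^\times$ sits inside a convex subgroup $H\leq\kappa(P)^\times$ with $H$ archimedean (equivalently, $H$ has archimedean class $i_0$ only), that the Hahn construction can be run so that its restriction to $H$ is the identity-up-to-scalar map into the $\R$-factor indexed by $i_0$, and that $\iota(\base^\times)$ being a subgroup of the \emph{archimedean} group $H$ forces, by Theorem~\ref{thm: rank1-emb}, exact agreement after one global rescaling. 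Once that normalization lemma is in hand, the verification of the two commuting squares is routine diagram-chasing, and the fact that $\ph$ is an embedding of semifields (not just a homomorphism) is immediate from injectivity of $\ph^\times$ together with the fact that semifield homomorphisms preserve and reflect $0$.
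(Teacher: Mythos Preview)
Your proposal is correct and follows essentially the same route as the paper: the paper's proof is the single sentence ``This is an immediate consequence of Theorem~\ref{thm:Hahns-embedding} and Theorem~\ref{thm:TOSAndTOAGEquiv},'' and you have simply unpacked those two citations. Your ``main obstacle'' about normalizing the $i_0$-coordinate is exactly the ``$+\eps$'' part of Theorem~\ref{thm:Hahns-embedding}, which is stated with precisely the two commuting squares you need (so there is no residual rescaling to worry about once you invoke it), and your observation that $\bigpi\circ j=\id_{\T}$ makes one square follow from the other is a correct bonus remark that the paper does not bother to isolate.
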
\begin{proof}

This is an immediate consequence of Theorem~\ref{thm:Hahns-embedding}, Theorem~\ref{thm:TOSAndTOAGEquiv}, and Lemma~\ref{lemma:LeastArchClassOfResidueSemifield}.

\end{proof}

In particular, there is a continuous homomorphism $A\to\kappa(P)\to\RnLexSemifield{I}$ with congruence-kernel $P$ such that the composition $\base\to A\to\RnLexSemifield{I}\toup{\diagbigpi}\T$ is the inclusion of $\base$ into $\T$.

We call embeddings as in Proposition~\ref{prop:HahnForSpaSemifields} \emph{Hahn embeddings}. We will use Hahn embeddings to prove various results about $\Cont A$. One of the ways in which it will be most helpful is in understanding containments between $P,P'\in\Cont A$. An example of an application that will be used repeatedly is Corollary~\ref{coro:PrimesInSpaChainInducedMapOnResidueSemifields}, which says that, if $P\subseteq P'$, then there is an induced morphism $\kappa(P)\to\kappa(P')$. In order to use Hahn embeddings to consider containments, we first consider the case where $P\in\Cont A$ is maximal with respect to inclusion.

\begin{lemma}\label{lemma:MaximalGeorgePrimes}
Let $\base$ be a sub-semifield of $\T$ and let $A$ be a topological $\base$-algebra. Let $P\in\Cont A$ be maximal with respect to inclusion. Then there is a unique embedding  $\kappa(P)\into\T$ such that the diagram
\comd{
\base\ \inj[rr] \inj@<-1ex>[dr]&&\T\\
&\kappa(P)\inj[ur]
}
commutes.


\end{lemma}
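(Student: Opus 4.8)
The plan is to apply the Hahn embedding machinery from Proposition~\ref{prop:HahnForSpaSemifields} and then argue that the hypothesis of maximality forces the index set of archimedean classes to have a single element, so that $\RnLexSemifield{I}$ collapses to $\T$. Concretely, let $I$ be the set of archimedean classes of $\kappa(P)^\times$, with smallest element $i_0$ the common class of $|a|_P$ for $a\in\base^\times$ with $a>1_\base$, as guaranteed by Lemma~\ref{lemma:LeastArchClassOfResidueSemifield}. Proposition~\ref{prop:HahnForSpaSemifields} furnishes a Hahn embedding $\ph:\kappa(P)\into\RnLexSemifield{I}$ making the relevant triangles with $\iota:\base\to\kappa(P)$ and $j:\T\to\RnLexSemifield{I}$, $\bigpi:\RnLexSemifield{I}\to\T$ commute. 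The composition $\bigpi\circ\ph:\kappa(P)\to\T$ is then a homomorphism of totally ordered semifields whose restriction to $\base$ is the inclusion $\psi:\base\into\T$, and it is continuous since both $\ph$ (an embedding, hence continuous by Proposition~\ref{prop:ContinuityDependsOnlyOnImage} applied with target $\RnLexSemifield{I}$ — or directly, since $\ph$ identifies $\base^\times$ with the first factor of a lexicographic decomposition) and $\bigpi$ (Corollary~\ref{coro:SurjSemifieldHomIsConts}) are continuous.

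The key step is to show that $\bigpi\circ\ph$ is actually \emph{injective}, i.e., that $\ph(\kappa(P))$ lands inside the $i_0$-factor $\R\subseteq\R^{(I)}$, equivalently that $I=\{i_0\}$. I would argue this by contradiction using maximality of $P$ in $\Cont A$. If $\bigpi\circ\ph$ had nontrivial kernel $P'$, then $P'$ is a prime congruence on $A$ with $P\subsetneq P'$. I claim $P'\in\Cont A$: the map $A\to\kappa(P')$ factors (up to the canonical identification $\kappa(P')\cong$ the residue semifield of $\kappa(P)$ at $\ker(\bigpi\circ\ph)/P$) through the continuous map $\bigpi\circ\ph$, and by Lemma~\ref{lemma:ContinuityGivesGeorge} (or directly Lemma~\ref{lemma:BasicPropertiesOfPInSpaA}\ref{itemBasicPropertiesOfPInSpaA:OrderProperty}) one checks the order property survives: for $a\in\kappa(P')^\times$ lift to $\tilde a\in\kappa(P)^\times$ and use the $b\in\base^\times$ with $|b|_P<\tilde a$, noting $|b|_{P'}<a$ since the quotient map $\kappa(P)\to\kappa(P')$ is order-preserving and $b$ stays nonzero (it is a unit). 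This contradicts maximality of $P$, so $P'=P$, i.e., $\bigpi\circ\ph$ is injective. Setting the desired embedding to be $\bigpi\circ\ph:\kappa(P)\into\T$, the first commuting triangle of Proposition~\ref{prop:HahnForSpaSemifields} gives exactly the required commutativity with $\base\into\T$.

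For uniqueness: suppose $\theta,\theta':\kappa(P)\into\T$ both make the triangle with $\base\into\T$ commute. Then $\theta$ and $\theta'$ are two homomorphisms of totally ordered semifields agreeing on the image of $\base$. Passing to value groups via Theorem~\ref{thm:TOSAndTOAGEquiv}, $\theta^\times$ and $(\theta')^\times$ are order-preserving injections $\kappa(P)^\times\into\T^\times=\R$ agreeing on $\base^\times$. Since $\base^\times$ is a nontrivial (archimedean) subgroup of $\kappa(P)^\times$, Theorem~\ref{thm: rank1-emb} says the embedding of the archimedean group $\kappa(P)^\times$ into $\R$ is unique up to a positive scalar; the constraint that it restricts to the given fixed embedding on $\base^\times\neq\{1\}$ pins down the scalar, so $\theta^\times=(\theta')^\times$ and hence $\theta=\theta'$. (One only needs that $\base^\times$ contains an element $\neq1_\base$, which holds since $\base\neq\B$.)

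\textbf{Main obstacle.} The step I expect to require the most care is verifying that $P'=\ker(\bigpi\circ\ph)$ genuinely lies in $\Cont A$ — specifically, correctly identifying $\kappa(P')$ with the residue semifield of $\kappa(P)$ at the induced prime and checking the continuity/order condition persists along the further quotient. Once that is in place, maximality does the rest almost immediately, and uniqueness is a clean application of the rank-one embedding theorem.
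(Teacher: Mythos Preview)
Your overall strategy matches the paper's: argue that if $\kappa(P)^\times$ has more than one archimedean class then one can produce a prime $P'\supsetneq P$ in $\Cont A$, contradicting maximality, and then invoke Theorem~\ref{thm: rank1-emb} for existence and uniqueness of the embedding into $\T$. The paper carries this out without routing through Proposition~\ref{prop:HahnForSpaSemifields}: given $a\in\kappa(P)^\times$ with $1_{\kappa(P)}<a$ and $s\gg a$ for some $s\in\base^\times$, it forms the convex subgroup $G$ generated by $a$ and lets $P'$ be the kernel of $A\to\kappa(P)\to S'$, where $(S')^\times=\kappa(P)^\times/G$.

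The obstacle you flag is real, and your proposed resolutions both have gaps. For route (a), the references you cite do not establish continuity of $\ph$: Proposition~\ref{prop:ContinuityDependsOnlyOnImage} goes the other direction (it deduces continuity into a \emph{smaller} semifield from continuity of a composite into a larger one), and $\ph$ does not identify $\kappa(P)^\times$ with the first lex factor of $\R^{(I)}$, so Corollary~\ref{coro:ContinuityUsingLexico} does not apply either. For route (b), Lemma~\ref{lemma:ContinuityGivesGeorge} only characterizes $\ContBase{\base}A$; for an arbitrary topological $\base$-algebra $A$ this is weaker than $\Cont A$, so verifying the order property does not by itself contradict maximality in $\Cont A$. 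The clean fix is exactly what the paper does: the quotient map from $\kappa(P)$ onto the target semifield (the image of your $\bigpi\circ\ph$, or the paper's $S'$) is a \emph{surjection} of totally ordered semifields, hence continuous by Corollary~\ref{coro:SurjSemifieldHomIsConts}; composing with the continuous $|\cdot|_P:A\to\kappa(P)$ yields a continuous homomorphism from $A$ to a totally ordered semifield with kernel $P'$, and then Proposition~\ref{prop:ContinuityDependsOnlyOnImage} (now used in the correct direction, with $\kappa(P')$ embedding into that semifield) gives $P'\in\Cont A$.
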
\begin{proof}
By Theorem~\ref{thm:TOSAndTOAGEquiv}, it suffices to prove the corresponding statement about $\base^\times$, $\kappa(P)^\times$, and $\T^\times$ as totally ordered abelian groups.

Because $P\in\Cont A$, Lemma \ref{lemma:LeastArchClassOfResidueSemifield} tells us that for any $s\in \base^\times$ such that $s>1_\base$ there is no $a\in\kappa(P)^\times$ such that $a\gg s$. 

Suppose there were some $a\in\kappa(P)^\times$ such that $s\gg a>1_{\kappa(P)}$ for some $s\in \base$. Then the convex subgroup $G$ of $\kappa(P)^\times$ generated by $a$ intersects $\base^\times$ only in $1_{\base}$. In particular, $\kappa(P)^\times/G\neq\B$. So the totally ordered semifield $\base'$ corresponding to $\kappa(P)^\times/G$ is a quotient of $\kappa(P)$, and by Corollary \ref{coro:SurjSemifieldHomIsConts}, the map $\kappa(P)\to\base'$ is continuous. Thus, the congruence-kernel $P'$ of the map $A\to\kappa(P)\to\base'$ is a prime congruence which strictly contains $P$, and satisfies $P'\in\Cont A$, contradicting the maximality of $P$.

Thus, for any $s\in \base^\times$ and $a\in\kappa(P)^\times$ with $s,a>1$ there are natural numbers $n,m$ such that $a^n>s$ and $s^m>a$. Since $\base^\times$ is an archimedean totally ordered abelian group, this implies that $\kappa(P)^\times$ has only one archimedean class, i.e., $\kappa(P)^\times$ is archimedean. Thus, by Theorem~\ref{thm: rank1-emb},  $\kappa(P)^\times$ embeds into $\T^\times$, and once we fix the image of a single element of $\kappa(P)^\times\setminus\{1_{\kappa(P)}\}$, the embedding is uniquely determined. By picking any element of $\base^\times\setminus\{1_{\kappa(P)}\}\subseteq\kappa(P)^\times\setminus\{1_{\kappa(P)}\}$ and sending it to itself in $\base^\times\subseteq\T^\times$, we get an embedding $\kappa(P)^\times\into\T^\times$. The corresponding embedding $\kappa(P)\into\T$ of semifields is the desired embedding.
\end{proof}

\begin{remark}
    We will show later, in Proposition~\ref{lemma:MaximalGeorgePrimesBack}, that the converse of Lemma~\ref{lemma:MaximalGeorgePrimes} also holds. 
\end{remark}

The next proposition will allow us to use Hahn embeddings to prove statements about containments between primes in $\Cont{A}$. Before that, we make a simple observation. 

\begin{lemma}
\label{lemma:SemifieldHomAndInequalities}
Let $\ph:K\to K'$ be a homomorphism of totally ordered semifields and let $a,b\in K$.
\begin{enumerate}
    \item If $a\leq b$ then $\ph(a)\leq\ph(b)$. 
    \item\label{item:HomStrictInequality} If $\ph(a)<\ph(b)$ then  $a<b$.
\end{enumerate}
\end{lemma}
\begin{proof}
(1) Since $\ph$, as a homomorphism of semirings, preserves addition, and the relation $\leq$ is defined in terms of addition, $\ph$ preserves $\leq$.

(2) This is the contrapositive of (1), with the roles of $a$ and $b$ flipped.
\end{proof}

\begin{proposition}\label{lemma:MatrixDeterminesMaximalPrimeAbove}
Let $\base$ be a sub-semifield of $\T$, and let $A$ be a topological $\base$-algebra. Let $P,P'\in\Cont A$ with $P'$ maximal with respect to inclusion. Let $I$ be the set of archimedean classes of $\kappa(P)$, which by Lemma \ref{lemma:LeastArchClassOfResidueSemifield} has a least element. Let $\bigpi:\RnLexSemifield{I}\to\T$ be the canonical projection. By Proposition \ref{prop:HahnForSpaSemifields} and Lemma \ref{lemma:MaximalGeorgePrimes} there are semiring homomorphisms $\ph_P:A\to\RnLexSemifield{I}$ and $\ph_{P'}:A\to\T$ such that $P$ is the congruence-kernel of $\ph_{P}$, $P'$ is the congruence-kernel of $\ph_{P'}$, and the compositions 
$$\base\to A\toup{\ph_{P}}\RnLexSemifield{I}\toup{\diagbigpi}\T \text{ and } \base\to A\toup{\ph_{P'}}\T$$ 
are both the inclusion of $\base$ into $\T$. With this setup, $P'\supseteq P$ if and only if $\ph_{P'}=\bigpi\circ\ph_{P}$.
\end{proposition}\begin{proof}
If $\ph_{P'}=\bigpi\circ\ph_{P}$ then $P$, being the congruence-kernel of $\ph_{P}$, is contained in $P'$, which is the congruence-kernel of $\ph_{P'}$.
So suppose $\ph_{P'}\neq\bigpi\circ\ph_{P}$. We wish to find a pair $\alpha\in P\sdrop P'$.

By hypothesis, there is some $f\in A$ such that $\ph_{P'}(f)\neq\bigpi\circ\ph_{P}(f)$. On the other hand, fixing some $a\in\base^\times\sdrop\{1_{\base}\}$, we have $\ph_{P'}(a)=\bigpi\circ\ph_{P}(a)\in\T^\times\sdrop\{1_{\T}\}$. Because $\ph_{P'}(f)$ and $\bigpi\circ\ph_{P}(f)$ are distinct elements of $\T$, there are integers $m$ and $n$ with $n$ positive such that $\ph_{P'}(a)^{m/n}$ is in between $\ph_{P'}(f)$ and $\bigpi\circ\ph_{P}(f)$. Here $\ph_{P'}(a)^{m/n}$ is being taken in the algebraically closed semifield $\T$, and the existence of $m$ and $n$ follows from the density of $\Q$ in $\R$. We now consider two cases.

\underline{Case 1}: $\ph_{P'}(f)<\ph_{P'}(a)^{m/n}<\bigpi\circ\ph_{P}(f)$. In this case  $\ph_{P'}(f^n)<\ph_{P'}(a^m)=\bigpi\circ\ph_{P}(a^m)<\bigpi\circ\ph_{P}(f^n)$. 
    Thus, part~(\ref{item:HomStrictInequality}) of  Lemma~\ref{lemma:SemifieldHomAndInequalities} gives us that $\ph_{P}(a^m)<\ph_{P}(f^n)$. 
    So the element $\alpha=(a^m+f^n,f^n)$ is in $P$ but not $P'$.
    
\underline{Case 2}: $\ph_{P'}(f)>\ph_{P'}(a)^{m/n}>\bigpi\circ\ph_{P}(f)$. Analogously to the previous case, we find that $\alpha=(a^m+f^n,a^m)$ is in $P$ but not $P'$.
\end{proof}

\begin{example}
Let $\base$ be a sub-semifield of $\T$ and let $M=\N^n$ or $\Z^n$. Let $P\in\ContBase{\base}\base[\Mon]$ have defining matrix $C$. In this case, Proposition~\ref{lemma:MatrixDeterminesMaximalPrimeAbove} says  that the unique maximal prime in $\ContBase{\base}\base[\Mon]$ which contains $P$ has a defining matrix given by the first row of $C$.
\end{example}

\begin{coro}\label{cor:ExistMaxPrimeinCont}
Let $\base$ be a sub-semifield of $\T$, and let $A$ be a topological $\base$-algebra. For any prime congruence $P\in\Cont A$ there is a unique $P'\in\Cont A$ which is maximal with respect to inclusion and contains $P$.
\end{coro}\begin{proof}
By Proposition~\ref{lemma:MatrixDeterminesMaximalPrimeAbove}, $P'$ is uniquely determined as the congruence-kernel of $\bigpi\circ\ph_{P}$, for any choice of $\ph_{P}$.
\end{proof}


\begin{coro}\label{coro:PrimesInChainHaveSameIdealKernel}
Let $\base$ be a sub-semifield of $\T$ and let $A$ be a topological $\base$-algebra. If $P,P'\in\Cont A$ and $P\subseteq P'$ then $P$ and $P'$ have the same ideal-kernel.
\end{coro}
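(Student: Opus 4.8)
The plan is to establish the two inclusions between the ideal-kernels of $P$ and $P'$ separately. One of them is free: since $P\subseteq P'$, every $a\in A$ with $(a,0_A)\in P$ also has $(a,0_A)\in P'$, so the ideal-kernel of $P$ is contained in the ideal-kernel of $P'$. The content is the reverse inclusion, which I would prove contrapositively: if $a\in A$ is not in the ideal-kernel of $P$, then it is not in the ideal-kernel of $P'$.

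So suppose $|a|_P\neq 0_{\kappa(P)}$. Since $\kappa(P)$ is a semifield, $|a|_P$ is then a unit, so $|a|_P\in\kappa(P)^\times$. By Lemma~\ref{lemma:BasicPropertiesOfPInSpaA} part~\ref{itemBasicPropertiesOfPInSpaA:OrderProperty}, there is some $b\in\base^\times$ with $|b|_P<|a|_P$ in $\kappa(P)$; write $b$ also for its image under the structure map $\base\to A$. Because the canonical order on an idempotent semiring is defined via addition, $|b|_P\leq|a|_P$ means $|a|_P+|b|_P=|a|_P$, and since $|\cdot|_P$ is a semiring homomorphism this says $|a+b|_P=|a|_P$, i.e.\ $(a+b,a)\in P$. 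As $P\subseteq P'$, we get $(a+b,a)\in P'$, and applying the homomorphism $|\cdot|_{P'}$ yields $|a|_{P'}+|b|_{P'}=|a|_{P'}$, i.e.\ $|b|_{P'}\leq|a|_{P'}$ in $\kappa(P')$.

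It remains to observe that $|b|_{P'}\neq 0_{\kappa(P')}$: indeed $b\in\base^\times$, so $|b|_{P'}$ is a unit of the semifield $\kappa(P')$ and in particular nonzero (alternatively, cite part~(3) of Lemma~\ref{lemma:BasicPropertiesOfPInSpaA}, that $\base\to A\toup{|\cdot|_{P'}}\kappa(P')$ is injective). Since $0_{\kappa(P')}$ is the least element of the totally ordered semifield $\kappa(P')$, the chain $0_{\kappa(P')}<|b|_{P'}\leq|a|_{P'}$ forces $|a|_{P'}\neq 0_{\kappa(P')}$, so $a$ is not in the ideal-kernel of $P'$. This completes the reverse inclusion, and hence the proof.

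I expect no genuine obstacle; the only care needed is to keep straight which residue semifield, $\kappa(P)$ or $\kappa(P')$, each inequality lives in, together with the elementary fact that $0$ is the minimum element of a totally ordered idempotent semiring. A heavier alternative would pass through the unique maximal prime $P''\in\Cont A$ over $P$ supplied by the preceding corollary: one has $P\subseteq P'\subseteq P''$ (the unique maximal prime over $P'$ also lies over $P$, hence equals $P''$), and by Proposition~\ref{lemma:MatrixDeterminesMaximalPrimeAbove}, writing $\ph_P:A\to\RnLexSemifield{I}$ for a Hahn embedding with kernel $P$, we have $P''=\ker(\bigpi\circ\ph_P)$; since $\bigpi:\RnLexSemifield{I}\to\T$, being a homomorphism of semifields, has trivial ideal-kernel, the ideal-kernels of $P$ and $P''$ coincide, and monotonicity of the ideal-kernel along $P\subseteq P'\subseteq P''$ finishes. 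The direct argument above is cleaner.
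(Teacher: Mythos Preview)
Your direct argument is correct. Interestingly, the paper takes precisely what you call the ``heavier alternative'': it passes to the unique maximal $\check{P}\in\Cont A$ above $P'$ (hence also above $P$), uses Proposition~\ref{lemma:MatrixDeterminesMaximalPrimeAbove} to realize $\check{P}$ as $\ker(\bigpi\circ\ph_P)$, observes that $\bigpi$ has trivial ideal-kernel so that $P$ and $\check{P}$ share the same ideal-kernel, and then squeezes $P'$ in between. Your main argument is genuinely more elementary: it avoids the Hahn-embedding machinery entirely and uses only Lemma~\ref{lemma:BasicPropertiesOfPInSpaA}\ref{itemBasicPropertiesOfPInSpaA:OrderProperty} together with the trivial fact that units map to units. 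The paper's route has the virtue of reusing structure already set up for other purposes, but your direct proof is cleaner as a self-contained argument and shows that the corollary does not really depend on the Hahn theory.
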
\begin{proof}
Let $\check{P}$ be the unique maximal element of $\Cont A$ containing $P'$. Since $\check{P}$ is also the unique maximal element of $\Cont A$ containing $P$, Proposition~\ref{lemma:MatrixDeterminesMaximalPrimeAbove} gives us that, once we choose $\ph_{P}:A\to\RnLexSemifield{I}$, $\check{P}$ is the congruence-kernel of $A\toup{\ph_{P}}\RnLexSemifield{I}\toup{\diagbigpi}\T$. Since $\bigpi$ has trivial ideal-kernel, this gives us that the ideal-kernels of $P$ and $\check{P}$ are the same. Since $P'$ is in between $P$ and $\check{P}$, we get that $P$ and $P'$ have the same ideal-kernel.
\end{proof}

\begin{defi}
If $A$ is a topological semiring, we let $\ContInt A$  denote the set of those $P\in\Cont A$ which have trivial ideal-kernel. Similarly, if $\base$ is a topological semiring and $A$ is a $\base$-algebra we let $\ContIntBase{\base} A$ denote the set of those $P\in\ContBase{\base} A$ with trivial ideal-kernel. 
\end{defi}

\begin{remark}
We use the notation $\ContInt$ because, in the case where $\base$ is a subsemifield of $\T$ and $M$ is a toric monoid, we think of $\ContIntBase{\base}\base[\Mon]$ as corresponding to the complement of the toric boundary.
\end{remark}

\begin{coro}\label{coro:PrimesInChainInInterior}
Let $\base$ be a sub-semifield of $\T$ and let $A$ be a topological $\base$-algebra. If $P,P'\in\Cont A$ and $P\subseteq P'$ then $P\in\ContInt A$ if and only if $P'\in\ContInt A$.
\end{coro}\begin{proof}
This is immediate from Corollary~\ref{coro:PrimesInChainHaveSameIdealKernel}.
\end{proof}

\begin{coro}\label{coro:PrimesInSpaChainInducedMapOnResidueSemifields}
Let $\base$ be a sub-semifield of $\T$ and let $A$ be a topological $\base$-algebra. If $P,P'\in\Cont A$ and $P\subseteq P'$ then the induced map $A/P\to A/P'$, which makes the diagram
\comd{
&A\ar[ld]_{|\cdot|_{P}}\ar[rd]^{|\cdot|_{P'}}\\
A/P\ar[rr]&&A/P'
}
commute, extends to a map $\pi=\pi_{P',P}:\kappa(P)\to\kappa(P')$ making the diagram
\comd{
&A\ar[ld]_{|\cdot|_{P}}\ar[rd]^{|\cdot|_{P'}}\\
\kappa(P)\ar[rr]^{\pi}&&\kappa(P')
}
commute. Moreover, the map $\pi$ is surjective and has trivial ideal-kernel.
\end{coro}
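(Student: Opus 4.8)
The plan is to construct the map $\pi$ using the universal property of the total semiring of fractions, then verify surjectivity and triviality of the ideal-kernel using the structural results already established for chains in $\Cont A$. The starting point is the induced homomorphism $q: A/P \to A/P'$ coming from the inclusion $P \subseteq P'$ (this is just the correspondence theorem applied to $P'/P$ on $A/P$, or equivalently the obvious map of quotients). Composing with $A/P' \to \kappa(P')$ gives a homomorphism $A/P \to \kappa(P')$. To extend this over $\kappa(P) = \Frac(A/P)$, by the universal property stated after Definition~\ref{def: semiring-of-fractions} it suffices to check that every cancellative element of $A/P$ maps to a unit in $\kappa(P')$. The key observation here is Corollary~\ref{coro:PrimesInChainHaveSameIdealKernel}: $P$ and $P'$ have the same ideal-kernel, so a cancellative (in particular nonzero) element $\bar a \in A/P$ has nonzero image $\bar{\bar a}\in A/P'$; since $A/P'$ is cancellative (as $P'$ is prime), $\bar{\bar a}$ is a nonzero element of a cancellative semiring and hence becomes a unit in $\Frac(A/P') = \kappa(P')$. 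This produces the unique homomorphism $\pi: \kappa(P) \to \kappa(P')$, and chasing the definitions shows it makes the stated triangle with $|\cdot|_P$ and $|\cdot|_{P'}$ commute.

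Next I would verify surjectivity. Every element of $\kappa(P') = \Frac(A/P')$ has the form $\bar{\bar a}/\bar{\bar b}$ with $\bar{\bar b}$ cancellative, i.e. nonzero, in $A/P'$. Lifting $\bar{\bar a}, \bar{\bar b}$ to elements $\bar a, \bar b$ of $A/P$, the element $\bar b$ is nonzero in $A/P$ (again because $P$ and $P'$ have the same ideal-kernel, by Corollary~\ref{coro:PrimesInChainHaveSameIdealKernel}) hence cancellative in the cancellative semiring $A/P$, so $\bar a/\bar b \in \kappa(P)$ and $\pi(\bar a/\bar b) = \bar{\bar a}/\bar{\bar b}$. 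For triviality of the ideal-kernel of $\pi$, suppose $x \in \kappa(P)$ with $\pi(x) = 0_{\kappa(P')}$. Write $x = \bar a/\bar b$ with $\bar b$ cancellative in $A/P$; then $\pi(x) = 0$ forces $\bar{\bar a} = 0$ in $A/P'$, i.e. $a$ is in the ideal-kernel of $P'$, hence in the ideal-kernel of $P$ by Corollary~\ref{coro:PrimesInChainHaveSameIdealKernel}, so $\bar a = 0$ in $A/P$ and $x = 0$.

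I do not expect any serious obstacle here: the entire argument rests on the single structural input that $P$ and $P'$ have the same ideal-kernel (Corollary~\ref{coro:PrimesInChainHaveSameIdealKernel}), together with the universal property of $\Frac$. The mildly delicate point — the closest thing to a ``hard part'' — is making sure the universal property is applied correctly: one must note that the map $A/P \to \kappa(P')$ sends cancellative elements to units, which is exactly where same-ideal-kernel is needed, since a priori a cancellative element of $A/P$ could have nonzero-but-non-cancellative image, except that $A/P'$ is itself cancellative so ``nonzero'' suffices. After that, surjectivity and trivial ideal-kernel are routine fraction manipulations. One should also double-check that $\pi$ is genuinely an \emph{extension} of the map $A/P \to A/P' \hookrightarrow \kappa(P')$ in the sense that the two displayed diagrams in the statement commute; this is immediate from the construction via the universal property, since that map is by definition the factorization through $\eta: A/P \to \kappa(P)$.
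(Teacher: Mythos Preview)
Your proposal is correct and follows essentially the same approach as the paper: use Corollary~\ref{coro:PrimesInChainHaveSameIdealKernel} to see that $A/P\to A/P'$ has trivial ideal-kernel, then apply the universal property of $\Frac$ to extend to $\kappa(P)\to\kappa(P')$. The only difference is that for the trivial ideal-kernel of $\pi$ the paper observes more directly that any homomorphism of semifields has trivial ideal-kernel, rather than redoing the same-ideal-kernel argument on fractions; your version is fine but slightly longer than necessary.
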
\begin{proof}
By Corollary~\ref{coro:PrimesInChainHaveSameIdealKernel}, $P$ and $P'$ have the same ideal-kernel, so the map $A/P\to A/P'$ has trivial ideal-kernel. Hence the map $A/P\to A/P'\into\kappa(P')$ sends all cancellative elements of $A/P$ to nonzero elements of $\kappa(P')$, which are therefore invertible. So by the universal property of the total semiring of fractions (see Definition~\ref{def: semiring-of-fractions}) this map $A/P\to\kappa(P')$ extends to a morphism $\kappa(P)\toup{\pi}\kappa(P')$. The corresponding diagram commutes because the image of $A\toup{|\cdot|_{P}}\kappa(P)$ is contained in $A/P$.

Surjectivity of $\pi$ follows from surjectivity of the map $A/P\to A/P'$. 
Because $\pi$ is a homomorphism of semifields, it has trivial ideal-kernel.
\end{proof}

\begin{remark}\label{rem:DianeMadeMeDoThis1}
We note that the proof above works to show the same conclusion any time $A$ is a semiring and $P\subseteq P'$ are prime congruences on $A$ with the same ideal-kernel.
\end{remark}

The following uses the above 
corollary 
to give a proof of the converse of Lemma~\ref{lemma:MaximalGeorgePrimes}.

\begin{proposition}\label{lemma:MaximalGeorgePrimesBack}
Let $\base$ be a sub-semifield of $\T$, $A$ be a topological $\base$-algebra, and  $P\in\Cont A$.
If there is an embedding $\kappa(P)\into\T$, then $P$ is maximal with respect to inclusion.
\end{proposition}

\begin{proof}
    Suppose $P' \supseteq P$ is in $\Cont A$.
    By Corollary~\ref{coro:PrimesInSpaChainInducedMapOnResidueSemifields} there is a surjection $\varphi: \kappa(P) \to \kappa(P')$. The congruence-kernel of $\varphi$ is a prime congruence on $\kappa(P)$. Since $\kappa(P)^\times$ has archimedean rank 1 (as it embeds into $\R$), the only convex subgroups are $\{1\}$ and $\kappa(P)^\times$. The later corresponds to a prime congruence with quotient $\B$ and the former corresponds to the diagonal $\Delta_{\kappa(P)}$. Since $P'\in\Cont A$,  $\ker\ph=\Delta_{\kappa(P)}$, so $P'=P$.
\end{proof}

We can use 
Remark~\ref{rem:DianeMadeMeDoThis1}
to prove a partial converse to Corollary~\ref{coro:PrimesInChainHaveSameIdealKernel} in a particular case. 
We first make a brief observation.



\begin{coro}
\label{coro:MapOnResidueSemifieldsAndInequalities}
Let $\base$ be a sub-semifield of $\T$, let $A$ be a topological $\base$-algebra, and let $P,P'\in\Cont A$ satisfying $P\subseteq P'$. Consider $a,b\in\kappa(P)$.
\begin{enumerate}
    \item If $a\leq b$ then $\pi_{P',P}(a)\leq\pi_{P',P}(b)$. 
    \item If $\pi_{P',P}(a)<\pi_{P',P}(b)$ then  $a<b$.
\end{enumerate}
\end{coro}\GeorgeStory{In which George goes to the preliminaries and back again.}
\begin{proof}
%
This is Lemma~\ref{lemma:SemifieldHomAndInequalities} applied to $\ph=\pi_{P',P}$.
\end{proof}

\begin{prop}\label{prop:GoingDownStaysInGeorge}
Let $\base$ be a sub-semifield of $\T$, and let $A$ be an $\base$-algebra. If $P'\in\ContBase{\base}A$ and $P\subseteq P'$ is a prime congruence on $A$ with the same ideal-kernel as $P'$, then $P\in\ContBase{\base}A$. In particular, if $P'\in\ContIntBase{\base}A$ then every prime congruence on $A$ which is contained in $P'$ is also in $\ContIntBase{\base}A$.
\end{prop}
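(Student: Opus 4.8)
The plan is to reduce everything to the internal characterization of $\ContBase{\base}A$ recorded in Lemma~\ref{lemma:ContinuityGivesGeorge}: a prime congruence $P$ on $A$ lies in $\ContBase{\base}A$ exactly when, for every $a\in\kappa(P)^\times$, there is some $b\in\base^\times$ with $|b|_P<a$. So I would fix an arbitrary $a\in\kappa(P)^\times$ and produce the required $b\in\base^\times$, transporting the corresponding statement for $P'$ down along the comparison map between residue semifields.

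The key input is that $P$ and $P'$ have the same ideal-kernel, so the remark following Corollary~\ref{coro:PrimesInSpaChainInducedMapOnResidueSemifields} applies directly (even without knowing a priori that $P\in\Cont A$): there is a surjective homomorphism of semifields $\pi=\pi_{P',P}\colon\kappa(P)\to\kappa(P')$, with trivial ideal-kernel, satisfying $\pi(|f|_P)=|f|_{P'}$ for all $f\in A$. First I would push $a$ forward: since $\pi$ has trivial ideal-kernel and $a\neq0_{\kappa(P)}$, the element $\pi(a)$ lies in $\kappa(P')^\times$. Then, because $P'\in\ContBase{\base}A$, Lemma~\ref{lemma:ContinuityGivesGeorge} applied to $P'$ yields $b\in\base^\times$ with $|b|_{P'}<\pi(a)$, that is, $\pi(|b|_P)<\pi(a)$. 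Finally I would pull this strict inequality back along $\pi$ using Corollary~\ref{coro:MapOnResidueSemifieldsAndInequalities}(2), which says $\pi$ reflects strict inequalities, to conclude $|b|_P<a$. By Lemma~\ref{lemma:ContinuityGivesGeorge} this proves $P\in\ContBase{\base}A$.

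For the ``in particular'' clause I would observe that if $P'$ has trivial ideal-kernel, then so does every prime congruence $P\subseteq P'$, since $(a,0_A)\in P$ forces $(a,0_A)\in P'$; hence $P$ and $P'$ automatically share the same (trivial) ideal-kernel, the first part gives $P\in\ContBase{\base}A$, and triviality of its ideal-kernel then places $P$ in $\ContIntBase{\base}A$.

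I do not expect any real obstacle here — the proof is just an assembly of earlier results. The only points requiring a little care are that all the inequality manipulations must be read inside the totally ordered semifields $\kappa(P)$ and $\kappa(P')$, and that one uses exactly the right direction of Corollary~\ref{coro:MapOnResidueSemifieldsAndInequalities}: $\pi$ preserves $\leq$ but, crucially for this argument, reflects $<$. One should also be sure to cite the general-semiring version of Corollary~\ref{coro:PrimesInSpaChainInducedMapOnResidueSemifields} (via the remark after it), since $P$ is not assumed in advance to be continuous.
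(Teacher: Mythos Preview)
Your proposal is correct and follows essentially the same route as the paper's proof: both reduce to Lemma~\ref{lemma:ContinuityGivesGeorge}, push $a$ forward along $\pi_{P',P}$, apply the characterization at $P'$ to find $b$, and then reflect the strict inequality back via Corollary~\ref{coro:MapOnResidueSemifieldsAndInequalities}(2). Your care in invoking the general-semiring remark after Corollary~\ref{coro:PrimesInSpaChainInducedMapOnResidueSemifields} (rather than the corollary itself) is well placed, since $P$ is not assumed to lie in $\Cont A$ a priori.
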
\begin{proof}
By Lemma \ref{lemma:ContinuityGivesGeorge}, we want to show that, given $a\in\kappa(P)^\times$, there is some $b\in\base^\times$ such that $|b|_{P}<a$. Since $P'\in\ContBase{\base} A$ and $\pi_{P',P}(a)\in\kappa(P')^\times$, there is some $b\in\base^\times$ such that $\pi_{P',P}(a)>|b|_{P'}=\pi_{P',P}(|b|_{P})$. So by part (2) of the previous lemma, $a>|b|_P$.
\end{proof}

In Section \ref{section:TheoremCrown} we will need to compare evaluations of a given semiring element at different primes. In order to do this, we prove a version of Proposition \ref{prop:HahnForSpaSemifields} for multiple prime congruences. First, we establish some notations.

Let $\base$ be a sub-semifield of $\T$ and let $A$ be a topological $\base$-algebra.
For any $P\in\Cont A$, we consider the set $I_P$ of archimedean classes of $\kappa(P)^\times$ and the semifield $\RnLexSemifield{I_P}$. Since $I_P$ has a least element $i_0(P)$, we have the canonical continuous homomorphisms $\bigpi:\RnLexSemifield{I_P}\to\T$ and $j:\T\to\RnLexSemifield{I_P}$. Recall that, if $P\in\Cont A$ is maximal, Lemma \ref{lemma:MaximalGeorgePrimes} gives us a canonical embedding $\kappa(P)\into\T$.

\begin{coro}\label{coro:BigDiagramForOnePrimeHahn}
Let $\base$ be a sub-semifield of $\T$ and let $A$ be a topological $\base$-algebra. Let $P\in\Cont A$ and consider the maximal $\check{P}\in\Cont A$ containing $P$. For any embedding $\ph:\kappa(P)\to\RnLexSemifield{I_P}$ as in Proposition \ref{prop:HahnForSpaSemifields}, the diagram
\comd{
\base\ar[r]\inj[d]&\kappa(P)\ar[r]\inj[d]_{\ph}&\kappa\left(\check{P}\right)\inj[d]\\
\T\ar[r]^{j}&\RnLexSemifield{I_P}\ar[r]^{\diagbigpi}&\T
}
commutes.
\end{coro}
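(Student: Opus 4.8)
The plan is to deduce the commutativity of the large rectangle by splitting it into its two constituent squares and checking each. The left square, involving $\base \hookrightarrow \kappa(P)$, $\base \hookrightarrow \T$, the embedding $\ph$, and the map $j: \T \to \RnLexSemifield{I_P}$, is precisely the content of the second commuting diagram in Proposition~\ref{prop:HahnForSpaSemifields}, once one identifies $\iota$ with the composite $\base \to A \toup{|\cdot|_P} \kappa(P)$ and $\psi$ with the inclusion $\base \into \T$; so nothing new is needed there. The right square is the one that requires a short argument: it asserts that $\bigpi \circ \ph = (\text{the canonical embedding } \kappa(\check P) \into \T) \circ \pi_{\check P, P}$, where $\pi_{\check P, P}: \kappa(P) \to \kappa(\check P)$ is the map from Corollary~\ref{coro:PrimesInSpaChainInducedMapOnResidueSemifields} (noting $P \subseteq \check P$ have the same ideal-kernel by Corollary~\ref{coro:PrimesInChainHaveSameIdealKernel}, so $\pi_{\check P, P}$ exists).

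For the right square, the key observation is that both $\bigpi \circ \ph$ and $(\kappa(\check P)\into\T) \circ \pi_{\check P, P}$ are continuous semiring homomorphisms $\kappa(P) \to \T$, and that the kernel (as a congruence on $A$, pulled back along $A \toup{|\cdot|_P} \kappa(P)$) of the composite $A \toup{\ph_P} \RnLexSemifield{I_P} \toup{\bigpi} \T$ is exactly $\check P$, by Proposition~\ref{lemma:MatrixDeterminesMaximalPrimeAbove} (this is how $\check P$ was characterized — it is the kernel of $\bigpi \circ \ph_P$). So I would first set $\ph_P := \ph \circ |\cdot|_P : A \to \RnLexSemifield{I_P}$, which is a valid Hahn-type homomorphism with kernel $P$; then $\bigpi \circ \ph_P : A \to \T$ has kernel $\check P$ and factors through $|\cdot|_{\check P}: A \to \kappa(\check P)$. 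The resulting map $\kappa(\check P) \to \T$ is injective (it has trivial ideal-kernel as a semifield homomorphism, and by the rank-1 argument behind Lemma~\ref{lemma:MaximalGeorgePrimes} it must be the canonical embedding once we check it restricts correctly on $\base^\times$). The compatibility on $\base$ is guaranteed because, by construction in Proposition~\ref{prop:HahnForSpaSemifields}, $\bigpi \circ \ph$ restricted to (the image of) $\base$ is the inclusion $\base \into \T$, and the canonical embedding of Lemma~\ref{lemma:MaximalGeorgePrimes} was also normalized to be the inclusion on $\base$; hence the two maps $\kappa(\check P) \to \T$ agree on $\base^\times$, which pins down the rank-1 embedding uniquely up to scalar, and the scalar is fixed to $1$ by the shared normalization.

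Putting it together: $\bigpi \circ \ph$ and $(\kappa(\check P)\into \T)\circ \pi_{\check P,P}$ are two semiring homomorphisms $\kappa(P) \to \T$; precomposing with the surjection $A \toup{|\cdot|_P} \kappa(P)$ gives, respectively, $\bigpi \circ \ph_P$ and $(\kappa(\check P)\into\T) \circ |\cdot|_{\check P}$, and these are equal as maps $A \to \T$ (both are the canonical-embedding factorization of the kernel-$\check P$ map $\bigpi \circ \ph_P$). Since $|\cdot|_P$ is surjective onto a generating set of $\kappa(P)$ (indeed $\kappa(P) = \Frac(A/P)$), equality after precomposition forces $\bigpi \circ \ph = (\kappa(\check P)\into\T)\circ\pi_{\check P,P}$, which is the commutativity of the right square. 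The main obstacle — really the only non-bookkeeping point — is verifying that the map $\kappa(\check P) \to \T$ induced by $\bigpi \circ \ph_P$ genuinely coincides with the specific canonical embedding produced in Lemma~\ref{lemma:MaximalGeorgePrimes}; this is handled by the uniqueness clause in that lemma together with Theorem~\ref{thm: rank1-emb}, using that both maps are the inclusion on $\base^\times \subseteq \kappa(\check P)^\times$.
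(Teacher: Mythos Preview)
Your proposal is correct and takes essentially the same approach as the paper: the left square is handled by Proposition~\ref{prop:HahnForSpaSemifields} and the right square by Proposition~\ref{lemma:MatrixDeterminesMaximalPrimeAbove}. The paper's proof is just these two citations; your version unpacks the right-square argument in more detail (factoring through $\kappa(P)$ via the universal property of $\Frac$, and invoking the uniqueness clause of Lemma~\ref{lemma:MaximalGeorgePrimes}), but note that Proposition~\ref{lemma:MatrixDeterminesMaximalPrimeAbove} already asserts the equality $\ph_{\check{P}}=\bigpi\circ\ph_P$ with $\ph_{\check{P}}$ built from the canonical embedding of Lemma~\ref{lemma:MaximalGeorgePrimes}, so the identification step you worry about is already contained in that proposition's statement.
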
\begin{proof}
The commutativity of the square on the left follows from Proposition \ref{prop:HahnForSpaSemifields} and the commutativity of the square on the right follows from Proposition \ref{lemma:MatrixDeterminesMaximalPrimeAbove}.
\end{proof}

\begin{prop}\label{prop:HahnFor2SpaPrimes} 
Let $\base$ be a sub-semifield of $\T$ and let $A$ be a topological $\base$-algebra. Let $P,P'\in\Cont A$ and consider the maximal $\check{P}, \check{P}'\in\Cont A$ containing $P, P'$, respectively. Then there is a totally ordered set $I_{P,P'}$ with least element $i_0(P,P')$ and embeddings $\ph_P:\kappa(P)\into\RnLexSemifield{I_{P,P'}}$ and $\ph_{P'}:\kappa(P')\into\RnLexSemifield{I_{P,P'}}$ such that the diagram
\comd{
&&\kappa(P)\ar[r]\inj[d]_{\ph_P}&\kappa\left(\check{P}\right)\inj[d]\\
\base\inj[r]\ar[urr]\ar[drr]&\T\ar[r]^{j}&\RnLexSemifield{I_{P,P'}}\ar[r]^{\diagbigpi}&\T\\
&&\kappa(P')\ar[r]\linj[u]^{\ph_{P'}}&\kappa\left(\check{P}'\right)\linj[u]\\
}
commutes.
\end{prop}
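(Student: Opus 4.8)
The plan is to combine the single-prime Hahn embedding of Proposition~\ref{prop:HahnForSpaSemifields} with the "going up to the maximal prime" compatibility of Corollary~\ref{coro:BigDiagramForOnePrimeHahn}, but to do so we first need to arrange that the value groups of $\kappa(P)$ and $\kappa(P')$ both embed into \emph{the same} lexicographic semifield $\RnLexSemifield{I_{P,P'}}$ in a way that is compatible with the inclusions of $\base$. The key observation is that, by Lemma~\ref{lemma:MaximalGeorgePrimes}, both $\kappa(\check P)$ and $\kappa(\check P')$ embed canonically into $\T$ over $\base$, so after making these identifications we may regard $\base\hookrightarrow\kappa(\check P)$ and $\base\hookrightarrow\kappa(\check P')$ as two copies of the \emph{same} extension $\base\hookrightarrow\T$. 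This is what will let us glue.

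First I would apply Proposition~\ref{prop:HahnForSpaSemifields} separately to $P$ and to $P'$, getting index sets $I_P$ and $I_{P'}$ (the archimedean classes of $\kappa(P)^\times$ and $\kappa(P')^\times$), each with least element $i_0(P)$, $i_0(P')$, and Hahn embeddings $\ph_P^0:\kappa(P)\into\RnLexSemifield{I_P}$, $\ph_{P'}^0:\kappa(P')\into\RnLexSemifield{I_{P'}}$ compatible with both $j:\T\to\RnLexSemifield{\bullet}$ and $\bigpi:\RnLexSemifield{\bullet}\to\T$, as in Corollary~\ref{coro:BigDiagramForOnePrimeHahn}. Next I would form a common totally ordered index set $I_{P,P'}$ by identifying the least elements $i_0(P)$ and $i_0(P')$ with a single least element $i_0(P,P')$, and taking the remaining elements of $I_P\setminus\{i_0(P)\}$ to be strictly below those of $I_{P'}\setminus\{i_0(P')\}$ (any choice of total order refining these two orders and keeping $i_0(P,P')$ least will do). The order-preserving injections $I_P\hookrightarrow I_{P,P'}$ and $I_{P'}\hookrightarrow I_{P,P'}$ that fix the least element induce, by extension-by-zero on supports, homomorphisms of totally ordered abelian groups $\R^{(I_P)}\to\R^{(I_{P,P'})}$ and $\R^{(I_{P'})}\to\R^{(I_{P,P'})}$ that are the identity on the $i_0$-coordinate; via Theorem~\ref{thm:TOSAndTOAGEquiv} these give homomorphisms of totally ordered semifields $\RnLexSemifield{I_P}\into\RnLexSemifield{I_{P,P'}}$ and $\RnLexSemifield{I_{P'}}\into\RnLexSemifield{I_{P,P'}}$ commuting with the respective $j$'s and $\bigpi$'s (for $\bigpi$ one uses that projection to the common least coordinate is unchanged; for $j$ one uses that $i_0$ is mapped to $i_0$). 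Composing, set $\ph_P:=(\text{inclusion})\circ\ph_P^0$ and $\ph_{P'}:=(\text{inclusion})\circ\ph_{P'}^0$.

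Finally I would check that the big hexagonal diagram commutes. The top and bottom squares (those involving $\check P$, $\check P'$, and the right-hand $\bigpi$) commute because they are obtained from the commuting squares of Corollary~\ref{coro:BigDiagramForOnePrimeHahn} for $P$ and $P'$ by post-composing with the inclusions $\RnLexSemifield{I_P}\into\RnLexSemifield{I_{P,P'}}$ and $\RnLexSemifield{I_{P'}}\into\RnLexSemifield{I_{P,P'}}$, which are compatible with $\bigpi$ as arranged; here one also uses the canonical identifications $\kappa(\check P)\hookrightarrow\T$ and $\kappa(\check P')\hookrightarrow\T$ from Lemma~\ref{lemma:MaximalGeorgePrimes}. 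The two left-hand triangles, asserting that $\base\to\kappa(P)\to\RnLexSemifield{I_{P,P'}}$ and $\base\to\kappa(P')\to\RnLexSemifield{I_{P,P'}}$ both equal $j$ composed with $\base\hookrightarrow\T$, follow from the left square of Corollary~\ref{coro:BigDiagramForOnePrimeHahn} for each prime together with compatibility of the inclusions with $j$. I expect the main obstacle to be purely bookkeeping: carefully setting up the total order on $I_{P,P'}$ so that \emph{both} extension maps preserve the least element and preserve the lexicographic order (so that they really are morphisms in $\TOAG$), and then tracking the several identifications via Theorem~\ref{thm:TOSAndTOAGEquiv} so that every face of the diagram is checked — there is no deep content beyond the two cited results, but the diagram has enough faces that care is needed.
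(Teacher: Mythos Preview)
Your proposal is correct and follows essentially the same route as the paper: apply the single-prime Hahn embedding (Proposition~\ref{prop:HahnForSpaSemifields}/Corollary~\ref{coro:BigDiagramForOnePrimeHahn}) to each of $P$ and $P'$, build $I_{P,P'}$ by identifying the two least elements and concatenating the remaining parts of $I_P$ and $I_{P'}$, use the resulting order-preserving injections to get extension-by-zero inclusions $\RnLexSemifield{I_P}\into\RnLexSemifield{I_{P,P'}}$ and $\RnLexSemifield{I_{P'}}\into\RnLexSemifield{I_{P,P'}}$ compatible with $j$ and $\bigpi$, and compose. The paper's proof is the same construction with the same diagram-chase; your version is slightly more explicit about why the extension-by-zero maps are order-preserving, which is fine.
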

\GeorgeStory{(In which George fights the whale and flies some kites.)}\begin{proof}
Write $I_P=\{i_0(P)\}\sqcup \breve{I}_P$ and $I_{P'}=\{i_0(P')\}\sqcup \breve{I}_{P'}$. Set $I_{P,P'}:=\{i_0(P,P')\}\sqcup\breve{I}_P\sqcup\breve{I}_{P'}$, and endow it with the total order indicated. The maps
\begin{multicols}{2}
\begin{align*}
I_P&\longrightarrow I_{P,P'}\\
i_0(P)&\longmapsto i_0(P,P')\\
\breve{I}_{P}&\toup{\id}\breve{I}_{P}\\
\end{align*}
\columnbreak

\begin{align*}
I_{P'}&\longrightarrow I_{P,P'}\\
i_0(P')&\longmapsto i_0(P,P')\\
\breve{I}_{P'}&\toup{\id} \breve{I}_{P'}
\end{align*}
\end{multicols}
\noindent induce injective homomorphisms $\RnLexSemifield{I_{P}}\to\RnLexSemifield{I_{P,P'}}$ and $\RnLexSemifield{I_{P'}}\to\RnLexSemifield{I_{P,P'}}$, which are continuous by Proposition~\ref{prop:ContinuityUsingLexico}. Moreover, these homomorphisms make the diagram
\comd{
&\RnLexSemifield{I_{P}}\ar[rd]^{\bigpi}\ar[d]\\
\T\ar[r]^{j}\ar[ru]^{j}\ar[rd]^{j}&\RnLexSemifield{I_{P,P'}}\ar[r]^{\bigpi}&\T\\
&\RnLexSemifield{I_{P'}}\ar[ru]_{\bigpi}\ar[u]
}
commute. Pick embeddings $\tilde{\ph}_P:\kappa(P)\into\RnLexSemifield{I_{P}}$ and $\tilde{\ph}_{P'}:\kappa(P')\into\RnLexSemifield{I_{P'}}$ as in Proposition \ref{prop:HahnForSpaSemifields}. Applying Corollary \ref{coro:BigDiagramForOnePrimeHahn} to each of these and using the previous commutative diagram, we see that the diagram
\comd{
&&\kappa(P)\inj[d]^{\tilde{\ph}_P} \ar[r]&\kappa\left(\check{P}\right)\inj[dd]\\
&&\RnLexSemifield{I_{P}}\ar[rd]^{\diagbigpi}\ar[d]\\
\base\ar[rruu]\ar[rrdd]\inj[r]&\T\ar[r]^{j}\ar[ru]^{j}\ar[rd]^{j}&\RnLexSemifield {I_{P,P'}}\ar[r]^{\diagbigpi}&\T\\
&&\RnLexSemifield{I_{P'}}\ar[ru]_{\diagbigpi}\ar[u]\\
&&\kappa(P')\linj[u]_{\tilde{\ph}_{P'}} \ar[r]&\kappa\left(\check{P}'\right)\linj[uu]\\
}
commutes. Thus, $\ph_P:\kappa(P)\toup{\tilde{\ph}_P}\RnLexSemifield{I_{P}}\to\RnLexSemifield{I_{P,P'}}$ and $\ph_{P'}:\kappa(P')\toup{\tilde{\ph}_{P'}}\RnLexSemifield{I_{P'}}\to\RnLexSemifield{I_{P,P'}}$ are as desired.
\end{proof}

\subsection{$\Cont$ as a topological space}\label{ch:ContTopSpace}

Let $A$ be a topological semiring. We now endow $\Cont A$ with the topology generated by the sets
$$\BasicOpen{f}{g}:=\{P\in\Cont A\ :\ |f|_P\leq|g|_P\neq0_{\kappa(P)}\}$$
for $f,g\in A$. This is analogous to the topology defined on the continuous spectrum of a ring; see \cite[Section 0]{Hub93} or \cite[Sections 4.1 and 7.2]{Wed12}. We give $\ContInt A\subseteq\Cont A$ the subspace topology.

\begin{lemma}
The sets $\BasicOpen{f}{g}$ form a basis for the topology on $\Cont A$.
\end{lemma}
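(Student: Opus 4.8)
The plan is to show that the collection $\{U(f,g) : f,g\in A\}$ is closed under finite intersections, since a basis for a topology is precisely a collection of open sets closed under finite intersections (equivalently, generating the topology) — and by definition we have declared the sets $U(f,g)$ to generate the topology on $\Cont A$. Since $U(1,1) = \Cont A$ (as $|1|_P = 1_{\kappa(P)} \neq 0_{\kappa(P)}$ for every prime $P$, using that $\kappa(P)$ is a semifield), the empty intersection is accounted for, and it remains to handle the intersection of two basic sets $U(f,g)$ and $U(f',g')$.

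First I would recall what membership in these sets means: $P \in U(f,g)$ iff $|g|_P \neq 0_{\kappa(P)}$ and $|f|_P \leq |g|_P$ in the totally ordered semifield $\kappa(P)$. The natural candidate for $U(f,g)\cap U(f',g')$ is $U(ff', gg')$ together possibly with an extra condition, so the key computation is to compare $P \in U(f,g)\cap U(f',g')$ with $P \in U(ff',gg')$. If $P$ lies in both $U(f,g)$ and $U(f',g')$, then $|g|_P, |g'|_P$ are both nonzero, hence $|gg'|_P = |g|_P|g'|_P$ is nonzero (multiplicativity of $|\cdot|_P$ and the fact that nonzero elements of a semifield are cancellative, in particular have no zero divisors), and multiplying the inequalities $|f|_P \leq |g|_P$ and $|f'|_P \leq |g'|_P$ — valid because multiplication preserves the order in a totally ordered semifield, the nonzero elements forming a totally ordered abelian group — gives $|ff'|_P = |f|_P|f'|_P \leq |g|_P|g'|_P = |gg'|_P$. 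Hence $P \in U(ff', gg')$, so $U(f,g)\cap U(f',g') \subseteq U(ff',gg')$.

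The reverse inclusion is false in general, so the real content is to express the intersection correctly. The point is that $P\in U(f,g)\cap U(f',g')$ should be characterized by: $|gg'|_P \neq 0$, $|f g'|_P \leq |gg'|_P$, and $|f' g|_P \leq |gg'|_P$. Indeed if $|g|_P, |g'|_P \neq 0$ then $|f|_P \leq |g|_P \iff |fg'|_P = |f|_P|g'|_P \leq |g|_P|g'|_P = |gg'|_P$ (cancelling the nonzero $|g'|_P$, which is legitimate since the order on $\kappa(P)^\times$ is translation-invariant), and symmetrically for the other inequality. Therefore
\[
U(f,g)\cap U(f',g') = U(fg', gg') \cap U(f'g, gg').
\]
This still looks like an intersection of two basic sets, so I would iterate: it suffices to handle $U(a, c)\cap U(b,c)$ with a common second argument $c$. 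For such sets, $P$ lies in the intersection iff $|c|_P \neq 0$, $|a|_P \leq |c|_P$, and $|b|_P \leq |c|_P$; since $\kappa(P)$ is totally ordered, the last two inequalities together are equivalent to $|a|_P + |b|_P = \max(|a|_P,|b|_P) \leq |c|_P$, i.e. $|a+b|_P \leq |c|_P$ (using that $|\cdot|_P$ is additive, so $|a+b|_P = |a|_P + |b|_P$). Hence $U(a,c)\cap U(b,c) = U(a+b, c)$, a single basic set. Combining, $U(f,g)\cap U(f',g') = U(fg' + f'g,\ gg')$, which is a basic set, completing the proof.

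The one point to be careful about — and the only plausible obstacle — is the use of cancellation: passing from $|fg'|_P \leq |gg'|_P$ back to $|f|_P \leq |g|_P$ requires dividing by the nonzero element $|g'|_P$ of the semifield $\kappa(P)$, and this is exactly where the hypothesis $|g'|_P \neq 0_{\kappa(P)}$ (built into membership in $U(f',g')$) is essential; without it the claimed equality of sets fails. So in writing this up I would state explicitly at the start of the argument that for any $P\in\Cont A$ and $h\in A$ with $|h|_P\neq 0$, the map $x\mapsto|h|_P\cdot x$ is an order-isomorphism of $\kappa(P)$ onto $\{y : y\leq \text{something}\}$... more precisely an order-automorphism of $\kappa(P)$ fixing $0$, so inequalities may be multiplied or divided by $|h|_P$ freely. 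Everything else is routine semifield arithmetic, and the whole proof is short.
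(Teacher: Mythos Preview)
Your proof is correct and follows essentially the same route as the paper: reduce to a common second argument via $U(f,g)\cap U(f',g')=U(fg',gg')\cap U(f'g,gg')$, then collapse using $|a+b|_P=|a|_P+|b|_P=\max(|a|_P,|b|_P)$ to obtain $U(fg'+f'g,\,gg')$. The brief detour through $U(ff',gg')$ is unnecessary but harmless.
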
\begin{proof}
Note that $\BasicOpen{1}{1}=\Cont A$. So it suffices to show that, for any $f,f',g,g'\in A$, $\basicopen{f}{g}\cap\basicopen{f'}{g'}=\basicopen{\tilde{f}}{\tilde{g}}$ for some $\tilde{f},\tilde{g}\in A$.


Straightforward computation shows that $\basicopen{f}{g}\cap\basicopen{f'}{g'}=\basicopen{fg'}{gg'}\cap\basicopen{gf'}{gg'}$. Since $|fg'+gf'|_P=|fg'|_P+|gf'|_P$ is the maximum of $|fg'|_P$ and $|gf'|_P$ in $\kappa(P)$, we have that $\basicopen{fg'}{gg'}\cap\basicopen{gf'}{gg'}=\basicopen{fg'+gf'}{gg'}$.
\end{proof}

We now consider specializations in $\Cont A$.

\begin{prop}\label{prop:SpecializationsInCont}
Let $\base$ be a sub-semifield of $\T$ and let $A$ be a topological $\base$-algebra. For any $P,P'\in\Cont A$, $P'$ is a specialization of $P$ if and only if $P'\subseteq P$.
\end{prop}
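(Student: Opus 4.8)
The plan is to prove the two implications separately, using the explicit description of the topology on $\Cont A$ via the sets $\basicopen{f}{g}$ and the characterization of $\kappa(P)$ as a total semiring of fractions. Recall that $P'$ is a specialization of $P$ means $P'$ lies in the closure $\overline{\{P\}}$, equivalently every basic open set $\basicopen{f}{g}$ containing $P'$ also contains $P$.

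First I would prove that $P' \subseteq P$ implies $P'$ is a specialization of $P$. Suppose $P' \subseteq P$. By Corollary~\ref{coro:PrimesInChainHaveSameIdealKernel}, $P'$ and $P$ have the same ideal-kernel, and by Corollary~\ref{coro:PrimesInSpaChainInducedMapOnResidueSemifields} there is a surjective homomorphism $\pi = \pi_{P,P'} : \kappa(P') \to \kappa(P)$ with trivial ideal-kernel, compatible with the evaluation maps $|\cdot|_{P'}$ and $|\cdot|_P$. Now take any basic open $\basicopen{f}{g}$ with $P' \in \basicopen{f}{g}$, i.e.\ $|f|_{P'} \leq |g|_{P'} \neq 0_{\kappa(P')}$. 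Applying $\pi$ and using Corollary~\ref{coro:MapOnResidueSemifieldsAndInequalities}(1) (monotonicity of $\pi$) together with the fact that $\pi$ has trivial ideal-kernel (so $|g|_{P'} \neq 0$ forces $\pi(|g|_{P'}) = |g|_P \neq 0$), we get $|f|_P \leq |g|_P \neq 0_{\kappa(P)}$, i.e.\ $P \in \basicopen{f}{g}$. Hence every basic open containing $P'$ contains $P$, so $P' \in \overline{\{P\}}$.

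Next I would prove the converse: if $P'$ is a specialization of $P$ then $P' \subseteq P$. I will argue by contraposition. Suppose $P' \not\subseteq P$, so there is a pair $(a,b) \in P' \setminus P$; then $|a|_{P'} = |b|_{P'}$ but $|a|_P \neq |b|_P$, and without loss of generality $|a|_P < |b|_P$ in $\kappa(P)$. The goal is to produce a basic open set containing $P'$ but not $P$. Since $|a|_P < |b|_P$, in particular $|b|_P \neq 0_{\kappa(P)}$, so $b \notin$ ideal-kernel of $P$. Consider the sets $\basicopen{b}{a}$: this contains $P'$ because $|b|_{P'} = |a|_{P'}$, and we need $|a|_{P'} \neq 0_{\kappa(P')}$ — we must handle the case where $|a|_{P'} = 0_{\kappa(P')}$ separately, but then $|b|_{P'} = 0$ too, which combined with $|b|_P \neq 0$ would contradict the fact that $P'$ and $P$ (if $P' \subseteq P$...) — actually since we are not assuming containment, I instead use $\basicopen{b+a}{b}$ type combinations. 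Concretely: $\basicopen{b}{a}$ contains $P'$ only if $|a|_{P'} \neq 0$; if $|a|_{P'} = |b|_{P'} = 0_{\kappa(P')}$, I would instead note that I can add $1$: replace the pair by considering evaluations of $a, b$ after multiplying through. The cleanest route is to observe that $(a,b)\in P'$ implies $(a+c, b+c)\in P'$ for any $c$, and pick $c$ a unit (e.g.\ $c = 1_A$ or an element of $\base^\times$) to ensure the evaluations are nonzero at both $P$ and $P'$. Then with $a' = a+c$, $b' = b+c$ we have $|a'|_{P'} = |b'|_{P'} \neq 0$ and, since $|a|_P < |b|_P$ with $c$ chosen small enough (using Lemma~\ref{lemma:BasicPropertiesOfPInSpaA}\ref{itemBasicPropertiesOfPInSpaA:OrderProperty} to find $c \in \base^\times$ with $|c|_P < |b|_P$, noting $|b|_P\neq 0$), we still have $|a'|_P < |b'|_P$, so $|b'|_P \not\leq |a'|_P$, giving $P \notin \basicopen{b'}{a'}$ while $P' \in \basicopen{b'}{a'}$. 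This shows $P' \notin \overline{\{P\}}$.

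The main obstacle I anticipate is the bookkeeping around zero values: a basic open $\basicopen{f}{g}$ imposes both an inequality \emph{and} a nonvanishing condition, so I must be careful that when I transport inequalities along $\pi_{P,P'}$ (forward direction) or when I build a separating open set (converse direction) the relevant elements do not evaluate to $0$. The trick of perturbing $(a,b)\in P'$ to $(a+c,b+c)$ with $c$ a suitably small element of $\base^\times$ — which exists by the continuity/order property in Lemma~\ref{lemma:BasicPropertiesOfPInSpaA} and Lemma~\ref{lemma:ContinuityGivesGeorge} — is what makes this clean, and verifying that this perturbation preserves both membership in $P'$ and the strict inequality at $P$ is the only genuinely delicate point; everything else is a direct application of the corollaries on chains of primes already established.
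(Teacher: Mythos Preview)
Your proof is correct. The forward direction ($P'\subseteq P$ implies specialization) is handled exactly as in the paper, via Corollaries~\ref{coro:PrimesInChainHaveSameIdealKernel}, \ref{coro:PrimesInSpaChainInducedMapOnResidueSemifields}, and \ref{coro:MapOnResidueSemifieldsAndInequalities}. For the converse, you argue by contraposition and construct an explicit separating basic open: given $(a,b)\in P'\setminus P$ with $|a|_P<|b|_P$, you perturb to $(a+c,b+c)$ with $c\in\base^\times$ chosen so that $|c|_P<|b|_P$, which simultaneously forces the values at $P'$ to be nonzero and preserves the strict inequality at $P$. The paper instead argues directly: assuming $P'$ is a specialization of $P$ and $(f,g)\in P'$, it handles the nonzero case immediately and, when $|g|_{P'}=0$, adds an \emph{arbitrary} $\eps\in\base^\times$, applies the nonzero case to get $(f+\eps,g+\eps)\in P$ for every $\eps$, and then invokes the closedness of $P$ in $A^2$ (Lemma~\ref{lemma:BasicPropertiesOfPInSpaA}(1)) to pass to the limit $\eps\to 0$. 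Both approaches hinge on the same perturbation idea; yours is more elementary in that it avoids the topological input that $P$ is closed, at the cost of having to choose $c$ carefully relative to $|b|_P$, while the paper's limit argument trades that choice for an appeal to closedness.
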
\begin{proof}
Note that $P'$ is a specialization of $P$ if and only if whenever $P'\in\basicopen{f}{g}$, we also have $P\in\basicopen{f}{g}$. Equivalently, $P'$ is a specialization of $P$ if and only if, for all $f,g\in A$, $|f|_{P'}\leq|g|_{P'}\neq0$ implies $|f|_{P}\leq|g|_{P}\neq0$.

If $P'\subseteq P$, then, by 
Corollaries~\ref{coro:PrimesInChainHaveSameIdealKernel} and \ref{coro:MapOnResidueSemifieldsAndInequalities}, 
whenever $|f|_{P'}\leq|g|_{P'}\neq0$ we have $|f|_{P}\leq|g|_{P}\neq0$, so $P'$ is a specialization of $P$.

Now suppose $P'$ is a specialization of $P$. Fix $(f,g)\in P'$; we wish to show that $(f,g)\in P$. If $|g|_{P'}\neq0$, then $|f|_{P'}\leq|g|_{P'}\neq0$ and $|g|_{P'}\leq|f|_{P'}\neq0$, so $|f|_{P}\leq|g|_{P}$ and $|g|_{P}\leq|f|_{P}$. Thus, $(f,g)\in P$. If, on the other hand, $|g|_{P'}=0$, we still have that, for all $\eps\in\base^\times$, $(f+\eps,g+\eps)\in P'$ and $|g+\eps|_{P'}\neq0$. So, by the previous case, $(f+\eps,g+\eps)\in P$. Since $\dlim_{\eps\to0}(f+\eps,g+\eps)=(f,g)$ and $P$ is closed, we get $(f,g)\in P$.
\end{proof}


\begin{coro}
Let $\base$ be a sub-semifield of $\T$ and let $A$ be a topological $\base$-algebra. Then $\Cont A$ is Kolmogorov, i.e., $\text{T}_0$.
\end{coro}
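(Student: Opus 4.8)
The plan is to deduce this corollary directly from Proposition~\ref{prop:SpecializationsInCont}. Recall that a topological space is Kolmogorov (T$_0$) precisely when topologically indistinguishable points are equal, i.e., when no two distinct points have exactly the same neighborhoods. Equivalently, in terms of the specialization preorder, a space is T$_0$ if and only if the specialization relation is antisymmetric: if $P$ is a specialization of $P'$ and $P'$ is a specialization of $P$, then $P = P'$.

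So the argument is short. First I would let $P, P' \in \Cont A$ be two points that are topologically indistinguishable; then each is a specialization of the other. By Proposition~\ref{prop:SpecializationsInCont}, ``$P'$ is a specialization of $P$'' is equivalent to $P' \subseteq P$, and ``$P$ is a specialization of $P'$'' is equivalent to $P \subseteq P'$. Combining these two containments gives $P = P'$ as subsets of $A^2$, hence as points of $\Cont A$. This shows $\Cont A$ is T$_0$. The same argument applies verbatim to $\ContInt A$ with the subspace topology, since specializations there are inherited from $\Cont A$, but the statement as written only concerns $\Cont A$.

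There is essentially no obstacle here: the entire content has already been extracted into Proposition~\ref{prop:SpecializationsInCont}, whose proof is the part that required the closedness of $P$ and the monotonicity results of Corollaries~\ref{coro:PrimesInChainHaveSameIdealKernel} and~\ref{coro:MapOnResidueSemifieldsAndInequalities}. The only thing to be careful about is to phrase the equivalence between T$_0$ and antisymmetry of the specialization preorder correctly, and to note that a prime congruence, being a subset of $A^2$, is determined by that subset — so mutual inclusion really does force equality of the points, not merely of some coarser invariant.

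\begin{proof}
Recall that a topological space is $\text{T}_0$ if and only if its specialization preorder is a partial order, i.e., is antisymmetric. So suppose $P, P'\in\Cont A$ are such that $P'$ is a specialization of $P$ and $P$ is a specialization of $P'$. By Proposition~\ref{prop:SpecializationsInCont}, the first condition gives $P'\subseteq P$ and the second gives $P\subseteq P'$. Since a prime congruence is a subset of $A\times A$, the two containments force $P=P'$. Thus the specialization preorder on $\Cont A$ is antisymmetric, so $\Cont A$ is $\text{T}_0$.
\end{proof}
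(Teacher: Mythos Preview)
Your proof is correct and follows essentially the same approach as the paper: both argue that topologically indistinguishable points are mutual specializations, then invoke Proposition~\ref{prop:SpecializationsInCont} to convert this into mutual containment of prime congruences, hence equality. Your version is slightly more explicit about the intermediate step $P'\subseteq P$ and $P\subseteq P'$, but the content is identical.
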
\begin{proof}
If $P,P'\in\Cont A$ are topologically indistinguishable, then they are specializations of each other. By Proposition~\ref{prop:SpecializationsInCont}, 
this implies that $P=P'$.
\end{proof}

\subsection{$\Cont$ as a functor}

We now want to consider $\Cont A$ and its variations as functors. To do this, we need to define several categories. 
We denote the category of topological spaces as $\Top$.

\begin{defi}
Let $\base$ be a sub-semifield of $\T$. We define four categories as follows.

%
%
%

\begin{itemize}
\item The objects of $\Alg_\base$ are $\base$-algebras. The morphisms are all $\base$-algebra homomorphisms, i.e., semiring homomorphisms $A\to B$ which make the diagram
\comd{
A\ar[rr]&&B\\
&\base\ar[ur]\ar[ul]
}
commute.

\item The objects of $\TopAlg_\base$ are topological $\base$-algebras. The morphisms are all continuous $\base$-algebra homomorphisms.

\item The objects of $\AlgInt_\base$ are $\base$-algebras. The morphisms are those $\base$-algebra homomorphisms which have trivial ideal-kernel.

\item The objects of $\TopAlgInt_\base$ are topological $\base$-algebras. The morphisms are those continuous $\base$-algebra homomorphisms with trivial ideal-kernel.
\end{itemize}
\end{defi}

\begin{prop}\label{prop:Cont_is_functor}
Let $\base$ be a sub-semifield of $\T$. 
Then $\Cont:\TopAlg_\base\to\Top$ is a contravariant functor. 
\end{prop}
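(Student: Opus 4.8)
The plan is to verify the two functor axioms directly: that $\Cont$ sends a continuous $\base$-algebra homomorphism to a continuous map of topological spaces, and that it respects identities and composition. Let me organize the key steps.

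\medskip

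\textbf{Step 1: Define the map on morphisms.} Given a morphism $\ph:A\to B$ in $\TopAlg_\base$, I would define $\Cont\ph:\Cont B\to\Cont A$ by $Q\mapsto\ph^*(Q)=\ker(A\toup{\ph}B\to\kappa(Q))$, i.e., the pullback congruence. First I must check this is well-defined, i.e., that $\ph^*(Q)\in\Cont A$. Since pullback of a prime congruence along a homomorphism is prime (noted in the excerpt after Definition~\ref{def: prime_domain}), $\ph^*(Q)$ is a prime congruence on $A$, and the induced map $A/\ph^*(Q)\into B/Q$ extends to an injection $\kappa(\ph^*(Q))\into\kappa(Q)$. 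Composing with the continuous map $A\to B\to\kappa(Q)$, we get that $A\to\kappa(\ph^*(Q))$ followed by this injection is continuous; since the injection is an injective homomorphism of totally ordered semifields, Proposition~\ref{prop:ContinuityDependsOnlyOnImage} gives that $A\to\kappa(\ph^*(Q))$ is continuous. It remains to rule out $A/\ph^*(Q)=\B$: since $A$ is a $\base$-algebra with $\base\subseteq\T$ and $\base\neq\B$, and the composite $\base\to A\to\kappa(\ph^*(Q))$ is continuous, Corollary~\ref{coro:ContsHomFromSubsemifieldOfT} (or Lemma~\ref{lemma:BasicPropertiesOfPInSpaA}(2)) forces $\kappa(\ph^*(Q))\neq\B$, hence $\ph^*(Q)\in\Cont A$.

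\medskip

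\textbf{Step 2: Continuity of $\Cont\ph$.} This is the heart of the argument. I would show the preimage of a subbasic open set $\BasicOpen{f}{g}\subseteq\Cont A$ (for $f,g\in A$) is open in $\Cont B$. Unwinding definitions, $Q\in(\Cont\ph)^{-1}(\BasicOpen{f}{g})$ iff $\ph^*(Q)\in\BasicOpen{f}{g}$ iff $|f|_{\ph^*(Q)}\leq|g|_{\ph^*(Q)}\neq0$. Because $\kappa(\ph^*(Q))\into\kappa(Q)$ is an order-embedding and sends $|a|_{\ph^*(Q)}$ to $|\ph(a)|_Q$, this is equivalent to $|\ph(f)|_Q\leq|\ph(g)|_Q\neq 0$, i.e., $Q\in\BasicOpen{\ph(f)}{\ph(g)}$. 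Hence $(\Cont\ph)^{-1}(\BasicOpen{f}{g})=\BasicOpen{\ph(f)}{\ph(g)}$, which is open. Since the $\BasicOpen{f}{g}$ form a basis, $\Cont\ph$ is continuous.

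\medskip

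\textbf{Step 3: Functoriality.} For the identity $\id_A$, clearly $\id_A^*(P)=P$, so $\Cont(\id_A)=\id_{\Cont A}$. For composition, given $A\toup{\ph}B\toup{\psi}C$, I would check $(\psi\circ\ph)^*(Q)=\ph^*(\psi^*(Q))$ for $Q\in\Cont C$, which is immediate from the definition of pullback of congruences: $(\psi\circ\ph\times\psi\circ\ph)^{-1}(Q)=(\ph\times\ph)^{-1}((\psi\times\psi)^{-1}(Q))$. This gives $\Cont(\psi\circ\ph)=\Cont(\ph)\circ\Cont(\psi)$, confirming contravariance.

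\medskip

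The main obstacle is Step 1 — the well-definedness — specifically keeping straight the interplay between the pullback congruence, the induced embedding of residue semifields, and the continuity criteria; once the identification $\kappa(\ph^*(Q))\hookrightarrow\kappa(Q)$ is in hand, Steps 2 and 3 are essentially formal. The excerpt has already supplied all the needed tools (pullback of primes is prime, the universal property of $\Frac$ extending $A/\ph^*(Q)\hookrightarrow B/Q$ to residue semifields, Proposition~\ref{prop:ContinuityDependsOnlyOnImage}, and Corollary~\ref{coro:ContsHomFromSubsemifieldOfT}), so no new ingredient is required.
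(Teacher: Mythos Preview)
Your proposal is correct and follows essentially the same approach as the paper's own proof: pull back the prime congruence, use the induced injection $\kappa(\ph^*(Q))\hookrightarrow\kappa(Q)$ together with Proposition~\ref{prop:ContinuityDependsOnlyOnImage} to get continuity of $A\to\kappa(\ph^*(Q))$, and verify continuity of $\ph^*$ via $(\ph^*)^{-1}(\BasicOpen{f}{g})=\BasicOpen{\ph(f)}{\ph(g)}$. One small caution on ordering: since the paper's convention is that ``totally ordered semifield'' means $\neq\B$, you should establish $\kappa(\ph^*(Q))\neq\B$ \emph{before} invoking Proposition~\ref{prop:ContinuityDependsOnlyOnImage} or Corollary~\ref{coro:ContsHomFromSubsemifieldOfT}; the paper does this by noting that the injective map $\base\to\kappa(Q)$ (from Lemma~\ref{lemma:BasicPropertiesOfPInSpaA} applied to $Q\in\Cont B$) factors through $\kappa(\ph^*(Q))$, forcing $\base\to\kappa(\ph^*(Q))$ to be injective and hence $\kappa(\ph^*(Q))\neq\B$.
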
\begin{proof}

Let $\varphi:A\to B$ be a morphism in $\TopAlg_\base$ and let $P \in \Cont B$. Then $\ph^*(P)$ is a prime congruence on $A$ and that $\ph$ induces an injective homomorphism $A/\ph^*(P)\into B/P$. So to get $\ph^*(P)\in\Cont A$ we just need to show that $A/\ph^*(P)\neq\B$ and the homomorphism $A\to\kappa(\ph^*(P))$ given by $f\mapsto|f|_{\ph^*(P)}$ is continuous.

The injective homomorphism $A/\ph^*(P)\to B/P$ induces an injective homomorphism on the residue semifields $\iota=\iota_P:\kappa(\ph^*(P))\into\kappa(P)$, which makes the diagram
\comd{
&\base\ar[dl]\ar[dr]\\
A\ar[rr]^{\ph}\ar[d]_{\pi_A}&&B\ar[d]^{\pi_B}\\
\kappa(\ph^*(P))\ar[rr]^<>(0.5){\iota}&&\kappa(P)
}
commute. Since $P\in\Cont B$, Lemma~\ref{lemma:BasicPropertiesOfPInSpaA} gives us that the map $\base\to B\to\kappa(P)$ is injective. By the commutativity of the diagram, this map is $\base\to A\to \kappa(\varphi^*(P))\into\kappa(P)$, so we get that $\base\to A\to \kappa(\varphi^*(P))$ is injective. In particular, $\kappa(\varphi^*(P))\neq\B$, so $A/\varphi^*(P)\neq\B$. 

Since $\ph$ and $\pi_B$ are continuous, $\iota\circ\pi_A=\pi_B\circ\ph$ is continuous. Since $\iota$ is an injective homomorphism of totally ordered semifields, Proposition~\ref{prop:ContinuityDependsOnlyOnImage} gives us that $\pi_A$ is continuous. Thus, $\ph^*(P)\in\Cont A$.

We have thus shown that $\ph:A\to B$ induces a function $\ph^*:\Cont B\to\Cont A$. To get that $\ph^*$ is continuous, it suffices to show that, for any $f,g\in A$, $(\ph^*)^{-1}(\basicopen{f}{g})$ is open in $\Cont B$. In fact, we claim that $(\ph^*)^{-1}(\basicopen{f}{g})=\basicopen{\ph(f)}{\ph(g)}$. To see this, consider $P\in\Cont B$. Note that $P\in(\ph^*)^{-1}(\basicopen{f}{g})$ if and only if $|f|_{\ph^*(P)}\leq|g|_{\ph^*(P)}\neq0_{\kappa(\ph^*(P))}$. Since $\iota_P:\kappa(\ph^*(P))\to\kappa(P)$ is an injective homomorphism of totally ordered semifields, $|f|_{\ph^*(P)}\leq|g|_{\ph^*(P)}\neq0_{\kappa(\ph^*(P))}$ if and only if $\iota_P\left(|f|_{\ph^*(P)}\right)\leq\iota_P\left(|g|_{\ph^*(P)}\right)\neq0_{\kappa(P)}$. By the commutativity of the diagram above, this happens if and only if $|\ph(f)|_P\leq|\ph(g)|_P\neq0_{\kappa(P)}$, i.e., if and only if $P\in\basicopen{\ph(f)}{\ph(g)}$. Thus, $(\ph^*)^{-1}(\basicopen{f}{g})=\basicopen{\ph(f)}{\ph(g)}$ and so $\ph^*$ is continuous. 

The verification that the map $\ph\mapsto\ph^*$ makes $\Cont$ into a contravariant functor is routine, and is left to the reader.
\end{proof}

%

\begin{prop}\label{ContInt_is_functor}
Let $\base$ be a sub-semifield of $\T$. Then $\ContInt:\TopAlgInt_\base\to\Top$ is a functor.
\end{prop}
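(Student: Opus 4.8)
The plan is to piggyback on the proof of Proposition~\ref{prop:Cont_is_functor}. By definition, $\ContInt A$ is the subset of $\Cont A$ consisting of primes with trivial ideal-kernel, endowed with the subspace topology, and a morphism in $\TopAlgInt_\base$ is a continuous $\base$-algebra homomorphism $\ph\colon A\to B$ with trivial ideal-kernel. Given such a morphism, Proposition~\ref{prop:Cont_is_functor} already supplies a continuous map $\ph^*\colon\Cont B\to\Cont A$, and the functoriality identities $\id^*=\id$ and $(\psi\circ\ph)^*=\ph^*\circ\psi^*$ hold there. So the only thing to check is that $\ph^*$ restricts to a map $\ContInt B\to\ContInt A$; once that is known, the restriction is automatically continuous for the subspace topologies, and the functoriality identities are inherited by restriction.

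First I would take $P\in\ContInt B$, so the ideal-kernel of $P$ is $\{0_B\}$, and show that the ideal-kernel of $\ph^*(P)$ is $\{0_A\}$. Suppose $a\in A$ lies in the ideal-kernel of $\ph^*(P)$, i.e.\ $(a,0_A)\in\ph^*(P)=(\ph\times\ph)^{-1}(P)$. Then $(\ph(a),\ph(0_A))=(\ph(a),0_B)\in P$, so $\ph(a)$ is in the ideal-kernel of $P$, hence $\ph(a)=0_B$. Thus $a$ is in the ideal-kernel of $\ph$, which is trivial by hypothesis, so $a=0_A$. This shows $\ph^*(P)\in\ContInt A$, and therefore $\ph^*$ restricts to a function $\ContInt B\to\ContInt A$.

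The remaining points are routine: this restricted function is continuous because it is the restriction of the continuous map $\ph^*\colon\Cont B\to\Cont A$ to the subspaces $\ContInt B$ and $\ContInt A$ (a preimage of a relatively open set is relatively open), and the assignment $\ph\mapsto\ph^*$ respects identities and composition because it does so in $\Cont$ and both formation of $\ph^*$ and restriction to the $\ContInt$ subspaces are compatible with composition. I do not anticipate a genuine obstacle here; the one thing worth stating explicitly is that morphisms in $\TopAlgInt_\base$ are required to have trivial ideal-kernel, which is exactly what makes the computation $\ph(a)=0_B\Rightarrow a=0_A$ go through — without that hypothesis $\ph^*$ need not preserve triviality of the ideal-kernel.
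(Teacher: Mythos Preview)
Your proposal is correct and follows essentially the same approach as the paper: restrict $\Cont$ to $\TopAlgInt_\base$ and then verify that if $P\in\ContInt B$ and $\ph$ has trivial ideal-kernel, then $\ph^*(P)$ has trivial ideal-kernel, via the chain $a\in\text{ideal-kernel of }\ph^*(P)\Rightarrow\ph(a)\in\text{ideal-kernel of }P\Rightarrow\ph(a)=0_B\Rightarrow a=0_A$. The paper phrases this using the maps to the residue semifields $\pi_A,\pi_B$ and the commutative square, while you work directly with the congruence $(\ph\times\ph)^{-1}(P)$, but the content is identical.
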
\begin{proof}
By restricting $\Cont$ to $\TopAlgInt_\base$ we get a functor $\Cont:\TopAlgInt_\base\to\Top$. It suffices to show that, if $\ph:A\to B$ is a morphism in $\TopAlgInt_\base$ and $P\in\ContInt B$ then $\ph^*(P)\in\ContInt A$. Since we already have $\ph^*(P)\in\Cont A$, we just need to show that $\ph^*(P)$ has trivial ideal-kernel.

Consider the commutative diagram
\comd{
A\ar[r]^{\ph}\ar[d]_{\pi_A}&B\ar[d]^{\pi_B}\\
\kappa(\ph^*(P))\ar[r]^<>(0.5){\iota}&\kappa(P).
}
Since $\ph^*(P)$ is the congruence-kernel of $\pi_A$, we want to show that the ideal-kernel of $\pi_A$ is trivial. So suppose $\pi_A(a)=0$ for some $a\in A$. Then $\pi_B(\ph(a))=\iota(\pi_A(a))=\iota(0)=0$. Since $P\in\ContInt B$ the ideal-kernel of $\pi_B$ is trivial, so $\ph(a)=0$. But, as $\ph$ is a morphism of $\TopAlgInt_\base$, the ideal-kernel of $\ph$ is trivial. So $a=0$.
\end{proof}

\begin{prop}
Let $\base$ be a sub-semifield of $\T$. Then $\ContBase{\base}:\Alg_{\base}\to\Top$ is a functor and $\ContIntBase{\base}:\AlgInt_\base\to\Top$ is a sub-functor of $\ContBase{\base}:\Alg_{\base}\to\Top$.
\end{prop}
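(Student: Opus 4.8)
The plan is to reduce the statement to the functoriality results already established for $\Cont$ and $\ContInt$, namely Propositions~\ref{prop:Cont_is_functor} and~\ref{ContInt_is_functor}, by exploiting the definition $\ContBase{\base}A=\Cont(A,\tau)$ where $\tau$ is the universal topology of Theorem~\ref{thm:UniversalTopOnSemiring} attached to the structure map $\base\to A$. The key point is that this universal topology is functorial in an appropriate sense: if $\ph:A\to B$ is an $\base$-algebra homomorphism, and $\tau_A$, $\tau_B$ are the corresponding universal topologies, then $\ph:(A,\tau_A)\to(B,\tau_B)$ is continuous. Once this is known, $\ContBase{\base}$ is literally the composite of the (non-full) inclusion $\Alg_\base\to\TopAlg_\base$ sending $A\mapsto(A,\tau_A)$ and $\ph\mapsto\ph$ with the functor $\Cont:\TopAlg_\base\to\Top$, hence is itself a functor; similarly $\ContIntBase{\base}$ is the composite of $\AlgInt_\base\to\TopAlgInt_\base$ with $\ContInt$.

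So the first and main step is to verify that $\ph:(A,\tau_A)\to(B,\tau_B)$ is continuous for every $\base$-algebra homomorphism $\ph$. This follows from the universal property in Theorem~\ref{thm:UniversalTopOnSemiring} applied with $X=\base$, $\psi$ the structure map $\base\to A$: the theorem says $\ph:(A,\tau_A)\to(B,\tau_B)$ is continuous if and only if the composition $\base\to A\toup{\ph}(B,\tau_B)$ is continuous. But $\base\to A\toup{\ph}B$ equals the structure map $\base\to B$, which is continuous into $(B,\tau_B)$ by the ``moreover'' clause of Theorem~\ref{thm:UniversalTopOnSemiring}. Hence $\ph$ is continuous. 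I would state this as a short lemma (or inline observation), since it is the only content beyond invoking the prior propositions. Note also that $\ph$ having trivial ideal-kernel is a purely algebraic condition unaffected by the choice of topology, so a morphism in $\AlgInt_\base$ gives a morphism in $\TopAlgInt_\base$ after equipping source and target with their universal topologies.

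With that lemma in hand, the proof is then just bookkeeping: the assignment $A\mapsto(A,\tau_A)$, $\ph\mapsto\ph$ defines a functor $U\colon\Alg_\base\to\TopAlg_\base$ (functoriality of $U$ on objects and morphisms is immediate once continuity is known, and it preserves identities and composites because the underlying maps and algebra structures are unchanged). Then $\ContBase{\base}=\Cont\circ U$ is a composite of functors, hence a functor $\Alg_\base\to\Top$, by Proposition~\ref{prop:Cont_is_functor}. Restricting $U$ to $\AlgInt_\base$ lands in $\TopAlgInt_\base$ by the ideal-kernel remark, giving $U^\circ\colon\AlgInt_\base\to\TopAlgInt_\base$, and $\ContIntBase{\base}=\ContInt\circ U^\circ$ is a functor by Proposition~\ref{ContInt_is_functor}. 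Finally, that $\ContIntBase{\base}$ is a subfunctor of $\ContBase{\base}$ follows because on objects $\ContIntBase{\base}A\subseteq\ContBase{\base}A$ by definition, and on morphisms the map $\ph^*$ is in both cases the restriction-of-scalars (pullback of prime congruences) map, so the relevant square commutes on the nose; this mirrors exactly the way $\ContInt$ is a subfunctor of $\Cont$ in Proposition~\ref{ContInt_is_functor}.

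I do not anticipate a serious obstacle here — the genuine mathematical work was done in Theorem~\ref{thm:UniversalTopOnSemiring} and in the two preceding functoriality propositions. The only mildly delicate point is being careful that the universal topology $\tau_A$ really is the one used to define $\ContBase{\base}A$ and that it is induced by the structure map $\base\to A$ rather than some other data; this is exactly the content of the definition preceding Lemma~\ref{lemma:ContinuityGivesGeorge}, so there is nothing to check beyond matching definitions. If anything requires a sentence of care it is the verification that $U$ respects composition, but since $U$ does not change underlying sets, maps, or algebra operations, this is immediate.
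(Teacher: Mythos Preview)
Your proposal is correct and follows essentially the same approach as the paper: both reduce to showing that $A\mapsto(A,\tau_A)$ is a functor $\Alg_\base\to\TopAlg_\base$ (with identity on underlying maps), verify continuity of $\ph:(A,\tau_A)\to(B,\tau_B)$ via the universal property of $\tau_A$ applied to the structure map $\base\to B$, and then compose with the already-established functors $\Cont$ and $\ContInt$.
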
\begin{proof}
For any $\base$-algebra $A$, let $\tau_A$ be the universal topology on $A$ defined in Theorem \ref{thm:UniversalTopOnSemiring} and applied to the function $\base\to A$. It suffices to show that $A\mapsto(A,\tau_A)$ is a functor $\Alg_\base\to\TopAlg_\base$ with the map on morphisms being the identity map, for then it is also a functor $\AlgInt_\base\to\TopAlgInt_\base$, $\ContBase{\base}$ is the composition $\Alg_\base\toup{(\bullet,\tau_{\bullet})}\TopAlg_\base\toup{\Cont}\Top$, and $\ContIntBase{\base}$ is the composition
$\AlgInt_\base\toup{(\bullet,\tau_{\bullet})}\TopAlgInt_\base\toup{\ContInt}\Top$.

Say $\ph:A\to B$ is a $\base$-algebra homomorphism. Since $\base\to(A,\tau_A)\toup{\ph}(B,\tau_B)$ is the structure map of $(B,\tau_B)$ as a topological $\base$-algebra, it is continuous. So by the defining property of $\tau_A$, the map $(A,\tau_A)\to(B,\tau_B)$ is continuous.
\end{proof}

\begin{prop}\label{prop:PullbackFromContToContBase}
Let $\base$ be a sub-semifield of $\T$ and let $\ph:A\to B$ be a $\base$-algebra homomorphism with $B$ a topological $\base$-algebra. Then $\ph$ induces a continuous map $\ph^*:\Cont B\to\ContBase{\base} A$. Moreover, if $\ph$ has trivial ideal-kernel, then it also induces a continuous map $\ph^*:\ContInt B\to\ContIntBase{\base} A$.
\end{prop}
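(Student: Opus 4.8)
The plan is to reduce everything to the functoriality results already established, namely Propositions~\ref{prop:Cont_is_functor} and \ref{ContInt_is_functor}, together with the universal property of the topology $\tau_A$ from Theorem~\ref{thm:UniversalTopOnSemiring}. Let $\tau_A$ be the canonical topology on $A$ obtained by applying Theorem~\ref{thm:UniversalTopOnSemiring} to the function $\base\to A$, so that $\ContBase{\base}A=\Cont(A,\tau_A)$ by definition. The key observation is that $\ph:A\to B$ becomes a \emph{continuous} homomorphism once we equip $A$ with $\tau_A$: indeed, the composition $\base\to A\toup{\ph}B$ is the structure map of $B$ as a topological $\base$-algebra, hence continuous, so by the defining property of $\tau_A$ the map $\ph:(A,\tau_A)\to B$ is continuous. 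Thus $\ph$ is a morphism in $\TopAlg_\base$.

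Having done this, the first claim is immediate: $\Cont:\TopAlg_\base\to\Top$ is a contravariant functor by Proposition~\ref{prop:Cont_is_functor}, so it sends the morphism $\ph:(A,\tau_A)\to B$ to a continuous map $\ph^*:\Cont B\to\Cont(A,\tau_A)=\ContBase{\base}A$. For the second claim, suppose in addition that $\ph$ has trivial ideal-kernel. Then $\ph:(A,\tau_A)\to B$ is a morphism in $\TopAlgInt_\base$, so by Proposition~\ref{ContInt_is_functor} it induces a continuous map $\ph^*:\ContInt B\to\ContInt(A,\tau_A)=\ContIntBase{\base}A$. One should also note for completeness that this $\ph^*$ is the restriction of the map from the first part, which is clear since in both cases $\ph^*$ is the pullback-of-congruences map $P\mapsto\ph^*(P)$ and the topologies involved are subspace topologies.

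There is essentially no obstacle here; the only point requiring a moment's care is the verification that $\ph$ is continuous for the topology $\tau_A$, and this is exactly the argument already used in the proof that $(\bullet,\tau_\bullet):\Alg_\base\to\TopAlg_\base$ is a functor in the preceding proposition. Everything else is a direct invocation of the functors $\Cont$ and $\ContInt$ already constructed. I would therefore write the proof in two short paragraphs mirroring the two assertions, citing Theorem~\ref{thm:UniversalTopOnSemiring} for the continuity of $\ph$ and Propositions~\ref{prop:Cont_is_functor} and \ref{ContInt_is_functor} for the existence and continuity of the induced maps.
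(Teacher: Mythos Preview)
Your proof is correct and essentially equivalent to the paper's. The only organizational difference is that the paper first observes the inclusion $\Cont B\subseteq\ContBase{\base}B$ (since continuity of $B\to\kappa(P)$ forces continuity of the composite $\base\to B\to\kappa(P)$) and then restricts the already-established functorial map $\ph^*:\ContBase{\base}B\to\ContBase{\base}A$, whereas you equip $A$ with $\tau_A$ directly and invoke the functoriality of $\Cont$; these amount to the same thing, since the proof of the preceding proposition builds $\ContBase{\base}$ precisely by the route you describe.
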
\begin{proof}
Note that $\Cont B\subseteq\ContBase{\base} B$. Indeed, if $B\to\kappa(P)$ is continuous then the composition $\base\to B\to\kappa(P)$ is continuous. So the map $\ph^*:\Cont B\to\ContBase{\base} A$ is just the restriction of $\ph^*:\ContBase{\base} B\to \ContBase{\base} A$. The proof for $\ph^*:\ContInt B\to\ContIntBase{\base} A$ is the same.
\end{proof}


\section{The semiring $\Cnvg{P}$ of power series convergent at a prime $P$}\label{sect: Cnvg}

Let $\base\neq\B$ be a totally ordered semifield and let $\Mon$ be a monoid. 
We 
define the set of series with coefficients in $\base$ and exponents in $\Mon$ to be 
$$\GeneralSeries{\base}{\Mon}:= \left\{ f = \sum_{u\in \Mon} f_{u} \chi^{u}\ :\  f_{u} \in \base \right\}.$$
If $\base=\T$ and $\Mon=\Z^n$, we view this as an infinite analogue of the tropical Laurent polynomial semiring. However, unlike the set of tropical Laurent polynomials, this set of series does not form a semiring. It does, however, form an $\T$-semimodule with term-by-term addition.
On the other hand, if $\Mon=\N^n$ then $\GeneralSeries{\base}{\Mon}=\GeneralSeries{\base}{x_1,\ldots,x_n}$ is an $\base$-algebra. If $\base=\T$ then this semiring is called the semiring of \textit{tropical power series}.

\begin{definition}\label{def:convergent}
%
%
Let $\base(\neq\B)$ be a totally ordered semifield and $\Mon$ be a monoid. Let $P$ be a prime congruence on $\base[\Mon]$. A series $f=\sum_{u\in \Mon} f_{u} \chi^{u} \in \GeneralSeries{\base}{\Mon}$ {\it converges} at $P$ if for any $b \in \base[\Mon]/P$, $b \neq 0_{\base}$, the set 
$$\{ u\in \Mon \;:\; |f_{u}\chi^{u}|_P \geq b \}$$
is finite.
We denote the 
set of series that converge at $P$ 
by $\Cnvg{P}$.
\end{definition}

By definition, every polynomial converges at $P$. In general, the set $\Cnvg{P}$ contains infinite sums, as in the following examples. 

\begin{example}
Let $\base = \TT$, $\Mon = \ZZ$ and let $P$ be the prime congruence with defining matrix $\begin{pmatrix} 1&0 \end{pmatrix}$. Then 
$$\Cnvg{P} = \left\{ f = \sum_{u\in \Mon} f_{u} \chi^{u} \;:\; f_u \to 
0_{\T}
\text{ as } u \to \pm \infty \right\}.$$\exEnd\end{example}

\begin{example}
Let $\base = \TT$, $\Mon = \N$ and let $P$ be the prime congruence with defining matrix $\begin{pmatrix} 1& -\infty \end{pmatrix}$. Then $\Cnvg{P} =  \GeneralSeries{\T}{x}$, the semiring of tropical power series. \exEnd\end{example}

\begin{example}
Let $\base = \TT$, $\Mon = \N^n$ and let $P$ be the prime congruence with defining matrix $\begin{pmatrix} 1& 0&\cdots&0 \end{pmatrix}$. Then 
$$\Cnvg{P} =\left\{\dsum_{u\in\N^n}f_{u}x^{u} \in\GeneralSeries{\T}{x_1,\ldots,x_n}\;:\;f_u\to0_{\T}\text{ as } |u|\to\infty \right\}$$ 
is the \emph{tropical Tate algebra}. \exEnd\end{example}

Note that, in general, $\Cnvg{P}$ is an $\base$-sub-semimodule of $\GeneralSeries{\base}{\Mon}$. 
\begin{lemma}\label{lemma:CnvgInChain}
Let $\base$ be a totally ordered semifield and let $\Mon$ be a monoid. Let $A=\base[\Mon]$ and suppose that $P,P'\in\ContBase{\base} A$ satisfy $P\subseteq P'$. Then $\cnvg{P}=\cnvg{P'}$.
\end{lemma}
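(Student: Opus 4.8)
The plan is to unwind the definition of convergence and reduce the claimed equality $\Cnvg{P}=\Cnvg{P'}$ to a statement comparing the orders $\leq_P$ and $\leq_{P'}$ on terms. Since $P\subseteq P'$ and both lie in $\ContBase{\base}A$, Corollary~\ref{coro:PrimesInChainHaveSameIdealKernel} tells us that $P$ and $P'$ have the same ideal-kernel, so Corollary~\ref{coro:PrimesInChainInInterior} (or rather Corollary~\ref{coro:PrimesInSpaChainInducedMapOnResidueSemifields}) provides a surjective homomorphism $\pi=\pi_{P',P}:\kappa(P)\to\kappa(P')$ with trivial ideal-kernel fitting into the evident commutative triangle with $|\cdot|_P$ and $|\cdot|_{P'}$. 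In particular, for a term $m=f_u\chi^u$ we have $|m|_{P'}=0$ if and only if $|m|_P=0$ (both happen exactly when $f_u=0_\base$, since the ideal-kernels of $P,P'$ are trivial in the relevant sense — more precisely $\pi$ has trivial ideal-kernel so $|m|_P\ne0\Rightarrow|m|_{P'}=\pi(|m|_P)\ne0$, and conversely $|m|_{P'}\ne0\Rightarrow|m|_P\ne0$ trivially).

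The key comparison is the following: for terms $m,m'$ of $\base[\Mon]$, $|m|_P\le|m'|_P$ holds if and only if $|m|_{P'}\le|m'|_{P'}$. The forward implication is Corollary~\ref{coro:MapOnResidueSemifieldsAndInequalities}(1) applied to $\pi$. For the reverse, first I would handle the degenerate cases where one side is $0$ using the ideal-kernel remarks above; then, assuming all four evaluations are nonzero, I would argue by contradiction: if $|m'|_P<|m|_P$ then by Corollary~\ref{coro:MapOnResidueSemifieldsAndInequalities}(1) $|m'|_{P'}\le|m|_{P'}$, and combined with $|m|_{P'}\le|m'|_{P'}$ this forces $|m|_{P'}=|m'|_{P'}$, i.e.\ $\pi(|m|_P)=\pi(|m'|_P)$. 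Here I want to conclude $|m|_P=|m'|_P$, contradicting $|m'|_P<|m|_P$. This needs injectivity of $\pi$ on the relevant pair, which is the point where I expect the real work to be; see below.

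Granting that term-order comparisons agree for $P$ and $P'$, the lemma follows formally. Fix $f=\sum_{u}f_u\chi^u$. Using the Hahn-type embeddings (Proposition~\ref{prop:HahnForSpaSemifields}, or just Lemma~\ref{lemma:ContinuityGivesGeorge}), the set $\{u : |f_u\chi^u|_P\ge b\}$ can be tested against elements $b=|c|_P$ for $c\in\base^\times$, since every nonzero element of $\kappa(P)$ is bounded below by $|c|_P$ for suitable $c\in\base^\times$ and bounded above similarly; so $f$ converges at $P$ iff for every $c\in\base^\times$ the set $\{u : |f_u\chi^u|_P\ge|c|_P\}$ is finite. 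By the term-comparison equivalence, $|f_u\chi^u|_P\ge|c|_P \iff |f_u\chi^u|_{P'}\ge|c|_{P'}$, so the two finiteness conditions — and hence convergence at $P$ and at $P'$ — coincide. I would spell out the reduction from arbitrary $b\in\base[\Mon]/P$ to $b$ of the form $|c|_P$, $c\in\base^\times$, as a short preliminary observation (monotone cofinality of $\base^\times$ below in $\kappa(P)^\times$, which is exactly Lemma~\ref{lemma:BasicPropertiesOfPInSpaA}\ref{itemBasicPropertiesOfPInSpaA:OrderProperty}, together with the symmetric statement obtained by taking inverses).

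**The main obstacle.** The crux is the reverse direction of the term-order comparison, which amounts to showing $\pi=\pi_{P',P}$ is injective on the image of $\base[\Mon]$ — equivalently that $P$ and $P'$ induce the \emph{same} preorder on terms, not merely that $P'$ is coarser. A priori passing from $P$ to the larger congruence $P'$ could collapse some strict inequality $|m'|_P<|m|_P$ to an equality. The honest fix is to observe that it suffices to prove this when one of $m,m'$ is a unit $\chi^v$ scaled by an element of $\base^\times$ — no: rather, I expect the cleanest route is to note that since $\base\to\kappa(P)$ and $\base\to\kappa(P')$ are both injective (Lemma~\ref{lemma:ContinuityGivesGeorge}) and compatible via $\pi$, for terms $m=f_u\chi^u,\,m'=f_{u'}\chi^{u'}$ with both nonzero we may divide by $m'$ in $\kappa(P)$ and reduce to comparing a single element $|m/m'|_P\in\kappa(P)^\times$ with $1$; the claim becomes: $\pi(x)=1_{\kappa(P')}$ with $x\in\kappa(P)^\times$ arising from a quotient of terms implies $x=1_{\kappa(P)}$. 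This is where I would either invoke a structural fact about $\Cont A$ for monoid algebras (e.g.\ via defining matrices in the $\N^n,\Z^n$ case after reduction, using that $\ker\pi$ is a convex subgroup of $\kappa(P)^\times$ trivial on $\base^\times$, and that such quotients of terms generate) or, more likely, simply cite Corollary~\ref{coro:MapOnResidueSemifieldsAndInequalities} in both directions directly — part~(2) of that corollary gives $\pi(a)<\pi(b)\Rightarrow a<b$, and applying it with the roles reversed to $a,b$ and to $b,a$, together with totality of the order on $\kappa(P)$, yields the biconditional $a\le b\iff\pi(a)\le\pi(b)$ on the nose. I suspect the intended proof is exactly this last observation, so the ``obstacle'' dissolves once one realizes Corollary~\ref{coro:MapOnResidueSemifieldsAndInequalities} is already a two-sided statement; the remaining content is the routine reduction of arbitrary test elements $b$ to scalars, handled by Lemma~\ref{lemma:BasicPropertiesOfPInSpaA}.
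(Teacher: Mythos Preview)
Your proposed biconditional $a\le b\iff\pi(a)\le\pi(b)$ does \emph{not} follow from Corollary~\ref{coro:MapOnResidueSemifieldsAndInequalities}, and in fact it is false whenever $P\subsetneq P'$. Parts (1) and (2) of that corollary handle the cases $\pi(a)<\pi(b)$ and $\pi(a)>\pi(b)$, but say nothing when $\pi(a)=\pi(b)$ with $a\neq b$; this is exactly the situation where $\pi$ collapses a strict inequality. Concretely, take $\Mon=\Z$, let $P$ have defining matrix $\begin{pmatrix}1&0\\0&1\end{pmatrix}$, and let $P'$ have defining matrix $\begin{pmatrix}1&0\end{pmatrix}$. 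Then $|x^{-1}|_{P'}=|1_\base|_{P'}$, so $|x^{-1}|_{P'}\ge|1_\base|_{P'}$ holds, yet $|x^{-1}|_P<|1_\base|_P$. Hence the sets $\{u:|f_u\chi^u|_P\ge|c|_P\}$ and $\{u:|f_u\chi^u|_{P'}\ge|c|_{P'}\}$ need not coincide for fixed $c\in\base^\times$, and your term-comparison equivalence breaks down.

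The paper's proof uses the same reduction to scalar test elements that you identify, but avoids the biconditional by threading a \emph{strict} inequality through the argument so that only the one-way implications of Corollary~\ref{coro:MapOnResidueSemifieldsAndInequalities} are needed. For $\Cnvg{P}\subseteq\Cnvg{P'}$: given nonzero $b\in A/P'$, choose $a\in\base^\times$ with $|a|_{P'}<b$; then
\[
\{u:|f_u\chi^u|_{P'}\ge b\}\subseteq\{u:|f_u\chi^u|_{P'}>|a|_{P'}\}\subseteq\{u:|f_u\chi^u|_P>|a|_P\},
\]
the second inclusion being part~(2) of the corollary. For $\Cnvg{P'}\subseteq\Cnvg{P}$: given nonzero $b\in A/P$, choose $a\in\base^\times$ with $|a|_P<b$; then
\[
\{u:|f_u\chi^u|_P\ge b\}\subseteq\{u:|f_u\chi^u|_P\ge|a|_P\}\subseteq\{u:|f_u\chi^u|_{P'}\ge|a|_{P'}\},
\]
the second inclusion being part~(1). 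Your outline becomes correct once you replace set equality by these one-sided containments.
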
\begin{proof}
Since $P\subseteq P'$, Corollary~\ref{coro:MapOnResidueSemifieldsAndInequalities} gives us that, for $g,h\in A$, if $|g|_{P}\geq|h|_{P}$ then $|g|_{P'}\geq|h|_{P'}$, and if $|g|_{P'}>|h|_{P'}$ then $|g|_{P}>|h|_{P}$. 

Let $f\in\GeneralSeries{\base}{\Mon}$ be a series $f=\dsum_{u\in \Mon}f_u \chi^u$ that converges at $P$. For any $b\in(A/P')\sdrop\{0_{A}\}\subseteq\kappa(P')^\times$, by Lemma~\ref{lemma:ContinuityGivesGeorge} we know that there is some $a\in \base^\times$ such that $|a|_{P'}<b$. So
\begin{align*}
\{u\in M\;:\;|f_u\chi^u|_{P'}\geq b\}&\subseteq\{u\in M\;:\; |f_u\chi^u|_{P'}>|a|_{P'}\}\\
&\subseteq\{u\in M\ :\ |f_u\chi^u|_{P}>|a|_{P}\},
\end{align*}
which is finite by the convergence of $f$ at $P$.

Suppose that $f$ converges at $P'$. If $b\in(A/P)\sdrop\{0_A\}\subseteq\kappa(P)^\times$, by Lemma \ref{lemma:BasicPropertiesOfPInSpaA} there is some $a\in \base^\times$ with $|a|_{P}<b$. Then 
\begin{align*}
\{u\in \Mon\;:\; |f_u \chi^u|_{P}\geq b\} &\subseteq\{u\in \Mon\;:\; |f_u \chi^u|_{P}\geq |a|_{P}\}\\
&\subseteq\{u\in M\;:\;|f_u \chi^u|_{P'}\geq |a|_{P'}\},
\end{align*} and because $|a|_{P'}\neq 0_{A/P'}$, the convergence of $f$ at $P'$ gives us that this last set is finite.
\end{proof}

\begin{lemma}\label{lemma:CnvgDefWithResidueSemifield}
Let $\base$ be a totally ordered semifield, let $\Mon$ be a monoid, and let $P\in\ContBase{\base}\base[\Mon]$. Then $f=\sum_{u\in\Mon}f_u\chi^u\in\GeneralSeries{\base}{\Mon}$ converges at $P$ if and only if for every element $b\in\kappa(P)^\times$ the set $\{u\in\Mon\,:\,|f_u\chi^u|_{P}\geq b\}$ is finite.
\end{lemma}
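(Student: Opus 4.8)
The plan is to relate the proposed criterion — finiteness of $\{u\in\Mon : |f_u\chi^u|_P\geq b\}$ for every $b\in\kappa(P)^\times$ — to Definition~\ref{def:convergent}, which tests this finiteness only for $b$ ranging over the nonzero elements of $\base[\Mon]/P$. The whole point is that $\base[\Mon]/P$ sits inside $\kappa(P)$ as a sub-semiring whose nonzero elements become units, and that the hypothesis $P\in\ContBase{\base}\base[\Mon]$ forces this sub-semiring to contain elements $\leq$ any prescribed $b\in\kappa(P)^\times$.

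First I would record the two facts just alluded to. Since $P$ is prime, $\base[\Mon]/P$ is cancellative, so every nonzero element of $\base[\Mon]/P$ becomes a unit in its total semiring of fractions $\kappa(P)$; thus the canonical inclusion $\base[\Mon]/P\into\kappa(P)$ carries $(\base[\Mon]/P)\sdrop\{0\}$ into $\kappa(P)^\times$. Next, by Theorem~\ref{thm:UniversalTopOnSemiring} the membership $P\in\ContBase{\base}\base[\Mon]$ entails that $\base\to\base[\Mon]\toup{|\cdot|_P}\kappa(P)$ is continuous, so Lemma~\ref{lemma:ContsHomOfTotOrdSemifields} applies to this homomorphism of totally ordered semifields and gives: for every $a\in\kappa(P)^\times$ there is some $s\in\base^\times$ with $|s|_P\leq a$. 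Note that the image of such an $s$ in $\base[\Mon]/P$ is a unit (it has inverse the image of $s^{-1}$), hence is a nonzero element of $\base[\Mon]/P$.

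With these in hand the two implications are short. If the finiteness condition holds for every $b\in\kappa(P)^\times$, then in particular it holds for every $b\in(\base[\Mon]/P)\sdrop\{0\}\subseteq\kappa(P)^\times$, which is exactly convergence at $P$ in the sense of Definition~\ref{def:convergent}. Conversely, suppose $f$ converges at $P$ and fix $b\in\kappa(P)^\times$; choose $s\in\base^\times$ with $|s|_P\leq b$ as above. Then
$$\{u\in\Mon : |f_u\chi^u|_P\geq b\}\subseteq\{u\in\Mon : |f_u\chi^u|_P\geq|s|_P\},$$
and the right-hand set is finite because $|s|_P$ is a nonzero element of $\base[\Mon]/P$ and $f$ converges at $P$. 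A subset of a finite set is finite, so the left-hand set is finite as well.

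I do not expect a genuine obstacle here: the argument is essentially an unwinding of definitions. The only step needing care is the bookkeeping that identifies $|s|_P$ with a nonzero (in fact invertible) element of $\base[\Mon]/P$ — which is precisely where primeness of $P$ and the membership $P\in\ContBase{\base}\base[\Mon]$ (through continuity of $\base\to\kappa(P)$) genuinely enter — and, if one wishes to be fully careful, checking that $\kappa(P)$ and $\base$ are both totally ordered semifields so that Lemma~\ref{lemma:ContsHomOfTotOrdSemifields} is applicable.
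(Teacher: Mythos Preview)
Your proof is correct and follows essentially the same route as the paper: one direction is immediate from the inclusion $(\base[\Mon]/P)\sdrop\{0\}\subseteq\kappa(P)^\times$, and for the other you use the cofinality property (for every $b\in\kappa(P)^\times$ there is $s\in\base^\times$ with $|s|_P\leq b$) coming from $P\in\ContBase{\base}\base[\Mon]$, then conclude by set inclusion. Your write-up is a bit more explicit about why that cofinality holds (citing Lemma~\ref{lemma:ContsHomOfTotOrdSemifields}), but the argument is the same.
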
\begin{proof}
Since the map $\base[\Mon]/P\to\kappa(P)$ is injective, the ``only if'' direction is clear. Let $f$ converge at $P$. Given $b\in\kappa(P)^\times$, there is some $s\in \base^\times$ such that $|s|_{P}\leq b$. This implies that $\{ u\in \Mon \,:\, |f_{u}\chi^{u}|_P \geq b \}\subseteq \{ u\in \Mon \,:\, |f_{u}\chi^{u}|_P \geq |s|_{P} \}$, but the bigger set is finite because $|s|_P\in(\base[\Mon]/P)\sdrop\{0_{\base}\}$.
\end{proof}

\begin{lemma}\label{lemma:AbsoluteValueOfConvergentSeriesMakesSense}
Let $\base$ be a totally ordered semifield, let $\Mon$ be a monoid, and let $P$ be a prime on $\base[\Mon]$. For any $f=\dsum_{u\in \Mon}f_u\chi^u\in\Cnvg{P}$ and any nonempty subset $X\subseteq\Mon$, 
there is a maximal element $\dsum_{u\in X}|f_u\chi^u|_{P}$ of the set $\{|f_u\chi^u|_{P}\,:\,u\in X\}$. 
Furthermore, the set $\{v\in X\,:\,|f_v\chi^v|_{P}=\dsum_{u\in X}|f_u\chi^u|_{P}\}$ is finite.
\end{lemma}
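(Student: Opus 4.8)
The plan is to reduce the statement to the ordinary fact that a nonempty subset of a totally ordered set which meets every ``cofinal tail'' only finitely often has a maximum, together with the convergence hypothesis on $f$. First I would reduce to the case $X = \Mon$, or rather simply work with an arbitrary nonempty $X \subseteq \Mon$ directly, since nothing in the argument uses the monoid structure; convergence at $P$ only controls how the $|f_u\chi^u|_P$ are distributed in the totally ordered semifield $\kappa(P)$. The key point is that, by Lemma~\ref{lemma:CnvgDefWithResidueSemifield}, for every $b \in \kappa(P)^\times$ the set $\{u \in \Mon : |f_u\chi^u|_P \geq b\}$ is finite; intersecting with $X$, the set $S_b := \{u \in X : |f_u\chi^u|_P \geq b\}$ is finite for every $b \in \kappa(P)^\times$.

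Next I would handle a small degenerate case: if $|f_u\chi^u|_P = 0_{\kappa(P)}$ for every $u \in X$, then the set $\{|f_u\chi^u|_P : u \in X\} = \{0_{\kappa(P)}\}$, whose maximum is $0_{\kappa(P)}$, and the set of $v$ achieving it is all of $X$ --- but wait, this need not be finite, so I must be slightly more careful. Actually the issue does not arise: if some $|f_{u_0}\chi^{u_0}|_P \neq 0$, pick $b = |f_{u_0}\chi^{u_0}|_P \in \kappa(P)^\times$; then $S_b$ is a nonempty finite subset of $X$, and since it is finite and totally ordered it has a maximal element $v_0$, i.e.\ $|f_{v_0}\chi^{v_0}|_P$ is maximal among $\{|f_u\chi^u|_P : u \in S_b\}$. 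For $u \in X \setminus S_b$ we have $|f_u\chi^u|_P < b \leq |f_{v_0}\chi^{v_0}|_P$, so $|f_{v_0}\chi^{v_0}|_P$ is in fact maximal among all of $\{|f_u\chi^u|_P : u \in X\}$; this is the desired maximal element, and I would denote it $\dsum_{u \in X}|f_u\chi^u|_P$. The only genuinely missing case is when every $|f_u\chi^u|_P = 0$; but then the set $\{|f_u\chi^u|_P : u \in X\}$ is the singleton $\{0_{\kappa(P)}\}$ which trivially has $0_{\kappa(P)}$ as its (unique, hence maximal) element, so the first assertion holds in all cases. I should note, however, that in this degenerate case the ``furthermore'' clause would fail; so I expect the intended reading is that $X$ is such that not all terms vanish, or the notation $\dsum_{u\in X}$ is only used when it is the supremum of nonzero terms --- I would add a sentence clarifying that when all $|f_u\chi^u|_P$ vanish the finiteness claim is vacuous/excluded, matching how the notation is used downstream.

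For the ``furthermore'' clause, assuming some term is nonzero: let $b_0 := \dsum_{u \in X}|f_u\chi^u|_P = |f_{v_0}\chi^{v_0}|_P \neq 0_{\kappa(P)}$, so $b_0 \in \kappa(P)^\times$. Then $\{v \in X : |f_v\chi^v|_P = b_0\} \subseteq \{v \in X : |f_v\chi^v|_P \geq b_0\} = S_{b_0}$, which is finite by the convergence of $f$ at $P$ via Lemma~\ref{lemma:CnvgDefWithResidueSemifield}. This finishes the argument. I do not anticipate a real obstacle here --- the proof is essentially a packaging of Lemma~\ref{lemma:CnvgDefWithResidueSemifield} --- the only thing to be careful about is the all-zero-terms edge case and making sure the notation $\dsum_{u\in X}|f_u\chi^u|_P$ is introduced cleanly (it denotes the maximum, consistent with the fact that in an idempotent semiring $+$ is the least upper bound, so an infinite "sum" is being defined here as this maximum, which exists precisely because of convergence).
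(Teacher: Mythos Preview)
Your approach is essentially the same as the paper's: pick some $u_0\in X$, set $b=|f_{u_0}\chi^{u_0}|_P$, use convergence of $f$ to see that $X':=\{u\in X:|f_u\chi^u|_P\geq b\}$ is finite and nonempty, take the maximum over $X'$, and observe that the set of maximizers is contained in $X'$.

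One small correction: you invoke Lemma~\ref{lemma:CnvgDefWithResidueSemifield}, but that lemma carries the extra hypothesis $P\in\ContBase{\base}\base[\Mon]$, which is not assumed here. The paper avoids this by appealing directly to Definition~\ref{def:convergent}, and that suffices for your argument too, since the threshold $b$ you actually use lies in $(\base[\Mon]/P)\setminus\{0\}$ rather than in $\kappa(P)^\times$. Your handling of the all-zero edge case is more explicit than the paper's (which tacitly picks $u_0$ with $b\neq 0$); your observation that the ``furthermore'' clause is delicate when every term vanishes is correct, and the paper simply does not address it.
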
\begin{proof}
Fixing $u_0\in X$, let $b=|f_{u_0}\chi^{u_0}|_{P}$. Then the set $X':=\{u\in X\;:\; |f_u\chi^u|_{P}\geq b\}$ is finite and nonempty. 
Thus, the largest of the finitely many values $|f_u\chi^u|_{P}$ for $u\in X'$ is also the largest value $\dsum_{u\in X}|f_u\chi^u|_{P}$ of $|f_u\chi^u|_{P}$ for $u\in X$. 
Finally, $\{v\in X\,:\,|f_v\chi^v|_{P}=\dsum_{u\in X}|f_u\chi^u|_{P}\}$ is finite because it is contained in $X'$.
\end{proof}

Given two series $f=\dsum_{u\in \Mon}f_u\chi^u$ and $g=\dsum_{u\in \Mon}g_u\chi^u$ in $\GeneralSeries{\base}{\Mon}$, we say that \emph{$f\cdot g$ exists} if, for each $u\in \Mon$, the set $\{f_vg_w\,:\, v,w\in M$ and $v+w=u\}$ has a greatest element. In this case, we will denote by $\dsum_{\substack{v,w\in \Mon\\v+w=u}}f_vg_w$ this maximum, and define $$f\cdot g:=\dsum_{u\in \Mon}\left(\dsum_{\substack{v,w\in \Mon\\v+w=u}}f_vg_w\right)\chi^u\in\GeneralSeries{\base}{\Mon}.$$


\begin{proposition}
Let $\base$ be a sub-semifield of $\T$ and let $\Mon$ be a 
toric monoid.
If $P \in \ContIntBase{\base} \base[\Mon]$ then $\CLS{P}$ is a semiring with the multiplication defined above.
\end{proposition}
\GeorgeStory{In which George rides camels.}
\begin{proof}

Given $f,g\in\Cnvg{P}$, we first show that $f\cdot g$ exists. Fix $u\in \Mon$. Let $$W:=\{v\in\Mon\,:\,(\exists w\in\Mon)\ u=v+w\}.$$ Note that $W$ is non-empty because $0_{\Mon}\in W$.
Because $f,g\in\Cnvg{P}$, there are largest values of $|f_v\chi^v|_{P}$ and $|g_v\chi^v|_{P}$ for $v\in W$. Furthermore, the sets 
\begin{align*}
    V_{f}&:=\{v\in W\,:\,|f_v\chi^v|_{P}\text{ is maximized}\} \text{ and } \\ 
    V_{g}&:=\{v\in W\,:\,|g_v\chi^v|_{P}\text{ is maximized}\}
\end{align*}
are finite. By the hypothesis on $\Mon$, the sets $\{v\in\Mon\,:\,u\in v+V_{f}\}$ and $\{v\in\Mon\,:\,u\in v+V_{g}\}$ are finite. 

Note that the existence of a greatest element of $\{f_vg_w\,:\, v,w\in \Mon$ and $v+w=u\}$ is unchanged if we replace $f$ and $g$ by $af$ and $bg$, respectively, for any fixed $a,b\in\base^\times$. 
Because $P\in\ContIntBase{\base} \base[\Mon]$, we can find $s_f,s_g\in\base^\times$ with $|s_f|_{P}\leq\min\{|f_v\chi^v|_{P}\,:\,u\in v+V_{g}\}$ and $|s_g|_{P}\leq\min\{|g_v\chi^v|_{P}\,:\,u\in v+V_{f}\}$. By replacing $f$ and $g$ with $s_f^{-1}f$ and $s_g^{-1}g$, respectively, we may assume without loss of generality that $\min\{|f_v\chi^v|_{P}\,:\,u\in v+V_{g}\}\geq 1_{\kappa(P)}$ and $\min\{|g_v\chi^v|_{P}\,:\,u\in v+V_{f}\}\geq 1_{\kappa(P)}$. In particular, $\mu_f:=\sum_{v\in W}|f_v\chi^v|_{P}\geq 1_{\kappa(P)}$ and $\mu_g:=\sum_{v\in W}|g_v\chi^v|_P\geq 1_{\kappa(P)}$.

Consider the set 
$$F:=\{(v,w)\in\Mon^2\,:\, v+w=u, |f_v\chi^v g_{w}\chi^w|_{P}\geq\mu_f+\mu_g\}.$$
Recall that $\mu_f+\mu_g$ is the larger of $\mu_f$ and $\mu_g$. 
We claim that $F$ is a finite set. To see this, note that if $|f_v\chi^v|_{P}<1_{\kappa(P)}$ then $|f_v\chi^v g_{w}\chi^w|_{P}<\mu_g$ for any $w\in\Mon$ such that $u=v+w$. Similarly, if $|g_w\chi^w|_{P}<1_{\kappa(P)}$ then $|f_v\chi^v g_{w}\chi^w|_{P}<\mu_f$ for any $v\in\Mon$ such that $u=v+w$. Taking the contrapositives of these statements, we find that 
$$F\subseteq\{v\in\Mon\,:\,|f_v\chi^v|_{P}\geq 1_{\kappa(P)}\}\times \{w\in\Mon\,:\,|g_w\chi^w|_{P}\geq 1_{\kappa(P)}\},$$
which is finite because $f,g\in\Cnvg{P}$.

We now claim that $F$ is nonempty. To see this, pick $v_0\in V_{f}$ and $w_0\in V_g$. In particular, $|f_{v_0}\chi^{v_0}|_{P}=\mu_f$ and  $|g_{w_0}\chi^{w_0}|_{P}=\mu_g$. Since $v_0,w_0\in W$, there are $v_1,w_1\in\Mon$ such that $v_0+w_1=u$ and $v_1+w_0=u$. 
Since 
\begin{align*}
    &\min\{|f_v\chi^v|_{P}\,:\,u\in v+V_{g}\}\geq 1_{\kappa(P)} \text{ and} \\
    &\min\{|g_w\chi^w|_{P}\,:\,u\in w+V_{f}\}\geq 1_{\kappa(P)},
\end{align*}
%
we have $|f_{v_1}\chi^{v_1}|_{P}\geq 1_{\kappa(P)}$ and $|g_{w_1}\chi^{w_1}|_{P}\geq 1_{\kappa(P)}$. Thus,
\begin{align*}
    &|f_{v_0}\chi^{v_0} g_{w_1}\chi^{w_1}|_{P}\geq\mu_f \text{ and} \\ 
    &|f_{v_1}\chi^{v_1} g_{w_0}\chi^{w_0}|_{P}\geq\mu_g,
\end{align*}
so at least one of $(v_0,w_1)$ and $(v_1,w_0)$ is in $F$.

Thus, we conclude that $\dsum_{\substack{v,w\in \Mon\\v+w=u}}|f_v\chi^v g_w\chi^w|_{P}$ exists in $\kappa(P)$. For $v,w\in \Mon$ with $v+w=u$ we have $|f_v\chi^v g_w\chi^w|_{P}=|f_v g_w|_{P}\cdot |\chi^u|_{P}$. Since $\kappa(P)$ is a totally ordered semifield and since $|\chi^u|_{P}\neq 0_{\kappa(P)}$, this gives us that $\dsum_{\substack{v,w\in \Mon\\v+w=u}}|f_v g_w|_{P}$ exists. Because the map $\base\to\base[\Mon]\toup{|\cdot|_{P}}\kappa(P)$ is injective and each $f_v g_{w}$ is in $\base$, this implies that $\dsum_{\substack{v,w\in \Mon\\v+w=u}}f_vg_w$ exists in $\base$. This completes the proof that $f\cdot g$ exists in $\GeneralSeries{\base}{\Mon}$.

We now need to show that $f\cdot g\in\Cnvg{P}$. For each $u\in\Mon$ let $c_u=\dsum_{\substack{v,w\in \Mon\\v+w=u}}f_vg_w$, so $f\cdot g=\dsum_{u\in\Mon}c_u\chi^u$. Let $m_f=\dsum_{u\in \Mon}|f_u\chi^u|_{P}$ and $m_g=\dsum_{u\in\Mon}|g_u\chi^u|_{P}$. If $m_f$ or $m_g$ is $0_{\kappa(P)}$ then $f$ or $g$, respectively, is the zero series, so $f\cdot g=0\in\Cnvg{P}$. So from now on we may assume that $m_f,m_g\neq 0_{\kappa(P)}$. Fix a nonzero $b\in \base[\Mon]/P$. 
Set $a=\frac{b}{m_f+m_g}$, where each of these operations takes place in $\kappa(P)$. 
For any $u\in\Mon$ with $$b\leq|c_u\chi^u|_{P}=\dsum_{\substack{v,w\in\Mon\\v+w=u}}|f_v\chi^v g_w\chi^w|_{P}$$ there are $v_0,w_0\in\Mon$ such that $v_0+w_0=u$ and $|f_{v_0}\chi^{v_0} g_{w_0}\chi^{w_0}|_{P}\geq b$. Then $$b\leq|f_{v_0}\chi^{v_0} g_{w_0}\chi^{w_0}|_{P}\leq |f_{v_0}\chi^{v_0}|_{P} m_g\leq |f_{v_0}\chi^{v_0}|_{P}(m_f+m_g),$$ so $|f_{v_0}\chi^{v_0}|_{P}\geq a$. Similarly, we get that $|g_{w_0}\chi^{w_0}|_{P}\geq a$. Thus, 
$$\{u\in\Mon\,:\,|c_u\chi^u|_{P}\geq b\}\subseteq\{v\in\Mon\,:\,|f_v\chi^v|_{P}\geq a\}+\{w\in\Mon\,:\,|g_w\chi^w|_{P}\geq a\},$$
and Lemma \ref{lemma:CnvgDefWithResidueSemifield} applied to both $f$ and $g$ gives us that the set on the right hand side is finite.

The proof that the operations $+$ and $\cdot$ on $\Cnvg{P}$ satisfy the axioms for a semiring are routine and are directly analogous to the proof that the usual operations on power series over a ring make the set of power series into a ring.
\end{proof}

\begin{remark}
The only property of toric monoids (as opposed to arbitrary monoids) that was used in this proof is that for all $u,w\in\Mon$, the set $\{v\in\Mon\ :\ u=v+w\}$ is finite. In fact, Section~\ref{sect: geom-crown} is the first point any other property of toric monoids used - all of the results before that point that use toric monoids are also true under this weak cancellativity hypothesis.
\end{remark}

Let $\base$ be a totally ordered semifield, let $\Mon$ be a monoid, and let $P$ be a prime congruence on $\base[\Mon]$. For any $f=\dsum_{u\in\Mon}f_u\chi^u\in\Cnvg{P}$, we write $$|f|_{P}:=\dsum_{u\in\Mon}|f_u\chi^u|_{P}\in \base[\Mon]/P\subseteq\kappa(P).$$ It is routine to check that whenever the multiplication defined above makes $\Cnvg{P}$ into a semiring, the resulting map $\Cnvg{P}\toup{|\cdot|_{P}}\kappa(P)$ is a semiring homomorphism.


\section{The metric topology on $\Cnvg{P}$}\label{sect: topology-Cnvg}

Let $\base$ be a sub-semifield of $\T$ and, as usual, we will assume that $\base$ is infinite, i.e., not $\BB$. Let $\Mon$ be a monoid, and let $P$ be a prime congruence on $\base[\Mon]$. We define a (generalized) ultrapseudometric on $\Cnvg{P}$ as follows. Given $f=\dsum_{u\in\Mon}f_u\chi^u$ and $g=\dsum_{u\in\Mon}g_u\chi^u$, we set
$$d(f,g)=d_{P}(f,g):=\dsum_{\substack{u\in\Mon\\f_u>g_u}}\left|f_u\chi^u\right|_{P}+\dsum_{\substack{u\in\Mon\\g_u>f_u}}\left|g_u\chi^u\right|_{P}\in\base[\Mon]/P\subseteq\kappa(P),$$
which makes sense by Lemma \ref{lemma:AbsoluteValueOfConvergentSeriesMakesSense}. 
Note that $d_{P}(f,g)$ is the maximum of the values $|f_u\chi^u|_{P}+|g_u\chi^u|_{P}$ for $u\in\Mon$ such that $f_u\neq g_u$, so we can also write
$$d_{P}(f,g)=\dsum_{\substack{u\in\Mon\\f_u\neq g_u}}|f_u\chi^u|_{P}+|g_u\chi^u|_{P}=\dsum_{\substack{u\in\Mon\\f_u\neq g_u}}|(f_u+g_u)\chi^u|_{P}
.$$ 
Here we use the fact that $|\cdot|_P$ is a semiring homomorphism and so respects addition.
We say that $d_{P}:\Cnvg{P}\times\Cnvg{P}\to\kappa(P)$ is an ultrapseudometric because, for all $f,g,h\in\Cnvg{P}$,
\begin{align*}
    d_{P}(f,g)&\geq0_{\kappa(P)},\\
    d_{P}(f,f)&=0_{\kappa(P)},\\
    d_{P}(f,g)&=d_{P}(g,f),\\
    \intertext{and}
    d_{P}(f,h)&\leq d_{P}(f,g)+d_{P}(g,h)=\max(d_{P}(f,g),d_{P}(g,h)).
\end{align*}
If $P$ has trivial ideal-kernel, then $d_P$ is an ultrametric, in the sense that, in addition to the above properties, for any $f,g\in\Cnvg{P}$, $d_{P}(f,g)=0_{\kappa(P)}$ if and only if $f=g$.

\begin{remark}
The above definition is motivated as follows. 
The distance $d_{P}(f,g)$ is the smallest value of $|f_1|_{P}+|g_1|_{P}$ over all $f_1,g_1,h\in\Cnvg{P}$ such that $f=h+f_1$ and $g=h+g_1$. This is realized by choosing $$h=\dsum_{u\in\Mon}\min(f_u,g_u)\chi^u, \ f_1=\dsum_{\substack{u\in\Mon\\f_u>g_u}}f_u\chi^u,\text{ and } g_1=\dsum_{\substack{u\in\Mon\\g_u>f_u}}g_u\chi^u.$$
Thus, the definition of $d_P(f,g)$ gives a way of replicating what ``$|f-g|_P$'' would be if it made sense.

Alternatively, we can motivate this definition in the following way. Suppose that $K$ is a field with a nontrivial (non-archimedean) valuation $v:K\onto\base$ and $\Mon$ is finitely generated (e.g.\ toric). We extend $v$ to a map $v_1:K[\Mon]\onto\base[\Mon]$ defined by taking the valuation of coefficients, and composing with $|\cdot|_{P}:\base[\Mon]\to\kappa(P)$ gives a norm $|\cdot|$ on $K[\Mon]$. The completion $K\langle\Mon\rangle_{P}$ of $K[\Mon]$ with respect to this norm admits a natural map $v_2:K\langle\Mon\rangle_{P}\onto\Cnvg{P}$ extending $v_1:K[\Mon]\onto\base[\Mon]$. (When $\Mon=\N^n$ and $P$ is given by the matrix $\begin{pmatrix}1&0&\cdots&0\end{pmatrix}$, this is coefficient-wise valuation from a Tate algebra to a tropical Tate algebra.) Given $f,g\in\Cnvg{P}$, 
$d_P(f,g)=\displaystyle\inf_{\substack{F\in v_2^{-1}(f)\\G\in v_2^{-1}(g)}}|F-G|$, as is realized by picking $H\in v_2^{-1}(h)$, $F_1\in v_2^{-1}(f_1)$, and $G_1\in v_2^{-1}(g_1)$ and letting $F=H+F_1$ and $G=H+G_1$. Because of this, we can say that $d_P$ is the quotient metric on $\Cnvg{P}$ as a quotient of $K\langle\Mon\rangle_P$; the above infimum is a direct analog of the definition of a quotient norm.

\end{remark}

In the same way that a usual metric induces a topology, $d_P$ induces a topology on $\Cnvg{P}$. Similarly, the definition of a Cauchy sequence transfers over to our setup in the obvious way.

\begin{thm}\label{thm:CnvgIsComplete}
Let $\base$ be a sub-semifield of $\T$, let $\Mon$ be a monoid, and let $P$ be a prime on $\base[\Mon]$. With respect to the pseudometric $d=d_P$, $\Cnvg{P}$ is complete. That is, every Cauchy sequence in $\Cnvg{P}$ converges.
\end{thm}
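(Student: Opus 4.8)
The plan is to take a Cauchy sequence $(f^{(k)})_{k\in\N}$ in $\Cnvg{P}$, say $f^{(k)}=\sum_{u\in\Mon}f^{(k)}_u\chi^u$, to build a candidate limit $g=\sum_{u\in\Mon}g_u\chi^u$ one coefficient at a time, to verify that $g\in\Cnvg{P}$, and then to show that $f^{(k)}\to g$. Spelling out the definition of a Cauchy sequence, the hypothesis says that for every $\rho\in\kappa(P)^\times$ there is an index $N(\rho)$ such that $d_P(f^{(k)},f^{(l)})<\rho$ whenever $k,l\ge N(\rho)$. Unwinding the definition of $d_P$ and using that $|\cdot|_P$ is a semiring homomorphism, this is the statement that for all such $k,l$ and all $u\in\Mon$ one has either $f^{(k)}_u=f^{(l)}_u$ or $|(f^{(k)}_u+f^{(l)}_u)\chi^u|_P=\max\bigl(|f^{(k)}_u\chi^u|_P,\,|f^{(l)}_u\chi^u|_P\bigr)<\rho$. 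This reformulation drives all three steps below.

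\emph{Step 1 (coefficientwise limits).} For each $u\in\Mon$ I would show, using only the Cauchy condition, that the sequence $(f^{(k)}_u)_k$ is either eventually constant or satisfies $|f^{(k)}_u\chi^u|_P\to 0_{\kappa(P)}$; accordingly I set $g_u$ to be the eventual value in the first case and $g_u=0_\base$ in the second. The point is that $\base$ is not assumed complete, so the content is precisely that Cauchyness in $d_P$ is rigid enough to force this dichotomy rather than an arbitrary limit. Concretely, if $(f^{(k)}_u)_k$ is not eventually constant and $|f^{(k)}_u\chi^u|_P\not\to 0$, then the set $T:=\{k:|f^{(k)}_u\chi^u|_P\ge\rho_0\}$ is infinite for some $\rho_0\in\kappa(P)^\times$; the reformulated Cauchy condition with $\rho=\rho_0$ forces $f^{(k)}_u$ to be constant on a tail of $T$ (two distinct values there would make the maximum above be $\ge\rho_0$ infinitely often), and then comparing an arbitrary $k\ge N(\rho_0)$ with an element of that tail forces $f^{(k)}_u$ to be constant on a tail of $\N$ — contradicting the assumption. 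Here I use that the composition $\base\to\base[\Mon]\to\kappa(P)$ is injective, which holds because $\base$ is a semifield.

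\emph{Step 2 ($g\in\Cnvg{P}$).} Given $b\in\kappa(P)^\times$, put $N=N(b)$. For any $u$ with $|g_u\chi^u|_P\ge b$ we have $g_u\ne 0_\base$, so by Step 1 the coefficient $f^{(l)}_u$ equals $g_u$ for all large $l$; choosing such an $l\ge N$ and comparing $f^{(N)}$ with $f^{(l)}$ rules out the alternative in the Cauchy condition at index $u$ (that would give $\max(|f^{(N)}_u\chi^u|_P,|f^{(l)}_u\chi^u|_P)<b$, impossible since this is $\ge|f^{(l)}_u\chi^u|_P=|g_u\chi^u|_P\ge b$), hence $f^{(N)}_u=g_u$. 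Therefore $\{u:|g_u\chi^u|_P\ge b\}\subseteq\{u:|f^{(N)}_u\chi^u|_P\ge b\}$, which is finite since $f^{(N)}\in\Cnvg{P}$. \emph{Step 3 ($f^{(k)}\to g$).} Fix $r\in\kappa(P)^\times$ and $N_0=N(r)$; I claim $d_P(f^{(k)},g)<r$ for all $k\ge N_0$. First I would establish the coefficientwise estimate: for every such $k$ and every $u$, either $f^{(k)}_u=g_u$ or $|(f^{(k)}_u+g_u)\chi^u|_P<r$. This follows by picking $l\ge N_0$, depending on $u$, for which $f^{(l)}_u$ already ``represents'' $g_u$ — either $f^{(l)}_u=g_u$, or $g_u=0_\base$ and $|f^{(l)}_u\chi^u|_P<r$ (both possibilities supplied by Step 1) — and then running the Cauchy bound between $f^{(k)}$ and $f^{(l)}$ through the resulting cases. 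To upgrade this coefficientwise estimate to a bound on $d_P$, observe that $f^{(k)}+g\in\Cnvg{P}$, because $\Cnvg{P}$ is a sub-semimodule of $\GeneralSeries{\base}{\Mon}$ and $g\in\Cnvg{P}$ by Step 2; then Lemma~\ref{lemma:AbsoluteValueOfConvergentSeriesMakesSense} applied to $f^{(k)}+g$ guarantees that $d_P(f^{(k)},g)$ is actually attained, equal to $|(f^{(k)}_{u^*}+g_{u^*})\chi^{u^*}|_P$ for a single index $u^*$ with $f^{(k)}_{u^*}\ne g_{u^*}$, and the coefficientwise estimate then gives $d_P(f^{(k)},g)<r$.

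The step I expect to be the main obstacle is the passage, in Steps 2 and 3, from coefficientwise control to control of $d_P$ itself: a family of values each $<r$ may well have supremum equal to $r$, so the argument genuinely depends on Lemma~\ref{lemma:AbsoluteValueOfConvergentSeriesMakesSense} to know that the relevant suprema are attained — and hence on having proved $g\in\Cnvg{P}$ \emph{before} attempting the convergence statement. The other thing to watch, though it causes no real trouble, is that $\base$ need not be complete; as in Step 1, the ultrapseudometric $d_P$ is discrete-like enough that Cauchy coefficient sequences are forced to stabilize or to tend to $0_\base$, so the coefficientwise limits land back in $\base$ automatically.
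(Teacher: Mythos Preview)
Your proof is correct and follows essentially the same three-step approach as the paper: establish the stabilize-or-vanish dichotomy for each coefficient, verify $g\in\Cnvg{P}$ by comparison with a single $f^{(N(b))}$, then deduce convergence---indeed you are more explicit than the paper about invoking Lemma~\ref{lemma:AbsoluteValueOfConvergentSeriesMakesSense} to pass from coefficientwise bounds to a bound on $d_P$. One small correction: the injectivity of $\base\to\kappa(P)$ that you cite need not hold under the stated hypotheses (only trivial ideal-kernel is guaranteed, since $\base$ is a semifield), but your argument does not actually rely on it.
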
\begin{proof}
Let $(f_i)_{i\in\N}$ be a Cauchy sequence in $\Cnvg{P}$ and write $f_i=\dsum_{u\in\Mon}f_{i,u}\chi^u$ for each $i\in\N$. So for each $\eps\in\kappa(P)^\times$ we can fix some $N_{\eps}\in\N$ such that for all $i,j\geq N_\eps$, $d(f_i,f_j)<\eps$.

Fix $\eps\in\kappa(P)^\times$ and $u\in\Mon$. If there were some $i,j\geq N_\eps$ such that $|f_{i,u}\chi^u|_{P}\geq\eps$ and $f_{i,u}\neq f_{j,u}$, then $$d(f_i,f_j)=\dsum_{\substack{w\in\Mon\\f_{i,w}\neq f_{j,w}}}|f_{i,w}\chi^w|_{P}+|f_{j,w}\chi^w|_{P}\geq|f_{i,u}\chi^u|_{P}+|f_{j,u}\chi^u|_{P}\geq\eps,$$
which is impossible. So for every $i\geq N_\eps$, either $|f_{i,u}\chi^u|_{P}<\eps$ or $f_{j,u}=f_{i,u}$ for every $j\geq N_{\eps}$. In the second case, we get that $f_{j,u}=f_{N_\eps,u}$. Thus, either $|f_{i,u}\chi^u|_{P}<\eps$ for all $i\geq N_\eps$ or $f_{j,u}=f_{N_\eps,u}$ for every $j\geq N_\eps$.

In particular, if $u\in\Mon$ is such that the sequence $(f_{i,u})_{i\in\N}$ does not stabilize, then, for every $\eps\in\kappa(P)^\times$ and $i\geq N_{\eps}$, $|f_{i,u}\chi^u|_P<\eps$. For any $u\in\Mon$, we define $F_u\in\base$ to be the eventual value of $(f_{i,u})_{i\in\N}$ if the sequence stabilizes, and $0_S$ otherwise. Set $F:=\dsum_{u\in\Mon}F_u\chi^u\in\GeneralSeries{\base}{\Mon}$.
We want to show that $F$ converges at $P$. Fix $b\in(\base[\Mon]/P)\sdrop\{0_S\}$ and let $L:=\{u\in\Mon:|F_u\chi^u|_P\geq b\}$. Consider $u\in L$. Then $F_u\neq0_S$, 
so for all sufficiently large $i$, $f_{i,u}=F_u$, which gives us that $|f_{i,u}\chi^u|_P\geq b$. Thus, for all $j\geq N_b$, $f_{j,u}=f_{N_b,u}$, so $F_u=f_{N_b,u}$. In particular, this shows that $L\subset\{u\in\Mon:|f_{N_b,u}\chi^u|_P\geq b\}$, which is finite because $f_{N_b}\in\Cnvg{P}$. So $F$ converges at $P$.

Now we show that $(f_i)_{i\in\N}$ converges to $F$. Fix $\eps\in\kappa(P)^\times$ and consider $j\geq N_\eps$. For $u\in\Mon$, if $f_{j,u}\neq F_u$ then $|f_{i,u}\chi^u|_P<\eps$ for all $i\geq N_\eps$. In particular, $|f_{j,u}\chi^u|_P<\eps$ and $|F_u\chi^u|_P<\eps$. Thus, $$d(f_j,F)=\dsum_{\substack{u\in\Mon\\f_{j,u}\neq F_u}}|f_{j,u}\chi^u|_{P}+|F_u\chi^u|_{P}<\eps,$$ for all $j\geq N_\eps$, so $(f_i)_{i\in\N}$ converges to $F$.
\end{proof}

\begin{remark}
Our notion of completeness of $\Cnvg{P}$ agrees with a more general notion of completeness. Namely, in the same way that a usual metric induces a \emph{uniform structure}, $d_P$ induces a uniform structure on $\Cnvg{P}$. Then, upon replacing sequences with nets, the above proof goes through to show that the uniform structure on $\Cnvg{P}$ is complete, in the sense that every Cauchy net converges.
\end{remark}

Even when two distinct primes have $\Cnvg{P}=\Cnvg{P'}$, the metrics $d_P$ and $d_{P'}$ are distinct. However, the following theorem shows that the two metrics induce the same topology.

\begin{thm}\label{thm:TopologiesOnCnvgCoincide}
Let $\base$ be a sub-semifield of $\T$, let $\Mon$ be a monoid, and let $P,P'\in\ContBase{\base} \base[\Mon]$ be such that $P\supseteq P'$. Then the topologies on $\Cnvg{P}=\Cnvg{P'}$ induced by $d_P$ and $d_{P'}$ are the same.
\end{thm}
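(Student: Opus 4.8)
The plan is to show that the two ultrapseudometrics $d_P$ and $d_{P'}$ on the common set $\Cnvg{P}=\Cnvg{P'}$ (equality by Lemma~\ref{lemma:CnvgInChain}) induce the same topology by showing that each open ball for one metric contains an open ball for the other centered at the same point. Since by Proposition~\ref{prop:SequencesSuffice} both topologies are first-countable, it would in fact suffice to check that a sequence converges in one metric if and only if it converges in the other, but the direct comparison of balls is cleaner and avoids passing to sequences.

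The key tool is the surjective homomorphism $\pi=\pi_{P,P'}:\kappa(P')\to\kappa(P)$ from Corollary~\ref{coro:PrimesInSpaChainInducedMapOnResidueSemifields} (note $P\supseteq P'$, so the roles are as in that corollary with the larger prime being $P$), together with Corollary~\ref{coro:MapOnResidueSemifieldsAndInequalities}: for $a,b\in\kappa(P')$, $a\leq b$ implies $\pi(a)\leq\pi(b)$, and $\pi(a)<\pi(b)$ implies $a<b$. The crucial compatibility is that for any $f,g\in\Cnvg{P}=\Cnvg{P'}$ we have $\pi(d_{P'}(f,g))=d_P(f,g)$: indeed, using the description $d_{P'}(f,g)=\sum_{f_u\neq g_u}|(f_u+g_u)\chi^u|_{P'}$ and the fact that $|\cdot|_{P}=\pi\circ|\cdot|_{P'}$ on $\base[\Mon]$, applying $\pi$ termwise and using that $\pi$ preserves the (finite, by Lemma~\ref{lemma:AbsoluteValueOfConvergentSeriesMakesSense}) maximum defining the sum gives the claim. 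One subtlety: $\pi$ need not be injective, so $d_{P'}(f,g)$ can be strictly positive while $d_P(f,g)=0$; this is exactly why we only get equality of \emph{topologies}, not of metrics, and it is harmless since $d_P$ is only a pseudometric anyway.

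Granting this, I would argue as follows. First, to see every $d_P$-open ball contains a $d_{P'}$-open ball: given $f\in\Cnvg{P}$ and $r\in\kappa(P)^\times$, pick by surjectivity of $\pi$ some $r'\in\kappa(P')^\times$ with $\pi(r')=r$. If $d_{P'}(g,f)<r'$ then, applying $\pi$ and using $\pi(d_{P'}(g,f))=d_P(g,f)$ together with the order-preservation in Corollary~\ref{coro:MapOnResidueSemifieldsAndInequalities}, we get $d_P(g,f)=\pi(d_{P'}(g,f))\leq\pi(r')=r$; to get strict inequality replace $r'$ by a slightly smaller element of $\kappa(P')^\times$ (possible since $\kappa(P')\neq\B$) whose image under $\pi$ is still $\leq r$, or more simply note $B_{P'}(f,r')\subseteq\{g:d_P(g,f)\leq r\}$ and shrink $r$ at the outset. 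Either way $B_{P'}(f,r')\subseteq B_P(f,r)$. Conversely, to see every $d_{P'}$-open ball contains a $d_P$-open ball: given $f\in\Cnvg{P}$ and $r'\in\kappa(P')^\times$, by Lemma~\ref{lemma:ContinuityGivesGeorge} (applied to $P'\in\ContBase{\base}\base[\Mon]$) there is $s\in\base^\times$ with $|s|_{P'}<r'$; I claim $B_P(f,|s|_P)\subseteq B_{P'}(f,r')$. If $d_P(g,f)<|s|_P$, i.e.\ $\pi(d_{P'}(g,f))<\pi(|s|_{P'})$, then by part~(2) of Corollary~\ref{coro:MapOnResidueSemifieldsAndInequalities} we get $d_{P'}(g,f)<|s|_{P'}<r'$, as desired.

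The main obstacle I anticipate is the verification of the compatibility identity $\pi(d_{P'}(f,g))=d_P(f,g)$ — specifically, checking that $\pi$ commutes with the infinite "sum" $\sum$ that appears in the definition of $d$, which is really a supremum that happens to be attained on a finite set. This needs Lemma~\ref{lemma:AbsoluteValueOfConvergentSeriesMakesSense} to know the relevant index sets where the maximum is achieved are finite, plus the observation that $\pi$ is order-preserving and hence sends the maximum of a finite set to the maximum of the image; one also wants that the set $\{u: f_u\neq g_u\}$ contributing to $d_{P'}$ is the same as the one contributing to $d_P$, which is automatic since this condition is about the coefficients in $\base$, not about $P$ or $P'$. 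Once that identity is in hand, the two containments above are short order-theoretic arguments and the theorem follows.
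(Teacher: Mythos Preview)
Your proposal is correct and follows essentially the same route as the paper: both establish the key identity $\pi(d_{P'}(f,g))=d_P(f,g)$ via Corollary~\ref{coro:PrimesInSpaChainInducedMapOnResidueSemifields} and then compare balls using the order-preservation and strict-inequality-reflection properties of $\pi$ from Corollary~\ref{coro:MapOnResidueSemifieldsAndInequalities}. The only cosmetic differences are that the paper handles the strict-inequality issue in your first direction by first choosing $\eps<r$ in $\kappa(P)$ and then lifting (exactly your ``shrink $r$ at the outset'' fix), and in your second direction the paper uses $r=\pi(r')$ directly rather than passing through an auxiliary $s\in\base^\times$, which is slightly more direct but equivalent.
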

\GeorgeStory{In which George meets Borpf and Borqg.}
\begin{proof}

We want to show that the identity maps $(\Cnvg{P'},d_{P'})\to(\Cnvg{P},d_{P})$ and \newline $(\Cnvg{P},d_{P})\to(\Cnvg{P'},d_{P'})$ are continuous. In fact, it is just as easy to show that they are uniformly continuous. 

Let $\pi=\pi_{P,P'}:\kappa(P')\to\kappa(P)$ be the induced surjective morphism on residue semifields as in Corollary \ref{coro:PrimesInSpaChainInducedMapOnResidueSemifields}. Note that, for any $f,g\in\Cnvg{P}$, $$d_P(f,g)=\dsum_{\substack{u\in\Mon\\f_u\neq g_u}}|(f_u+g_u)\chi^u|_P=\pi\left(\dsum_{\substack{u\in\Mon\\f_u\neq g_u}}|(f_u+g_u)\chi^u|_{P'}\right)=\pi(d_{P'}(f,g)).$$

Fix $r\in\kappa(P)^\times$; we want to show that there is some $r'\in\kappa(P')^\times$ such that, for any $f,g\in\Cnvg{P'}$ with $d_{P'}(f,g)<r'$, we have $d_{P}(f,g)<r$. Pick $\eps\in\kappa(P)^\times$ \footnote{It is essential that $\kappa(P)\neq\B$ to ensure that such $\eps$ exists.} with $\eps<r$  and $r'\in\kappa(P')^\times$ with $\pi(r')=\eps$. If $d_{P'}(f,g)<r'$ then $d_P(f,g)=\pi(d_{P'}(f,g))\leq\pi(r')=\eps<r$.

Now fix $r'\in\kappa(P')^\times$; we want to show that there is a $r\in\kappa(P)^\times$ such that, if $f,g\in\Cnvg{P}$ have $d_P(f,g)<r$ then $d_{P'}(f,g)<r'$. Let $r=\pi(r')$. Then $d_P(f,g)<r$ says $\pi(d_{P'}(f,g))<\pi(r')$ so, by part (2) of Corollary~\ref{coro:MapOnResidueSemifieldsAndInequalities}, $d_{P'}(f,g)<r'$.
\end{proof}

In light of Theorem \ref{thm:TopologiesOnCnvgCoincide}, from now on, when we refer to $\Cnvg{P}$, we think of it as a topological space with the topology induced by $d_P$, which we call the \emph{metric topology}.

For each $f\in\Cnvg{P}$ and $r\in\kappa(P)^\times$, we set
$$B(f,r)=B_P(f,r):=\{g\in\Cnvg{P}:d_P(g,f)<r\},$$
the open ball around $f$ of radius $r$.

\begin{prop}\label{prop:SequencesSuffice}
Let $\base$ sub-semifield of $\T$ and let $\Mon$ be a monoid. For any $P\in\ContBase{\base} \base[\Mon]$, the {metric} topology on $\Cnvg{P}$ is first-countable. In particular, for any positive integer $\ell$ and any subset $\Xi\subseteq\cnvg{P}^\ell$, the closure of $\Xi$ is exactly the set of points in $\Cnvg{P}^\ell$ which are the limits of sequences $(x_n)_{n\in\N}$ with each $x_n\in\Xi$.
\end{prop}\begin{proof}
Fix any $\eps\in\base^\times$ which is less than $1_\base$. For any $a\in\kappa(P)^\times$, there is some $s\in\base^\times$ such that $|s|_P<a$. Since $\base\subseteq\T$, there is some natural number $n$ such that $\eps^n<s$, so $|\eps^n|_P<a$. Thus, for any $f\in\Cnvg{P}$ the set of open balls $\{B(f,|\eps^n|_P):n\in\N\}$ is a countable neighborhood basis at $f$.
\end{proof}

\begin{prop}\label{prop:ConvergentSeriesAreLimitsOfPolynomials}
Let $\base$ sub-semifield of $\T$ and let $\Mon$ be a monoid. For any $P \in \ContBase{\base}\base[\Mon]$, every $f\in\Cnvg{P}$ is a limit of its partial sums. Thus, the polynomial semiring $\base[\Mon]$ is dense in $\Cnvg{P}$ and so $\Cnvg{P}$ is the completion of $\base[\Mon]$ with respect to the pseudometric $d_p$.

\end{prop}

\begin{remark}
This proposition is not tautological, as convergent series are defined as abstract series satisfying a certain property, rather than as limits of polynomials.
\end{remark}

\begin{proof}[Proof of Proposition~\ref{prop:ConvergentSeriesAreLimitsOfPolynomials}]
Fix any $f\in\Cnvg{P}$ and $\eps\in\base^\times$ which is less than $1_\base$. As in the proof of Proposition~\ref{prop:SequencesSuffice}, the open balls $B(f,|\eps^n|_P)$ form a neighborhood basis at $f$. For any $n\in\N$, let $I_n$ denote the set of $u\in\Mon$ such that $|f_u\chi^u|_P\geq |\eps^n|_P$. Since $f \in \Cnvg{P}$, the set $I_n$ is finite. Now consider the polynomial $g_n = \sum\limits_{u \in I_n} f_u\chi^u$. By the definition of $d_{P}$, we have that $d_{P}(f,g_n) = \sum\limits_{u \not\in I_n}|f_u\chi^u|_P <|\eps^n|_P$. Thus $\dlim_{n\to\infty}g_n=f$.
\end{proof}

We will now show that $\Cnvg{P}$ is a topological semiring whenever $P\in\ContIntBase{\base} \base[\Mon]$, where $\base$ is a sub-semifield of $\T$ and $\Mon$ is a troic monoid.

\begin{theorem}
Let $\base$ be a sub-semifield of $\T$ and $\Mon$ a toric monoid. For $P\in\ContIntBase{\base}{\base[\Mon]}$, the semiring $\Cnvg{P}$ is a topological $\base$-algebra.
\end{theorem}

\begin{proof}
The proof follows directly from Proposition~\ref{prop:+isContinuous} and Proposition~\ref{prop:xisContinuous} showing that the the two operations on $\Cnvg{P}$ are continuous and Proposition~\ref{prop:SToCnvgIsContinuous} showing that the map $\base\to\base[\Mon]\to\Cnvg{P}$ is continuous.
\end{proof}


\GeorgeStory{Where George meets Ulr and Ugr... }
\begin{proposition}\label{prop:+isContinuous}
Let $\base$ be a sub-semifield of $\T$, $\Mon$ a monoid, and $P\in\ContIntBase{\base}{\base[\Mon]}$. Then the addition on $\Cnvg{P}$ is continuous.
\end{proposition}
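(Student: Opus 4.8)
The plan is to verify continuity of the addition map $\Cnvg{P}\times\Cnvg{P}\to\Cnvg{P}$ directly using the ultrapseudometric $d=d_P$, exploiting the ultrametric (non-archimedean) nature of $d$. Recall that $d$ satisfies the strong triangle inequality $d(a,c)\leq\max(d(a,b),d(b,c))$. First I would fix $f,g\in\Cnvg{P}$, a target radius $r\in\kappa(P)^\times$, and aim to find radii $r_1,r_2\in\kappa(P)^\times$ such that whenever $d(f,f')<r_1$ and $d(g,g')<r_2$ we have $d(f+g,f'+g')<r$. By the strong triangle inequality applied to $f+g$, $f'+g$, $f'+g'$, it suffices to bound $d(f+g,f'+g)$ and $d(f+g',f'+g')$; and by symmetry it is enough to show the single estimate $d(f+g,f'+g)\leq d(f,f')$ for all $f,f',g$, after which taking $r_1=r_2=r$ finishes the argument.

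The key step is therefore the inequality $d(f+g,\,f'+g)\leq d(f,f')$. To see this, write $f=\sum f_u\chi^u$, $f'=\sum f'_u\chi^u$, $g=\sum g_u\chi^u$. For each $u\in\Mon$, the $u$-coefficient of $f+g$ is $f_u+g_u$ and of $f'+g$ is $f'_u+g_u$ (sums taken in $\base\subseteq\T$, i.e.\ maxima). Whenever $f_u+g_u\neq f'_u+g_u$, we must have $f_u\neq f'_u$, and moreover $(f_u+g_u)+(f'_u+g_u)\leq f_u+f'_u$ (again a max-identity: $\max(f_u,g_u,f'_u,g_u)=\max(f_u,f'_u)$ since the two sides differ). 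Applying the semiring homomorphism $|\cdot|_P$, which is order-preserving, we get $|((f_u+g_u)+(f'_u+g_u))\chi^u|_P\leq|(f_u+f'_u)\chi^u|_P$. Taking the supremum over those $u$ with $f_u+g_u\neq f'_u+g_u$ (a subset of $\{u:f_u\neq f'_u\}$), and using the alternative formula $d(a,b)=\sum_{u:a_u\neq b_u}|(a_u+b_u)\chi^u|_P$ recorded just before Theorem~\ref{thm:CnvgIsComplete}, yields $d(f+g,f'+g)\leq d(f,f')$ as claimed. (One should also note $f+g,f'+g\in\Cnvg{P}$, which holds since $\Cnvg{P}$ is a semiring, hence closed under addition.)

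I do not expect a serious obstacle here; the main point requiring care is the elementary max-arithmetic identity $(f_u+g_u)+(f'_u+g_u)\leq f_u+f'_u$ in $\base$ and the bookkeeping that the index set shrinks, both of which are routine. The only subtlety worth stating explicitly is that all these manipulations take place inside $\kappa(P)$ after applying $|\cdot|_P$, and we use that $|\cdot|_P$ respects both the addition and the order on $\base[\Mon]$; the ultrametric inequality for $d$ then packages everything, so no $\eps$–$\delta$ splitting beyond the one reduction above is needed. Finally, to conclude continuity at the point $(f,g)$: given $r$, the open balls $B(f,r)\times B(g,r)$ map into $B(f+g,r)$, since for $(f',g')$ in this product, $d(f+g,f'+g')\leq\max(d(f+g,f'+g),\,d(f'+g,f'+g'))\leq\max(d(f,f'),d(g,g'))<r$.
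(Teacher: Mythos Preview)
Your proof is correct and takes a genuinely different route from the paper's. The paper argues by fixing a target ball $B(f,r)$, splitting $\Mon$ into $U_{gr}=\{u:|f_u\chi^u|_P\geq r\}$ and $U_{lr}=\{u:|f_u\chi^u|_P<r\}$, explicitly characterising which pairs $(g,h)$ satisfy $g+h\in B(f,r)$ in terms of this decomposition, and then running a coefficient-by-coefficient case analysis to check that $B(g,r)\times B(h,r)$ lands in $B(f,r)$. You instead isolate the single translation-type inequality $d(f+g,f'+g)\leq d(f,f')$ and then let the ultrametric triangle inequality do the rest. Your approach is cleaner and more modular: the inequality you prove is the exact additive analogue of the paper's Lemma~\ref{lemma:MultAndDist} (which the paper only states and uses for multiplication), so your argument makes the treatment of the two operations parallel. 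The paper's approach, by contrast, is more hands-on and yields an explicit description of the preimage $\mathfrak{Z}^{-1}(B(f,r))$, which is not needed here but could be useful elsewhere.

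One tiny remark on your write-up: the step ``$(f_u+g_u)+(f'_u+g_u)\leq f_u+f'_u$'' is in fact an equality under the hypothesis $f_u+g_u\neq f'_u+g_u$, as your parenthetical already notes; the argument works either way, but stating it as an equality makes the subsequent supremum bound over the smaller index set $\{u:f_u+g_u\neq f'_u+g_u\}\subseteq\{u:f_u\neq f'_u\}$ completely transparent.
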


\begin{proof}
\newcommand{\addmap}{\mathfrak{Z}}
The addition on $\Cnvg{P}$ will be the map
$$\addmap: \Cnvg{P} \times \Cnvg{P} \rightarrow \Cnvg{P}.$$
We will show that the inverse of every open ball in $\Cnvg{P}$ is open in $\Cnvg{P} \times \Cnvg{P}$, where $\Cnvg{P} \times \Cnvg{P}$ has the product topology.

Let $f \in \Cnvg{P}$ and $r \in \kappa(P)^\times$. The preimage of $B(f,r)$ is the set 
\begin{align*}
    \addmap^{-1}(B(f,r)) &= \left\{ (g,h) \in \Cnvg{P} \times \Cnvg{P}\ : \ d_{P}(g+h,f)<r  \right\}\\
                     &= \left\{ (g,h) \in \Cnvg{P} \times \Cnvg{P}\ : \ \dsum_{\substack{u\in \Mon\\g_u+h_u \neq f_u}} |(g_u+h_u+f_u)\chi^u|_P <r  \right\}.
\end{align*}

We want to show that, around each point $(g,h)\in\addmap^{-1}(B(f,r))$, 
there is a product $\mathcal{B}$ of open balls such that, for all $(g',h') \in \mathcal{B}$, we have $g'+h' = \addmap (g', h') \in B(f,r)$.  We first investigate when $g+h \in B(f,r)$. Consider the sets
$$U_{gr} = \left\{ u\in \Mon \ : \  |f_u\chi^u|_P \geq r \right\}
\text{\; and\; }
U_{lr} = \left\{ u\in \Mon \ : \  |f_u\chi^u|_P < r\right\}.$$
With this notation, $g+h \in B(f,r)$ if and only if the following condition holds: for all $u \in U_{lr}$, we have $|g_u\chi^u|_P,|h_u\chi^u|_P < r$ and, for all $u \in U_{gr}$, we have $g_u + h_u = f_u$.

Now we show that, if $g+h \in B(f,r)$ and $(g',h') \in B(g,r) \times B(h,r)$, then $g'+h' \in B(f,r)$. 

For $u \in U_{lr}$ we either have $g_u = g'_{u}$ or $g_u \neq g'_{u}$. In the first case we get that $|g_u\chi^u|_P = |g'_{u}\chi^u|_P <r$. In the second case, where $g_u \neq g'_{u}$, both $|g_u\chi^u|_P$ and $|g'_{u}\chi^u|_P$ contribute to the distance $d_P(g,g')$. Since $g' \in B(g, r)$, we conclude that  
$$|g'_{u}\chi^u|_P\leq |g_u\chi^u|_P + |g'_{u}\chi^u|_P\leq d_P(g,g') <r.$$ 
In either case, we obtain that $|g'_{u}\chi^u|_P < r$. Similarly, we observe that $|h'_{u}\chi^u|_P < r$.

For $u \in U_{gr}$ we can assume without loss of generality that $g_u = f_u$ and that $h_u \leq f_u$. With this assumption we have that $|g_u\chi^u|_P = |f_{u}\chi^u|_P \geq r$. If $g'_u \neq g_{u}$, then $$d_P(g', g) \geq |g_u\chi^u|_P + |g'_{u}\chi^u|_P \geq |g_u\chi^u|_P  \geq r,$$
but this cannot happen. So we conclude that $g'_u = g_{u} = f_u$.
Looking at $h'$, we either have $|h'_u\chi^u|_P < r$ or $|h'_{u}\chi^u|_P \geq r$. In the first case, since $|f_u\chi^u|_P \geq r$, we have 
$$|h'_u|_P |\chi^u|_P= |h'_u\chi^u|_P  < r \leq |f_u\chi^u|_P = |f_u |_P |\chi^u|_P.$$
Since $|\chi^u|_P\in\kappa(P)^\times$, the above inequality implies that 
$|h'_u|_P < |f_u |_P $ and so $h'_u < f_u$.
In the second case, $|h_{u}\chi^u|_P \geq r$ and, since $h' \in B(h, r)$, we conclude that $h'_u = h_u \leq f_u$. Either way, $h'_u\leq f_u$ and, since we already know that $g'_{u} = f_u$, we obtain that $g'_{u} + h'_{u} = f_u$. Thus, $g'+h' \in B(f,r)$.
\end{proof}


We use the following lemma to prove that the multiplication on $\Cnvg{P}$ is continuous.
\begin{lemma}\label{lemma:MultAndDist}
Let $\base$ be a sub-semifield of $\T$, let $\Mon$ be a toric monoid, and let $P\in\ContIntBase{\base} \base[\Mon]$. For any $g,g',h\in\Cnvg{P}$, $d_P(gh,g'h)\leq d_P(g,g')|h|_P$.
\end{lemma}\begin{proof}
For $u\in\Mon$, let 
$$c_u=\dsum_{\substack{v,w\in \Mon\\v+w=u}}g_vh_w 
\quad \text{and} \quad c_u'=\dsum_{\substack{v,w\in \Mon\\v+w=u}}g'_vh_w,$$
so 
$$d_P(gh,g'h)=\dsum_{\substack{u\in \Mon\\c_u\neq c_u'}}|(c_u+c_u')\chi^u|_P
\quad \text{and} \quad
d_P(g,g')=\dsum_{\substack{u\in \Mon\\g_u\neq g_u'}}|(g_u+g_u')\chi^u|_P.
$$

Fix $u\in\Mon$ with $c_u\neq c_u'$; we want to show that $|(c_u+c_u')\chi^u|_P\leq d_P(g,g')|h|_P$. We assume without loss of generality that $c_u>c_u'$, so $c_u+c_u'=c_u$. Pick $v_0,w_0\in\Mon$ attaining the maximum in $c_u=\dsum_{v+w=u}g_vh_w$, i.e., $v_0+w_0=u$ and $c_u=g_{v_0}h_{w_0}$. Then 
$|(c_u+c_u')\chi^u|_P=|g_{v_0}h_{w_0}\chi^u|_P$, 
so we want to show that $|g_{v_0}h_{w_0}\chi^u|_P\leq d(g,g')|h|_P$. If $h_{w_0}=0_\base$ this is clear, so assume $h_{w_0}\in\base^\times$. Hence, we can divide the inequality $g_{v_0}h_{w_0}=c_u>c_u'\geq g'_{v_0}h_{w_0}$ by $h_{w_0}$ to get $g_{v_0}>g'_{v_0}$. In particular, $|(g_{v_0}+g_{v_0}')\chi^{v_0}|_P\leq d_P(g,g')$. Thus, 
$$|g_{v_0}h_{w_0}\chi^u|_P=|g_{v_0}\chi^{v_0}|_P\ |h_{w_0}\chi^{w_0}|_P=|(g_{v_0}+g'_{v_0})\chi^{v_0}|_P\ |h_{w_0}\chi^{w_0}|_P\leq d_P(g,g')|h|_P.$$
\end{proof}

\begin{proposition}\label{prop:xisContinuous}
Let $\base$ be a sub-semifield of $\T$ and $\Mon$ a toric monoid. For any prime $P\in\ContIntBase{\base} \base[\Mon]$, the multiplication on $\Cnvg{P}$ is continuous.
\end{proposition}

\begin{proof}
The multiplication on $\Cnvg{P}$ will be the map
$$m: \Cnvg{P} \times \Cnvg{P} \rightarrow \Cnvg{P}.$$
We will show that the inverse image of every open ball in $\Cnvg{P}$ is open in $\Cnvg{P}^2$ with the product topology.

Let $f \in \Cnvg{P}$ and $r \in \kappa(P)^\times$. The preimage of the ball of radius $r$ centered at $f$ is the set 
$$m^{-1}(B(f,r)) = \left\{ (g,h) \in \Cnvg{P} \times \Cnvg{P}\ : \ d_{P}(gh,f)<r  \right\}.$$
Analogously to proving that addition is continuous, it suffices to show that if $gh \in B(f,r)$ and $(g',h') \in B(g,r_1) \times B(h,r_2)$ then $g'h' \in B(f,r)$, for some choices $r_1,r_2\in\kappa(P)^\times$. We will consider 3 cases:

\noindent \underline{Case 1}: $g=h=0$. In this case we obtain that 
$$\Qnorm{f} = d_P(0,f) = d_P(gh,f) < r.$$
The last inequality holds since $gh \in B(f,r)$. If $r \leq 1$ then $r^2 \leq r$ so we set $r_1 = r$ and ensure that $r_1^2 \leq r$. Otherwise, if $r \geq 1$ then $1^2 \leq r$ so we set $r_1 = 1$ and again ensure that $r_1^2 \leq r$. 
Either way, set $r_2=r_1$. If $(g',h') \in B(g,r_1) \times B(h,r_2) = B(0,r_1) \times B(0,r_1)$ then
$$ \Qnorm{g'h'} = \Qnorm{g'} \Qnorm{h'} < r_1^2 \leq r,$$
and so
$$ d_P(g'h',f) \leq \Qnorm{g'h'} + \Qnorm{f} < r + r = r .$$
\noindent \underline{Case 2}: $g=0$ but $h \neq 0$. In this case we still have that 
$$\Qnorm{f} = d_P(0,f) = d_P(gh,f) < r.$$
Set $r_1 = r / \Qnorm{h}$ and $r_2 =  \Qnorm{h}$. For $(g',h') \in B(g,r_1) \times B(h,r_2) = B(0,r_1) \times B(h,r_2)$ we have that 
$$ \Qnorm{g'h'} = \Qnorm{g'} \Qnorm{h'} < r_1 \Qnorm{h'} \leq  r_1(r_2 + \Qnorm{h}),$$
where the last inequality holds since $h' \in B(h, r_2)$. Thus, we obtain that
$$d_P(g'h',f) \leq \Qnorm{g'h'} + \Qnorm{f} < r_1(r_2 + \Qnorm{h}) + r = r+r = r,$$
where the second-to-last equality is a consequence of our choice of $r_1$ and $r_2$.

\noindent \underline{Case 3}: $g, h \neq 0$. Set $r_1 = r / \Qnorm{h}$ and $r_2 =  \min(\Qnorm{h}, r / \Qnorm{g} )$. Let $(g',h') \in B(g,r_1) \times B(h,r_2)$. Then, by applying the triangle inequality twice, we get that 
$$d_P(g'h',f) \leq d_P(g'h',gh) + d_P(gh,f) \leq d_P(g'h',g'h) + d_P(g'h,gh) + d_P(gh,f).$$ 
By Lemma~\ref{lemma:MultAndDist}, this gives us
\begin{align*}
    d_P(g'h',f)
    &\leq d_P(h',h)\Qnorm{g'} + d_P(g',g)\Qnorm{h} + d_P(gh,f) \\
    &< r_2\Qnorm{g'} + r_1\Qnorm{h} + r \\
    &\leq r_2(\Qnorm{g}+r_1) + r_1\Qnorm{h} +  r\\ 
    &=r_2\Qnorm{g}+r_1r_2 + r_1\Qnorm{h} + r 
    =r.
\end{align*}
\end{proof}


\begin{prop}\label{prop:SToCnvgIsContinuous}
Let $\base$ be a sub-semifield of $\T$, let $\Mon$ be a monoid, and let $P\in\ContBase{\base}\base[\Mon]$. Then the map $\base\to\base[\Mon]\to\Cnvg{P}$ is continuous.
\end{prop}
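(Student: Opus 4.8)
The plan is to show continuity of the composite $\iota: \base \to \base[\Mon] \to \Cnvg{P}$ directly from the definition, by checking that the preimage of every basic open set is open. Since $\base$ is a sub-semifield of $\T$, its topology has a base consisting of the singletons $\{a\}$ and the intervals $[0_\base, a]$ for $a \in \base^\times$, and $\Cnvg{P}$ carries the metric topology from $d_P$, so it suffices to show that for each $g \in \Cnvg{P}$ and each $r \in \kappa(P)^\times$, the preimage $\iota^{-1}(B_P(g,r))$ is open in $\base$. Note that the map $\iota$ sends $a \in \base$ to the constant series $a\chi^{0_\Mon}$ (all other coefficients $0_\base$), so for $a, a' \in \base$ with $a \neq a'$ we have $d_P(\iota(a), \iota(a')) = |(a + a')\chi^{0_\Mon}|_P = |a+a'|_P \cdot |\chi^{0_\Mon}|_P$, and since $|\chi^{0_\Mon}|_P = 1_{\kappa(P)}$ this is just $|a + a'|_P = \max(|a|_P, |a'|_P)$.

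First I would reduce to understanding $\iota^{-1}(B_P(g,r))$. If $g$ is not of the form $a\chi^{0_\Mon}$ for any $a \in \base$, i.e. $g$ has a nonzero coefficient $g_u$ with $u \neq 0_\Mon$, then for any $a \in \base$ the distance $d_P(\iota(a), g)$ is at least $|g_u \chi^u|_P$, so $B_P(g,r)$ meets the image of $\iota$ only if $r > |g_u\chi^u|_P$ for all such $u$; in any case $\iota^{-1}(B_P(g,r)) = \iota^{-1}(B_P(a_0\chi^{0_\Mon}, r))$ where $a_0 = g_{0_\Mon}$ whenever that preimage is nonempty (one checks $d_P(\iota(a), g) < r$ forces $d_P(\iota(a), a_0\chi^{0_\Mon}) < r$ and conversely, using that the coefficients of $g$ at nonzero exponents are either all fixed by being large or all contribute less than $r$). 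So without loss of generality I reduce to the case $g = a_0 \chi^{0_\Mon}$ with $a_0 \in \base$, and then $\iota^{-1}(B_P(g,r)) = \{a \in \base : d_P(\iota(a), \iota(a_0)) < r\} = \{a \in \base : a = a_0\} \cup \{a \in \base : \max(|a|_P, |a_0|_P) < r\}$.

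Next I would argue this set is open in $\base$. If $|a_0|_P \geq r$ it equals $\{a_0\}$, which is open since singletons are open in the topology on $\base$. If $|a_0|_P < r$, it equals $\{a \in \base : |a|_P < r\} \cup \{a_0\} = \{a \in \base : |a|_P < r\}$ (as $a_0$ already satisfies $|a_0|_P < r$). By Lemma~\ref{lemma:ContinuityGivesGeorge}, since $P \in \ContBase{\base}\base[\Mon]$, the composite $\base \to \base[\Mon] \toup{|\cdot|_P} \kappa(P)$ is a continuous (in fact injective) homomorphism of totally ordered semifields; equivalently, for every $c \in \kappa(P)^\times$ there is $b \in \base^\times$ with $|b|_P < c$. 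Since $|\cdot|_P : \base \to \kappa(P)$ is continuous and the set $[0_{\kappa(P)}, c) = \{x \in \kappa(P) : x < c\}$ is open in $\kappa(P)$ for $c \in \kappa(P)^\times$ (it contains $[0_{\kappa(P)}, b]$ for any $b < c$, and does not contain $0$ only if... indeed it contains $0_{\kappa(P)}$ so openness is the condition that it contains $[0_{\kappa(P)}, b]$ for some $b \in \kappa(P)^\times$, which holds), the preimage $\{a \in \base : |a|_P < r\}$ is open in $\base$. This completes the argument.

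The main obstacle — really the only subtlety — is the bookkeeping in the first reduction: carefully checking that $\iota^{-1}(B_P(g,r))$ only depends on the constant term $g_{0_\Mon}$ of $g$ (and on whether $r$ exceeds the $P$-norms of the other terms of $g$), so that we genuinely reduce to the constant-series case. Everything after that is a direct appeal to Lemma~\ref{lemma:ContinuityGivesGeorge} and the description of the topology on a totally ordered semifield. I would present the proof by first writing $\iota(a) = a\chi^{0_\Mon}$, computing $d_P(\iota(a), \iota(a'))$, then handling the preimage of a ball centered at an arbitrary $g$ by splitting off its constant term, and finally invoking continuity of $|\cdot|_P$ restricted to $\base$.

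\begin{proof}
Write $\iota:\base\to\Cnvg{P}$ for the composition $\base\to\base[\Mon]\to\Cnvg{P}$; explicitly, $\iota(a)=a\chi^{0_\Mon}$, the series whose coefficient at $0_\Mon$ is $a$ and whose other coefficients are $0_\base$. Since $|\chi^{0_\Mon}|_P=1_{\kappa(P)}$, for any $a,a'\in\base$ with $a\neq a'$ we have
$$d_P(\iota(a),\iota(a'))=|(a+a')\chi^{0_\Mon}|_P=|a+a'|_P=\max(|a|_P,|a'|_P).$$

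The topology on $\Cnvg{P}$ is induced by $d_P$, so it suffices to show that $\iota^{-1}(B_P(g,r))$ is open in $\base$ for every $g\in\Cnvg{P}$ and every $r\in\kappa(P)^\times$. Write $g=\dsum_{u\in\Mon}g_u\chi^u$ and set $a_0=g_{0_\Mon}\in\base$. We claim that $\iota^{-1}(B_P(g,r))$ is either empty or equal to $\iota^{-1}(B_P(a_0\chi^{0_\Mon},r))$. Indeed, for $a\in\base$,
$$d_P(\iota(a),g)=\dsum_{\substack{u\in\Mon\\ \iota(a)_u\neq g_u}}|(\iota(a)_u+g_u)\chi^u|_P,$$
and the terms with $u\neq0_\Mon$ contribute $\dsum_{\substack{u\neq0_\Mon\\ g_u\neq0_\base}}|g_u\chi^u|_P$, which does not depend on $a$, while the term with $u=0_\Mon$ contributes $|(a+a_0)\chi^{0_\Mon}|_P$ if $a\neq a_0$ and nothing if $a=a_0$; this last quantity equals $d_P(\iota(a),a_0\chi^{0_\Mon})$. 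So if $\dsum_{\substack{u\neq0_\Mon\\ g_u\neq0_\base}}|g_u\chi^u|_P\geq r$ then $d_P(\iota(a),g)\geq r$ for all $a$, hence $\iota^{-1}(B_P(g,r))=\emptyset$; otherwise $d_P(\iota(a),g)<r$ if and only if $d_P(\iota(a),a_0\chi^{0_\Mon})<r$, proving the claim.

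It therefore suffices to show that $\iota^{-1}(B_P(a_0\chi^{0_\Mon},r))$ is open in $\base$ for each $a_0\in\base$ and $r\in\kappa(P)^\times$. By the computation above,
$$\iota^{-1}(B_P(a_0\chi^{0_\Mon},r))=\{a_0\}\cup\{a\in\base\,:\,\max(|a|_P,|a_0|_P)<r\}.$$
If $|a_0|_P\geq r$ this set is the singleton $\{a_0\}$, which is open in $\base$. If $|a_0|_P<r$ this set equals $\{a\in\base\,:\,|a|_P<r\}$. Now $|\cdot|_P:\base\to\kappa(P)$ is continuous: by Lemma~\ref{lemma:ContinuityGivesGeorge}, $P\in\ContBase{\base}\base[\Mon]$ implies that for every $c\in\kappa(P)^\times$ there is $b\in\base^\times$ with $|b|_P<c$, so the composite $\base\to\base[\Mon]\toup{|\cdot|_P}\kappa(P)$ satisfies condition \ref{LemmaContsHomSemifieldsItem:LessOrEqual} of Lemma~\ref{lemma:ContsHomOfTotOrdSemifields} and hence is continuous. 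The set $\{x\in\kappa(P)\,:\,x<r\}$ contains $0_{\kappa(P)}$ and contains $[0_{\kappa(P)},b]$ for any $b\in\kappa(P)^\times$ with $b<r$, so it is open in $\kappa(P)$. Therefore $\{a\in\base\,:\,|a|_P<r\}$ is open in $\base$, completing the proof.
\end{proof}
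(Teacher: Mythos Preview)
Your proof is correct and follows the same overall strategy as the paper's --- show that preimages of $d_P$-balls are open in $\base$ --- but you take a longer route. You first reduce explicitly to balls centered at constants $a_0\chi^{0_\Mon}$, then case-split on $|a_0|_P$ versus $r$, and finally invoke continuity of $|\cdot|_P:\base\to\kappa(P)$ together with openness of $[0_{\kappa(P)},r)$ in $\kappa(P)$. The paper's argument is more economical: it splits directly on whether $0_\base$ lies in $B(f,r)\cap\base$ (any subset of $\base$ not containing $0_\base$ is automatically open), and when $0_\base$ does lie in the ball, uses the ultrametric property once to recentre the ball at $0_\base$, then checks directly that $\{c\in\base:|c|_P<r\}\supseteq[0_\base,a]$ for any $a\in\base^\times$ with $|a|_P<r$. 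Your reduction to constant centers and your appeal to continuity of $|\cdot|_P$ are both valid, just unnecessary once one observes these two shortcuts.
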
\begin{proof}
For any $f\in\Cnvg{P}$ and $r\in\kappa(P)^\times$, we want to show that $B(f,r)\cap\base$ is open in $\base$. If $0_\base$ is not in $B(f,r)\cap\base$ then $B(f,r)\cap\base$ is open. 

So suppose $0_\base\in B(f,r)$, and fix some $a\in\base^\times$ with $|a|_P<r$. Then
\begin{align*}
B(f,r)\cap\base&=B(0_\base,r)\cap\base\\
&=\{c\in\base\ :\ |c|_P<r\}\\
&\supseteq\{c\in\base\ :\ |c|_P\leq|a|_P\}\\
&=[0_\base,a],
\end{align*}
so $B(f,r)\cap\base$ is open in $\base$.
\end{proof}

\begin{remark} Let $\base$ be a subsemifield of $\T$, $\Mon$ a monoid, and $P\in\ContBaseInt{\base}\base[\Mon]$. In the induced topology by $d_P$ on the space of monomials of $\base[\Mon]$, every point except for $0_{\base}$ is isolated. Indeed,  $f,g$ be distinct monomials (terms) in $\base[\Mon]$. Then by definition $$d_P(f,g) = \sum\limits_{\substack{u\in \Mon \\ f_u \neq g_u}} |(f_u + g_u)\chi^u|_P = |f|_P + |g|_P \geq |f|_P.$$ This shows that the only monomial in the open ball around $f$ of radius $|f|_P$ is $f$ itself. 

This statement is analogous to the fact that, in any totally ordered semifield, every nonzero point is isolated.
\end{remark}


\section{The canonical extension of $P$ to a prime on $\Cnvg{P}$}\label{sect: canExt}
\GeorgeStory{George and the dragon eggs.}

\noindent Let $\base\neq\B$ be a subsemifield of $\T$, let $\Mon$ be a toric monoid, and let  $P$ be a prime in $\ContIntBase{\base}\base[\Mon]$.

\begin{definition} 
The {\it canonical extension} $Q_P$ of $P$ to $\Cnvg{P}$ is the congruence-kernel of the map $\Cnvg{P} \toup{|\cdot|_P} \base[\Mon]/P$. 
Since $\Cnvg{P} \toup{|\cdot|_P} \base[\Mon]/P$ is surjective, there is a natural identification of $\kappa(P)$ with $\kappa(Q_P)$, under which we have that $|\sum\limits_u f_u\chi^u|_{Q_P} = \max\limits_u |f_u\chi^u|_P$.
\end{definition}

Our first result about $Q_P$ is more easily first seen as a result about ultrapseudometrics.

\begin{lemma}\label{lemma:DistIsContinuous}
Let $K$ be a totally ordered semifield and let $d:X^2\to K$ be an ultrapseudometric. For any $a\in X$, the map $d(a,\cdot):X\to K$ is continuous in the topology on $X$ induced by $d$.
\end{lemma}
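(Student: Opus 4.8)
The plan is to unwind the definition of the topology on $X$ induced by $d$ and show that the preimage of every basic open set of $K$ under $d(a,\cdot)$ is open. Recall that a base for the canonical topology on the totally ordered semifield $K$ is given by the singletons $\{c\}$ for $c\in K^\times$ together with the intervals $[0_K,c]$ for $c\in K^\times$. So it suffices to show that $d(a,\cdot)^{-1}(\{c\})$ and $d(a,\cdot)^{-1}([0_K,c])$ are open in $X$ for each $c\in K^\times$.

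First I would handle the interval case, which is the easy one: $d(a,x)\in[0_K,c]$ means $d(a,x)\leq c$, and if $d(a,x_0)\leq c$ then for any $x$ with $d(x,x_0)\leq c$ we get $d(a,x)\leq\max(d(a,x_0),d(x_0,x))\leq c$ by the ultrametric (strong triangle) inequality; hence the ball $B(x_0,c')$ for any $c'\leq c$, $c'\in K^\times$, is contained in $d(a,\cdot)^{-1}([0_K,c])$, and such $c'$ exists since $K\neq\B$. (If $0_K$ happens not to lie in this preimage there is nothing to check, but in fact it always does when the preimage is nonempty, by the same strong triangle inequality argument centered at a point of the preimage.) So that preimage is open.

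Next I would treat the singleton case $d(a,\cdot)^{-1}(\{c\})$ with $c\in K^\times$. Suppose $d(a,x_0)=c$. I claim that for any $x$ with $d(x,x_0)<c$ we have $d(a,x)=c$ as well: this is the standard ``all triangles are isosceles'' fact for ultrametrics. Indeed $d(a,x)\leq\max(d(a,x_0),d(x_0,x))=\max(c,d(x_0,x))=c$ since $d(x_0,x)<c$, and symmetrically $c=d(a,x_0)\leq\max(d(a,x),d(x,x_0))$; since $d(x,x_0)<c$ this forces $c\leq d(a,x)$, giving $d(a,x)=c$. Here I am using that $K$ is totally ordered so the maximum behaves as expected, and that for $c'\in K^\times$ the notion $d(x_0,x)<c'$ makes sense and balls $B(x_0,c')$ are the basic neighborhoods of $x_0$. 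Thus $B(x_0,c)\subseteq d(a,\cdot)^{-1}(\{c\})$, so this preimage is open.

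Since the basic open sets of $K$ have open preimages and preimages commute with unions, $d(a,\cdot)$ is continuous. I do not expect any serious obstacle here; the only mild subtlety is remembering to invoke the strong (ultra) triangle inequality rather than the ordinary one and to phrase the ball arguments using radii in $K^\times$ (using $K\neq\B$ to produce radii below a given one when needed). This lemma is stated for a general ultrapseudometric into an arbitrary totally ordered semifield, so the argument must stay purely at that level of generality, which it does.
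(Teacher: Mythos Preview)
Your proof is correct and follows essentially the same approach as the paper: check that preimages of the basic open sets $\{c\}$ and $[0_K,c]$ are open by exhibiting a ball around each point of the preimage, using the ultrametric (isosceles triangle) inequality. The paper is terser---it writes the isosceles step as $d(a,y)=d(a,x)+d(x,y)=d(a,x)$ using the idempotent addition, and for $[0_K,r]$ simply takes the ball of radius $r$---whereas you spell out both directions of the isosceles argument explicitly; your detour through a radius $c'\leq c$ is unnecessary (take $c'=c$), and the parenthetical about ``$0_K$ lying in the preimage'' is a slight confusion of which space you are in, but neither affects the argument.
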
\begin{proof}
Let $\ph(x)=d(a,x)$. 
It suffices to show that  $\ph^{-1}(r)$ and $\ph^{-1}\left([0_{K},r]\right)$ are open in $X$ for $r\in K^\times$.

Say $x\in\ph^{-1}(r)$. If $y\in B(x,r)$ then $d(x,y)<r=d(a,x)$, so $\ph(y)=d(a,y)=d(a,x)+d(x,y)=d(a,x)=r$.

Now consider $x\in\ph^{-1}([0_K,r])$. If $y\in B(x,r)$ then $\ph(y)=d(a,y)\leq d(a,x)+d(x,y)\leq r+r=r$, i.e., $\ph(y)\in[0_K,r]$.
\end{proof}

\begin{prop}\label{prop:CanonicalExtensionIsClosed}
For any $P\in\ContIntBase{\base}\base[\Mon]$, we have $Q_P\in\ContInt\Cnvg{P}$.
\end{prop}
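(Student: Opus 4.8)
The plan is to verify the two defining properties of $\ContInt\Cnvg{P}$: that $Q_P$ is a prime congruence lying in $\Cont\Cnvg{P}$, and that it has trivial ideal-kernel. Recall $Q_P$ is by definition the kernel of the semiring homomorphism $|\cdot|_P:\Cnvg{P}\to\base[\Mon]/P$. Since $\base[\Mon]/P$ is totally ordered and cancellative (because $P$ is prime on $\base[\Mon]$), the kernel of any homomorphism to it is a prime congruence; this is exactly the observation recorded in the excerpt right after the definition of kernel. So $Q_P$ is prime, and $\Cnvg{P}/Q_P$ embeds into $\base[\Mon]/P$, with $\kappa(Q_P)$ identified with $\kappa(P)$ as noted in the definition of the canonical extension. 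In particular $\Cnvg{P}/Q_P\neq\B$ since $\base[\Mon]/P\neq\B$ (as $P\in\ContIntBase{\base}\base[\Mon]$ forces $\base\hookrightarrow\base[\Mon]/P$, so the latter is not $\B$).

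Next I would check continuity of $|\cdot|_{Q_P}:\Cnvg{P}\to\kappa(Q_P)=\kappa(P)$. Under the identification this map is just $f\mapsto|f|_P=\dsum_u|f_u\chi^u|_P$. The cleanest route is to observe $|f|_P = d_P(f,0)$: indeed $d_P(f,0)=\dsum_{u:\,f_u\neq 0}|f_u\chi^u|_P=|f|_P$. So $|\cdot|_{Q_P}$ is exactly the map $d_P(0,\cdot):\Cnvg{P}\to\kappa(P)$, which is continuous by Lemma~\ref{lemma:DistIsContinuous} (applied with $X=\Cnvg{P}$, $K=\kappa(P)$, $a=0$, and the ultrapseudometric $d_P$ — which is genuinely an ultrametric here since $P$, hence $Q_P$, has trivial ideal-kernel, but we only need the ultrapseudometric hypothesis of the lemma). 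This establishes $Q_P\in\Cont\Cnvg{P}$.

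Finally, for trivial ideal-kernel: the ideal-kernel of $Q_P$ is $\{f\in\Cnvg{P}:|f|_P=0_{\kappa(P)}\}$. If $|f|_P=\dsum_u|f_u\chi^u|_P=0_{\kappa(P)}$, then since each $|f_u\chi^u|_P\leq|f|_P=0$ and $0$ is the least element, every term satisfies $|f_u\chi^u|_P=0$. Because $P\in\ContIntBase{\base}\base[\Mon]$ has trivial ideal-kernel, $|f_u\chi^u|_P=|f_u|_P\cdot|\chi^u|_P=0$ with $|\chi^u|_P\in\kappa(P)^\times$ forces $|f_u|_P=0$, and then $f_u=0_\base$ since $\base\hookrightarrow\base[\Mon]/P$ is injective. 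Thus $f=0$, so the ideal-kernel is trivial and $Q_P\in\ContInt\Cnvg{P}$.

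I do not anticipate a serious obstacle here; the one point to be careful about is invoking Lemma~\ref{lemma:DistIsContinuous} correctly — namely recognizing the residue map as the distance-to-zero function and checking its hypotheses — and making sure the identification $\kappa(Q_P)\cong\kappa(P)$ is used consistently so that "continuous to $\kappa(Q_P)$" is literally the statement the lemma provides.
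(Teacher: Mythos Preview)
Your proof is correct and follows essentially the same approach as the paper: the key step in both is recognizing that $|\cdot|_{Q_P}=d_P(0,\cdot)$ and invoking Lemma~\ref{lemma:DistIsContinuous} to get continuity. You have simply made explicit the verifications (primality, $\neq\B$, trivial ideal-kernel) that the paper leaves implicit.
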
\begin{proof}
%
%
Since the map $|\cdot|_P:\Cnvg{P}\to\kappa(P)$ has trivial ideal-kernel, it suffices to show that this homomorphism is continuous. But $|\cdot|_P=d_P(0_{\base},\cdot)$, so Lemma~\ref{lemma:DistIsContinuous} tells us that this is continuous.
\end{proof}

For any $Q\in\Cont(\Cnvg{P})$, Proposition \ref{prop:PullbackFromContToContBase} tells us that the restriction $Q|_{\base[\Mon]}$ of $Q$ to $\base[\Mon]$ is in $\ContBase{\base} \base[\Mon]$.
We now work towards showing that $Q_P$ is the unique element of $\Cont(\Cnvg{P})$ whose restriction to $\base[\Mon]$ is $P$. 
First, we require an auxiliary topological result.

\begin{prop}\label{prop:CannonicalExtensionIsClosure}
Let $P$ be a prime congruence in 
$\ContIntBase{\base}\base[\Mon]$, 
and let $Q_P$ denote the canonical extension of $P$ to $\Cnvg{P}$. If we view $P\subseteq\base[\Mon]^2\subseteq\cnvg{P}^2$, then $Q_P$ is the closure of $P$ in $\Cnvg{P}^2$.
\end{prop}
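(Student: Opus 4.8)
The plan is to show the two inclusions $Q_P\subseteq\overline{P}$ and $\overline{P}\subseteq Q_P$, where the closure is taken in $\Cnvg{P}^2$ with the product topology. Throughout I use that, by Proposition~\ref{prop:SequencesSuffice}, the topology on $\Cnvg{P}^2$ is first-countable, so the closure of $P$ is exactly the set of limits of sequences of points of $P$.

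\emph{First inclusion, $\overline{P}\subseteq Q_P$.} By Proposition~\ref{prop:CanonicalExtensionIsClosed}, $Q_P\in\ContInt\Cnvg{P}$, and by Lemma~\ref{lemma:BasicPropertiesOfPInSpaA}(1) every prime in $\Cont\Cnvg{P}$ is closed as a subset of $\Cnvg{P}^2$. So it suffices to observe that $P\subseteq Q_P$ as subsets of $\Cnvg{P}^2$: indeed, for $f,g\in\base[\Mon]$ with $(f,g)\in P$ we have $|f|_P=|g|_P$ in $\base[\Mon]/P$, hence $(f,g)$ lies in the kernel $Q_P$ of $\Cnvg{P}\toup{|\cdot|_P}\base[\Mon]/P$. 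Taking closures, $\overline{P}\subseteq\overline{Q_P}=Q_P$.

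\emph{Second inclusion, $Q_P\subseteq\overline{P}$.} Fix $(f,g)\in Q_P$, i.e.\ $|f|_P=|g|_P$ in $\kappa(P)$, and write $f=\sum_u f_u\chi^u$, $g=\sum_u g_u\chi^u$. The idea is to approximate $f$ and $g$ simultaneously by a single polynomial-pair that already lies in $P$. Using the neighborhood basis $B(f,|\eps^n|_P)$ from the proof of Proposition~\ref{prop:ConvergentSeriesAreLimitsOfPolynomials} (for some fixed $\eps\in\base^\times$ with $\eps<1_\base$), for each $n$ let $I_n=\{u:|f_u\chi^u|_P\geq|\eps^n|_P\text{ or }|g_u\chi^u|_P\geq|\eps^n|_P\}$, a finite set since $f,g\in\Cnvg{P}$, and set $f_n=\sum_{u\in I_n}f_u\chi^u$, $g_n=\sum_{u\in I_n}g_u\chi^u$. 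As in Proposition~\ref{prop:ConvergentSeriesAreLimitsOfPolynomials}, $f_n\to f$ and $g_n\to g$, so $(f_n,g_n)\to(f,g)$ in $\Cnvg{P}^2$. It remains to modify the pair slightly so that it lands in $P$ while preserving convergence. Since $|f|_P=|g|_P$, there is a term attaining the maximum on each side; in fact $|f_n|_P=|f|_P=|g|_P=|g_n|_P$ for all $n$ large enough that $I_n$ contains an index realizing the maximum of $|f|_P$ (and of $|g|_P$). For such $n$, $f_n$ and $g_n$ are polynomials with $|f_n|_P=|g_n|_P$, i.e.\ $(f_n,g_n)\in P$. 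Hence $(f,g)=\lim_n(f_n,g_n)\in\overline{P}$.

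\emph{Main obstacle.} The delicate point is the last step: $(f_n,g_n)$ need not lie in $P$ for \emph{all} $n$, only once the truncation index set $I_n$ is large enough to capture a $P$-leading term of $f$ and of $g$ simultaneously. I should check carefully that such $n$ exist — this follows because $|f|_P,|g|_P\in\base[\Mon]/P$ are honestly attained by finitely many monomials of $f$ (resp.\ $g$) by Lemma~\ref{lemma:AbsoluteValueOfConvergentSeriesMakesSense}, and these monomials have $P$-value $\geq|\eps^n|_P$ for $n$ large — and that for those $n$ the truncated pair is genuinely in $P$, which is immediate since $P$ is determined by the relation $f\equiv_P g\iff|f|_P=|g|_P$ on $\base[\Mon]$. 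Everything else is routine: the density argument is a verbatim reuse of Proposition~\ref{prop:ConvergentSeriesAreLimitsOfPolynomials}, and the first inclusion is a one-line application of closedness of primes in $\Cont$.
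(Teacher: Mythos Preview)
Your proof is correct and follows essentially the same approach as the paper: both show $Q_P$ is closed and contains $P$ for one inclusion, and for the other approximate $(f,g)\in Q_P$ by polynomial pairs that eventually contain the $P$-leading terms of $f$ and $g$ and hence lie in $P$. You are slightly more explicit in constructing the truncations $f_n,g_n$ over a common index set $I_n$, whereas the paper invokes arbitrary approximating sequences from Proposition~\ref{prop:ConvergentSeriesAreLimitsOfPolynomials} and observes they must eventually contain the leading terms, but the substance is the same.
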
\begin{proof}
From Proposition \ref{prop:CanonicalExtensionIsClosed} we know that $Q_P$ is closed. So it remains to show that, for any closed set $\fred$ such that $P\subseteq\fred\subseteq\cnvg{P}^2$, $Q_P\subseteq\fred$. Say $(f,g)\in Q_P$. By Proposition~\ref{prop:ConvergentSeriesAreLimitsOfPolynomials}, there are sequences $(f_n)_{n\in\N}$ and $(g_n)_{n\in\N}$ in $\base[\Mon]$ with $f=\dlim_{n\to\infty}f_n$ and $g=\dlim_{n\to\infty}g_n$. Note that, for $n$ sufficiently large, $f_n$ and $g_n$ must contain the $P$-leading terms of $f$ and $g$, respectively. For such $n$ we have $(f_n,g_n)\in P\subseteq\fred$. Since $(f_n,g_n)$ is eventually in the closed set $\fred$, $(f,g)=\dlim_{n\to\infty}(f_n,g_n)$ is also in $\fred$.
\end{proof}

The uniqueness of $Q_P$ as an element of $\Cont(\Cnvg{P})$ whose restriction to $\base[\Mon]$ is $P$ will now follow from a more general uniqueness result which will also be used in the proof of the main theorem in Section~\ref{section:TheoremCrown}. 
\begin{prop}\label{prop:GeneralUniquenessOfExtension}
Let $\base$ be a subsemifield of $\T$ and let $\ph:A\to B$ be a homomorphism of $\base$-algebras with $B$ a topological $\base$-algebra. Suppose that the closure $\bar{P}$ of $(\ph\times\ph)(P)$ in $B^2$ is a prime congruence on $B$, $\ph^*(\bar{P})=P$, and that the map $A/P\to B/\bar{P}$ induced by $\ph$ is an isomorphism. Then $\bar{P}$ is the unique closed prime on $B$ whose pullback to $A$ is $P$.
\end{prop}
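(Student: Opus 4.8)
The plan is to show that if $Q$ is any closed prime congruence on $B$ with $\ph^*(Q) = P$, then $Q = \bar{P}$. First I would observe that $\bar{P} \subseteq Q$: since $\ph^*(Q) = P$, by definition $(\ph\times\ph)(P) \subseteq Q$ as a subset of $B^2$, and since $Q$ is closed, the closure $\bar{P}$ of $(\ph\times\ph)(P)$ is contained in $Q$. This is the easy inclusion, and it uses only that $Q$ is closed. Thus both $\bar{P}$ and $Q$ are prime congruences on $B$ with $\bar{P} \subseteq Q$, and it remains to prove the reverse inclusion $Q \subseteq \bar{P}$.

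For the reverse inclusion I would pass to quotients. By the correspondence theorem for congruences, $Q$ corresponds to a prime congruence $\bar{Q}$ on $B/\bar{P}$, namely the image of $Q$ under the quotient map $B \to B/\bar{P}$, and $Q = \bar{P}$ if and only if $\bar{Q}$ is the trivial congruence $\Delta$ on $B/\bar{P}$. Now consider the composition $A \toup{\ph} B \to B/\bar{P}$. On one hand its kernel is $\ph^*(\bar{P}) = P$ by hypothesis; on the other hand, the hypothesis that the induced map $A/P \to B/\bar{P}$ is an isomorphism means this composition realizes $B/\bar{P}$ (up to the canonical identification) as the quotient $A/P$. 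So the pullback of $\bar{Q}$ along $A/P \xrightarrow{\sim} B/\bar{P}$ is a prime congruence $\tilde{Q}$ on $A/P$ whose pullback along $A \to A/P$ equals $\ph^*(Q) = P$. But by the correspondence theorem the only congruence on $A/P$ whose pullback to $A$ is $P$ is $\Delta_{A/P}$. Hence $\tilde{Q} = \Delta_{A/P}$, and transporting across the isomorphism $A/P \xrightarrow{\sim} B/\bar{P}$ gives $\bar{Q} = \Delta_{B/\bar{P}}$, i.e.\ $Q = \bar{P}$.

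I expect the argument to be essentially formal once the bookkeeping is set up correctly; the main point to be careful about is the interaction between the set-theoretic closure and the algebraic operations — specifically, one should make sure that $(\ph\times\ph)(P) \subseteq Q$ genuinely follows from $\ph^*(Q) = (\ph\times\ph)^{-1}(Q) = P$ (it does, directly from the definition of pullback), and that no continuity of $\ph$ itself is needed, only that $Q$ is a closed subset of $B^2$. The other subtlety is simply to keep straight which hypotheses are used where: closedness of $Q$ gives $\bar{P}\subseteq Q$, while the isomorphism hypothesis $A/P \xrightarrow{\sim} B/\bar{P}$ together with the correspondence theorem gives $Q \subseteq \bar{P}$. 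Neither direction requires $\bar{P}$ itself to be closed beyond what is already assumed, so the proof is short.
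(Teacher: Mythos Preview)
Your proof is correct and follows essentially the same approach as the paper: both arguments first use closedness of $Q$ to get $\bar{P}\subseteq Q$, then pass to the quotient $B/\bar{P}$, transport the induced congruence across the isomorphism $A/P\xrightarrow{\sim}B/\bar{P}$, and invoke the correspondence theorem to conclude it is the diagonal. The paper's version differs only in notation (it writes $\widetilde{Q}$ for your $\bar{Q}$ and $\widetilde{P}$ for your $\tilde{Q}$) and in phrasing the diagram-chase slightly more explicitly.
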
\begin{proof}
%
%
Note that, since $\ph^*(\bar{P})=P$, we must have $\ker\ph\subseteq P$. Hence, by quotienting $A$ by $\ker\ph$, we may assume without loss of generality that $A$ is an $\base$-subalgebra of $B$. Since $A/P\to B/\bar{P}$ is surjective, for all $f\in B$ there is some $f_0\in A$ such that $(f_0,f)\in \bar{P}$.

Suppose that $Q$ is a closed prime congruence on $B$ such that $Q\cap A^2=P$. Since $Q$ is closed and contains $P$, we have $Q\supseteq\bar{P}$.

Consider $(f,g)\in Q$. So there exist $f_0,g_0\in A$ such that $(f_0,f)$ and $(g_0,g)$ are both in $\bar{P}\subseteq Q$. By symmetry and transitivity of $Q$, we get that $f_0\sim_{Q}f\sim_{Q}g\sim_{Q}g_0$ gives us $(f_0,g_0)\in Q\cap A^2=P$. So $f\sim_{\bar{P}}f_0\sim_{\bar{P}}g_0\sim_{\bar{P}}g$, giving $(f,g)\in\bar{P}$. Hence $Q\subseteq \bar{P}$.
\end{proof}

\begin{coro}\label{coro:CannonicalExt}
For any $P\in\ContIntBase{\base}\base[\Mon]$, $Q_P$ is the unique element of $\Cont(\Cnvg{P})$ whose restriction to $\base[\Mon]$ is $P$.
\end{coro}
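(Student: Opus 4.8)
The plan is to obtain this as a direct application of the abstract uniqueness statement, Proposition~\ref{prop:GeneralUniquenessOfExtension}, to the inclusion $\iota:\base[\Mon]\to\Cnvg{P}$, after checking that points of $\Cont(\Cnvg{P})$ are automatically closed. First I would record that $Q_P$ is itself a legitimate point with the correct restriction: by Proposition~\ref{prop:CanonicalExtensionIsClosed} we have $Q_P\in\ContInt\Cnvg{P}\subseteq\Cont\Cnvg{P}$, and since for $f,g\in\base[\Mon]$ one has $(f,g)\in Q_P$ precisely when $|f|_P=|g|_P$ in $\base[\Mon]/P$, i.e.\ precisely when $(f,g)\in P$, it follows that $\iota^*(Q_P)=P$.

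For uniqueness I would verify the three hypotheses of Proposition~\ref{prop:GeneralUniquenessOfExtension} with $A=\base[\Mon]$, $B=\Cnvg{P}$ (a topological $\base$-algebra, as established in this section), and $\ph=\iota$. (i) The closure of $(\iota\times\iota)(P)$ in $\Cnvg{P}^2$ is a prime congruence: this is exactly Proposition~\ref{prop:CannonicalExtensionIsClosure} together with Proposition~\ref{prop:CanonicalExtensionIsClosed}, which identify this closure with $Q_P$ and show it is prime. (ii) $\iota^*(Q_P)=P$, already noted. (iii) The induced map $\base[\Mon]/P\to\Cnvg{P}/Q_P$ is an isomorphism: this holds because $Q_P$ is by definition the kernel of the surjection $|\cdot|_P:\Cnvg{P}\to\base[\Mon]/P$, and that surjection restricts on $\base[\Mon]$ to the quotient map. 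Proposition~\ref{prop:GeneralUniquenessOfExtension} then yields that $Q_P$ is the unique \emph{closed} prime congruence on $\Cnvg{P}$ pulling back to $P$.

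To upgrade ``closed prime'' to ``element of $\Cont(\Cnvg{P})$'', I would invoke Lemma~\ref{lemma:BasicPropertiesOfPInSpaA}(1): since $\Cnvg{P}$ is a topological $\base$-algebra and $\base$ is a (totally ordered) sub-semifield of $\T$, every $Q\in\Cont(\Cnvg{P})$ is automatically closed as a subset of $\Cnvg{P}^2$. Hence any $Q\in\Cont(\Cnvg{P})$ with $\iota^*(Q)=P$ is a closed prime congruence pulling back to $P$, so $Q=Q_P$, which completes the argument.

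I do not anticipate a genuine obstacle here: all the substantive work (primeness and closedness of $Q_P$, density of $\base[\Mon]$ in $\Cnvg{P}$, the abstract uniqueness lemma) is already in place. The only points requiring care are matching the hypotheses of Proposition~\ref{prop:GeneralUniquenessOfExtension} exactly and remembering that continuity of $A\to\kappa(Q)$ for $Q\in\Cont(\Cnvg{P})$ forces $Q$ to be closed via Lemma~\ref{lemma:BasicPropertiesOfPInSpaA}.
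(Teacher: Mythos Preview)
Your proposal is correct and follows essentially the same route as the paper: verify the hypotheses of Proposition~\ref{prop:GeneralUniquenessOfExtension} using Proposition~\ref{prop:CannonicalExtensionIsClosure} (closure of $P$ is $Q_P$), the definition of $Q_P$ (restriction and isomorphism on quotients), and the fact that points of $\Cont(\Cnvg{P})$ are closed. The paper's proof is terser but structurally identical; your explicit citation of Lemma~\ref{lemma:BasicPropertiesOfPInSpaA}(1) for closedness is a nice touch that the paper leaves implicit.
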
\begin{proof}
Note that, by definition of the canonical extension $Q_P$, $Q_P|_{\base[\Mon]}=P$ and the induced map $\base[M]/P\to\Cnvg{P}/Q_P$ is an isomorphism. Proposition~\ref{prop:CannonicalExtensionIsClosure} tells us that $Q_P$ is the closure of $P$ in $\Cnvg{P}^2$. Since any element of $\Cont(\Cnvg{P})$ is closed in $\Cnvg{P}^2$, Proposition~\ref{prop:GeneralUniquenessOfExtension} now gives us that $Q_P$ is the unique element of $\Cont(\cnvg{P})$ whose restriction to $\base[\Mon]$ is $P$. 
\end{proof}

\begin{remark} 
%
To get Corollary~\ref{coro:CannonicalExt} we applied Proposition~\ref{prop:GeneralUniquenessOfExtension} with $A=\base[\Mon]$ and $B=\Cnvg{P}$ where $P$ is also the prime we are considering extending. In Section~\ref{section:TheoremCrown} we will apply Proposition~\ref{prop:GeneralUniquenessOfExtension} to the case where these primes are distinct.
\end{remark}


\begin{example}\label{ex:CanonicalExtUnderP}
Given $P\in\ContIntBase{\base}\base[\Mon]$, consider any prime congruence $P' \subseteq P$. By Proposition~\ref{prop:GoingDownStaysInGeorge}, $P'\in\ContIntBase{\base}\base[\Mon]$ so we can apply Lemma~\ref{lemma:CnvgInChain} to get that $\Cnvg{P'} = \Cnvg{P}$. 
Hence, $P'$ also has a unique extension in $\Cont(\Cnvg{P})$, namely $Q_{P'}$.\exEnd
\end{example}




\section{Description of $\Cont{(\Cnvg{P})}$}\label{section:TheoremCrown}
\GeorgeStory{In which George's hat gets fancy.\newline}

In this section we let $\base\neq\B$ be a sub-semifield of $\T$, let $\Mon$ be a toric monoid, and let $P\in\ContIntBase{\base} \base[\Mon]$.
Our goal is to study the rest of $\Cont(\Cnvg{P})$. To do this, we will consider other $P'\in\ContBase{\base} \base[\Mon]$ and compare them to $P$. In order to compare evaluations of elements of $\base[\Mon]$ at both $P$ and $P'$, we embed $\kappa(P)$ and $\kappa(P')$ into a single totally ordered semifield.

For any such $P$ and $P'$, Proposition \ref{prop:HahnFor2SpaPrimes} tells us that we can fix a totally ordered set $I_{P,P'}$ with least element $i_0(P,P')$ and embeddings $\ph_P:\kappa(P)\to\RnLexSemifield{I_{P,P'}}$ and $\ph_{P'}:\kappa(P')\to\RnLexSemifield{I_{P,P'}}$ such that the diagram
\comd{
&&\kappa(P)\ar[r]^{\pi_{\check{P},P}}\inj[d]_{\ph_P}&\kappa\left(\check{P}\right)\inj[d]\\
\base\inj[r]\ar[urr]\ar[drr]&\T\ar[r]^{j}&\RnLexSemifield{I_{P,P'}}\ar[r]^{\diagbigpi}&\T\\
&&\kappa(P')\ar[r]_{\pi_{\check{P}',P'}}\linj[u]^{\ph_{P'}}&\kappa\left(\check{P}'\right)\linj[u]\\
}
commutes. Here $\check{P}$ and $\check{P}'$ are the maximal elements of $\ContBase{\base} \base[\Mon]$ containing $P$ and $P'$, respectively, and $j$ and $\bigpi$ are the continuous homomorphisms induced by inclusion of, and projection onto, the $i_0(P,P')$ factor. Recall that, by Lemma \ref{lemma:MaximalGeorgePrimes}, the embeddings $\kappa(\check{P})\into\T$ and $\kappa(\check{P}')\into\T$ are uniquely determined by their compatibility with $\base$.

Using these embeddings and the section $j:\T\to\RnLexSemifield{I_{P,P'}}$ of $\bigpi$, for any $f\in\base[\Mon]$ we can view $|f|_{P}, |f|_{P'}, |f|_{\check{P}}$, and $|f|_{\check{P}'}$ as elements of $\RnLexSemifield{I_{P,P'}}$, and we do so for the rest of this section. 
Viewing $j$ as an embedding also makes $\bigpi$ an endomorphism of $\RnLexSemifield{I_{P,P'}}$, which satisfies the following property:
 for any $\gamma\in\RnLexSemifield{I_{P,P'}}$ and $d\in\T$ such that $d>1_{\T}$, 
 \begin{equation}\label{eq:bp1}
      \bigpi(\gamma)<d \gamma \text{ and } d\bigpi(\gamma)>\gamma.
 \end{equation}


We start by considering the closures in $\Cnvg{P}^2$ of primes on $\base[\Mon]$. These are not necessarily congruences, since they do not need to be transitively closed.

\begin{notation}
For primes $P \in \ContBaseInt{\base}{\base[\Mon]}$ and $P' \in \ContBase{\base}{\base[\Mon]}$ we will denote by $\bar{P'}$ the topological closure of $P'$ in $\Cnvg{P}^2$. 
\end{notation}


\begin{example}[Motivating example]\label{ex:motivating} Let $P$ and $P'$ be prime congruences on $\T[x^{\pm 1}]$ with defining matrices $\begin{pmatrix} 1 & 0 \end{pmatrix}$ and $\begin{pmatrix} 1 & 1 \\ 0 & 1\end{pmatrix}$ respectively. Consider two sequences of polynomials for $n \geq 0$,
$$f_n = 1_\T + t^{-n}x^n \text{, and } g_n = t^{-n}x^n.$$
Both of these sequences converge in $\Cnvg{P}$. More precisely,
$$d_P(1_\T, f_n) = d_P(1_\T,1_\T + t^{-n}x^n) = |t^{-n}x^n|_P = t^{-n}.$$
Thus, as $\displaystyle \lim_{n\rightarrow \infty} d_P(1_\T, f_n) = 0_\T$, and so $f_n$ converges to $1_\T$ in the $P$ norm. Similarly, we check that $\displaystyle \lim_{n\rightarrow \infty} d_P(0_\T, g_n) = 0_\T$, so the sequence $g_n$ converges to $0_\T$. Since the pairs of polynomials $(f_n, g_n)$ are elements of $P'$, we get that $\displaystyle \lim_{n\rightarrow \infty}(f_n, g_n) = (1_\T, 0_\T)$ belongs to $\bar{P'}$. 
Thus, any congruence that contains the closure of $P'$ contains $(1_\T, 0_\T)$ and is therefore all of $\Cnvg{P}^2$. 
In particular, there is no closed prime on $\Cnvg{P}$ which contains $P'$. 
\exEnd\end{example}

We abstract the idea of this example into the following lemma.
\begin{lemma}\label{lem:converge-to-(0,1)}
Consider a maximal prime $P \in \ContBaseInt{\base}{\base[\Mon]}$ and any prime $P'\in \ContBase{\base}{\base[\Mon]}$. If there is a monomial $m$ such that $|m|_P < 1_\base \leq |m|_{P'}$, then 
there is no closed prime of $\Cnvg{P}$ which contains ${P'}$.
\end{lemma}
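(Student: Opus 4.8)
The idea is to use the monomial $m$ to build sequences of polynomials witnessing that a pathological pair lies in the closure of any prime extending $P'$. Since $P$ is maximal in $\ContBaseInt{\base}{\base[\Mon]}$, Lemma~\ref{lemma:MaximalGeorgePrimes} gives an embedding $\kappa(P)\into\T$, so we may regard $|{\cdot}|_P$ as taking values in $\T$; in particular $|m|_P < 1_\base$ means $|m^n|_P = |m|_P^n \to 0_\T$ as $n\to\infty$. The plan is to mimic Example~\ref{ex:motivating}: set $f_n = 1_\base + c\, m^n$ and $g_n = c\, m^n$ for a suitable fixed $c \in \base^\times$ (in the example $c$ was absorbed into writing $t^{-n}x^n$ directly; here I would instead just take $c = 1_\base$ and use $m^n$ itself, since $|m^n|_P\to 0_\T$ already does the job). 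Then compute $d_P(1_\base, f_n) = |m^n|_P = |m|_P^n \to 0_\T$ and $d_P(0_\base, g_n) = |m^n|_P \to 0_\T$, so in $\Cnvg{P}$ we have $f_n \to 1_\base$ and $g_n \to 0_\base$, hence $(f_n, g_n) \to (1_\base, 0_\base)$ in $\Cnvg{P}^2$.

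Next I would check that each pair $(f_n, g_n)$ lies in $P'$, which is where the hypothesis $1_\base \le |m|_{P'}$ enters: since $|m^n|_{P'} = |m|_{P'}^n \ge 1_\base = |1_\base|_{P'}$, the $P'$-leading term of $f_n = 1_\base + m^n$ is $m^n$, so $f_n \equiv_{P'} m^n = g_n$, i.e.\ $(f_n, g_n) \in P'$. Therefore $(1_\base, 0_\base)$, being a limit of elements of $P'$, lies in the closure $\bar{P'}$ of $P'$ in $\Cnvg{P}^2$. Consequently any closed congruence on $\Cnvg{P}$ containing $P'$ contains $\bar{P'}$ and hence the pair $(1_\base, 0_\base)$; but a congruence containing $(1_{\Cnvg{P}}, 0_{\Cnvg{P}})$ identifies every element $h$ with $h\cdot 0 = 0$, so it is all of $\Cnvg{P}^2$, which is not a prime (a prime is by definition a proper congruence). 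Hence there is no closed prime of $\Cnvg{P}$ containing $P'$.

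\textbf{Main obstacle.} The computations are routine given the earlier lemmas; the only point requiring a little care is the convergence claim $f_n \to 1_\base$, which relies on $|m|_P < 1_\base$ forcing $|m|_P^n \to 0_\T$ \emph{in the appropriate ordered semifield}. This is exactly where maximality of $P$ is needed: it guarantees via Lemma~\ref{lemma:MaximalGeorgePrimes} that $\kappa(P)$ embeds in $\T$, so that $|m|_P$ is archimedean-comparable to $1_\base$ and powers of a sub-unit genuinely tend to $0$. (Without maximality $\kappa(P)$ could have higher rank and $|m|_P^n$ need not converge to $0$; indeed the statement of the lemma is only asserted for maximal $P$.) I would state this convergence step explicitly, noting that in $\T\cong([0,\infty),\max,\cdot)$ we have $\lim_{n\to\infty}|m|_P^n = 0_\T$ precisely because $0_\T \le |m|_P < 1_\base \le 1_\T$, and that $d_P(1_\base,f_n)$ and $d_P(0_\base,g_n)$ both equal $|m^n|_P$ by the definition of $d_P$.
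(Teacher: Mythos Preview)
Your proposal is correct and follows essentially the same argument as the paper: the paper also takes $f_n = 1_\base + m^n$, $g_n = m^n$, uses $|m|_{P'}\ge 1_\base$ to get $(f_n,g_n)\in P'$, and uses $|m|_P<1_\base$ together with $\kappa(P)\hookrightarrow\T$ (from maximality via Lemma~\ref{lemma:MaximalGeorgePrimes}) to get $(f_n,g_n)\to(1_\base,0_\base)$, concluding that any closed prime containing $P'$ would have to contain $(1_\base,0_\base)$. Your explicit identification of why maximality of $P$ is needed matches the paper's use of ``$|m|_P<1_\base$ and is in $\T$''.
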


\begin{proof}
Assume there is such an $m$, and suppose there were a closed prime congruence $Q\supseteq P'$ on $\Cnvg{P}$. We can consider two sequences, $\{ f_n\}_{n\in \N}$ and $\{ g_n\}_{n\in \N}$ given by  
$$f_n = 1_\base + m^n \text{ and } g_n = m^n.$$ 
Notice that $(f_n, g_n) \in P'$. To see what these sequences converge to, compute
$$d_P(1_\base, f_n) = d_P(1_\base, 1_\base + m^n) = |m^n|_P = |m|_P^n.$$
Since $|m|_P < 1_\base$ and is in $\T$, $\displaystyle \lim_{n\rightarrow \infty} d_P(1_\base, f_n) = 0_\base$, and so $f_n$ converges to $1_\base$ in $\Cnvg{P}$. Similarly, we check that $d_P(0_\base, g_n) = 0_\base$, so the sequence $g_n$ converges to $0_\base$. 
Thus, $(f_n, g_n)$ converges to $(1_\base, 0_\base)$, which tells us that $(1_\base,0_\base)\in\bar{P}'\subseteq Q$, contradicting the assumption that $Q$ is prime.
\end{proof}




In the following theorem, we classify which primes on $\base[\Mon]$ occur as the restriction of some $Q\in\Cont(\Cnvg{P})$. We use the above lemma to rule out those primes that do not arise this way. First, we need to introduce the following condition on primes.

\begin{defi} For prime congruences $P \in \ContIntBase{\base} \base[\Mon]$ and $P'\in\ContBase{\base} \base[\Mon]$, we say that $P'$ satisfies $\propStar$ with respect to $P$ if there exists a nonzero $c \in \base$ such that, for any term $m \in \base[\Mon]$, $c|m|_P \geq |m|_{P'}$.
\end{defi}

\begin{theorem}[$\crown{}$]\label{thm:crown}
Fix, $P \in \ContIntBase{\base} \base[\Mon]$. The map $\iota^*:\Cont (\Cnvg{P}) \rightarrow \ContBase{\base} \base[\Mon]$ induced by the inclusion $\iota: \base[\Mon] \rightarrow \Cnvg{P}$ is a bijection onto the set of those prime congruences in  $\ContBase{\base} \base[\Mon]$ that satisfy $\propStar$ with respect to $P$.
\end{theorem}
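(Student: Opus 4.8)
The plan is to prove the theorem in two directions. First, I will show that for any $Q \in \Cont(\Cnvg{P})$, the restriction $P' := \iota^*(Q) = Q|_{\base[\Mon]}$ satisfies $\propStar$ with respect to $P$. Second, I will show that every $P' \in \ContBase{\base}\base[\Mon]$ satisfying $\propStar$ does arise as such a restriction, and that the preimage $(\iota^*)^{-1}(P')$ is a single point --- this latter injectivity being essentially a relative version of Corollary~\ref{coro:CannonicalExt}. For the injectivity, the key observation is that $\base[\Mon]$ is dense in $\Cnvg{P}$ (Proposition~\ref{prop:ConvergentSeriesAreLimitsOfPolynomials}) and every $Q \in \Cont(\Cnvg{P})$ is closed in $\Cnvg{P}^2$ (Lemma~\ref{lemma:BasicPropertiesOfPInSpaA}(1)); combined with the uniqueness machinery of Proposition~\ref{prop:GeneralUniquenessOfExtension}, a prime $Q \in \Cont(\Cnvg{P})$ should be determined by its restriction to $\base[\Mon]$ together with the requirement that it be closed, so that $Q = \overline{P'}$ whenever $\overline{P'}$ is already transitively closed.

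For the \textbf{necessity of $\propStar$}: suppose $Q \in \Cont(\Cnvg{P})$ restricts to $P'$ on $\base[\Mon]$. Since $Q$ is closed in $\Cnvg{P}^2$, it contains the closure $\overline{P'}$. The idea is to exploit the convergence phenomenon from Example~\ref{ex:motivating} and Lemma~\ref{lem:converge-to-(0,1)}: if $\propStar$ fails with respect to $P$, then for every nonzero $c \in \base$ there is a term $m$ with $c|m|_P < |m|_{P'}$; iterating/rescaling, I expect to produce terms $m$ with $|m|_P$ arbitrarily small (in the $\T$-sense, i.e., $|m|_P \to 0_\base$) while $|m|_{P'} \geq 1_\base$, or more precisely one should pass to $\check{P}$ (the maximal prime above $P$) and a suitable monomial, then invoke Lemma~\ref{lem:converge-to-(0,1)} to conclude there is no closed prime of $\Cnvg{P}$ containing $P'$ --- wait, but Lemma~\ref{lem:converge-to-(0,1)} is stated for $P$ maximal, so I will need to first reduce to the maximal case. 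The reduction should go: if $\propStar$ fails for $P$, I claim it fails for $\check P$ as well (using that $|m|_{\check P} = \bigpi(|m|_P)$ and property \eqref{eq:bp1} relating $\bigpi(\gamma)$ to $\gamma$); then apply Lemma~\ref{lem:converge-to-(0,1)} at $\check P$, whose $\Cnvg{\check P} = \Cnvg{P}$ by Lemma~\ref{lemma:CnvgInChain}, to get a contradiction. The precise bookkeeping with the Hahn embeddings $\ph_P, \ph_{P'} : \kappa(\cdot) \to \RnLexSemifield{I_{P,P'}}$ and the reduction of $\propStar$ to a statement about leading-coefficient bounds is where care is needed.

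For the \textbf{sufficiency of $\propStar$}: given $P'$ satisfying $\propStar$ with constant $c$, I will construct $Q$ directly. The natural candidate is the closure $\overline{P'}$ of $P'$ in $\Cnvg{P}^2$; the main work is to show (a) $\overline{P'}$ is a congruence (transitively closed), (b) it is prime, i.e., $\Cnvg{P}/\overline{P'}$ is totally ordered and cancellative, (c) $\overline{P'} \cap \base[\Mon]^2 = P'$, and (d) the induced map $\Cnvg{P} \to \kappa$ of the quotient is continuous so that $Q := \overline{P'} \in \Cont(\Cnvg{P})$. The condition $\propStar$ is exactly what makes $|\cdot|_{P'}$ extend continuously from $\base[\Mon]$ to $\Cnvg{P}$: for $f = \sum f_u \chi^u \in \Cnvg{P}$, the bound $c|f_u\chi^u|_P \geq |f_u\chi^u|_{P'}$ together with convergence of $f$ at $P$ forces $\{u : |f_u\chi^u|_{P'} \geq b\}$ to be finite for each $b$, so $|f|_{P'} := \sum_u |f_u\chi^u|_{P'}$ is well-defined; this gives a homomorphism $\Cnvg{P} \to \kappa(P')$ (extending $|\cdot|_{P'}$ on $\base[\Mon]$), whose kernel is the desired $Q$, and which is continuous since $d_{P'}(f,g) \leq c\, d_P(f,g)$ roughly speaking (more precisely one bounds the $P'$-distance by the $P$-distance termwise using $\propStar$). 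Then Corollary~\ref{coro:CannonicalExt}-style uniqueness (via Proposition~\ref{prop:GeneralUniquenessOfExtension}, checking $\base[\Mon]/P' \xrightarrow{\sim} \Cnvg{P}/Q$) shows $Q = \overline{P'}$ and that it is the unique element of $\Cont(\Cnvg{P})$ restricting to $P'$.

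\textbf{The hard part} will be the necessity direction --- specifically, extracting from the failure of $\propStar$ a sequence of terms whose $P$-norms go to $0$ while their $P'$-norms stay bounded below, in order to apply Lemma~\ref{lem:converge-to-(0,1)}. The subtlety is that failure of $\propStar$ only gives, for each $c$, \emph{some} term $m_c$ with $c|m_c|_P < |m_c|_{P'}$, and one must manipulate these (e.g. by passing to powers $m_c^n$, exploiting that $\base \subseteq \T$ so norms lie in an archimedean group, and reducing to $\check P$ where ranks collapse to $1$) to land in the exact hypothesis "$|m|_{\check P} < 1_\base \leq |m|_{\check P'}$" of Lemma~\ref{lem:converge-to-(0,1)}. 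I expect this to require the full strength of the two-prime Hahn embedding (Proposition~\ref{prop:HahnFor2SpaPrimes}) and the inequality \eqref{eq:bp1}. The sufficiency direction, by contrast, is mostly a matter of checking that $\propStar$ gives the termwise domination needed to define $|\cdot|_{P'}$ on all of $\Cnvg{P}$ and that this extension is continuous, after which the uniqueness is handed to us by the general results already proven.
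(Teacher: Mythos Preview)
Your plan is correct and follows the same architecture as the paper's proof: reduce to $P$ maximal via $\Cnvg{P}=\Cnvg{\check P}$ and Proposition~\ref{prop:SalvageThmCrownByPMaxmial}; for sufficiency, extend $|\cdot|_{P'}$ to all of $\Cnvg{P}$ using the termwise bound from $\propStar$ (this is exactly Proposition~\ref{prop:convg-containment} and Corollary~\ref{coro:propStarImpliesSpa}); for necessity, produce a monomial $m$ with $|m|_P<1_\base\le|m|_{P'}$ and invoke Lemma~\ref{lem:converge-to-(0,1)}; for injectivity, use Proposition~\ref{prop:GeneralUniquenessOfExtension} after checking $\base[\Mon]/P'\cong\Cnvg{P}/\bar{P}'$ via Lemma~\ref{lem:claim5}.

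Two small points where you are overcomplicating. First, Lemma~\ref{lem:converge-to-(0,1)} does not require $P'$ to be maximal, so there is no need to pass to $\check P'$; you only reduce $P$ to $\check P$. Second, the ``hard part'' is lighter than you fear: once $P$ is maximal, $|\mu|_P$ lies in $\T$ (not just in some $\RnLexSemifield{I}$), and the construction of $m$ is a one-line archimedean trick. Namely, from failure of $\propStar$ pick $c>1_\base$ and a term $\mu$ with $|\mu|_P<c|\mu|_P<|\mu|_{P'}$; since $(|\mu|_P,c|\mu|_P)$ is a genuine interval in $\T^\times$, for large enough $\beta\in\N$ the interval $(|\mu|_P^\beta,(c|\mu|_P)^\beta)$ contains some $a^{-1}$ with $a\in\base^\times$, and then $m=a\mu^\beta$ satisfies $|m|_P<1_\base<|m|_{P'}$ directly. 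The two-prime Hahn embedding is used only implicitly, to make sense of the mixed inequality $c|\mu|_P<|\mu|_{P'}$; you do not need inequality~\eqref{eq:bp1} for this step.
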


Before we prove the theorem we look at special cases of interest.

\begin{coro}\label{coro:crown_torus}
Let $\base$ be a sub-semifield of $\T$ and let $\Mon=\Z^n$, so $\base[\Mon]=\base[x_1^{\pm1},\ldots,x_n^{\pm1}]$ is the Laurent polynomial semiring over $\base$. If $P\in\ContBase{\base} \base[\Z^n]$ is a maximal prime, then the map $\iota: S[x_1^{\pm1},\ldots,x_n^{\pm1}] \rightarrow \Cnvg{P}$ induces a bijection onto the set of primes $P' \subseteq P$.
\end{coro}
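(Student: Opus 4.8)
The plan is to reduce the statement to Theorem~\ref{thm:crown} and then describe the $\propStar$-primes concretely in the maximal case. Since $\Mon=\Z^n$, every prime congruence on $\base[\Z^n]$ has trivial ideal-kernel, so in particular $P\in\ContIntBase{\base}\base[\Z^n]$, and $\Z^n$ is pseudo-cancellative; hence Theorem~\ref{thm:crown} applies and tells us that $\iota^*$ is a bijection from $\Cont(\Cnvg{P})$ onto the set of primes $P'\in\ContBase{\base}\base[\Z^n]$ that satisfy $\propStar$ with respect to $P$. So the content of the corollary is exactly the claim that, \emph{when $P$ is maximal, a prime $P'\in\ContBase{\base}\base[\Z^n]$ satisfies $\propStar$ with respect to $P$ if and only if $P'\subseteq P$}. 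Both sides make sense: by Proposition~\ref{prop:GoingDownStaysInGeorge}, every prime contained in $P$ is automatically in $\ContBase{\base}\base[\Z^n]$.

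To compare the evaluations $|m|_P$ and $|m|_{P'}$ of a term $m$, I would use the Hahn-embedding setup of this section: Proposition~\ref{prop:HahnFor2SpaPrimes} gives a totally ordered set $I_{P,P'}$ with least element together with embeddings $\ph_P:\kappa(P)\hookrightarrow\RnLexSemifield{I_{P,P'}}$ and $\ph_{P'}:\kappa(P')\hookrightarrow\RnLexSemifield{I_{P,P'}}$ compatible with $j:\T\to\RnLexSemifield{I_{P,P'}}$ and $\bigpi:\RnLexSemifield{I_{P,P'}}\to\T$. The two facts I would extract, both relying on maximality of $P$, are: (i) since $\kappa(P)^\times$ has only one archimedean class (Lemma~\ref{lemma:MaximalGeorgePrimes}), the image of $\ph_P$ lies inside $j(\T)$, so $\bigpi$ fixes $|m|_P$ for every term $m$; and (ii) from the commuting diagram of Proposition~\ref{prop:HahnFor2SpaPrimes} together with Corollary~\ref{coro:PrimesInSpaChainInducedMapOnResidueSemifields}, $\bigpi(|m|_{P'})=|m|_{\check{P'}}$ for every term $m$, where $\check{P'}$ denotes the maximal prime of $\ContBase{\base}\base[\Z^n]$ containing $P'$.

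For the direction $P'\subseteq P\Rightarrow\propStar$: here $\check{P'}=P$ because $P$ is maximal and contains $P'$, so $\bigpi(|m|_{P'})=|m|_P$ for all terms $m$. I would then pick any $c\in\base$ with $c>1_\base$ (possible as $\base\neq\B$) and apply the second inequality of \eqref{eq:bp1} with $\gamma=|m|_{P'}$ and $d=c$ to get $c|m|_P=c\,\bigpi(|m|_{P'})>|m|_{P'}$ for every nonzero term $m$ (the case $m=0$ being trivial); this is exactly $\propStar$ with witness $c$. For the direction $\propStar\Rightarrow P'\subseteq P$: given a witness $c\in\base\setminus\{0_\base\}$, apply the order-preserving $\bigpi$ to the inequalities $c|m|_P\geq|m|_{P'}$ and use $\bigpi(c)=c$, $\bigpi(|m|_P)=|m|_P$, $\bigpi(|m|_{P'})=|m|_{\check{P'}}$ to obtain $c|m|_P\geq|m|_{\check{P'}}$ in $\T$ for every term $m$; replacing $m$ by $m^N$ and letting $N\to\infty$ removes the constant $c$ and gives $|m|_P\geq|m|_{\check{P'}}$; finally, since in $\base[\Z^n]$ a nonzero term is invertible (with inverse again a term), applying this to $m^{-1}$ and inverting yields the reverse inequality, hence $|m|_P=|m|_{\check{P'}}$ for all terms $m$. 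As the canonical embeddings $\kappa(P)\hookrightarrow\T$ and $\kappa(\check{P'})\hookrightarrow\T$ are injective and order-preserving, this forces $\leq_P$ and $\leq_{\check{P'}}$ to agree on terms, and since terms additively generate $\base[\Z^n]$ a prime is determined by its restriction to them; so $P=\check{P'}$ and therefore $P'\subseteq\check{P'}=P$.

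I expect the main obstacle to be the bookkeeping in the Hahn-embedding picture: making precise that maximality of $P$ puts $|m|_P$ into the bottom copy $j(\T)$ so that $\bigpi$ records it faithfully (this is exactly where the rank-one hypothesis on $P$ is used), and that $\bigpi$ sends $|m|_{P'}$ to $|m|_{\check{P'}}$ rather than to something that has forgotten the relevant data. After that, the only other essential ingredient is that terms are invertible in $\base[\Z^n]$, which is what upgrades the a priori one-sided bound of $\propStar$ to the two-sided equality $|m|_P=|m|_{\check{P'}}$ needed to conclude $P=\check{P'}$.
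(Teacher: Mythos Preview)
Your proposal is correct, and the forward direction ($P'\subseteq P\Rightarrow\propStar$) is exactly the paper's argument. The reverse direction, however, follows a genuinely different route. The paper argues by contrapositive: assuming $P'\not\subseteq P$, it finds terms $m_1,m_2$ with $m_1\leq_{P'}m_2$ but $m_1>_P m_2$, sets $m=m_1/m_2$ (a term, since $\Mon=\Z^n$) to obtain $|m|_P<1_\base\leq|m|_{P'}$, and then invokes Lemma~\ref{lem:converge-to-(0,1)} to conclude that no closed prime of $\Cnvg{P}$ contains $P'$, hence $P'$ is not in the image of $\iota^*$. Your route instead stays inside the $\propStar$ framework: you push the inequality $c|m|_P\geq|m|_{P'}$ through $\bigpi$ to land in $\T$, use the archimedean trick $m\mapsto m^N$ to absorb the constant $c$, and then exploit invertibility of terms to upgrade $|m|_P\geq|m|_{\check{P}'}$ to equality, forcing $P=\check{P}'$. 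This is essentially an anticipatory, Laurent-specialized version of Lemmas~\ref{lem:claim1*} and~\ref{lem:claim2*} from the geometric subsection. The paper's approach is more hands-on and avoids the Hahn bookkeeping you flagged as the main obstacle; your approach is more systematic and makes transparent that the two features driving the result are the archimedean nature of $\T$ and the invertibility of monomials in $\base[\Z^n]$.
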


\begin{proof}
In this case $\ContBase{\base} \base[\Z^n] = \ContIntBase{\base} \base[\Z^n]$ and every $P'\subseteq P$ satisfies $\propStar$ with respect to $P$. To see this, fix $c\in\base^\times$ greater than $1_\base$ and note that, for any term $m$,  $c|m|_P=c\bigpi(|m|_{P'})>|m|_{P'}$ by (\ref{eq:bp1}).

Now it suffices to show that, if $P'\not\subseteq P$, then $P'$ is not in the image of $\iota^*$.
Notice that $P' \subseteq P$ if and only if for all terms $m_1$ and $m_2$, for which $|m_1|_{P'} \leq |m_2|_{P'}$, we have that $|m_1|_P \leq |m_2|_P$. So, if $P' \not\subseteq P$, there are terms $m_1$ and $m_2$ violating the above conditions. Consider $m = m_1/m_2$ and observe that $|m|_{P'} > |1_S|_{P'}$ and $|m|_{P} < |1_S|_{P}$. 
Now apply Lemma~\ref{lem:converge-to-(0,1)} to this $m$ to conclude that there is no closed prime of $\Cnvg{P}$ containing ${P'}$.
\end{proof}


\begin{coro}\label{coro:crown_affine}
Let $\base$ be a sub-semifield of $\T$ and let $\Mon=\N^n$, so $\base[\Mon]=\base[x_1,\ldots,x_n]$ is the polynomial semiring over $\base$. Consider $P\in\ContIntBase{\base} \base[x_1, \ldots, x_n]$ and $P' \in\ContBase{\base} \base[x_1, \ldots, x_n]$. By Corollary~\ref{coro: matrix_form} we can pick defining matrices $$U = \begin{pmatrix} 1 & p_1 & \ldots & p_n \\
0& * & \cdots & * \\ \vdots & \vdots & \ddots & \vdots \\ 0& * & \cdots & * \end{pmatrix}
\text{\; and\; }
U' = \begin{pmatrix} 1 & p_1' & \cdots & p_n' \\
0& * & \cdots & * \\ \vdots & \vdots & \ddots & \vdots \\ 0& * & \cdots & * \end{pmatrix}
$$
for $P$ and $P'$, respectively, where $U$ has entries in $\R$ and $U'$ has entries in $\R\cup\{-\infty\}$. Then $P'$ is in the image of the injective map $\iota^*: \Cont (\Cnvg{P}) \rightarrow \ContBase{\base}\base[x_1, \ldots, x_n ]$ if and only if $p_i' \leq p_i$  for all $1 \leq i \leq n$.
\end{coro}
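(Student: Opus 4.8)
The plan is to combine Theorem~\ref{thm:crown} with the Hahn embeddings fixed at the start of this section. By Theorem~\ref{thm:crown}, $\iota^*$ is injective and a prime $P'$ lies in its image exactly when $P'$ satisfies $\propStar$ with respect to $P$, so it suffices to prove that $P'$ satisfies $\propStar$ with respect to $P$ if and only if $p_i'\le p_i$ for every $i$. Here we read $-\infty\le p_i$ as always true, which handles the columns of $U'$ equal to $-\infty$, i.e.\ the variables $x_i$ lying in the ideal-kernel of $P'$; note also that every $p_i\in\R$ and that $|x^u|_P\ne0_{\kappa(P)}$ for all $u\in\N^n$, since $P\in\ContIntBase{\base}\base[\Mon]$ has trivial ideal-kernel.

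First I would reduce $\propStar$ to monomials $x^u$, $u\in\N^n$. A term of $\base[\Mon]$ is $t^a x^u$ with $t^a\in\base$, and because $|t^ax^u|_P=t^a|x^u|_P$, $|t^ax^u|_{P'}=t^a|x^u|_{P'}$, and $t^a$ is a unit in $\RnLexSemifield{I_{P,P'}}$, the inequality $c\,|t^ax^u|_P\ge|t^ax^u|_{P'}$ holds for all $a$ and $u$ if and only if $c\,|x^u|_P\ge|x^u|_{P'}$ holds for all $u$. So the task becomes: decide for which $P'$ there exists $c\in\base^\times$ with $c\,|x^u|_P\ge|x^u|_{P'}$ in $\RnLexSemifield{I_{P,P'}}$ for all $u\in\N^n$.

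The key step is to pass to $i_0(P,P')$-components, i.e.\ to apply $\bigpi$. By the example following Proposition~\ref{lemma:MatrixDeterminesMaximalPrimeAbove} the first rows of $U,U'$ are defining matrices for the maximal primes $\check P,\check P'$ above $P,P'$, and by the diagram of Proposition~\ref{prop:HahnFor2SpaPrimes} the composites of $\ph_P,\ph_{P'}$ with $\bigpi$ recover $|\cdot|_{\check P},|\cdot|_{\check P'}$; hence $\bigpi(|x^u|_P)=t^{\sum_i p_iu_i}$ and $\bigpi(|x^u|_{P'})=t^{\sum_i p_i'u_i}$, the latter being $0_\T$ when some $u_i>0$ with $p_i'=-\infty$. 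Writing $c=t^\gamma$, the $i_0$-component of $c\,|x^u|_P$ is $\gamma+\sum_i p_iu_i$. Since order in $\RnLexSemifield{I_{P,P'}}$ is decided first on the $i_0$-component, a strict inequality there (or $|x^u|_{P'}=0_\T$) already forces the corresponding inequality of the elements. For the ``if'' direction, if all $p_i'\le p_i$ I would pick $\gamma>0$ with $t^\gamma\in\base^\times$ (possible since $\base\ne\B$); then $\sum_i p_i'u_i\le\sum_i p_iu_i<\gamma+\sum_i p_iu_i$ for every $u$, so $c\,|x^u|_P>|x^u|_{P'}$, giving $\propStar$. For the ``only if'' direction, if $p_j'>p_j$ for some $j$ then $p_j,p_j'\in\R$, and given any $c=t^\gamma$ I would take $u=Ne_j$ with $N$ large enough that $Np_j'>\gamma+Np_j$, whence $|x^{Ne_j}|_{P'}>c\,|x^{Ne_j}|_P$; so no $c$ witnesses $\propStar$.

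I expect the only real obstacle to be bookkeeping: correctly identifying the $i_0$-component with $\bigpi$ and with evaluation at the maximal primes $\check P,\check P'$ (so that the entries $p_i,p_i'$ of the first rows of $U,U'$ actually appear), and keeping track of the $-\infty$ columns of $U'$ throughout. The choice of a strictly positive $\gamma$ in the ``if'' direction is the device that lets me avoid ever inspecting the higher components of the Hahn embeddings.
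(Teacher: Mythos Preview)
Your proposal is correct and follows essentially the same approach as the paper: both reduce via Theorem~\ref{thm:crown} to analyzing $\propStar$, and both settle the ``if'' direction by choosing $c=t^\gamma$ with $\gamma>0$ and checking a strict inequality on the first (i.e.\ $i_0$-)coordinate. The one small difference is in the ``only if'' direction: you directly negate $\propStar$ by taking $u=Ne_j$ with $N$ large so that $Np_j'>\gamma+Np_j$ for the given $c=t^\gamma$, whereas the paper instead constructs a single monomial $m=t^\alpha x_l^\beta$ with $|m|_P<1_\base<|m|_{P'}$ and then invokes Lemma~\ref{lem:converge-to-(0,1)} to rule out any closed prime containing $P'$. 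Your route is slightly more direct given that Theorem~\ref{thm:crown} is already in hand; the paper's route has the virtue of reusing the concrete obstruction lemma. Either way, the bookkeeping you flag (identifying $\bigpi$ with evaluation at $\check P,\check P'$ via the diagram of Proposition~\ref{prop:HahnFor2SpaPrimes}, and handling the $-\infty$ columns of $U'$) is exactly what is needed and is handled correctly.
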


\begin{remark}
One technicality we need to take into account in the above corollary is that in order to compare the norms of terms taken with respect to different primes we need to use matrices of the same size. Thus, for the purpose of stating the corollaries we will assume that every prime corresponding to a matrix of size $k \times (n+1)$ is defined by a square matrix, whose remaining $n+1-k$ rows are all 0. Each matrix gives a Hahn embedding as in Example~\ref{example:MatricesAreHahnEmbeddings} and adding the extra rows creates a joint Hahn embedding as in  Proposition~\ref{prop:HahnFor2SpaPrimes}.
\end{remark}

\begin{proof}[Proof of Corollary~\ref{coro:crown_affine}]
Let $P$ and $P'$ have defining matrices $U$ and $U'$, as in the statement. So for $m=t^ax_1^{u_1}\cdots x_n^{u_n}$, $t^b|m|_P\geq|m|_{P'}$ exactly if 
$$\begin{pmatrix}b\\ 0\\ \vdots\\ 0\end{pmatrix}+_{\R}U\begin{pmatrix}a\\ u_1\\ \vdots\\ u_n\end{pmatrix}\geq_{lex}U'\begin{pmatrix}a\\ u_1\\ \vdots\\ u_n\end{pmatrix}.$$

Suppose $p_i'\leq p_i$ for $1\leq i\leq n$. Pick any $c\in\base^\times$ greater than $1_\base$, so $c=t^b$ for some $b>0_{\R}$. For any term $m=t^ax_1^{u_1}\cdots x_n^{u_n}$, we have
\begin{equation}\label{ineq:CrownAffineNorms}
    a+\dsum_{i=1}^n p_i'u_i\leq a+\dsum_{i=1}^n p_iu_i<b+a+\dsum_{i=1}^n p_iu_i
\end{equation}
because $u_i\geq0$. 
Since, in (\ref{ineq:CrownAffineNorms}), left-most sum is the first entry of 
$U'\begin{pmatrix}a& u_1& \cdots& u_n\end{pmatrix}^T$
and the right-most sum is the first entry of 
$\begin{pmatrix}b& 0& \cdots& 0\end{pmatrix}^T +_{\R} U\begin{pmatrix}a& u_1& \cdots& u_n\end{pmatrix}^T$, 
this gives us that $c|m|_P\geq|m|_{P'}$. Thus, $P'$ satisfies $\propStar$ with respect to $P$.

Now suppose $p_l'>p_l$ for some $l$ between $1$ and $n$; it suffices to show that $P'$ is not in the image of $\iota^*$. Since $(p_l,p_l')$ is a nonempty open interval of $\R$, there exists a positive integer $\beta$ such that $(\beta p_l, \beta p'_l)$ contains some $-\alpha$ with $t^\alpha \in \base$. Let $m = t^{\alpha} x^{\beta}_l$, which has exponent vector $\alpha e_1+\beta e_{l+1}$. Since $U'(\alpha e_1+\beta e_{l+1})$ has first entry $\alpha+_{\R}\beta p_l'$, $U(\alpha e_1+\beta e_{l+1})$ has first entry $\alpha+_{\R}\beta p_l$, and $\alpha +_{\R} \beta p_l < 0_\R < \alpha +_{\R} \beta p_l'$, we have $|m|_P<1_{\base}<|m|_{P'}$. 
By Lemma~\ref{lem:converge-to-(0,1)} applied to this $m$, there is no closed prime of $\Cnvg{P}$ containing ${P'}$.
\end{proof}

\begin{remark}
Recall from Section~\ref{sect:prelim} that, if the defining matrix of a prime on $\base[x_1,\ldots,x_n]$ has $-\infty$ as an entry then all other entries in that column are also $-\infty$. Equivalently, we can remove the columns with $-\infty$ entries and consider a prime on a semiring with less variables. 
\end{remark}

We now proceed to prove a sequence of statements that imply Theorem~\ref{thm:crown}. The following proposition will allow us to assume, in the proof of Theorem~\ref{thm:crown}, that $P$ is maximal in $\ContIntBase{\base}\base[\Mon]$.

\begin{prop}\label{prop:SalvageThmCrownByPMaxmial}
Let $P, \check{P} \in\ContIntBase{\base} \base[\Mon]$ be such that $\check{P}$ is maximal and $\check{P}\supseteq P$, and let $P'\in\ContBase{\base} \base[\Mon]$. Then $P'$ satisfies $\propStar$ with respect to $P$ if and only if it satisfies $\propStar$ with respect to $\check{P}$.
\end{prop}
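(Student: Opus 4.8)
The plan is to reduce everything to the ``bounded distortion'' relation~(\ref{eq:bp1}) between $|m|_P$ and its projection $\bigpi(|m|_P)$. The starting point is the observation that, since $\check P$ is the (unique) maximal prime of $\ContBase{\base}\base[\Mon]$ lying above $P$, the joint Hahn embedding produced by Proposition~\ref{prop:HahnFor2SpaPrimes}, together with the canonical embedding $\kappa(\check P)\into\T$ from Lemma~\ref{lemma:MaximalGeorgePrimes}, gives the identification
$$|m|_{\check P}=\bigpi(|m|_P)\quad\text{in }\RnLexSemifield{I_{P,P'}}$$
for every term $m\in\base[\Mon]$. Thus $\propStar$ with respect to $P$ and $\propStar$ with respect to $\check P$ both bound $|m|_{P'}$ by a $\base$-scalar multiple of $|m|_P$, respectively of $\bigpi(|m|_P)$, and~(\ref{eq:bp1}) says precisely that $|m|_P$ and $\bigpi(|m|_P)$ differ by at most a factor lying in $\T^\times$ that can be absorbed into the constant. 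Since $\base\neq\B$ we may fix once and for all some $d\in\base^\times$ with $d>1_\base$.

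For the forward implication I would assume $c\in\base^\times$ witnesses $\propStar$ with respect to $P$, so that $c|m|_P\geq|m|_{P'}$ for all terms $m$. For a nonzero term $m$ we have $|m|_P\neq0_{\kappa(P)}$ because $P$ has trivial ideal-kernel, so applying the second half of~(\ref{eq:bp1}) to $\gamma=|m|_P$ and the scalar $d$ gives $d\,|m|_{\check P}=d\,\bigpi(|m|_P)>|m|_P$, whence $(cd)\,|m|_{\check P}>c\,|m|_P\geq|m|_{P'}$; the case $m=0$ is trivial. Therefore $cd\in\base^\times$ witnesses $\propStar$ with respect to $\check P$. The reverse implication is symmetric: if $c\in\base^\times$ witnesses $\propStar$ with respect to $\check P$, the first half of~(\ref{eq:bp1}) applied to $\gamma=|m|_P$ gives $|m|_{\check P}=\bigpi(|m|_P)<d\,|m|_P$, so $(cd)\,|m|_P>c\,|m|_{\check P}\geq|m|_{P'}$, and $cd\in\base^\times$ witnesses $\propStar$ with respect to $P$.

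I do not expect a genuine obstacle here; the only points requiring care are (i) justifying the equality $|m|_{\check P}=\bigpi(|m|_P)$ from the commuting diagram of Section~\ref{section:TheoremCrown} and the uniqueness of the maximal prime above $P$ (the corollary to Proposition~\ref{lemma:MatrixDeterminesMaximalPrimeAbove}), and (ii) noting that the auxiliary scalar $d$ can be taken inside $\base$ rather than merely inside $\T$, which is exactly where the standing hypothesis $\base\neq\B$ enters. Everything else is bookkeeping with the lexicographic order on $\RnLexSemifield{I_{P,P'}}$.
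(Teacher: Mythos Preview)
Your proposal is correct and follows essentially the same argument as the paper: both identify $|m|_{\check P}$ with $\bigpi(|m|_P)$ via the Hahn-embedding framework, then use the two halves of~(\ref{eq:bp1}) with an auxiliary scalar $d\in\base^\times$, $d>1_\base$, to pass between the bounds $c|m|_P\geq|m|_{P'}$ and $\check c\,|m|_{\check P}\geq|m|_{P'}$. Your explicit care with the case $m=0$ and with $|m|_P\neq0$ (needed for the strict inequalities in~(\ref{eq:bp1})) is slightly more fastidious than the paper's version, but the substance is the same.
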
\begin{proof}
Suppose that there exists a $c\in\base^\times$ such that $c|m|_P\geq|m|_{P'}$ for all terms $m\in\base[\Mon]$. Fix any $d\in\base$ greater than $1_{\base}$ and let $\check{c}=cd$. For any term $m\in\base[\Mon]$ we have 
$$\check{c}|m|_{\check{P}}=cd\bigpi(|m|_P)>c|m|_P\geq|m|_{P'}.$$

For the other direction, suppose that there is some $\check{c}\in\base$ such that $\check{c}|m|_{\check{P}}\geq|m|_{P'}$ for all terms $m\in\base$. Fix any $d>1_{\base}$ in $\base$ and let $c=d\check{c}$. For any term $m\in\base[\Mon]$ we have
$$c|m|_P=d\check{c}|m|_P>\check{c}\bigpi(|m|_P)=\check{c}|m|_{\check{P}}\geq|m|_{P'}.$$
\end{proof}


\begin{proposition}\label{prop:convg-containment} 
If $P' \in\ContBase{\base}  \base[\Mon]$ satisfies $\propStar$ with respect to a prime $P \in \ContIntBase{\base} \base[\Mon]$, then $\Cnvg{P} \subseteq \Cnvg{P'}$. In this case, the inclusion map $\Cnvg{P}\to\Cnvg{P'}$ is uniformly continuous.
\end{proposition}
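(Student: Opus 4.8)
The plan is to exploit the single inequality that $\propStar$ provides, namely $c\,|m|_P\ge|m|_{P'}$ for every term $m$, by applying it to the individual terms of a series. Before anything else I would fix joint Hahn embeddings as in Proposition~\ref{prop:HahnFor2SpaPrimes}, so that $|m|_P$ and $|m|_{P'}$ — and hence $d_P$ and $d_{P'}$ — are all regarded as taking values in the common totally ordered semifield $\RnLexSemifield{I_{P,P'}}$, exactly as in the setup at the start of Section~\ref{section:TheoremCrown}. This is what makes the inequality in $\propStar$ literally meaningful, and it also makes $\base$ embed compatibly into $\kappa(P)$, $\kappa(P')$, and $\RnLexSemifield{I_{P,P'}}$; consequently, for $a,b\in\base$ the symbols $a$, $|a|_P$, $|a|_{P'}$ all name the same element of $\RnLexSemifield{I_{P,P'}}$ and $|ab|_P=|a|_P\,|b|_P$, which is what will let me move freely between $P$-notation and $P'$-notation for elements of $\base$.

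For the containment $\Cnvg{P}\subseteq\Cnvg{P'}$, I would take $f=\dsum_u f_u\chi^u\in\Cnvg{P}$ and verify the defining finiteness condition of Definition~\ref{def:convergent} at $P'$. Given a nonzero $b\in\base[\Mon]/P'$, since $P'\in\ContBase{\base}\base[\Mon]$ Lemma~\ref{lemma:ContinuityGivesGeorge} produces $s\in\base^\times$ with $|s|_{P'}<b$, so it suffices to bound $\{u:|f_u\chi^u|_{P'}\ge|s|_{P'}\}$. Applying $\propStar$ to the term $f_u\chi^u$ converts $|f_u\chi^u|_{P'}\ge|s|_{P'}$ into $|f_u\chi^u|_P\ge c^{-1}|s|_{P'}$, and the key identification gives $c^{-1}|s|_{P'}=|c^{-1}s|_P$ with $c^{-1}s\in\base^\times$; hence the set in question sits inside $\{u:|f_u\chi^u|_P\ge|c^{-1}s|_P\}$, which is finite because $f$ converges at $P$ and $|c^{-1}s|_P$ is a nonzero element of $\base[\Mon]/P$ (nonzero by Lemma~\ref{lemma:BasicPropertiesOfPInSpaA}).

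For uniform continuity of the inclusion $\Cnvg{P}\hookrightarrow\Cnvg{P'}$, I would first establish the Lipschitz-type estimate $d_{P'}(f,g)\le c\,d_P(f,g)$ for $f,g\in\Cnvg{P}$: writing $d_{P'}(f,g)=\dsum_{f_u\ne g_u}|(f_u+g_u)\chi^u|_{P'}$, observe that $(f_u+g_u)\chi^u=\max(f_u,g_u)\chi^u$ is a term, apply $\propStar$ termwise, and use that multiplication by the fixed positive element $c$ commutes with the supremum. Then, given $r'\in\kappa(P')^\times$, I would pick $s_0\in\base^\times$ with $|s_0|_{P'}<r'$ (Lemma~\ref{lemma:ContinuityGivesGeorge} again) and set $r:=|c^{-1}s_0|_P\in\kappa(P)^\times$; for $d_P(f,g)<r$ the estimate yields $d_{P'}(f,g)\le c\,d_P(f,g)<cr=|s_0|_{P'}<r'$, which says precisely that the basic entourage of $\Cnvg{P}$ of radius $r$ is carried into the basic entourage of $\Cnvg{P'}$ of radius $r'$.

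Essentially all the content is the two-line application of $\propStar$ to single terms; the step I would flag as the main (and only real, if minor) obstacle is the bookkeeping of which semifield each quantity lives in — consistently using the joint Hahn embedding so that comparisons like $c\,|m|_P\ge|m|_{P'}$ and identities like $c^{-1}|s|_{P'}=|c^{-1}s|_P$ are legitimate. Once that is pinned down, both parts are routine; note that uniform continuity alone already re-proves $d_{P'}(0,f)\le c\,|f|_P$, but not the finiteness statement needed for the containment, so the two parts genuinely have to be argued separately.
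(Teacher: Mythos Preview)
Your proposal is correct and follows essentially the same approach as the paper: both the containment and the uniform continuity are obtained by applying the single inequality $c|m|_P\ge|m|_{P'}$ termwise, reducing the $P'$-level sets (resp.\ the $P'$-distance) to $P$-level sets (resp.\ the $P$-distance) scaled by $c^{-1}$, and then invoking continuity of $\base\to\kappa(P')$ to get below an arbitrary threshold. Your explicit invocation of the joint Hahn embedding is exactly the setup fixed at the start of Section~\ref{section:TheoremCrown}, and your choice of $r=|c^{-1}s_0|_P$ is the paper's ``pick $\delta\in\base^\times$ less than $\varepsilon/c$'' made precise.
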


\begin{proof}
Fix $f \in \Cnvg{P}$. For any nonzero element $b\in \base[\Mon]/P'$ we want to show that $\{ u \in \Mon \ : \ |f_u\chi^u|_{P'} \geq b\}$ is finite. Since $b\in\kappa(P')^\times$, there exists a nonzero element $d \in \base$ such that $|d|_{P'} < b$, whence
$$\{ u\in \Mon \ : \ |f_u\chi^u|_{P'} \geq b \} \subseteq \{ u\in \Mon \ : \ |f_u\chi^u|_{P'} \geq |d|_{P'} \}.$$
Since $P'$ satisfies $\propStar$, there is a $c\neq 0_\base$ such that, for any $u\in \Mon$ with $|f_u\chi^u|_{P'} \geq |d|_{P'}$,
$$c |f_u\chi^u|_{P} \geq |f_u\chi^u|_{P'} \geq |d|_{P'} = |d|_{P}.$$
Dividing by $c$ we obtain that $|f_u\chi^u|_{P} \geq |d/c|_P$. This implies that 
$$\{ u\in \Mon \ : \ |f_u\chi^u|_{P'} \geq |d|_{P'} \}  \subseteq  \{ u\in \Mon \ : \ |f_u\chi^u|_{P} \geq |d/c|_P \},$$
where the second set is finite, since $f\in \Cnvg{P}$.
Thus, we conclude that $f\in\Cnvg{P'}$.

Given $\eps\in\kappa(P')^\times$, we want to show that there is a $\delta\in\kappa(P)^\times$ such that, if $f,g\in\Cnvg{P}$ with $d_P(f,g)<\delta$, then $d_{P'}(f,g)<\eps$. Note that
\begin{align*}
    d_{P'}(f,g)=\sum_{\substack{u\in\Mon\\f_u\neq g_u}}\left(|f_u|_{P'}+|g_u|_{P'}\right) \leq\sum_{\substack{u\in\Mon\\f_u\neq g_u}}c\left(|f_u|_{P}+|g_u|_{P}\right) =c d_P(f,g).
\end{align*}
Thus, picking any $\delta\in\base^\times$ which is less than $\eps/c$ will work.
\end{proof}

The above proposition is used two ways. First, it guarantees that the $P'$-leading terms of $f\in\Cnvg{P}$ form a polynomial as pointed out in Remark~\ref{rem:leading-term-wrt-P'}. Second, the continuity statement will be used to deduce that $\bar{P}'\in\Cont\Cnvg{P}$.

\begin{remark}\label{rem:leading-term-wrt-P'} 
By Proposition~\ref{prop:convg-containment} every $f \in \Cnvg{P}$ has a leading term with respect to the ordering induced by $P'$, and there are finitely many such leading terms. 
\end{remark}

The following result is a technical tool that we use in the subsequent lemma to give an algebraic characterization of $\bar{P}'$.

\begin{lemma}\label{lem:claim4} 
Let $P' \in\ContBase{\base} \base[\Mon]$ be a prime that satisfies $\propStar$ with respect to a prime $P \in \ContIntBase{\base} \base[\Mon]$. Fix $f \in \Cnvg{P}$ and let $\{f_n\}_{n \in \N}$ be any sequence of polynomials in $\base[\Mon]$ converging to $f$ in $\Cnvg{P}$. Denote by $\mathcal{L}_f'$ the set of leading terms of $f$ with respect to the preorder induced by $P'$, and let $f' = \sum\limits_{m \in \mathcal{L}_f'} m$. Then there exists $n_0 \in \N$ such that, for every $n \geq n_0$, we have that 
$(f',f_n)\in P'$.
\end{lemma}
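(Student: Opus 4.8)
The plan is to show that for $n$ large enough, the polynomial $f_n$ has exactly the same $P'$-leading term value as $f$, and moreover that the $P'$-leading terms of $f_n$ (among its own monomials) coincide, as a collection, with $\mathcal{L}_f'$. The key point is that property $(*)$ converts $P$-convergence into control on $P'$-values: by Proposition~\ref{prop:convg-containment}, $f\in\Cnvg{P'}$, so by Lemma~\ref{lemma:AbsoluteValueOfConvergentSeriesMakesSense} (applied with respect to $P'$) the set $\mathcal{L}_f'$ is finite and nonempty, and $|f|_{P'}=\dsum_{m\in\mathcal{L}_f'}|m|_{P'}$ is a genuine maximum. Also, since the values $|f_u\chi^u|_{P'}$ for $u\in\Mon$ with $f_u\neq0_\base$ either equal $|f|_{P'}$ (finitely many $u$) or are strictly smaller, there is a well-defined ``second largest value'' $\beta<|f|_{P'}$ achieved by the non-leading monomials of $f$ (or $\beta=0_{\kappa(P')}$ if $f$ is a single term up to ties, which is a harmless degenerate case).

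First I would fix $\eps\in\base^\times$ with $|\eps|_{P'}$ strictly between $\beta$ and $|f|_{P'}$ — possible because $\base^\times$ is cofinal downward in $\kappa(P')^\times$ by Lemma~\ref{lemma:ContinuityGivesGeorge}, and because between any two values of $\kappa(P')^\times$ we can interpolate by an element of $\base^\times$ using density of $\Q$ in $\R$ together with the Hahn embedding (as in the proof of Proposition~\ref{lemma:MatrixDeterminesMaximalPrimeAbove}); alternatively one can just pick $\eps$ so that $|\eps|_{P'}\leq|f|_{P'}$ and $|\eps|_{P'}>\beta$, and if $\beta=|f|_{P'}$ is impossible there is no constraint from below. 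By Proposition~\ref{prop:convg-containment} the inclusion $\Cnvg{P}\to\Cnvg{P'}$ is (uniformly) continuous, so $f_n\to f$ in the $d_{P'}$-topology; hence there is $n_0$ with $d_{P'}(f_n,f)<|\eps|_{P'}$ for all $n\geq n_0$. I claim this $n_0$ works.

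Fix $n\geq n_0$ and write $f=\dsum_u f_u\chi^u$, $f_n=\dsum_u f_{n,u}\chi^u$. The inequality $d_{P'}(f_n,f)<|\eps|_{P'}$ means precisely that for every $u$ with $f_{n,u}\neq f_u$ we have $|f_{n,u}\chi^u|_{P'}<|\eps|_{P'}$ and $|f_u\chi^u|_{P'}<|\eps|_{P'}$. For $u$ with $\chi^u$-monomial in $\mathcal{L}_f'$ we have $|f_u\chi^u|_{P'}=|f|_{P'}>|\eps|_{P'}$, so necessarily $f_{n,u}=f_u$; thus $f_n$ contains all of $\mathcal{L}_f'$. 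For $u$ not indexing a leading term of $f$: either $f_{n,u}=f_u$, in which case $|f_{n,u}\chi^u|_{P'}=|f_u\chi^u|_{P'}\leq\beta<|\eps|_{P'}<|f|_{P'}$, or $f_{n,u}\neq f_u$, in which case $|f_{n,u}\chi^u|_{P'}<|\eps|_{P'}<|f|_{P'}$. Either way every such monomial of $f_n$ has $P'$-value strictly below $|f|_{P'}$. Therefore $|f_n|_{P'}=|f|_{P'}$ and the $P'$-leading monomials of $f_n$ are exactly those indexed by $\mathcal{L}_f'$, with the same coefficients. Consequently $f_n\equiv_{P'} f'$, i.e.\ $(f',f_n)\in P'$, as desired.

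\textbf{Main obstacle.} The only subtlety I expect is the degenerate case in which $f$ has no ``non-leading'' monomials whose $P'$-value is strictly positive — equivalently $f$ is, up to ties, supported entirely on $\mathcal{L}_f'$, so $\beta=0_{\kappa(P')}$ (or $f=0$). Then the lower constraint on $|\eps|_{P'}$ is vacuous and one just needs $|\eps|_{P'}\leq|f|_{P'}$ (or, if $f=0$, the statement is trivial since then $\mathcal{L}_f'=\emptyset$, $f'=0$, and $f_n\to0$ forces $f_n\equiv_{P'}0$ eventually). Handling $f=0$ and the case $\mathcal{L}_f'=\{\text{all monomials of }f\}$ separately is routine bookkeeping. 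The substantive content is entirely in using property $(*)$ to pass from $P$-convergence to $d_{P'}$-convergence via Proposition~\ref{prop:convg-containment}, after which the argument is the same as the standard fact that a convergent sequence of polynomials eventually agrees with the limit on its leading terms (Proposition~\ref{prop:CannonicalExtensionIsClosure}'s proof), but now carried out with respect to the order of $P'$ rather than $P$.
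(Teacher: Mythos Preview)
Your argument is correct and close in spirit to the paper's, but there is one wrinkle worth flagging and one difference worth noting.

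\textbf{The $\beta$ constraint is both unjustified and unnecessary.} You ask for $\eps\in\base^\times$ with $\beta<|\eps|_{P'}<|f|_{P'}$, and appeal to ``density of $\Q$ in $\R$ together with the Hahn embedding'' to find it. That justification does not work in general: if $\kappa(P')^\times$ has rank $\geq2$, the image of $\base^\times$ sits in the lowest archimedean class and need not separate two values lying in a higher class (e.g.\ if $\beta$ and $|f|_{P'}$ differ only in a non-leading Hahn coordinate). Fortunately your argument never uses the lower bound $|\eps|_{P'}>\beta$: in the case $f_{n,u}=f_u$ for a non-leading $u$ you only need $|f_u\chi^u|_{P'}\leq\beta<|f|_{P'}$, and in the case $f_{n,u}\neq f_u$ you only need $|f_{n,u}\chi^u|_{P'}<|\eps|_{P'}\leq|f|_{P'}$. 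So simply take any $\eps\in\base^\times$ with $|\eps|_{P'}<|f|_{P'}$ (Lemma~\ref{lemma:ContinuityGivesGeorge}), or even just use the radius $|f|_{P'}\in\kappa(P')^\times$ directly; the rest of your argument goes through unchanged.

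\textbf{Comparison with the paper.} The paper argues directly in the $d_P$-metric: it chooses $n_0$ so that $d_P(f_n,f)<c^{-1}|f'|_{P'}$, where $c$ is the constant from property~$(*)$, and then for any differing term $\mu$ applies $|\mu|_{P'}\leq c|\mu|_P\leq c\,d_P(f_n,f)<|f'|_{P'}$. You instead invoke Proposition~\ref{prop:convg-containment} to transfer $d_P$-convergence to $d_{P'}$-convergence and then argue entirely in $d_{P'}$. These are really the same argument: your use of Proposition~\ref{prop:convg-containment} packages exactly the inequality $|\mu|_{P'}\leq c|\mu|_P$ that the paper applies by hand. The paper's version is marginally more self-contained; yours makes the structural role of the continuous inclusion $\Cnvg{P}\hookrightarrow\Cnvg{P'}$ more visible.
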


\begin{proof}
Write $f = f' + f''$ so that the supports of $f'$ and $f''$ are disjoint. By Remark~\ref{rem:leading-term-wrt-P'}, $f'$ is a polynomial. Since $P'$ satisfies $\propStar$, we can fix a nonzero $c \in \base$ such that $c|m|_P \geq |m|_{P'}$ for all terms $m$. For some $n_0 \in \N$ and for all $n \geq n_0$, $$d_P(f_n, f) < c^{-1}|f'|_{P'}\leq|f'|_P.$$
In particular, for all $n \geq n_0$ we can write $f_n = f' + f_n''$ with the supports of $f'$ and $f_n''$ disjoint. Consider a term $\mu = f_{n,u}\chi^u$ of $f_n''$. 
If $f_{n,u} \neq f_u$, where $f_u$ is the coefficient of $\chi^u$ in $f$, then 
$$|\mu|_{P'}\leq c|\mu|_P \leq c\ d_P(f_n, f)<|f'|_{P'}.$$
If $f_{n,u} = f_u$, then $\mu = f_u\chi^u$ is a term of $f''$, so $|\mu|_{P'}<|f'|_{P'}$. In both cases, $|\mu|_{P'}<|f'|_{P'}$ and thus $|f_n''|_{P'}<|f'|_{P'}$ implying that $|f_n|_{P'} = |f' + f_n''|_{P'} = |f'|_{P'}$. Since $f',f_n\in\base[\Mon]$, this says that $(f',f_n)\in P'$.
\end{proof}

\begin{lemma}\label{lem:claim5}
Let $P' \in \ContBase{\base} \base[\Mon]$ be a prime that satisfies $\propStar$ with respect to $P \in \ContIntBase{\base} \base[\Mon]$. For a pair $(f,g) \in \Cnvg{P}^2$, denote by $m_f$ and $m_g$ any leading monomials of $f$ and $g$ with respect to $P'$. Then $(f,g)$ is in $\bar{P'}$ if and only if $(m_f, m_g) \in P'$.
\end{lemma}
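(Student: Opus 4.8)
\textbf{Proof plan for Lemma~\ref{lem:claim5}.}
The plan is to prove both implications by exploiting the explicit formula for $d_P$ together with the characterization of convergence in $\Cnvg{P}$ via $P'$ that comes out of Proposition~\ref{prop:convg-containment} (and Remark~\ref{rem:leading-term-wrt-P'}, which guarantees that $f$ and $g$ really do have finitely many $P'$-leading terms). First I would set up notation: write $f=\sum_u f_u\chi^u$, $g=\sum_u g_u\chi^u$, let $\mathcal{L}_f'$ and $\mathcal{L}_g'$ be the $P'$-leading terms, and put $f'=\sum_{m\in\mathcal{L}_f'}m$, $g'=\sum_{m\in\mathcal{L}_g'}m$, so that $|f|_{P'}=|f'|_{P'}=|m_f|_{P'}$ and likewise for $g$. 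Both $f'$ and $g'$ are polynomials.

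For the ``only if'' direction, suppose $(f,g)\in\bar{P'}$. Since $\base[\Mon]$ is dense in $\Cnvg{P}$ (Proposition~\ref{prop:ConvergentSeriesAreLimitsOfPolynomials}) and $\Cnvg{P}$ is first-countable (Proposition~\ref{prop:SequencesSuffice}), there are sequences $(f_n),(g_n)$ in $\base[\Mon]$ with $f_n\to f$, $g_n\to g$, $(f_n,g_n)\in P'$. Apply Lemma~\ref{lem:claim4} to each: for $n$ large, $(f',f_n)\in P'$ and $(g',g_n)\in P'$. By symmetry and transitivity of the congruence $P'$ on $\base[\Mon]$, we get $(f',g')\in P'$, i.e.\ $|f'|_{P'}=|g'|_{P'}$, hence $|m_f|_{P'}=|m_g|_{P'}$, which is exactly $(m_f,m_g)\in P'$. (Here I am using that $m_f\equiv_{P'}f'$ and $m_g\equiv_{P'}g'$, since a $P'$-leading term of a polynomial is $\equiv_{P'}$ to the whole polynomial.)

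For the ``if'' direction, suppose $(m_f,m_g)\in P'$, equivalently $|f'|_{P'}=|g'|_{P'}$. I want to produce sequences in $P'$ converging to $(f,g)$. The natural choice is to combine the leading parts $f',g'$ with the partial sums of the ``tails'' $f''=f+f'$ wait---better: write $f=f'+f''$ and $g=g'+g''$ with disjoint supports (possible since $f',g'$ collect the strictly largest $P'$-values). Let $f''_n,g''_n$ be the partial sums of $f'',g''$, so $f'+f''_n\to f$ and $g'+g''_n\to g$ in $\Cnvg{P}$ (using that partial sums converge, Proposition~\ref{prop:ConvergentSeriesAreLimitsOfPolynomials}, plus that $d_P$-convergence is unaffected by adding the fixed polynomial $f'$). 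For each $n$ the polynomials $f'+f''_n$ and $g'+g''_n$ have $P'$-leading terms equal to those of $f'$ and $g'$ respectively---because every monomial in $f''_n$ has strictly smaller $P'$-value than $|f'|_{P'}$, and similarly for $g''_n$---so $|f'+f''_n|_{P'}=|f'|_{P'}=|g'|_{P'}=|g'+g''_n|_{P'}$, giving $(f'+f''_n,\,g'+g''_n)\in P'$. Taking the limit, $(f,g)\in\bar{P'}$.

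\textbf{Main obstacle.} The delicate point is making sure, in the ``if'' direction, that the tails $f''_n$ and $g''_n$ genuinely have all their $P'$-values strictly below $|f'|_{P'}$, and that one may freely use partial sums with respect to the $P$-metric while controlling $P'$-leading behavior; this is where Proposition~\ref{prop:convg-containment} (that $\propStar$ forces $\Cnvg{P}\subseteq\Cnvg{P'}$ with uniformly continuous inclusion) and Remark~\ref{rem:leading-term-wrt-P'} do the real work, and I would want to state that dependence carefully rather than wave at it. A minor secondary point is to double-check that $f'$ and $f''$ can be taken with disjoint supports even when some coefficients coincide---this is immediate from how $\mathcal{L}_f'$ is defined, but worth a sentence.
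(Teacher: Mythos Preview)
Your proposal is correct and follows essentially the same approach as the paper's proof: both directions use Lemma~\ref{lem:claim4} (implicitly or explicitly) together with transitivity of $P'$, and for the ``if'' direction the paper likewise takes partial sums starting from $f'$ and $g'$ and observes that each such partial sum is $\equiv_{P'}$ to $f'$ (resp.\ $g'$). Your write-up is slightly more detailed about why the tails have strictly smaller $P'$-value, but the argument is the same.
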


\begin{proof}
Suppose $(f,g)$ is in $\bar{P'}$. By Proposition \ref{prop:SequencesSuffice}, we can pick a sequence $(f_n, g_n)$ in $P'$ of pairs of polynomials converging to $(f,g)$ in $\cnvg{P}$. Denote by $\mathcal{L}_f'$ and $\mathcal{L}_g'$ the sets of leading terms of $f$ and $g$ with respect to the ordering induced by $P'$. Let $f' = \sum\limits_{m_f \in \mathcal{L}_f'} m_f$ and $g' = \sum\limits_{m_g \in \mathcal{L}_g'} m_g$. By Lemma~\ref{lem:claim4}, there exists $n_0\in \N$ such that for all $n \geq n_0$, 
$(f', f_n) \in P'$ and $(g', g_n) \in P'$. Since $(f_n, g_n) \in P'$, transitivity of $P'$ gives us $(f',g') \in P'$.\par
For the other direction, let $f'$ and $g'$ be defined as before. By assumption, $(f', g') \in P'$. Let $\{ f_n\}_{n\in \N}$ and $\{ g_n\}_{n\in \N}$ be sequences of partial sums with $f_1 = f'$ and $g_1 = g'$, such that $\{ f_n\}$ converges to $f$ and $\{ g_n\}$ converges to $g$ in $\Cnvg{P}$. Then $(f_n, f')\in P'$ and $(g_n, g')\in P'$ and, by transitivity, $(f_n, g_n) \in P'$. Thus, $(f,g) = \lim\limits_{n\rightarrow\infty}(f_n, g_n) \in \bar{P'}$.
\end{proof}

The following Corollary together with the previous statement give us one direction of the proof of 8.5.

\begin{coro}\label{coro:propStarImpliesSpa}
If a prime $P'\in\ContBase{\base} \base[\Mon]$ satisfies $\propStar$ with respect to a prime congruence $P\in\ContBaseInt{\base}{\base[\Mon]}$, then $\bar{P}'\in\Cont (\Cnvg{P})$.

\end{coro}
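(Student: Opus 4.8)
The plan is to show that $\bar{P}'$ is a closed prime congruence on $\Cnvg{P}$ lying in $\Cont(\Cnvg{P})$, by verifying in turn that it is a congruence, that it is prime, that its restriction to $\base[\Mon]$ behaves well, and finally that the quotient map is continuous. First I would establish that $\bar{P}'$ is a congruence. It is automatically reflexive, symmetric, and closed (being a topological closure); compatibility with the operations $+$ and $\cdot$ on $\Cnvg{P}$ follows because these operations are continuous by Propositions~\ref{prop:+isContinuous} and \ref{prop:xisContinuous}, so the closure of a subcongruence that is stable under continuous operations is again stable. The only nontrivial point is transitivity, and this is exactly where Lemma~\ref{lem:claim5} does the work: that lemma gives a clean ``normal form'' description, namely $(f,g)\in\bar{P}'$ if and only if $(m_f,m_g)\in P'$ for any choice of $P'$-leading monomials $m_f,m_g$ of $f,g$ (which exist and are finite in number by Remark~\ref{rem:leading-term-wrt-P'}, using Proposition~\ref{prop:convg-containment}). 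Transitivity of $\bar{P}'$ then reduces to transitivity of $P'$ on leading monomials.

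Next I would check primeness, i.e.\ that $\Cnvg{P}/\bar{P}'$ is totally ordered and cancellative. The leading-monomial description again is the key: the map $\Cnvg{P}\to\base[\Mon]/P'\hookrightarrow\kappa(P')$ sending $f\mapsto |f|_{P'}:=\sum_{u}|f_u\chi^u|_{P'}$ is well-defined on $\Cnvg{P}$ precisely because each $f\in\Cnvg{P}$ has finitely many $P'$-leading terms (Remark~\ref{rem:leading-term-wrt-P'}), and by Lemma~\ref{lem:claim5} its kernel is exactly $\bar{P}'$. This exhibits $\Cnvg{P}/\bar{P}'$ as a subsemiring of the totally ordered cancellative semifield $\kappa(P')$ (indeed it is identified with $\base[\Mon]/P'$), hence it is itself totally ordered and cancellative, and not the whole product, so $\bar{P}'$ is a proper prime congruence. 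In particular $\Cnvg{P}/\bar{P}'\cong\base[\Mon]/P'\neq\B$ because $P'\in\ContBase{\base}\base[\Mon]$.

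Finally I would verify continuity of $\Cnvg{P}\to\kappa(\bar{P}')$. Since $\Cnvg{P}/\bar{P}'\cong\base[\Mon]/P'$, we have $\kappa(\bar{P}')\cong\kappa(P')$, and the induced map $\Cnvg{P}\to\kappa(P')$ factors as $\Cnvg{P}\hookrightarrow\Cnvg{P'}\to\kappa(P')$, where the first map is the inclusion — which is (uniformly) continuous by Proposition~\ref{prop:convg-containment}, using that $P'$ satisfies $\propStar$ with respect to $P$ — and the second is $|\cdot|_{P'}$ on $\Cnvg{P'}$, continuous by Lemma~\ref{lemma:DistIsContinuous} since $|\cdot|_{P'}=d_{P'}(0_\base,\cdot)$. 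Composing gives continuity, so $\bar{P}'\in\Cont(\Cnvg{P})$. I expect the main obstacle to be purely bookkeeping: making sure the ``leading monomial'' apparatus from Lemmas~\ref{lem:claim4} and \ref{lem:claim5} is invoked in the right order to get transitivity before invoking it again for the kernel description, and checking that the identification $\Cnvg{P}/\bar{P}'\cong\base[\Mon]/P'$ is compatible with all the structure maps — but no genuinely new argument is needed beyond what the preceding lemmas supply.
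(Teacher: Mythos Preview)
Your proposal is correct and follows essentially the same approach as the paper: use Lemma~\ref{lem:claim5} to deduce that $\bar{P}'$ is a prime congruence with $\kappa(\bar{P}')\cong\kappa(P')$, then establish continuity by factoring $|\cdot|_{\bar{P}'}$ as $\Cnvg{P}\hookrightarrow\Cnvg{P'}\xrightarrow{|\cdot|_{P'}}\kappa(P')$ and applying Proposition~\ref{prop:convg-containment} and Lemma~\ref{lemma:DistIsContinuous}. The only difference is that you spell out in more detail how Lemma~\ref{lem:claim5} yields transitivity and primeness (via the kernel description), whereas the paper simply asserts these consequences.
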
\begin{proof}
By Lemma \ref{lem:claim5}, $\bar{P}'$ is a prime congruence on $\Cnvg{P}$. 
The same lemma also shows that the inclusion $\base[\Mon]\subseteq\Cnvg{P}$ induces an isomorphism $\kappa(P')\cong\kappa(\bar{P}')$ which makes the diagram
\comd{
\cnvg{P}\inj[r]\ar[d]_{|\cdot|_{\bar{P}'}}&\cnvg{P'}\ar[d]^{|\cdot|_{P'}}\\
\kappa(\bar{P}')&\kappa(P')\ar[l]_{\cong}
}
commute.
So it suffices to show that the composition 
$$\Cnvg{P}\into\Cnvg{P'}\toup{|\cdot|_{P'}}\kappa(P')$$
is continuous. The first map is continuous by Proposition~\ref{prop:convg-containment}. 
The continuity of the second map, $f\mapsto |f|_{P'}=d_{P'}(0_{\base},f)$, follows from Lemma~\ref{lemma:DistIsContinuous}.
\end{proof}

\begin{proof}[Proof of Theorem~\ref{thm:crown}]
Fix $P \in \ContBaseInt{\base}{\base[\Mon]}$. Since changing $P$ to the maximal $\check{P}\in\ContBaseInt{\base}{\base[\Mon]}$ does not change $\Cnvg{P}$ as a topological $\base[\Mon]$-algebra, 
and by Proposition \ref{prop:SalvageThmCrownByPMaxmial} it also does not change which $P'\in\ContBase{\base}{\base[\Mon]}$ satisfy $\propStar$ with respect to $P$, 
we may assume without loss of generality that $P$ is maximal in $\ContBase{\base}{\base[\Mon]}$. 
We will first show that the image of $\iota^*:\Cont (\Cnvg{P}) \rightarrow \ContBase{\base}{\base[\Mon]}$ is the set of primes of $\base[\Mon]$ satisfying $\propStar$ with respect to $P$. 
If $P'$ satisfies $\propStar$ with respect to $P$ then Corollary~\ref{coro:propStarImpliesSpa} asserts that $\bar{P}'\in\Cont(\Cnvg{P})$ and Lemma~\ref{lem:claim5} shows that $\iota^*(\bar{P}')=P'$. 

For the other inclusion, assume $P'$ does not satisfy $\propStar$ with respect to $P$. We will show the existence of a monomial $m$ such that $|m|_P < 1_\base < |m|_{P'}$. 
Applying Lemma~\ref{lem:converge-to-(0,1)} to this $m$ shows that there is no closed prime of $\Cnvg{P}$ containing ${P'}$.\par

If $P'$ does not satisfy $\propStar$ with respect to $P$, then for all $c\in \base\setminus\{0_\base\}$ there exists a term $\mu \in\base[\Mon]$ such that $c|\mu|_P < |\mu|_{P'}$. In particular, we may assume that $c > 1_\base$ and so $$|\mu|_{P}<c|\mu|_{P}<|\mu|_{P'}.$$
Since $|\mu|_{P} \in \T^\times$, $(|\mu|_{P}, c|\mu|_{P})$ is an interval so there exists a $\beta\in \N$ such that for some $a\in \base$, $a^{-1}$ is in the interval $(|\mu|_{P}^\beta, (c|\mu|_{P})^\beta)$. 
Now let $m = a\mu^\beta$ and observe that
$$|m|_P = a|\mu|_P^\beta < 1_\base < a(c|\mu|_P)^\beta < a|\mu|_{P'}^\beta = |m|_{P'}.$$

It remains to show that the map $\iota^*$ is an injection. 
For any $P'$ in the image of $\iota^*$, Lemma~\ref{lem:claim5} gives us that the pullback of $\bar{P}'$ to $\base[\Mon]$ is $P'$ and the induced map $\base[\Mon]/P' \to \Cnvg{P}/\bar{P'}$ is an isomorphism. Thus, by Proposition~\ref{prop:GeneralUniquenessOfExtension}, the prime congruence $\bar{P}'$ is the unique element of $\Cont(\cnvg{P})$ whose pullback to $\base[\Mon]$ is $P'$.
\end{proof}

We now show that $\propStar$ is equivalent to another condition which is, \emph{a priori}, stronger. This version of the condition will be used in proving the main theorem of the next subsection.

%
%
\begin{prop}\label{prop:TwoVersionsOfProperty*AreSame}
Let $P \in \ContBaseInt{\base}{\base[\Mon]}$ and $P'\in \ContBase{\base}{\base[\Mon]}$. Then $P'$ satisfies $\propStar$ with respect to $P$ if and only if, for every $c\in\base$ with $c>1_{\base}$ and every term $m\in \base[\Mon]$, $c|m|_P\geq|m|_{P'}$.
\end{prop}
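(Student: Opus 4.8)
The implication from the \emph{a priori} stronger condition to $\propStar$ is trivial: pick any $c \in \base$ with $c > 1_{\base}$ (which exists since $\base \neq \B$), and $\propStar$ holds with this $c$. So the content is the forward direction. Suppose $P'$ satisfies $\propStar$ with respect to $P$, witnessed by some nonzero $c_0 \in \base$; without loss of generality $c_0 > 1_{\base}$ (enlarging $c_0$ only makes the inequality $c_0|m|_P \geq |m|_{P'}$ easier to satisfy, using Lemma~\ref{lemma:SemifieldHomAndInequalities}(1) applied inside $\RnLexSemifield{I_{P,P'}}$). Fix an arbitrary $c \in \base$ with $c > 1_{\base}$; I must show $c|m|_P \geq |m|_{P'}$ for every term $m$. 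The key idea is to run the argument on powers of $m$, exploiting the homogeneity of the defining matrices, exactly as in the proof of Theorem~\ref{thm:crown} and Corollary~\ref{coro:crown_affine}.

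Here is the plan for the main step. Work inside the joint semifield $\RnLexSemifield{I_{P,P'}}$ provided by Proposition~\ref{prop:HahnFor2SpaPrimes}, so that $|m|_P$, $|m|_{P'}$, $|c|_P = |c|_{P'}$ (these agree since $\base \hookrightarrow \T \xhookrightarrow{j} \RnLexSemifield{I_{P,P'}}$ compatibly) are all elements of one totally ordered abelian group, written multiplicatively. Suppose for contradiction that $c|m|_P < |m|_{P'}$ for some term $m$ and some $c > 1_{\base}$. Raising to the $n$-th power (which is order-preserving and multiplicative), $c^n |m|_P^n = c^n |m^n|_P < |m^n|_{P'} = |m|_{P'}^n$ for every $n \in \Z_{>0}$, using that $|\cdot|_P$ and $|\cdot|_{P'}$ are semiring homomorphisms so $|m^n| = |m|^n$. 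On the other hand, since $\base^\times$ is archimedean (it embeds in $\T^\times = \R$), there is $n$ with $c^n \geq c_0$, hence $c^n \geq c_0$ in $\base^\times \subseteq \RnLexSemifield{I_{P,P'}}^\times$, so $c^n|m^n|_P \geq c_0|m^n|_P \geq |m^n|_{P'}$ by $\propStar$ applied to the term $m^n$. This contradicts $c^n|m^n|_P < |m^n|_{P'}$, completing the proof.

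The one point requiring a little care — and the place I expect the only genuine friction — is the comparison ``$c^n \geq c_0$ in $\base^\times$ implies $c^n|m^n|_P \geq c_0|m^n|_P$'' and, dually, that $c^n|m^n|_P \geq |m^n|_{P'}$ really does follow from $\propStar$ for the single term $m^n$. The former is just translation-invariance of the order on the totally ordered abelian group $\RnLexSemifield{I_{P,P'}}^\times$ (multiplying the inequality $c^n \geq c_0$ by the positive element $|m^n|_P$), and the latter is literally the defining inequality of $\propStar$ instantiated at the term $m^n$. One must also note that $m^n$ is indeed a term whenever $m$ is, which is immediate in $\base[\Mon]$. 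No appeal to convergence, closures, or Hahn embeddings beyond having a common ordered target is needed; the argument is purely about the archimedean nature of $\base$ together with the multiplicativity of $|\cdot|_P$ and $|\cdot|_{P'}$.

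Thus the proof is short: the hard direction reduces, via taking $n$-th powers and using that $\base^\times$ is archimedean, to the single-term hypothesis $\propStar$, and the only subtlety is the bookkeeping of working with $|m|_P$ and $|m|_{P'}$ simultaneously inside one totally ordered semifield, which Proposition~\ref{prop:HahnFor2SpaPrimes} is designed to make possible.
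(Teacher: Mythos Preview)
Your proof is correct and takes essentially the same approach as the paper: both exploit that $\base^\times$ is archimedean and apply $\propStar$ to powers $m^n$ of the given term. The paper argues directly by taking $\ell$-th roots in the algebraically closed semifield $\RnLexSemifield{I_{P,P'}}$ (writing $\tilde{c}|m|_P\geq c^{1/\ell}|m|_P=(c|m^\ell|_P)^{1/\ell}\geq(|m^\ell|_{P'})^{1/\ell}=|m|_{P'}$), whereas you raise to $n$-th powers and argue by contradiction; your version has the small advantage of not invoking algebraic closedness, but the underlying idea is identical.
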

\begin{proof}
The ``if'' direction is clear. So assume that $P'$ satisfies $\propStar$ with respect to $P$, i.e., there is some nonzero $c\in \base$ such that $c|m|_{P}\geq|m|_{P'}$ for all terms $m\in \base[\Mon]$. 

If $c\leq 1_\base$ then, for any $\tilde{c}\in\base$ with $\tilde{c}>1_\base$, $\tilde{c}|m|_P\geq c|m|_{P}\geq|m|_{P'}$.

Now say $c>1_\base$. For any $\tilde{c}\in\base$ with $\tilde{c}>1_\base$, there is a positive integer $\ell$ such that $c^{1/\ell}\leq\tilde{c}$ in $\T$. Then, in the algebraically closed semifield $\RnLexSemifield{I_{P,P'}}$, we have 
$$\tilde{c}|m|_P\geq c^{1/\ell}|m|_P=(c|m^\ell|_P)^{1/\ell}\geq(|m^\ell|_{P'})^{1/\ell}=|m|_{P'}.$$
\end{proof}

\subsection{Geometric description of $\Cont{(\Cnvg{P})}$}\label{sect: geom-crown}


We round off this section by providing a more geometric flavor to the above results. 
We continue the assumption that $\base$ is a sub-semifield of $\T$, different from $\BB$. In this section, we write the toric monoid $\Mon $ as $\sigma^{\vee} \cap \Lambda$ for $\Lambda$ a 
finitely generated free abelian group and $\sigma$ a cone in $\Lambda^*$. 
We fix primes $P \in \ContBaseInt{\base}{\base[\Mon]}$ and $P' , \check{P}'\in \ContBase{\base}{\base[\Mon]}$, where $\check{P}'\supseteq P'$ and $\check{P}'$ is maximal in $\ContBase{\base}{\base[\Mon]}$ with respect to inclusion.\par\medskip


There is a map $\Phi:\ContBase{\base}{\base[\Mon]}\to N_{\R}(\sigma)$ defined as follows. Given $P'\in\ContBase{\base}{\base[\Mon]}$ and following the notations above, we have the semiring homomorphism
$$\ph=\ph_{P'}:\base[\Mon]\to\kappa(P')\toup{\pi_{\check{P}',P'}}\kappa(\check{P}')\to\T.$$
\noindent Precomposing with the canonical monoid homomorphism from $\Mon$ to the multiplicative monoid of $\base[\Mon]$ gives us a monoid homomorphism $\phi_{P'}:\Mon\to(\T,\cdot)$, i.e., an element of the tropical toric variety $N_{\R}(\sigma)$. We set $\Phi(P')=\phi_{P'}$. Moreover, if $P'$ has trivial ideal-kernel, $\phi_{P'}$ does not map anything to $0_{\T}$, and so induces a group homomorphism $\Lambda\to\R$, i.e., an element of $N_{\R}$. So the map $\Phi:\ContBase{\base}{\base[\Mon]}\to N_{\R}(\sigma)$ restricts to a map $\ContBaseInt{\base}{\base[\Mon]}\to N_{\R}$.

\begin{example}\label{ex:MapToClassicalTorus}
Suppose $M=\Lambda=\Z^n$, so $\sigma=\{0\}$. In this case we can give an even more explicit description of the map $\Phi:\ContBase{\base}\base[\Mon]\to N_{\R}(\sigma)=N_{\R}=\R^n$. Given $P'\in\ContBase{\base}\base[\Mon]$, pick a defining matrix for $P'$ of the form 
$$\begin{pmatrix}
 1&w\\
 *&*\\
 \vdots&\vdots\\
 *&*
\end{pmatrix}$$
where $w\in\R^n$. Then $\Phi(P')=w$.
\end{example}

\begin{example}\label{ex:MapToClassicalAffineSpace}
Suppose $M=\N^n$, so $\Lambda=\Z^n$ and $\sigma$ is the totally negative orthant in $\R^n$. We can also give an explicit description of the map $\Phi:\ContBase{\base}\base[\Mon]\to N_{\R}(\sigma)=\T^n$ in this case. As above, we can pick a defining matrix for $P'$, but now some entries may be $-\infty$. Thus, we still have $w\in\T^n$, and $\Phi(P')=w$.
\end{example}

\begin{remark}
In the contexts of Examples~\ref{ex:MapToClassicalTorus} and \ref{ex:MapToClassicalAffineSpace}, $\Phi(P')$ can also be described as the unique point of the congruence variety of $P'$. See \cite[Section 3.1]{BE13} for the definition of the congruence variety.
\end{remark}

The following lemma allows us to reduce our discussion to primes in $\ContBase{\base}{\base[\Mon]}$  which are maximal with respect to inclusion.

\begin{lemma}\label{lem:claim1*}
Let $P , P',$ and $ \check{P}'$ be as above. For every term $m\in \base[\Mon]$ we have that $c|m|_P \geq |m|_{P'}$ for all $c\in \base$ with $c > 1_\base$ if and only if $c|m|_P \geq |m|_{\check{P}'}$ for all $c\in \base$ with $c > 1_\base$.
\end{lemma}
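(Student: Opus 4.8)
The plan is to exploit the relationship $|m|_{\check P'} = \bigpi(|m|_{P'})$, which comes from the commutative diagram preceding Proposition~\ref{prop:convg-containment} together with Proposition~\ref{lemma:MatrixDeterminesMaximalPrimeAbove}: since $\check P'$ is the maximal prime in $\ContBase{\base}{\base[\Mon]}$ containing $P'$, the Hahn embeddings can be chosen so that evaluating at $\check P'$ is exactly $\bigpi$ applied to evaluating at $P'$, once both are viewed inside $\RnLexSemifield{I_{P,P'}}$. So the whole lemma reduces to comparing $c|m|_P$ with $|m|_{P'}$ versus with $\bigpi(|m|_{P'})$, and the key tool is the inequality~\eqref{eq:bp1}: for any $\gamma\in\RnLexSemifield{I_{P,P'}}$ and any $d\in\T$ with $d>1_\T$, one has $\bigpi(\gamma)<d\gamma$ and $d\bigpi(\gamma)>\gamma$.

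First I would set up notation: view all of $|m|_P$, $|m|_{P'}$, $|m|_{\check P'}$ as elements of $\RnLexSemifield{I_{P,P'}}$ as in the paragraph before Example~\ref{ex:motivating}, and record that $|m|_{\check P'}=\bigpi(|m|_{P'})$ for every term $m$. For the forward direction, assume $c|m|_P\geq|m|_{P'}$ for all $c\in\base$ with $c>1_\base$. Fix such a $c$ and fix a term $m$; pick any $d\in\base$ with $1_\base<d<c$ in $\T$ (possible since $\base\neq\B$ is a subsemifield of $\T$, so $\base^\times$ is dense-ish — at any rate there is some element strictly between $1_\base$ and $c$, e.g. $d=c^{1/2}$ taken in $\T$, noting one must check $c^{1/2}\in\base$; if $\base$ is not closed under square roots one instead picks any $d\in\base^\times$ with $1_\base<d$ and writes $c=d\tilde c$ with $\tilde c=d^{-1}c$, and applies the hypothesis with $\tilde c$ in place of $c$ when $\tilde c>1_\base$, or directly when $\tilde c\le 1_\base$). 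Then $c|m|_P=d\,(\tilde c|m|_P)\geq d\,|m|_{P'}>\bigpi(|m|_{P'})=|m|_{\check P'}$, using the second inequality in~\eqref{eq:bp1} with $\gamma=|m|_{P'}$. The reverse direction is symmetric: given $c|m|_P\geq|m|_{\check P'}=\bigpi(|m|_{P'})$ for all $c>1_\base$, write $c=d\tilde c$ as above and compute $c|m|_P=d\,(\tilde c|m|_P)\geq d\,\bigpi(|m|_{P'})>|m|_{P'}$, using the first inequality in~\eqref{eq:bp1}. This mirrors exactly the two-sided argument already carried out in the proof of Proposition~\ref{prop:SalvageThmCrownByPMaxmial}.

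The main obstacle — really a bookkeeping point rather than a genuine difficulty — is handling the quantifier ``for all $c>1_\base$'' cleanly when $\base$ is not divisible: one cannot simply take square roots inside $\base$, so the split $c=d\tilde c$ with a fixed auxiliary $d\in\base^\times$, $d>1_\base$, is the right move, together with the trivial case analysis on whether $\tilde c>1_\base$ or $\tilde c\leq1_\base$ (in the latter case one just uses monotonicity, $\tilde c|m|_P\leq|m|_P\leq c|m|_P$, and applies the hypothesis at $c$ itself). Once that is dispatched, the only substantive input is the identity $|m|_{\check P'}=\bigpi(|m|_{P'})$ and the order inequality~\eqref{eq:bp1}, both already available. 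I would write the proof in three or four lines, essentially quoting the computation from Proposition~\ref{prop:SalvageThmCrownByPMaxmial} with $P$ replaced by $P'$ and $\check P$ by $\check P'$, and noting that the roles of $P$ and the ``$c>1_\base$'' quantifier are exactly as there.
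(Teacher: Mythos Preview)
Your overall plan matches the paper's proof: both directions rest on $|m|_{\check P'}=\bigpi(|m|_{P'})$ and the inequalities~\eqref{eq:bp1}, exactly parallel to Proposition~\ref{prop:SalvageThmCrownByPMaxmial}. The paper, however, does not try to keep the auxiliary element in $\base$: it picks $d\in\T$ with $1_\T<d<c$ and writes $c|m|_P=(c/d)\,d|m|_P\geq d|m|_{P'}>\bigpi(|m|_{P'})$, using $(c/d)|m|_P\geq|m|_{P'}$ with $c/d\in\T^\times$. That step is legitimate because the hypothesis upgrades from $c\in\base$ to $c\in\T$ via the $m^\ell$ trick of Proposition~\ref{prop:TwoVersionsOfProperty*AreSame} (apply the hypothesis to $m^\ell$ and take $\ell$-th roots in the algebraically closed $\RnLexSemifield{I_{P,P'}}$), though the paper does not spell this out.

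You correctly flag this as a subtlety and try to avoid it by taking $d\in\base$, but your Case~2 ($\tilde c=c/d\leq 1_\base$) does not work. Applying the hypothesis ``at $c$ itself'' yields only $c|m|_P\geq|m|_{P'}$, and this does \emph{not} give $c|m|_P\geq\bigpi(|m|_{P'})$: the elements $|m|_{P'}$ and $\bigpi(|m|_{P'})$ are not comparable in general (whenever the first non-leading coordinate of $|m|_{P'}$ is negative one has $|m|_{P'}<\bigpi(|m|_{P'})$). Concretely, with $\base=\Z_{\max}$ and $c=t$ there is no element of $\base$ strictly between $1_\base$ and $c$, so you are forced into Case~2 for the smallest admissible $c$, and your monotonicity line gives no handle on $\bigpi(|m|_{P'})$. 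The clean repair avoids any case split: apply the hypothesis to the term $m^2$ to get $c^{1/2}|m|_P\geq|m|_{P'}$, whence $c|m|_P=c^{1/2}\cdot c^{1/2}|m|_P\geq c^{1/2}|m|_{P'}>\bigpi(|m|_{P'})$ by~\eqref{eq:bp1}. (Side note: you have the two halves of~\eqref{eq:bp1} swapped --- the forward direction uses $\bigpi(\gamma)<d\gamma$, the reverse uses $d\bigpi(\gamma)>\gamma$.)
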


\begin{proof}
First assume that $c|m|_P \geq |m|_{P'}$ for every $c\in \base$, $c > 1_\base$, and for every term $m\in \base[\Mon]$. We will show that $c |m|_P \geq |m|_{\check{P}'} =\bigpi(|m|_{P'})$. Since $\base \subseteq \T$, we can find a $d \in \T$ such that $1_\T < d < c$. Then
$$c |m|_P = \frac{c}{d}\ d|m|_P \geq d|m|_{P'} >\bigpi(|m|_{P'})=|m|_{\check{P}'},$$
where the last inequality follows from Inequalities~(\ref{eq:bp1}). 
The proof of the reverse direction is similar.
\end{proof}

\begin{lemma}\label{lem:claim2*}
Let $P , P'$ be as previously defined with $P,P'\in\ContBase{\base}{\base[\Mon]}$ maximal. For every $c\in \base$ with $c > 1_\base$ we have that $c|m|_P \geq |m|_{P'}$ for all terms $m\in \base[\Mon]$ if and only if $|m|_P \geq |m|_{P'}$ for all terms $m\in \base[\Mon]$.
\end{lemma}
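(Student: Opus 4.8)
The statement to prove is Lemma~\ref{lem:claim2*}: for maximal primes $P, P' \in \ContBase{\base}{\base[\Mon]}$, the condition ``$c|m|_P \geq |m|_{P'}$ for all terms $m$'' (for a fixed $c > 1_\base$) is equivalent to the condition ``$|m|_P \geq |m|_{P'}$ for all terms $m$''. One direction is immediate: if $|m|_P \geq |m|_{P'}$ for all $m$, then since $c > 1_\base$ we have $c|m|_P > |m|_P \geq |m|_{P'}$. So the content is in the forward direction. Here the key point is that $P$ and $P'$ being maximal means, by Lemma~\ref{lemma:MaximalGeorgePrimes}, that $\kappa(P)$ and $\kappa(P')$ each embed canonically into $\T$ compatibly with the inclusion of $\base$, so that $|m|_P$ and $|m|_{P'}$ can both be regarded as genuine elements of $\T = (\R \cup \{-\infty\}, \max, +)$.

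\textbf{Main argument.} Assume $c|m|_P \geq |m|_{P'}$ for all terms $m$, and suppose toward a contradiction that there is a term $m_0$ with $|m_0|_P < |m_0|_{P'}$ (in $\T$, hence strictly in $\R$ after taking $\log$, unless one side is $0_\T$; I would first dispose of the $0_\T$ cases using that maximal primes here have trivial ideal-kernel or handle them directly). Writing additively via $\log$, set $\delta = \log|m_0|_{P'} - \log|m_0|_P > 0$. Now consider powers $m_0^\beta$ for $\beta \in \N$: since $|\cdot|_P$ and $|\cdot|_{P'}$ are semiring homomorphisms, $\log|m_0^\beta|_P = \beta \log|m_0|_P$ and $\log|m_0^\beta|_{P'} = \beta \log|m_0|_{P'}$, so the gap $\log|m_0^\beta|_{P'} - \log|m_0^\beta|_P = \beta\delta$ grows without bound. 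But the hypothesis $c|m_0^\beta|_P \geq |m_0^\beta|_{P'}$ says $\log(c) + \beta\log|m_0|_P \geq \beta\log|m_0|_{P'}$, i.e. $\log(c) \geq \beta\delta$ for all $\beta \in \N$; this is impossible since $\log(c) > 0$ is a fixed real number and $\delta > 0$. This contradiction forces $|m|_P \geq |m|_{P'}$ for all terms $m$.

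\textbf{Expected obstacle.} The genuinely non-routine part is the bookkeeping needed to make sense of the inequalities as inequalities of real numbers: one must invoke Lemma~\ref{lemma:MaximalGeorgePrimes} to place $|m|_P$, $|m|_{P'}$ in a common copy of $\T$ (here the statement is much cleaner than the general $\RnLexSemifield{I_{P,P'}}$ setup used elsewhere in the section, precisely because both primes are maximal, so $\bigpi$ is not needed), and one must be slightly careful about terms $m$ for which $|m|_P = 0_\T$ or $|m|_{P'} = 0_\T$ — but for $\Mon = \N^n$ this can only happen when $m$ involves a variable in the ideal-kernel, and the ideal-kernels of $P$ and $P'$ may a priori differ. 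I would handle this by noting that the $\propStar$-type hypothesis $c|m|_P \geq |m|_{P'}$ with $|m|_P = 0_\T$ already forces $|m|_{P'} = 0_\T$, so the ideal-kernel of $P$ is contained in that of $P'$ on the level of terms, and the remaining comparison takes place among terms with nonzero value, where the power-trick above applies verbatim. Beyond that, the proof is a short application of the archimedean property of $\R$.
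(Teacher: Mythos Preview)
Your proposal is correct and takes essentially the same approach as the paper: both exploit multiplicativity on powers of a term together with the archimedean property of $\T^\times$. The paper applies the hypothesis to $m^l$ and takes $l$-th roots in $\T$ to get $c^{1/l}|m|_P \geq |m|_{P'}$, then lets $l\to\infty$; you apply the hypothesis to $m_0^\beta$ and argue that $\log(c) \geq \beta\delta$ is impossible for large $\beta$. These are dual formulations of the same idea, and your version in fact only needs the hypothesis for a single $c>1_\base$, which is slightly stronger than what the lemma literally asserts (though equivalent by Proposition~\ref{prop:TwoVersionsOfProperty*AreSame}).

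Two small cleanups: in the easy direction you should write $c|m|_P \geq |m|_P$ rather than strict inequality, to cover the case $|m|_P = 0_\T$; and your discussion of the $0_\T$ cases is more than needed, since in the section's standing hypotheses $P\in\ContIntBase{\base}\base[\Mon]$ already gives $|m|_P\neq 0_\T$ for every term, while $|m|_{P'}=0_\T$ makes both inequalities trivial.
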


\begin{proof}
If $|m|_P \geq |m|_{P'}$, then $c|m|_P \geq |m|_{P'}$ since $c > 1_\base$. For the other direction, assume that for all $c\in \base$ with $c > 1_\base$ and every term $m\in \base[\Mon]$ we have that $c|m|_P \geq |m|_{P'}$. In particular, for all $l \in \Z_{\geq 1}$ we have $c|m^l|_P \geq |m^l|_{P'}$. 
Thus, in the algebraically closed semifield $\T$, we have 
$$c^{1/l}|m|_P = (c|m^l|_P)^{1/l} \geq (|m^l|_{P'})^{1/l}=|m|_{P'}.$$
By letting $l$ get arbitrarily large, we obtain the desired inequality $|m|_P \geq |m|_{P'}$ because $\T^\times$ is archimedean.
\end{proof}

We are now ready to state the geometric version of our main theorem of the section. 

\begin{theorem}[$\crown{}$g]\label{thm:crown_geom}
Let $\base\neq\B$ be a sub-semifield of $\T$ and let $\Mon$ be the toric monoid corresponding to a cone $\sigma$ in $N_{\R}$. Also, let $P \in \ContBaseInt{\base}{\base[\Mon]}$ and $P' \in \ContBase{\base}{\base[\Mon]}$. Then the map $\iota^*:\Cont(\Cnvg{P}) \rightarrow \ContBase{\base}{\base[\Mon]}$ induced by the inclusion $\iota: \base[\Mon] \rightarrow \Cnvg{P}$ is injective and the following statements are equivalent:

\begin{enumerate}
    \item[i)] $P'$ is in the image of $\iota^*$.
    \item[ii)] For all $c\in \base$, $c>1_\base$ and all terms $m\in \base[\Mon]$, $c|m|_P \geq |m|_{P'}$.
    \item[iii)] Letting $\bar{\sigma}$ be the closure of $\sigma$ in $N_{\R}(\sigma)$, we have $\Phi(P')\in\Phi(P)+_{\R}\bar{\sigma}$.
\end{enumerate}
\end{theorem}

\begin{proof}
The injectivity of $\iota^*$ and the equivalence of $(i)$ and  $(ii)$ follow from the statements of Theorem~\ref{thm:crown} and Proposition~\ref{prop:TwoVersionsOfProperty*AreSame}.

Now it remains only to show that $(ii)$ and $(iii)$ are equivalent. 
Note that $\Phi(P)=\Phi(\check{P})$ where $\check{P}$ is the maximal element of $\ContBase{\base}\base[\Mon]$ containing $P$, so replacing $P$ and $P'$ by the maximal elements of $\ContBase{\base}\base[\Mon]$ above them does not change the statement of $(iii)$.
In view of Lemma~\ref{lem:claim1*}, without loss of generality we can assume that $P'$ is a maximal prime in $\ContBase{\base}{\base[\Mon]}$ with respect to inclusion. 
We can also assume without loss of generality that $P$ is maximal: by Theorem \ref{thm:crown}, $P'$ satisfies $\propStar$ with respect to $P$ if and only if $P'$ is in the image of $\iota^*$ , but $\iota$, and hence $\iota^*$, is unchanged if we replace $P$ by the maximal prime in $\ContBase{\base}{\base[\Mon]}$ above it.
Moreover, if $P,P'\in\ContBase{\base}{\base[\Mon]}$ are maximal Lemma~\ref{lemma:MaximalGeorgePrimes} guarantees the existence of canonical embeddings of $\kappa(P)$ and $\kappa(P')$ into $\T$. 

Thus, since the primes $P$ and $P'$ are maximal in $\ContBase{\base}\base[\Mon]$, we have that $w:=\Phi(P)$ is the map $\Mon\to\base[\Mon]\to\kappa(P)\to\T$ and $w'=\Phi(P')$ is $\Mon\to\base[\Mon]\to\kappa(P')\to\T$. 
By Lemma~\ref{lem:claim2*}, (ii) is equivalent to having $|m|_P\geq|m|_{P'}$ for all terms $m\in \base[\Mon]$.
Let $m=a\chi^u$ be a nonzero term of $\base[\Mon]$.
Note that $|m|_{P}\geq|m|_{P'}$ says that $\angbra{w,u}+_{\R}\log(a)\geq\angbra{w',u}+\log(a)$, which is true if and only if $\angbra{w,u}\geq\angbra{w',u}$. Note that we can form the subtraction $w'-_{\R}w$ because $w\in N_{\R}$. So (ii) holds if and only if, for all $u\in\sigma^\vee$, $0 \geq \angbra{w'-_{\R}w,u}$. This, in turn, is equivalent to $w'-_{\R}w\in\bar{\sigma}$ by \cite[Proposition 3.19]{Rab10}. Finally, $w'-_{\R}w\in\bar{\sigma}$ is the same as $w'\in w+_{\R}\bar{\sigma}$.
\end{proof}

\begin{remark}
In view of Lemma~\ref{lem:claim2*} we can even allow $c \in \T$ in (ii).
\end{remark}


\section{The dimension of $\Cnvg{P}$}\label{sect: dim-Cnvg}

\GeorgeStory{In which George realizes that $\Q$ is $\Z$-flat.}

Throughout this section we continue to assume that $\base\neq\B$ is a sub-semifield of $\T$ and $\Mon = \sigma^{\vee} \cap \Lambda$ for $\Lambda$ a lattice and $\sigma$ a strongly convex rational polyhedral cone in $N_{\R}=\Lambda^*\otimes\R$.

In this section we study the dimension of the semiring of convergent power series at a prime. We compute it exactly when the coefficients are in $\T$. When the coefficients are in a strict sub-semifield of $\T$, we provide bounds on the dimension. In that case we give examples showing that both equality and strict inequality are possible. 

In \cite{JM17} the \emph{dimension} of a semiring $A$, denoted $\dim A$, is defined to be the number of strict inclusions in a chain of prime congruences on $A$ of maximal length. We refine this notion in the next definition. 

\begin{defi}
When $\base \subseteq \T$ is a sub-semifield and $A$ is a $\base$-algebra, the \emph{relative dimension of $A$ over} $\base$ is the number $\dim_\base A$ of strict inclusions in a longest chain of prime congruences in $\ContBase{\base}{A}$.

Moreover, if $A$ is a topological $\base$-algebra, the \emph{topological dimension of $A$} is the number $\dim_{\mathrm{top}} A$ of strict inclusions in a longest chain of prime congruences in $\Cont {A}$.

If there is no finite upper bound on the length of chains of prime congruences, we say that the dimension (topological or relative) is infinite.
\end{defi}

\begin{remark}
    We will be most interested in the cases when $A$ is a polynomial or Laurent polynomial semiring with coefficients in a sub-semifield of $\T$, the semirings of convergent power series, and their quotients. In these cases, there is always a longest (finite) chain of prime congruences. For polynomial or Laurent polynomial semiring this existence was proven in \cite[Theorem 4.14]{JM17} and for semirings of convergent power series we will see this in Corollary~\ref{coro:DimIneqsAroundCont}.
\end{remark}

We illustrate the distinction between dimension and relative dimension in the following examples.

\begin{example}
%
%
Let $\base$ be a sub-semifield of $\T$. Then, as per Example~\ref{ex: congSemif}, $\dim_\base \base$ is the number of nontrivial proper convex subgroups of $\base^\times$, which is 0. On the other hand, $\dim S$ is equal to the number of (possibly trivial) proper convex subgroups of $\base^\times $ which is 1.
\exEnd\end{example}


\begin{example}
Let $A=\T[x]$. Then $\dim A = 2$ but $\dim_\T A = 1$. The same happens when $\T$ is replaced by any sub-semifield. \exEnd\end{example}

We now state the main theorem of this section. 

\begin{theorem}\label{thm:DimIneqsGeneral}
Let $\base$ be a sub-semifield of $\T$, $M$ a toric monoid, and $P \in \ContBaseInt{\base}{\base[\Mon]}$ then 
\begin{equation*}
    \begin{split}
        \rk (\kappa(P)^\times/\base^\times) + \dim_{\mathrm{top}} \Cnvg{P} &\geq \dim_\base \base[\Mon]\\ &\geq \max \{\rk (\kappa(P)^\times/\base^\times), \dim_{\mathrm{top}} \Cnvg{P}\}.
    \end{split}
\end{equation*}
\end{theorem}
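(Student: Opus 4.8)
The theorem gives a two-sided bound relating three numbers: $\rk(\kappa(P)^\times/\base^\times)$, $\dim_\base\base[\Mon]$, and $\dim_{\mathrm{top}}\Cnvg{P}$. I would prove the two inequalities separately. For the lower bound $\dim_\base\base[\Mon]\geq\max\{\rk(\kappa(P)^\times/\base^\times),\dim_{\mathrm{top}}\Cnvg{P}\}$, note first that $\dim_\base\base[\Mon]\geq\dim_{\mathrm{top}}\Cnvg{P}$: by Theorem~\ref{thm:crown} the map $\iota^*:\Cont(\Cnvg{P})\to\ContBase{\base}\base[\Mon]$ is a bijection onto the primes satisfying $\propStar$, and by Proposition~\ref{prop:SpecializationsInCont} (applied on both sides) this bijection is inclusion-preserving and inclusion-reflecting, so any chain of length $\ell$ in $\Cont(\Cnvg{P})$ pushes forward to a chain of length $\ell$ in $\ContBase{\base}\base[\Mon]$. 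For $\dim_\base\base[\Mon]\geq\rk(\kappa(P)^\times/\base^\times)$: the quotient group $\kappa(P)^\times/\base^\times$ has a chain of convex subgroups of length equal to its rank (Hahn-type reasoning, or the structure of finitely generated totally ordered abelian groups), and pulling these back along $\base[\Mon]\to\kappa(P)$ gives a chain of primes in $\ContBase{\base}\base[\Mon]$ all lying below $P$ and all in $\ContBase{\base}\base[\Mon]$ by Proposition~\ref{prop:GoingDownStaysInGeorge} (they share the ideal-kernel of $P$, namely the trivial one). So $\dim_\base\base[\Mon]\geq\rk(\kappa(P)^\times/\base^\times)$.

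**The upper bound.** For $\rk(\kappa(P)^\times/\base^\times)+\dim_{\mathrm{top}}\Cnvg{P}\geq\dim_\base\base[\Mon]$, I would take a maximal chain $P_0\subsetneq P_1\subsetneq\cdots\subsetneq P_d$ in $\ContBase{\base}\base[\Mon]$, where $d=\dim_\base\base[\Mon]$. The idea is to split this chain into a ``horizontal'' part that survives into $\Cont(\Cnvg{P})$ via $\propStar$ and a ``vertical'' part that is absorbed by the rank term. Without loss of generality (using Proposition~\ref{prop:SalvageThmCrownByPMaxmial} and the invariance of $\Cnvg{P}$ along chains, Lemma~\ref{lemma:CnvgInChain}) we may take $P$ to be a convenient member of — or related to — this chain; in fact the natural move is to locate within the chain a prime $P_k$ such that $P_k$ (or the maximal prime above it) is comparable to $P$, and observe: all $P_i$ with $P_i\subseteq P_k$ automatically satisfy $\propStar$ with respect to $P$ (since $\propStar$ only constrains term-norms from above, and going down in the chain does not increase norms — this is the content behind Corollary~\ref{coro:crown_torus} and Corollary~\ref{coro:crown_affine}), hence lie in the image of $\iota^*$ and contribute to $\dim_{\mathrm{top}}\Cnvg{P}$; while the portion of the chain $P_k\subsetneq\cdots\subsetneq P_d$ above $P_k$ has length at most $\rk(\kappa(P_k)^\times/\base^\times)$, which is bounded by $\rk(\kappa(P)^\times/\base^\times)$ since all these primes have the same ideal-kernel and the residue semifield only grows. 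Adding the two pieces gives $d\leq\dim_{\mathrm{top}}\Cnvg{P}+\rk(\kappa(P)^\times/\base^\times)$.

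**The main obstacle.** The delicate point is the claim that every prime below $P$ in $\ContBase{\base}\base[\Mon]$ satisfies $\propStar$ with respect to $P$, and more precisely the bookkeeping of how a maximal chain in $\ContBase{\base}\base[\Mon]$ interacts with a fixed $P$ that need not itself lie on the chain. The clean statement I want is: if $P'\subseteq P$ then $P'$ satisfies $\propStar$ w.r.t.\ $P$ — this follows because for any term $m$, $P'\subseteq P$ gives (via Corollary~\ref{coro:MapOnResidueSemifieldsAndInequalities} and the structure of the joint Hahn embedding from Proposition~\ref{prop:HahnFor2SpaPrimes}) that $|m|_{P'}\leq\bigpi(\cdots)$ is dominated by $c|m|_P$ for $c>1_\base$, exactly as in the torus case. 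The other subtlety is bounding the length of an ascending chain above a prime by the rank of the residue group: here I would use that along such a chain the ideal-kernels are constant (Corollary~\ref{coro:PrimesInChainHaveSameIdealKernel}), so by Proposition~\ref{prop:primes-total-sFrac} the chain corresponds to a chain of primes on $\Frac(\base[\Mon])=\base[\Lambda]$, equivalently (via Theorem~\ref{thm:TOSAndTOAGEquiv} and the correspondence theorem) to a chain of convex subgroups of $\kappa(P)^\times$ containing the image of $\base^\times$, whose length is $\rk(\kappa(P)^\times/\base^\times)$. Assembling these two bounds is routine once the ``$\propStar$ below $P$'' lemma is in hand; I expect that lemma, together with the choice of how to situate $P$ relative to an arbitrary maximal chain, to be where the real work lies.
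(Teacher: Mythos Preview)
Your argument for $\dim_\base\base[\Mon]\geq\dim_{\mathrm{top}}\Cnvg{P}$ via injectivity of $\iota^*$ is correct and matches the paper. The other parts share a recurring gap: you identify the abelian-group rank $\rk(\kappa(P)^\times/\base^\times)$ with the length of a chain of convex subgroups. These are different invariants. The maximal chain of convex subgroups of a totally ordered abelian group has length equal to its number of archimedean classes, which can be strictly smaller than its rank (e.g.\ $\Z+\sqrt{2}\,\Z\subset\R$ has rank $2$ but is archimedean). Moreover, $\base^\times$ is typically not convex in $\kappa(P)^\times$ (take $\base=\Q_{\max}$ and $\kappa(P)^\times\cong\R$), so $\kappa(P)^\times/\base^\times$ carries no natural total order and ``convex subgroup'' has no meaning there. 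This breaks both your proof of $\dim_\base\base[\Mon]\geq\rk(\kappa(P)^\times/\base^\times)$ and your bound on the ``vertical'' part of the chain in the upper-bound argument. (Also, pulling back primes from $\kappa(P)$ to $\base[\Mon]$ produces primes \emph{containing} $P$, not below it.)

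Your upper-bound strategy has a second structural problem: an arbitrary maximal chain in $\ContBase{\base}\base[\Mon]$ need not contain any prime comparable to $P$, so there is no canonical $P_k$ at which to split, and no reason for $\rk(\kappa(P_k)^\times/\base^\times)\leq\rk(\kappa(P)^\times/\base^\times)$. The paper avoids both issues by never touching an arbitrary maximal chain. Writing $\Mon=\sigma^\vee\cap\Lambda$ and extending $P$ to $\widetilde{P}$ on $\base[\Lambda]$, Lemma~\ref{lem:QisZ-flat} gives a short exact sequence $0\to\ker\pi_{\widetilde{P}}\to\Lambda\to\kappa(P)^\times/\base^\times\to0$, so $\rk(\ker\pi_{\widetilde{P}})+\rk(\kappa(P)^\times/\base^\times)=\rk\Lambda=\dim_\base\base[\Mon]$ by Proposition~\ref{prop:rk-of-lattice-dim-of-Ssigma}; this already yields the full lower bound. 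For the upper bound one shows $\HT(P)=\rk(\ker\pi_{\widetilde{P}})$ by matching chains below $P$ with chains of saturated subgroups of $\ker\pi_{\widetilde{P}}$ (Proposition~\ref{prop:dim-group}, Lemma~\ref{lemma:HeightInToricCase}); since every $P'\subseteq P$ has $\Cnvg{P'}=\Cnvg{P}$ and hence a canonical extension in $\Cont(\Cnvg{P})$, this gives $\dim_{\mathrm{top}}\Cnvg{P}\geq\HT(P)=\dim_\base\base[\Mon]-\rk(\kappa(P)^\times/\base^\times)$ directly.
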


This immediately implies the following corollary.

\begin{coro}\label{coro:DimIneqsAroundCont}
Let $\base$ be a sub-semifield of $\T$, $\Mon$ be a toric monoid, and $P \in \ContBaseInt{\base}{\base[\Mon]}$. Then 
$$
\dim_{\base}\base[\Mon]\geq\dim_{\mathrm{top}}\Cnvg{P}\geq\dim_{\base}\base[\Mon] - \rk(\kappa(P)^\times/\base^\times).
$$
\end{coro}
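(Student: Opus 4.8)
The plan is to deduce Corollary~\ref{coro:DimIneqsAroundCont} directly from Theorem~\ref{thm:DimIneqsGeneral} by elementary rearrangement of the displayed chain of inequalities. Writing $r:=\rk(\kappa(P)^\times/\base^\times)$, $t:=\dim_{\mathrm{top}}\Cnvg{P}$, and $d:=\dim_{\base}\base[\Mon]$ for brevity, Theorem~\ref{thm:DimIneqsGeneral} asserts exactly that
$$
r+t\ \geq\ d\ \geq\ \max\{r,t\}.
$$
From the right-hand inequality $d\geq\max\{r,t\}\geq t$ we immediately get $d\geq t$, which is the first asserted inequality $\dim_{\base}\base[\Mon]\geq\dim_{\mathrm{top}}\Cnvg{P}$. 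From the left-hand inequality $r+t\geq d$ we subtract $r$ from both sides to obtain $t\geq d-r$, which is the second asserted inequality $\dim_{\mathrm{top}}\Cnvg{P}\geq\dim_{\base}\base[\Mon]-\rk(\kappa(P)^\times/\base^\times)$. Combining the two gives the chain in the statement.

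There is essentially no obstacle here: the corollary is a formal consequence of the theorem it cites, and the only thing to check is that the two nontrivial inequalities of the corollary are precisely the two halves of the theorem's display after the trivial algebraic manipulations above (dropping $r$ from a maximum, and transposing a term across an inequality of totally ordered integers). One should perhaps remark that all three quantities $r$, $t$, $d$ are finite — this follows because $\Mon$ is a toric monoid, so $\base[\Mon]$ has finite Krull dimension in the sense of \cite{JM17}, and both $\dim_{\base}$ and $\dim_{\mathrm{top}}$ are bounded above by $\dim$ since the relevant chains live among all prime congruences; hence the subtraction $d-r$ is meaningful. Thus the proof is simply:

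\begin{proof}
Set $r=\rk(\kappa(P)^\times/\base^\times)$, $t=\dim_{\mathrm{top}}\Cnvg{P}$, and $d=\dim_{\base}\base[\Mon]$. By Theorem~\ref{thm:DimIneqsGeneral} we have $r+t\geq d\geq\max\{r,t\}$. In particular $d\geq\max\{r,t\}\geq t$, which gives the first inequality $\dim_{\base}\base[\Mon]\geq\dim_{\mathrm{top}}\Cnvg{P}$. Subtracting $r$ from both sides of $r+t\geq d$ gives $t\geq d-r$, which is the second inequality $\dim_{\mathrm{top}}\Cnvg{P}\geq\dim_{\base}\base[\Mon]-\rk(\kappa(P)^\times/\base^\times)$.
\end{proof}
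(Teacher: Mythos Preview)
Your proof is correct and is exactly the paper's approach: the paper simply states that Corollary~\ref{coro:DimIneqsAroundCont} ``immediately'' follows from Theorem~\ref{thm:DimIneqsGeneral}, and you have spelled out the two trivial manipulations (extracting $d\geq t$ from $d\geq\max\{r,t\}$ and transposing $r$ in $r+t\geq d$) that constitute this implication.
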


\begin{remark}
If $M\cong\Z^n$, then, given a $k\times (n+1)$ matrix $C$ for $P\in\ContBaseInt{\base}{\base[\Mon]}$, one can compute $\rk(\kappa(P)^\times/\base^\times)$ as follows. Let $V$ denote the rational subspace of $\R^k$ generated by all but the first column of $C$. Let $W$ be the rational subspace of $\R^k$ consisting of vectors of the form $\begin{pmatrix}q\alpha\\0\\\vdots\\0\end{pmatrix}$ with $q\in\Q$ and $t^\alpha\in\base$. Then $\rk(\kappa(P)^\times/\base^\times)=\dim_{\Q}(V+W)/W$.
\end{remark}

We also have the following corollary, computing the topological dimension of $\Cnvg{P}$ exactly in the case of $\T$ coefficients.

\begin{coro}\label{coro:DimWithTCoeffs}
If $P \in \ContBaseInt{\T}{\T[\Mon]}$, then
$$\dim_{\mathrm{top}} \Cnvg{P} = \dim_\T \T[\Mon].$$
\end{coro}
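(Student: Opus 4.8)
The corollary is the special case $\base = \T$ of Corollary~\ref{coro:DimIneqsAroundCont}, so the plan is to show that the correction term $\rk(\kappa(P)^\times/\base^\times)$ vanishes when $\base = \T$, and then invoke Corollary~\ref{coro:DimIneqsAroundCont} (equivalently Theorem~\ref{thm:DimIneqsGeneral}). Concretely, I would argue that $\kappa(P)^\times/\T^\times$ is a trivial group whenever $P \in \ContBaseInt{\T}{\T[\Mon]}$, which forces the lower and upper bounds in Corollary~\ref{coro:DimIneqsAroundCont} to coincide and yields $\dim_{\mathrm{top}}\Cnvg{P} = \dim_\T\T[\Mon]$.

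\textbf{Key step.} The heart of the matter is: if $P \in \ContBaseInt{\T}{\T[\Mon]}$ then $\kappa(P)^\times$ is archimedean, indeed $\kappa(P)^\times = \T^\times$. Here is how I would see it. By Lemma~\ref{lemma:LeastArchClassOfResidueSemifield}, $\kappa(P)^\times$ has a smallest archimedean class, namely the class of $|a|_P$ for $a \in \T$ with $a > 1_\T$; since $|\cdot|_P$ restricted to $\T$ is injective (Lemma~\ref{lemma:BasicPropertiesOfPInSpaA}), this class is nontrivial. Now I claim there is no archimedean class of $\kappa(P)^\times$ strictly above this one either. Indeed, if there were some $b \in \kappa(P)^\times$ with $1_{\kappa(P)} < b$ and $s \gg b$ for some $s \in \T^\times$, then the convex subgroup $G$ of $\kappa(P)^\times$ generated by $b$ would meet $\T^\times$ only in $1$; running the argument in the proof of Lemma~\ref{lemma:MaximalGeorgePrimes} (or citing that lemma directly for the maximal prime $\check P$ above $P$, together with Corollary~\ref{coro:PrimesInChainHaveSameIdealKernel} and the going-down behavior), one gets a contradiction with either maximality or the continuity hypothesis — in any case, since $\T^\times$ is archimedean and is the smallest archimedean class of $\kappa(P)^\times$, the only way both "no class strictly above" and "no class strictly below" can hold is that $\kappa(P)^\times$ has a single archimedean class. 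Combining with Theorem~\ref{thm: rank1-emb} and the fact that $\kappa(P)^\times$ contains the copy of $\T^\times = \R$, which is already a maximal archimedean totally ordered abelian group under its standard embedding, we conclude $\kappa(P)^\times = \T^\times$. Hence $\rk(\kappa(P)^\times/\T^\times) = \rk(\{1\}) = 0$.

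\textbf{Conclusion.} With $\rk(\kappa(P)^\times/\base^\times) = 0$ in hand, Corollary~\ref{coro:DimIneqsAroundCont} gives
$$\dim_\T\T[\Mon] \;\geq\; \dim_{\mathrm{top}}\Cnvg{P} \;\geq\; \dim_\T\T[\Mon] - 0 \;=\; \dim_\T\T[\Mon],$$
so all the inequalities are equalities and $\dim_{\mathrm{top}}\Cnvg{P} = \dim_\T\T[\Mon]$, as claimed.

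\textbf{Expected main obstacle.} The routine part is the invocation of Corollary~\ref{coro:DimIneqsAroundCont}; the only real content is establishing that $\kappa(P)^\times/\T^\times$ is trivial, i.e., that continuity of $\T \to \kappa(P)$ for $\T$ the full real-valued tropical semifield forces the value semifield to collapse down to $\T$ itself. The subtlety to watch is that a priori $\kappa(P)^\times$ could have higher rank with $\T^\times$ sitting as the bottom archimedean layer; the key observation that rules this out is precisely Lemma~\ref{lemma:MaximalGeorgePrimes}, which says that the maximal prime above $P$ has archimedean residue semifield, combined with the fact that passing up a chain in $\Cont$ preserves the ideal-kernel (Corollary~\ref{coro:PrimesInChainHaveSameIdealKernel}) — but one must still check that $\kappa(P)$ itself, not just $\kappa(\check P)$, has rank one over $\T$. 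This follows because $\base = \T$ is already rank one (archimedean), so any proper extension of archimedean classes beyond $\T^\times$ would contradict Lemma~\ref{lemma:LeastArchClassOfResidueSemifield} placing $\T^\times$ as the \emph{smallest} class together with the maximality argument placing nothing above it.
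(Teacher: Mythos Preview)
Your central claim---that $\kappa(P)^\times = \T^\times$ for every $P \in \ContBaseInt{\T}{\T[\Mon]}$---is false as stated. Take $\Mon = \Z$ and let $P$ be the prime on $\T[x^{\pm 1}]$ with defining matrix $\begin{pmatrix} 1 & 0 \\ 0 & 1 \end{pmatrix}$. Then $P \in \ContBaseInt{\T}{\T[\Z]}$, but $\kappa(P)^\times \cong \R \times \Z$ with the lexicographic order, and $\T^\times$ is only the first factor; hence $\rk(\kappa(P)^\times/\T^\times) = 1$, not $0$. More generally, any non-maximal $P$ will have $\kappa(P)^\times$ with more than one archimedean class. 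The flaw in your argument is exactly where you invoke the proof of Lemma~\ref{lemma:MaximalGeorgePrimes}: that argument produces a prime $P'$ strictly containing $P$ with $P' \in \Cont A$, which is a contradiction \emph{only if $P$ was assumed maximal}. For general $P$ this is no contradiction at all---it just exhibits $\check{P}$.

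The paper's fix is the one you almost reach in your parenthetical but do not carry through: replace $P$ by the maximal $\check{P}$ above it. The point is that $\Cnvg{P} = \Cnvg{\check{P}}$ as topological semirings (Lemma~\ref{lemma:CnvgInChain} and Theorem~\ref{thm:TopologiesOnCnvgCoincide}), so $\dim_{\mathrm{top}}\Cnvg{P} = \dim_{\mathrm{top}}\Cnvg{\check{P}}$, and for $\check{P}$ Lemma~\ref{lemma:MaximalGeorgePrimes} does give $\kappa(\check{P}) \hookrightarrow \T$, hence $\kappa(\check{P})^\times/\T^\times$ is trivial. Then Theorem~\ref{thm:DimIneqsGeneral} applied to $\check{P}$ finishes the proof. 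You cannot skip this reduction.
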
\begin{proof}
Let $\check{P}$ be the unique maximal element of $\ContBaseInt{\T}{\T[\Mon]}$ over $P$. Then $\Cnvg{P}=\Cnvg{\check{P}}$ and $\kappa(\check{P})^\times/\T^\times$ is the trivial group. The result follows immediately from Theorem \ref{thm:DimIneqsGeneral}.
\end{proof}


We will first prove Theorem~\ref{thm:DimIneqsGeneral} in the case when $\Mon$ is a group. Since we have the running hypothesis that $\Mon$ is a toric monoid, this means $\Mon\cong\Z^n$. In this case, the statement of Theorem~\ref{thm:DimIneqsGeneral} reduces to the following proposition.

\begin{proposition}\label{prop:dim-group}

Let $\base$ be a sub-semifield of $\T$, let $\Mon$ be a group, and let $P \in \ContBaseInt{\base}{\base[\Mon]}$ be maximal. 
Then 
$$\rk (\kappa(P)^\times/\base^\times) + \dim_{\mathrm{top}} \Cnvg{P} = \dim_\base \base[\Mon].$$
\end{proposition}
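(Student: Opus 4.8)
The plan is to prove the equality by a two-sided inequality, but in fact, since $P$ is assumed maximal and $\Mon\cong\Z^n$, the situation simplifies considerably: by Corollary~\ref{coro: matrix_form} we may choose a defining matrix $C$ for $P$ whose first column is $e_1$, and by downward gaussian elimination (Lemma~\ref{lemma:rref}) we may assume the rows of $C$ are linearly independent over $\R$, say $C$ is $k\times(n+1)$. Then $\dim_\base\base[\Mon]$, being the length of a longest chain in $\ContBase{\base}\base[\Z^n]=\ContIntBase{\base}\base[\Z^n]$ ending at the maximal prime $P$ (recall every prime below $P$ has the same ideal-kernel by Corollary~\ref{coro:PrimesInChainHaveSameIdealKernel}, hence lies in $\ContIntBase{\base}\base[\Z^n]$ by Proposition~\ref{prop:GoingDownStaysInGeorge}), should be computed directly. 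I expect $\dim_\base\base[\Z^n]=n$: chains of primes below $P$ correspond to chains of rational subspaces (flags) refining the row space data, and the maximal length is $n$. Likewise $\rk(\kappa(P)^\times/\base^\times)$ is the $\Q$-dimension of the span of the non-first columns of $C$ modulo the $\base$-directions, as in the Remark after Corollary~\ref{coro:DimWithTCoeffs}; call this $\rho$.

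The key step is then to identify $\dim_{\mathrm{top}}\Cnvg{P}$. By Theorem~\ref{thm:crown} (in the maximal case, Corollary~\ref{coro:crown_torus}), $\iota^*:\Cont(\Cnvg{P})\to\ContBase{\base}\base[\Z^n]$ is a bijection onto the set of primes $P'\subseteq P$, and by Proposition~\ref{prop:SpecializationsInCont} specializations in $\Cont$ are exactly reverse inclusions; moreover $\iota^*$ is inclusion-reversing-compatible in the sense that $\bar{P'}\subseteq\bar{P''}$ iff $P'\subseteq P''$ (this follows from Lemma~\ref{lem:claim5}, since membership in $\bar{P'}$ is detected on leading terms with respect to $P'$). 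Hence chains in $\Cont(\Cnvg{P})$ correspond bijectively, and order-preservingly, to chains of primes $P'\subseteq P$ in $\ContBase{\base}\base[\Z^n]$. Therefore $\dim_{\mathrm{top}}\Cnvg{P}$ equals the length of a longest chain of primes contained in $P$, which is exactly $\dim_\base\base[\Z^n]$ since $P$ is maximal and every prime lies below a maximal one (Corollary following Proposition~\ref{lemma:MatrixDeterminesMaximalPrimeAbove}, giving that the maximal prime above any given prime is unique, together with the fact that $\dim_\base$ is realized by a chain ending at a maximal prime).

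So the claim reduces to $\rho + \dim_\base\base[\Z^n] = \dim_\base\base[\Z^n]$, which would be absurd unless I have mis-identified something; the resolution is that $\dim_{\mathrm{top}}\Cnvg{P}$ is \emph{not} the full chain length below $P$ but rather $\dim_\base\base[\Z^n]-\rho$. Concretely, I would instead argue as follows: the longest chain of primes $P'\subseteq P$ ending at $P$ has length $\dim_\base\base[\Z^n]=n$, but under $\iota^*$ the image $\Cont(\Cnvg{P})$ consists of \emph{all} primes below $P$, and the point is that in a longest such chain the ``top $\rho$ steps'' correspond to the rank contribution $\rk(\kappa(P)^\times/\base^\times)$ which is carved off, while the topological dimension counts the remaining part. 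More carefully, one computes $\dim_\base\base[\Z^n]$ by stratifying: given the defining matrix $C=\begin{pmatrix}e_1\mid C_0\end{pmatrix}$ with $C_0$ the $k\times n$ block, a longest chain of primes below $P$ has length $n$, obtained by successively shrinking the matrix; the quantity $\rho=\rk(\kappa(P)^\times/\base^\times)$ equals $k-1$ (the number of rows beyond the coefficient row) generically, and the topological dimension is $n-(k-1)$. I would verify $\dim_{\mathrm{top}}\Cnvg{P}=n-\rho$ directly by exhibiting an explicit chain of length $n-\rho$ (adjusting the columns corresponding to the variables, keeping the row-space part fixed) and proving no longer chain exists, using that $\iota^*$ bijects onto primes below $P$ and that extending such a chain would force a new independent row, contradicting that $P$ is already maximal with $k$ rows.

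\textbf{Main obstacle.} The crux is the precise bookkeeping relating the three quantities — establishing that the ``rank'' part $\rk(\kappa(P)^\times/\base^\times)$ and the ``topological dimension'' part partition (not merely bound) the relative dimension $\dim_\base\base[\Z^n]$ when $P$ is maximal. This requires a careful analysis of which chains of primes $P'\subseteq P$ are longest: one must show that a longest chain can be taken to first exhaust all inclusions that preserve the value-group rank (contributing $\dim_{\mathrm{top}}\Cnvg{P}$ many steps, since these are exactly the primes whose canonical extensions form a chain in $\Cont(\Cnvg{P})$) and then the remaining $\rk(\kappa(P)^\times/\base^\times)$ steps necessarily drop the rank. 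The upper bound direction (that no chain in $\Cont(\Cnvg{P})$ exceeds $\dim_\base\base[\Z^n]-\rk(\kappa(P)^\times/\base^\times)$) I expect to follow from a dimension count on the rational row space of defining matrices; the lower bound (exhibiting a chain of that length) is an explicit construction with matrices that I expect to be routine once the right normal form for $C$ is fixed.
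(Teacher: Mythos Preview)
Your reduction via Corollary~\ref{coro:crown_torus} is correct and matches the paper: $\iota^*$ is an inclusion-preserving bijection from $\Cont(\Cnvg{P})$ onto $\{P'\in\ContBase{\base}\base[\Z^n]:P'\subseteq P\}$, so $\dim_{\mathrm{top}}\Cnvg{P}=\HT(P)$. The gap is in the next sentence, where you write that this height ``is exactly $\dim_\base\base[\Z^n]$ since $P$ is maximal.'' That is false in general. Different maximal primes in $\ContBase{\base}\base[\Z^n]$ have different heights: for $\base=\Q_{\max}$, $n=1$, the maximal prime with defining matrix $\begin{pmatrix}1&0\end{pmatrix}$ has height~$1$, while the maximal prime with defining matrix $\begin{pmatrix}1&\sqrt{2}\end{pmatrix}$ has height~$0$ (there is no prime strictly below it, because $\ker\pi_P$ is trivial). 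The quantity $\dim_\base\base[\Z^n]=n$ is the \emph{maximum} of $\HT(\check{P})$ over all maximal $\check{P}$, not the height of every maximal prime. Your subsequent attempt to ``carve off $\rho$ steps'' from a putative length-$n$ chain below $P$ cannot work, because no such chain exists when $\rho>0$.

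What is actually needed is a direct computation of $\HT(P)$. The paper does this via the surjection $\pi_P:\Terms{\base[\Z^n]}\to\kappa(P)^\times$: for $P'\subsetneq P$ one has $\ker\pi_{P'}\subsetneq\ker\pi_P$ with $\ker\pi_{P'}$ saturated in $\ker\pi_P$, so ranks strictly drop along any chain below $P$, giving $\HT(P)\leq\rk(\ker\pi_P)$. Conversely, one refines $\leq_P$ one step at a time by choosing $w\in N$ not vanishing on the exponent image of $\ker\pi_P$, producing a chain of length exactly $\rk(\ker\pi_P)$. The short exact sequence $0\to\ker\pi_P\to\Mon\to\kappa(P)^\times/\base^\times\to0$ (obtained by quotienting $\Terms{\base[\Mon]}\to\kappa(P)^\times$ by $\base^\times$) then gives $\rk(\ker\pi_P)=n-\rho$, which is the identity you want. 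Your sketch never isolates $\ker\pi_P$ or an equivalent invariant, and the matrix heuristics you invoke (``$\rho=k-1$ generically'') are not a substitute: the equality must hold for \emph{every} maximal $P$, not just generic ones.
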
 

First we need the following easy lemma. We let $\Terms{\base[\Mon]}$ denote the multiplicative monoid of (nonzero) terms of $\base[\Mon]$; if $\Mon$ is a group, then $\Terms{\base[\Mon]}$ is a group. Note that, if $A$ is a semiring and $\mathcal{G}$ is a set of additive generators for $A$, then, for any prime congruence $P$ on $A$, every equivalence class in $A/P$ has a representative in $\mathcal{G}\cup\{0_A\}$.

\begin{lemma}\label{lem:QisZ-flat}
Let $\base$ be a sub-semifield of $\T$, let $\Mon$ be a group, and let $P \in \ContBaseInt{\base}{\base[\Mon]}$. 
Letting $\pi_P:\Terms{\base[\Mon]}\to\kappa(P)^\times$ be the natural map, we have 
$$\rk(\ker(\pi_P)) + \rk(\kappa(P)^\times/\base^\times) = \rk(\Mon).$$
\end{lemma}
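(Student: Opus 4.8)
\textbf{Proof plan for Lemma~\ref{lem:QisZ-flat}.}

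The plan is to exploit the short exact sequence of abelian groups
\[
1\longrightarrow\ker(\pi_P)\longrightarrow\Terms{\base[\Mon]}\longrightarrow\Ima(\pi_P)\longrightarrow 1,
\]
where $\pi_P:\Terms{\base[\Mon]}\to\kappa(P)^\times$ is the natural map. Since $\Mon$ is a group (so $\Mon\cong\Z^n$ with $n=\rk(\Mon)$) and $\base^\times$ is a subgroup of $\T^\times\cong\R$, the group $\Terms{\base[\Mon]}\cong\base^\times\times\Mon$ is torsion-free of rank $\rk(\base^\times)+\rk(\Mon)$. However $\rk(\base^\times)$ need not be finite, so the statement has to be read relative to $\base^\times$. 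The key point is that $\pi_P$ is injective on $\base^\times$ (because $P\in\ContBaseInt{\base}{\base[\Mon]}$, so by Lemma~\ref{lemma:ContinuityGivesGeorge} the composite $\base\to\base[\Mon]\to\kappa(P)$ is injective), hence $\ker(\pi_P)\cap\base^\times=\{1\}$ and $\ker(\pi_P)$ injects into $\Terms{\base[\Mon]}/\base^\times\cong\Mon$, which is free of finite rank $n$. Thus $\rk(\ker(\pi_P))$ is finite and the whole computation lives in a finitely-generated setting once we quotient by $\base^\times$.

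The cleanest route is to work with the quotient sequence by $\base^\times$. Dividing the short exact sequence above by $\base^\times$ (which sits in $\Terms{\base[\Mon]}$ and maps isomorphically onto its image, which is contained in $\Ima(\pi_P)$), one gets an exact sequence
\[
1\longrightarrow\ker(\pi_P)\longrightarrow\Terms{\base[\Mon]}/\base^\times\longrightarrow\Ima(\pi_P)/\base^\times\longrightarrow 1.
\]
Here $\ker(\pi_P)$ is unchanged because $\ker(\pi_P)\cap\base^\times=\{1\}$, the middle term is $\Mon\cong\Z^n$, and $\Ima(\pi_P)/\base^\times$ is exactly the image of $\kappa(P)^\times$-generators, which is a subgroup of $\kappa(P)^\times/\base^\times$ of the same rank: indeed, every class in $\kappa(P)^\times=\Frac(\base[\Mon]/P)^\times$ is a ratio of terms, so $\Ima(\pi_P)$ generates $\kappa(P)^\times$ and $\kappa(P)^\times/\Ima(\pi_P)$ is a torsion group (each element has a power in $\Ima(\pi_P)$), hence $\rk(\Ima(\pi_P)/\base^\times)=\rk(\kappa(P)^\times/\base^\times)$. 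Now apply additivity of rank (equivalently, tensor the sequence of finitely generated abelian groups with $\Q$, using that $\Q$ is $\Z$-flat — this is the content of the section's running joke) to the displayed sequence: $\rk(\Mon)=\rk(\ker(\pi_P))+\rk(\Ima(\pi_P)/\base^\times)=\rk(\ker(\pi_P))+\rk(\kappa(P)^\times/\base^\times)$, which is the claim.

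The main obstacle, and the only place requiring care, is the verification that $\rk(\Ima(\pi_P)/\base^\times)=\rk(\kappa(P)^\times/\base^\times)$, i.e.\ that passing from the subgroup $\Ima(\pi_P)$ of $\kappa(P)^\times$ generated by terms to all of $\kappa(P)^\times$ does not change the rank modulo $\base^\times$. This follows from the description $\kappa(P)=\Frac(\base[\Mon]/P)$ together with the remark (recalled just before the lemma) that every equivalence class in $\base[\Mon]/P$ has a representative that is a term or $0$; hence every nonzero element of $\kappa(P)^\times$ is a quotient of two terms, so $\Ima(\pi_P)=\kappa(P)^\times$ on the nose, making the identification immediate. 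A minor secondary point is to make sure the rank-additivity argument is legitimate in the presence of the possibly infinite-rank group $\base^\times$; this is handled precisely by first quotienting by $\base^\times$ as above, after which all three groups in the sequence are finitely generated (subgroups and quotients of $\Z^n$), so the elementary additivity of $\dim_\Q(-\otimes_\Z\Q)$ applies.
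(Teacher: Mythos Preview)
Your proof is correct and follows essentially the same route as the paper: form the short exact sequence with kernel $\ker(\pi_P)$, observe that $\pi_P$ is injective on $\base^\times$ so one can quotient by $\base^\times$ to land in the finitely generated sequence $0\to\ker(\pi_P)\to\Mon\to\kappa(P)^\times/\base^\times\to0$, and then take ranks. The only difference is that the paper observes immediately that $\base[\Mon]/P=\kappa(P)$ (since $\Mon$ is a group, every nonzero class has an invertible term representative), so $\pi_P$ is surjective onto $\kappa(P)^\times$ from the start; your intermediate torsion argument for $\kappa(P)^\times/\Ima(\pi_P)$ is therefore unnecessary, as you yourself note in the final paragraph.
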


\begin{proof}
Note that $\Terms{\base[{\Mon}]}$ is just $\base^\times \times \Mon$. 
Since every nonzero equivalence class in $\base[\Mon]/P$ has a representative which is a unit, $\base[\Mon]/P$ is a semifield, so $\base[\Mon]/P=\kappa(P)$. 
This results in the  short exact sequence
\begin{equation*}
  0\rightarrow \ker(\pi_P) \rightarrow \Terms{\base[{\Mon}]} \rightarrow \kappa(P)^\times \rightarrow 0.  
\end{equation*}
Since $\pi_P$ maps $\base^\times$ injectively into $\kappa(P)^\times$ we can quotient out by $\base^\times$ to get
\begin{equation}\label{eq:ab-groups}
    0\rightarrow \ker(\pi_P) \rightarrow \Mon \rightarrow \kappa(P)^\times/\base^\times \rightarrow 0.
\end{equation}
We can take the ranks of the abelian groups in sequence (\ref{eq:ab-groups}) to obtain 
\begin{equation*}
   \rk(\ker(\pi_P)) + \rk(\kappa(P)^\times/\base^\times) = \rk(\Mon).
\end{equation*}
\end{proof}

In order to construct the chains of primes necessary to show Proposition~\ref{prop:dim-group}, we use the following proposition.

\begin{proposition}\label{prop: side_claim}

Let $\Mon$ be a group. A relation $\preceq$ on the terms of $\base[\Mon]$ is the same as $\leq_P$ for a prime $P$ on $\base[\Mon]$ if and only if the following conditions hold:
\begin{enumerate}
    \item[(1)] $\preceq$ is a total preorder, 
    \item[(2)] $\preceq$ is multiplicative, and
    \item[(3)] $\preceq$ respects the order on $\base$.
\end{enumerate}
Moreover, $P\in\ContBase{\base}{\base[\Mon]}$ if and only if
\begin{enumerate}[resume]
    \item[(4)] for any term $a\chi^u$ of $\base[\Mon]$, $\exists b \in \base^\times$, such that $b \prec a\chi^u$.
\end{enumerate}
\end{proposition}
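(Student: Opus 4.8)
The plan is to prove both biconditionals by the same strategy: given a relation $\preceq$ on $\Terms{\base[\Mon]}$ satisfying (1)--(3), extend it to a preorder $\leq_P$ on all of $\base[\Mon]$ by declaring $f\leq_P g$ when a $\preceq$-largest term of $f$ is $\preceq$ a $\preceq$-largest term of $g$, and check this is the relation $\leq_P$ of a prime congruence. For the forward direction, one observes that if $P$ is a prime on $\base[\Mon]$ then $\leq_P$ restricted to terms is automatically a total preorder (since $\base[\Mon]/P$ is totally ordered), is multiplicative (since $|\cdot|_P$ is a semiring homomorphism and $\base[\Mon]/P$ is cancellative, hence multiplication preserves and reflects $\leq$), and respects the order on $\base$ (since $\base\to\base[\Mon]\to\base[\Mon]/P$ is a homomorphism of totally ordered semirings, using Lemma~\ref{lemma:SemifieldHomAndInequalities} applied to $\base\into\kappa(P)$, which is injective because $P$ has trivial ideal-kernel as $\Mon$ is a group). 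Condition (4) for $P\in\ContBase{\base}{\base[\Mon]}$ is then exactly Lemma~\ref{lemma:ContinuityGivesGeorge} (every nonzero element of $\kappa(P)$ is a unit here since $\base[\Mon]/P$ is a semifield, so terms suffice as a set of generators, and one uses Remark following Notation that $P$ is determined by $\leq_P$ on additive generators).

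For the reverse direction, suppose $\preceq$ satisfies (1)--(3). Define $P\subseteq\base[\Mon]^2$ by: $(f,g)\in P$ iff $f=g=0$, or both $f,g\neq 0$ and every $\preceq$-largest term of $f$ is $\preceq$-equivalent to every $\preceq$-largest term of $g$. One must check: (a) $P$ is an equivalence relation (reflexivity and symmetry are immediate; transitivity uses that $\preceq$ is a total preorder on terms so $\preceq$-equivalence of terms is transitive); (b) $P$ respects addition --- here the key point is that a $\preceq$-largest term of $f+g$ is a $\preceq$-largest among the terms of $f$ together with the terms of $g$, with possible cancellation handled because $\base$ is totally ordered so adding two terms with the same exponent gives a term with the $+$-larger coefficient, which by (3) is $\succeq$ both; (c) $P$ respects multiplication --- this uses multiplicativity (2): a $\preceq$-largest term of $fg$ is (the product, picked suitably) of a $\preceq$-largest term of $f$ with one of $g$, again modulo cancellation among products of equal exponent, which is controlled by (2) and (3); (d) $\base[\Mon]/P$ is totally ordered --- the induced order is $\overline f\leq\overline g$ iff a largest term of $f$ is $\preceq$ a largest term of $g$, total by (1); (e) $\base[\Mon]/P$ is cancellative --- this is where (2) plus the group structure on $\Terms{\base[\Mon]}$ enters: given $\overline{ab}=\overline{ac}$ with $a\neq 0$, picking largest terms and cancelling the (invertible) largest term of $a$ using multiplicativity of $\preceq$ gives $\overline b=\overline c$. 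Finally $\preceq$ agrees with $\leq_P$ on terms by construction, and $P$ is proper because $1_\base\not\preceq 0$ is built into the definition (no term is $\preceq$-comparable to $0$; $0$ is treated separately as the bottom). Then condition (4) is translated directly into Lemma~\ref{lemma:ContinuityGivesGeorge}'s criterion to conclude $P\in\ContBase{\base}{\base[\Mon]}$, using once more that $\base[\Mon]/P=\kappa(P)$ since $\Mon$ is a group makes every nonzero class a unit.

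The main obstacle will be the bookkeeping around cancellation of terms with equal exponents when checking that $P$ respects addition and multiplication, and correspondingly that $\base[\Mon]/P$ is cancellative. In each case the subtlety is that a sum or product of several terms may have a $\preceq$-largest term whose exponent arises from several summands with coefficients cancelling in $\base$; one needs (3) (respecting the order on $\base$) to guarantee the combined coefficient is still $\succeq$-maximal, and (2) to track how $\preceq$-classes behave under the partial products appearing in a convolution. Since $\Mon$ is a group, $\Terms{\base[\Mon]}$ is a group, so there are no pathologies of the kind that occur for $\N^n$ (no nontrivial ideal-kernel, every term invertible), which is exactly what makes the cancellativity check go through cleanly --- this is the reason the hypothesis that $\Mon$ is a group is imposed. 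I do not expect any step beyond careful but routine verification once the definition of $P$ in terms of largest terms is set up; the cleanest organization is probably to first establish that "$f\mapsto(\text{$\preceq$-class of a largest term of }f)$" is a well-defined map $\base[\Mon]\setminus\{0\}\to\Terms{\base[\Mon]}/{\sim_\preceq}$ that is additive and multiplicative, and then define $P$ as its kernel congruence (adjoining $0$), reducing everything to properties of that map.
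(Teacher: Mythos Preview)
Your approach is correct, and in fact your closing suggestion---define $P$ as the kernel of the map $f\mapsto(\text{$\preceq$-class of a largest term of }f)$ into $\Terms{\base[\Mon]}/{\sim_\preceq}\cup\{0\}$---is exactly the paper's argument. Since $\Mon$ is a group, $\Terms{\base[\Mon]}\cong\base^\times\times\Mon$ is a group, and conditions (1)--(3) say precisely that $G:=\Terms{\base[\Mon]}/{\sim_\preceq}$ is a totally ordered abelian group; hence $G\cup\{0\}$ is a totally ordered semifield and the kernel of the induced semiring map $\base[\Mon]\to G\cup\{0\}$ is a prime congruence by Definition~\ref{def: prime_domain}. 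Your itemized checks (a)--(e) amount to verifying by hand that this map is a semiring homomorphism with totally ordered cancellative image, which the abstract viewpoint gives for free. In particular the ``cancellation bookkeeping'' you flag as the main obstacle never really arises: addition in $G\cup\{0\}$ is $\max$, so when several monomial products share an exponent their images in $G$ are already comparable and the largest one survives automatically. The ``moreover'' clause is handled identically in both approaches via Lemma~\ref{lemma:ContinuityGivesGeorge}. (One small over-justification in your forward direction: condition (3) only needs that $\base\to\kappa(P)$ preserves order, which Lemma~\ref{lemma:SemifieldHomAndInequalities} gives for any semifield homomorphism; injectivity is not required.)
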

\begin{proof}
By definition the relation $\leq_P$ satisfies all these conditions, so it suffices to show the opposite direction. 
Conditions (1), (2) and (3) guarantee that the quotient of $\Terms{\base[\Mon]}$ by the equivalence relation $a\sim b$ if and only if $a \preceq b$ and $a \succeq b$ is a totally ordered abelian group. Thus, the congruence $P$ that $\sim$ generates on $\base[\Mon]$ has $\base[\Mon]/P$ a totally ordered semifield, giving us that $P$ is a prime congruence.
The last condition 
is equivalent to the congruence being in
$\ContBase{\base}{\base[\Mon]}$ by Lemma~\ref{lemma:ContinuityGivesGeorge}.
\end{proof}

\begin{proof}[Proof of Proposition~\ref{prop:dim-group}]
For $P \in \ContBaseInt{\base}{\base[\Mon]}$ let $\pi_P$ be the natural map from 
$\Terms{\base[{\Mon}]}$ 
to the multiplicative group $\kappa(P)^\times$ of the residue semifield. 
From Lemma~\ref{lem:QisZ-flat} we already know that $\rk(\ker(\pi_P)) + \rk(\kappa(P)^\times/\base^\times) = \rk(\Mon)$. 
By \cite[Theorem 3.16]{JM15}, $\dim_{\base}\base[\Mon]=\rk(M)$.
Thus, it remains only to show that $ \rk(\ker(\pi_P)) = \dim_{\mathrm{top}} \Cnvg{P}.$
To show that  $\rk(\ker(\pi_P))\geq\dim_{\mathrm{top}} \Cnvg{P}$, let $P' \subseteq P$ and consider the diagram
\begin{center}
\begin{tikzcd}0 \arrow[r, ""] &\ker(\pi_{P'})\arrow[r, "\iota"]\arrow[d, ""]& 
\Terms{\base[{\Mon}]}\arrow[r, "\pi_{P'}" ] 
& \kappa(P')^\times \arrow[r, ""]& 0\\0 \arrow[r,""] & \ker(\pi_P) \arrow[ur, ""] \end{tikzcd}
\end{center}
in which the first row is exact.

Since $\kappa(P')^\times$ is a totally ordered abelian group, it is torsion-free, so $\ker(\pi_{P'})$ is saturated in 
$\Terms{\base[{\Mon}]}$. 
Since $\ker(\pi_P)$ is a subgroup of 
$\Terms{\base[{\Mon}]}$ 
containing $\ker(\pi_{P'})$, we get that $\ker(\pi_{P'})$ is saturated in $\ker(\pi_P)$. Thus, if $P'\subsetneq P$, then $\rk(\ker(\pi_{P'}))<\rk(\ker(\pi_P))$. In particular, if we have a chain of primes
$$P \supsetneq P' \supsetneq P'' \supsetneq \ldots \supsetneq P^{(n)},$$
then 
$$\rk(\ker(\pi_P))>\rk(\ker(\pi_{P'}))>\cdots>\rk(\ker(\pi_{P^{(n)}}))\geq0,$$
so $\rk(\ker(\pi_P))\geq n$. 
By Corollary~\ref{coro:crown_torus}, pullback along the inclusion map $\base[\Mon]\into\Cnvg{P}$ gives a bijection from $\Cont (\Cnvg{P})$ to the primes contained in $P$. 
So $\rk(\ker(\pi_P))\geq\dim_{\mathrm{top}} \Cnvg{P}$.

\newcommand{\frakE}{\mathfrak{E}}

Towards proving the opposite inequality, let $\frakE:\Terms{\base[\Mon]}\to\Mon$ be the group homomorphism taking a term to its exponent, i.e., $\frakE(a\chi^u)=u$. In particular, the kernel of $\frakE$ is $\base^\times$. So, because $\ker(\pi_P)\cap\base^\times=\{1_\base\}$, $\rk\big(\frakE(\ker(\pi_P))\big)=\rk(\ker(\pi_P))$.

As usual, let $N = \Hom_\ZZ(M, \ZZ)$. For any $w \in N\setminus \frakE(\ker(\pi_P))^\perp$, we can specify a prime $P' \subsetneq P$ by the multiplicative preorder $\leq_{P'}$ on $\text{Terms}\ \base[{\Mon}]$ given as follows:
$$a\chi^u \leq_{P'} b\chi^v \Longleftrightarrow a\chi^u <_P b\chi^v \text{ or } (a\chi^u \equiv_P b\chi^v \text{ and } \left< w,u \right>\leq  \left< w,v \right>).$$
We claim that for such a pair $P$ and $P'$ we have that $\rk(\ker(\pi_{P'})) = \rk(\ker(\pi_{P}))-1.$ To see this, note that $a\chi^u \in \ker(\pi_{P'})$ if and only if $a\chi^u \in \ker(\pi_{P})$ and $\left< w,u \right>=0 $. Thus, $\frakE(\ker(\pi_{P'})) = \frakE(\ker(\pi_{P})) \cap w^\perp$.
Since $w$ is not 0 on $\frakE(\ker(\pi_{P}))$, we have 
$$
\rk(\ker(\pi_{P'}))=\rk\left(\frakE(\ker(\pi_{P})) \cap w^\perp\right)
= \rk\left(\frakE(\ker(\pi_{P})\right))-1
=\rk(\ker(\pi_P))-1,
$$
as claimed.  
Now denote by $k$ the rank of $\ker(\pi_P)$ and observe that $k = 0$ if and only if $N = \frakE(\ker(\pi_P))^\perp$. So we can apply the above construction recursively $k$ times to obtain a chain of primes 
$$P \supsetneq P' \supsetneq P'' \supsetneq \ldots \supsetneq P^{(k)}.$$
Thus, $ \dim_{\mathrm{top}} \Cnvg{P} \geq k = \rk(\ker(\pi_P))$ and so, combining the two inequalities, we get the desired statement.
\end{proof}

\begin{remark}\label{remark:InterpretKerPiP}
The above proof also gives a result in the case where $P\in\ContBaseInt{\base}\base[\Mon]$ is not necessarily maximal. Here we are still assuming that $\base$ is a subsemifield of $\T$ and $\Mon$ is a group. In this case, the proof shows that $\rk(\ker(\pi_P))$ is the maximum length of a chain under $P$, where $\pi_P:\Terms{\base[\Mon]}\to\kappa(P)^\times$ is the natural map.


\end{remark}

\begin{defi}
The \emph{height} of a prime congruence $P$ on a semiring $A$ is the maximum length $\HT(P)$ of a chain $P_0\subsetneq P_1\subsetneq\cdots\subsetneq P_{k}=P$ of primes under $P$.
\end{defi}

With this terminology, Remark~\ref{remark:InterpretKerPiP} says that, when $\base$ is a sub-semifield of $\T$ and $\Mon\cong\Z^n$, the height of any $P\in\ContIntBase{\base}\base[\Mon]$ is $\HT(P)=\rk(\ker \pi_P)=n-\rk(\kappa(P)^\times/\base^\times)$. We can also extend this to the case of a toric monoid.

\begin{lemma}\label{lemma:HeightInToricCase}
Let $\base$ be  sub-semifield of $\T$ and let $\Mon\subseteq\Lambda$ be a toric monoid corresponding to a cone $\sigma$ in $N_{\R}$. For any $P\in\ContIntBase{\base}\base[\Mon]$, $\HT(P)=\rk(\Lambda)-\rk(\kappa(P)^\times/\base^\times)$.
\end{lemma}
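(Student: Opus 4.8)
The plan is to reduce the claim to the case of a lattice, which is exactly the content of Remark~\ref{remark:InterpretKerPiP}, by passing to the total semiring of fractions. Since $\Mon$ is a toric monoid it is cancellative with groupification $\Lambda$, and $\base[\Mon]$ is generated additively by its monomials, all of which are cancellative, so $\Frac(\base[\Mon])\cong\base[\Lambda]$; write $\eta:\base[\Mon]\to\base[\Lambda]$ for the canonical inclusion. By Proposition~\ref{prop:primes-total-sFrac}, $\eta^*$ is an injection from the set of all prime congruences on $\base[\Lambda]$ onto the set of prime congruences on $\base[\Mon]$ with trivial ideal-kernel. I would first upgrade this to an order-isomorphism: $\eta^*$ is automatically inclusion-preserving, and conversely if $Q_1,Q_2$ are congruences on $\base[\Lambda]$ with $\eta^*(Q_1)\subseteq\eta^*(Q_2)$, then for $(f,g)\in Q_1$ one writes $f=f'/a$, $g=g'/b$ with $f',g'\in\base[\Mon]$ and $a,b$ cancellative, multiplies the pair by the unit $ab\in\base[\Lambda]$ to obtain $(bf',ag')\in Q_1\cap\base[\Mon]^2=\eta^*(Q_1)\subseteq\eta^*(Q_2)\subseteq Q_2$, and then multiplies back by the unit $(ab)^{-1}$ to conclude $(f,g)\in Q_2$.

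The key observation making the reduction work at the level of \emph{all} chains under $P$ (not merely continuous ones) is that every prime congruence $P'\subseteq P$ automatically has trivial ideal-kernel, because its ideal-kernel is contained in that of $P$, which is $\{0\}$ by the hypothesis $P\in\ContIntBase{\base}\base[\Mon]$. Consequently the poset of primes under $P$ in $\base[\Mon]$ is, via $(\eta^*)^{-1}$, order-isomorphic to the poset of primes under $\tilde P:=(\eta^*)^{-1}(P)$ in $\base[\Lambda]$, so $\HT_{\base[\Mon]}(P)=\HT_{\base[\Lambda]}(\tilde P)$.

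It then remains to identify the right-hand side. I would check that $\tilde P\in\ContIntBase{\base}\base[\Lambda]$: it has trivial ideal-kernel since $\base[\Lambda]$ is generated by units, and the proof of Proposition~\ref{prop:primes-total-sFrac} shows $\base[\Lambda]/\tilde P\cong\Frac(\base[\Mon]/P)=\kappa(P)$, so $\kappa(\tilde P)=\base[\Lambda]/\tilde P\cong\kappa(P)$ and the composite $\base\to\base[\Lambda]\to\kappa(\tilde P)$ is identified with the continuous map $\base\to\base[\Mon]\to\kappa(P)$; Lemma~\ref{lemma:ContinuityGivesGeorge} then gives $\tilde P\in\ContBase{\base}\base[\Lambda]$. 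Since $\Lambda\cong\Z^{\rk\Lambda}$, Remark~\ref{remark:InterpretKerPiP} applies to $\tilde P$ and yields $\HT_{\base[\Lambda]}(\tilde P)=\rk\Lambda-\rk(\kappa(\tilde P)^\times/\base^\times)=\rk\Lambda-\rk(\kappa(P)^\times/\base^\times)$, which is the desired formula. The only real subtlety is the reverse-containment half of the order-isomorphism (needed to transport a chain under $P$ up to a chain under $\tilde P$), together with the ideal-kernel observation guaranteeing this applies to arbitrary, not just continuous, chains; everything else is bookkeeping with results already in hand.
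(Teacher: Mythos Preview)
Your proposal is correct and follows essentially the same route as the paper: reduce to the lattice case via the bijection of Proposition~\ref{prop:primes-total-sFrac} (noting that all primes below $P$ inherit trivial ideal-kernel), then invoke Remark~\ref{remark:InterpretKerPiP}. You are simply more explicit than the paper about two points it leaves implicit---that the bijection $\eta^*$ is an order-isomorphism, and that $\tilde P$ lands in $\ContIntBase{\base}\base[\Lambda]$ (via the identification $\kappa(\tilde P)\cong\kappa(P)$)---but neither constitutes a genuinely different idea.
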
\begin{proof}
Because $P$ has trivial ideal-kernel, it extends to a unique prime $\widetilde{P}\in\ContBaseInt{\base}\base[\Lambda]$ and the same applies to all of the primes contained in $P$. Since all the primes below $\widetilde{P}$ arise this way, we have $\HT(P)=\HT(\widetilde{P})=n-\rk(\kappa(\widetilde{P})^\times/\base^\times)=n-\rk(\kappa(P)^\times/\base^\times)$.
\end{proof}

We can also extract another result for the case of toric monoids. First we require a lemma.

\begin{lemma}\label{lemma:PreorderBeingAnOrderDeterminedOnTorus}
Let $\base$ be  sub-semifield of $\T$ and let $\Mon\subseteq\Lambda$ be a toric monoid corresponding to a cone $\sigma$ in $N_{\R}$. Fix $P\in\ContIntBase{\base}\base[\Mon]$ and let $\wt{P}$ be the corresponding prime congruence on $\base[\Lambda]$. Then the preorder $\leq_P$ on $\Terms{\base[\Mon]}$ is an order if and only if $\leq_{\wt{P}}$ is an order on $\Terms{\base[\Lambda]}$.
\end{lemma}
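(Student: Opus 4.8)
The plan is to reduce the statement to a purely combinatorial assertion about preorders on groups of terms, and then invoke the description of primes from Proposition~\ref{prop: side_claim}. First I would recall that, since $P$ has trivial ideal-kernel, it extends uniquely to a prime $\wt{P}\in\ContBaseInt{\base}\base[\Lambda]$ with $P=\wt{P}|_{\base[\Mon]}$, and that the natural inclusion $\Terms{\base[\Mon]}\hookrightarrow\Terms{\base[\Lambda]}$ is compatible with the two preorders, i.e., the restriction of $\leq_{\wt{P}}$ to $\Terms{\base[\Mon]}$ is exactly $\leq_P$. This is essentially the content of how one passes between $P$ and $\wt{P}$ via the semiring of fractions.

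The ``if'' direction is then immediate: if $\leq_{\wt{P}}$ is an order on $\Terms{\base[\Lambda]}$, then its restriction to the submonoid $\Terms{\base[\Mon]}$ is still antisymmetric, so $\leq_P$ is an order. For the ``only if'' direction, suppose $\leq_P$ is an order on $\Terms{\base[\Mon]}$; I want to show $\leq_{\wt{P}}$ is an order on $\Terms{\base[\Lambda]}$. Equivalently, writing $\pi_{\wt{P}}:\Terms{\base[\Lambda]}\to\kappa(\wt{P})^\times$ for the natural homomorphism (as in Lemma~\ref{lem:QisZ-flat}), I must show $\ker(\pi_{\wt{P}})$ is trivial given that $\ker(\pi_P)=\ker(\pi_{\wt{P}})\cap\Terms{\base[\Mon]}$ is trivial. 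The key point is that $\Terms{\base[\Mon]}=\base^\times\times\Mon$ generates $\Terms{\base[\Lambda]}=\base^\times\times\Lambda$ as a group, since $\Lambda$ is the groupification of $\Mon$; and $\ker(\pi_{\wt{P}})$ is a \emph{saturated} subgroup of $\Terms{\base[\Lambda]}$ because $\kappa(\wt{P})^\times$ is torsion-free (being a totally ordered abelian group). So if $a\chi^u\in\ker(\pi_{\wt{P}})$ with $u\in\Lambda$, write $u=u_1-u_2$ with $u_1,u_2\in\Mon$; then $(a\chi^u)\cdot\chi^{u_2}=a\chi^{u_1}$, and applying $\pi_{\wt{P}}$ shows $|a\chi^{u_1}|_{\wt{P}}=|\chi^{u_2}|_{\wt{P}}\neq 0$. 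Hmm — this shows $a\chi^{u_1}$ and $\chi^{u_2}$ have the same image, not that either lies in $\ker$.

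A cleaner route: take $a\chi^u\in\ker(\pi_{\wt{P}})$, so $|a\chi^u|_{\wt{P}}=1_{\kappa(\wt{P})}$. Pick $u_2\in\Mon$ with $u+u_2\in\Mon$ and set $b\in\base^\times$. Then both $a b\chi^{u+u_2}$ and $b\chi^{u_2}$ lie in $\Terms{\base[\Mon]}$ and have equal image under $\pi_P$ (since multiplying the ghost relation $|a\chi^u|_{\wt{P}}=1$ by $|b\chi^{u_2}|_{\wt{P}}$ gives $|ab\chi^{u+u_2}|_{\wt{P}}=|b\chi^{u_2}|_{\wt{P}}$, and this equality descends to $\kappa(P)^\times$ because $\Terms{\base[\Mon]}\to\kappa(P)^\times$ factors through the restriction). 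Since $\leq_P$ is an order, $ab\chi^{u+u_2}=b\chi^{u_2}$ as terms, forcing $a=1_\base$ and $u=0_\Lambda$, i.e. $a\chi^u$ is the identity term. Hence $\ker(\pi_{\wt{P}})$ is trivial and $\leq_{\wt{P}}$ is an order.

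The main obstacle I anticipate is bookkeeping the identification $\kappa(P)^\times\hookrightarrow\kappa(\wt{P})^\times$ and checking carefully that the relation $\leq_P$ really is the restriction of $\leq_{\wt{P}}$ — this rests on the fact (implicit in Proposition~\ref{prop:primes-total-sFrac} and the discussion of $\Frac$) that $\base[\Mon]/P\hookrightarrow\base[\Lambda]/\wt{P}$ is a sub-semiring, so the ambient $\kappa$'s agree. Once that compatibility is nailed down, everything else is the saturation/torsion-freeness argument above plus the observation that antisymmetry of $\leq_P$ on terms is exactly triviality of $\ker(\pi_P)$. I would phrase the final write-up in terms of these kernels rather than the preorders directly, since Lemma~\ref{lem:QisZ-flat} already sets up that language.
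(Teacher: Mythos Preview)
Your proposal is correct and follows essentially the same strategy as the paper: both translate the question into injectivity of $\pi_P$ and $\pi_{\wt{P}}$, handle one direction by restriction, and for the other direction shift a term of $\base[\Lambda]$ into $\Terms{\base[\Mon]}$ by multiplying by a suitable $\chi^{u_2}$ with $u_2\in\Mon$. The paper phrases the shift slightly differently (it takes two terms with equal image and multiplies both by $\chi^{r\mu}$ for $\mu$ in the interior of $\sigma^\vee$), but this is the same idea as your ``pick $u_2\in\Mon$ with $u+u_2\in\Mon$''; your detour about saturation and the auxiliary $b\in\base^\times$ are unnecessary but harmless.
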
\begin{proof}
Note that $\leq_{P}$ and $\leq_{\wt{P}}$ are orders if and only if the functions $\pi_P:\Terms{\base[\Mon]}\to\kappa(P)^\times$ and $\pi_{\wt{P}}:\Terms{\base[\Lambda]}\to\kappa(\wt{P})^\times$, respectively, are injective. 

Under the identification $\kappa(P)\cong\kappa(\wt{P})$, $\pi_P$ is the restriction of $\pi_{\wt{P}}$ to $\Terms{\base[\Mon]}\subseteq\Terms{\base[\Lambda]}$, so if $\pi_{\wt{P}}$ is injective then so is $\pi_{P}$.

Now suppose $\pi_{P}$ is injective and $a\chi^u, a'\chi^{u'}\in\Terms{\base[\Lambda]}$ are such that $\pi_{\wt{P}}(a\chi^u)=\pi_{\wt{P}}(a'\chi^{u'})$. Since $\sigma^\vee$ is a full-dimensional rational (not necessarily strongly convex) cone in $\Lambda_{\R}$, we can pick $\mu\in\Lambda$ which is in the interior of $\sigma^\vee$. So for any sufficiently large $r\in\N$, $u+r\mu,u'+r\mu\in\sigma^\vee\cap\Lambda=\Mon$. Now
\begin{align*}
\pi_{P}(a\chi^{u+r\mu})
&=\pi_{\wt{P}}(a\chi^{u})\pi_{\wt{P}}(\chi^{r\mu})\\
&=\pi_{\wt{P}}(a'\chi^{u'})\pi_{\wt{P}}(\chi^{r\mu})\\
&=\pi_{P}(a'\chi^{u'+r\mu}),
\end{align*}
so $a\chi^{u+r\mu}=a'\chi^{u'+r\mu}$. Thus $a=a'$ and $u=u'$, showing that $\pi_{\wt{P}}$ is injective.
\end{proof}

\begin{coro}
Let $\base$ be  sub-semifield of $\T$ and let $\Mon\subseteq\Lambda$ be a toric monoid corresponding to a cone $\sigma$ in $N_{\R}$. For any $P\in\ContBase{\base}\base[\Mon]$, the preorder $\leq_P$ on $\Terms{\base[\Mon]}$ is an order if and only if $P\in\ContIntBase{\base}\base[\Mon]$ and $\HT(P)=0$.
\end{coro}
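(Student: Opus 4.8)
The plan is to reduce the corollary directly to the tools already assembled, specifically Lemma~\ref{lemma:PreorderBeingAnOrderDeterminedOnTorus}, Remark~\ref{remark:InterpretKerPiP}, and Lemma~\ref{lemma:HeightInToricCase}. The first observation is that, in the toric setting, the preorder $\leq_P$ on $\Terms{\base[\Mon]}$ is an order if and only if the natural map $\pi_P:\Terms{\base[\Mon]}\to\kappa(P)^\times$ is injective, since $\leq_P$ fails to be antisymmetric exactly when two distinct terms have the same image in $\kappa(P)$. In particular, if $\leq_P$ is an order then $\pi_P$ has trivial kernel on terms, and since the ideal-kernel of $P$ is determined by its intersection with the additive generators (the terms and $0$), this forces $P$ to have trivial ideal-kernel, i.e.\ $P\in\ContIntBase{\base}\base[\Mon]$. (If $P$ had nontrivial ideal-kernel, some term $\chi^u$ would satisfy $\chi^u\equiv_P 0_{\base}$, and then $\chi^u$ and $\chi^{2u}$ would have the same class, so $\leq_P$ would not be an order.) Conversely, being an order certainly implies $P\in\ContBase{\base}\base[\Mon]$ since we need the continuity condition in the hypothesis only to invoke the later lemmas, so the statement is really about $P\in\ContIntBase{\base}\base[\Mon]$.

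Once we know $P\in\ContIntBase{\base}\base[\Mon]$, I would pass to the corresponding prime $\widetilde P\in\ContBaseInt{\base}\base[\Lambda]$ on the groupification. By Lemma~\ref{lemma:PreorderBeingAnOrderDeterminedOnTorus}, $\leq_P$ is an order on $\Terms{\base[\Mon]}$ if and only if $\leq_{\widetilde P}$ is an order on $\Terms{\base[\Lambda]}$, equivalently if and only if $\pi_{\widetilde P}:\Terms{\base[\Lambda]}\to\kappa(\widetilde P)^\times$ is injective, i.e.\ $\ker(\pi_{\widetilde P})$ is trivial, i.e.\ $\rk(\ker(\pi_{\widetilde P}))=0$ — here using that $\ker(\pi_{\widetilde P})$ is a subgroup of the free abelian group $\Terms{\base[\Lambda]}\cong\base^\times\times\Lambda$ (so actually we should be slightly careful: $\base^\times$ is not finitely generated, but $\pi_{\widetilde P}$ restricted to $\base^\times$ is injective, so $\ker(\pi_{\widetilde P})$ embeds into $\Lambda$ and rank zero does mean trivial). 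By Remark~\ref{remark:InterpretKerPiP}, $\rk(\ker(\pi_{\widetilde P}))=\HT(\widetilde P)$, and by Lemma~\ref{lemma:HeightInToricCase} (or its proof), $\HT(\widetilde P)=\HT(P)$. Stringing these equivalences together gives: $\leq_P$ is an order $\iff$ $P\in\ContIntBase{\base}\base[\Mon]$ and $\HT(P)=0$.

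The main obstacle — really the only subtle point — is handling the coefficient group $\base^\times$, which is not free or finitely generated, when translating "injective on terms'' into "rank of kernel is zero.'' The clean way around this is to note that $\pi_P$ (resp.\ $\pi_{\widetilde P}$) is always injective on $\base^\times$ because $P\in\ContBase{\base}\base[\Mon]$ (Lemma~\ref{lemma:ContinuityGivesGeorge} / Lemma~\ref{lemma:BasicPropertiesOfPInSpaA}), so the kernel maps injectively under the exponent homomorphism $\frakE$ into $\Lambda$, hence is free of finite rank; thus rank zero is genuinely equivalent to triviality. With that in hand, the write-up is short: state the chain of equivalences, cite the three results, and dispatch the ideal-kernel implication by the $\chi^u$ vs.\ $\chi^{2u}$ argument above.

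\begin{proof}
Suppose first that $\leq_P$ is an order on $\Terms{\base[\Mon]}$. Then the natural map $\pi_P:\Terms{\base[\Mon]}\to\kappa(P)^\times$ is injective. If $P$ had nontrivial ideal-kernel, then by Remark~\ref{remark:IdealKernelAndGenerators} there would be a term $\chi^u$ with $(\chi^u,0_{\base[\Mon]})\in P$; but then $\chi^u$ and $\chi^{2u}$ are distinct terms with the same image ($0_{\kappa(P)}$) under $\pi_P$, contradicting injectivity. Hence $P\in\ContIntBase{\base}\base[\Mon]$. Let $\wt P\in\ContBaseInt{\base}\base[\Lambda]$ be the corresponding prime. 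By Lemma~\ref{lemma:PreorderBeingAnOrderDeterminedOnTorus}, $\leq_{\wt P}$ is an order on $\Terms{\base[\Lambda]}$, so $\pi_{\wt P}:\Terms{\base[\Lambda]}\to\kappa(\wt P)^\times$ is injective, i.e.\ $\ker(\pi_{\wt P})$ is trivial. Since $\wt P\in\ContBase{\base}\base[\Lambda]$, the restriction of $\pi_{\wt P}$ to $\base^\times$ is injective, so the exponent homomorphism $\frakE:\Terms{\base[\Lambda]}\to\Lambda$ embeds $\ker(\pi_{\wt P})$ into the free abelian group $\Lambda$; thus $\rk(\ker(\pi_{\wt P}))=0$. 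By Remark~\ref{remark:InterpretKerPiP} (applied to $\wt P$) and Lemma~\ref{lemma:HeightInToricCase}, $\HT(P)=\HT(\wt P)=\rk(\ker(\pi_{\wt P}))=0$.

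Conversely, suppose $P\in\ContIntBase{\base}\base[\Mon]$ and $\HT(P)=0$. Let $\wt P\in\ContBaseInt{\base}\base[\Lambda]$ be the corresponding prime. By Lemma~\ref{lemma:HeightInToricCase} and Remark~\ref{remark:InterpretKerPiP}, $\rk(\ker(\pi_{\wt P}))=\HT(\wt P)=\HT(P)=0$; as above, $\ker(\pi_{\wt P})$ embeds in $\Lambda$, so $\ker(\pi_{\wt P})$ is trivial, i.e.\ $\pi_{\wt P}$ is injective. Equivalently, $\leq_{\wt P}$ is an order on $\Terms{\base[\Lambda]}$, so by Lemma~\ref{lemma:PreorderBeingAnOrderDeterminedOnTorus} the preorder $\leq_P$ is an order on $\Terms{\base[\Mon]}$.
\end{proof}
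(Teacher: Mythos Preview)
Your proof is correct and follows essentially the same route as the paper's: reduce to trivial ideal-kernel, pass to $\wt P$ on $\base[\Lambda]$ via Lemma~\ref{lemma:PreorderBeingAnOrderDeterminedOnTorus}, and identify injectivity of $\pi_{\wt P}$ with $\rk(\ker(\pi_{\wt P}))=0=\HT(\wt P)=\HT(P)$. You are in fact slightly more careful than the paper in two spots: you give an explicit witness ($\chi^u$ versus $\chi^{2u}$) for why nontrivial ideal-kernel prevents $\leq_P$ from being an order, and you justify why rank zero of $\ker(\pi_{\wt P})$ really means triviality by observing that the kernel embeds in the lattice $\Lambda$ via $\frakE$.
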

\begin{proof}
Note that $\leq_P$ is an order if and only if the map $\Terms{\base[\Mon]}\to\kappa(P)$ is injective. Since the ideal-kernel of $\base[\Mon]\to\kappa(P)$ is generated by its intersection with $\Terms{\base[\Mon]}$, whenever $\leq_P$ is an order we have $P\in\ContBaseInt{\base}\base[\Mon]$. Therefore, it suffices to show that, if $P$ has trivial ideal-kernel, then $\leq_P$ is an order if and only if $\HT(P)=0$.

When $P\in\ContBaseInt{\base}\base[\Mon]$, it corresponds to a prime $\wt{P}$ on $\base[\Lambda]$. By Lemma~\ref{lemma:PreorderBeingAnOrderDeterminedOnTorus}, $\leq_P$ is an order if and only if $\leq_{\wt{P}}$ is an order. This happens exactly if the group homomorphism $\pi_{\wt{P}}:\Terms{\base[\Lambda]}\to\kappa(P)^\times$ is injective, i.e., 
$$0=\rk(\ker(\pi_{\wt{P}}))=\HT(\wt{P})=\HT(P).$$
\end{proof}



We now continue with the toric case and move towards proving Theorem~\ref{thm:DimIneqsGeneral}. In studying chains of primes, we will use the stratification given in the following proposition.

\begin{prop}\label{prop:AdicToricStratification}
Let $\base$ be a sub-semifield of $\T$, let $\Lambda$ be a finitely generated free abelian group, 
let $\sigma$ be a cone in $N_{\R}$, 
and let $\Mon = \sigma^\vee\cap\Lambda$. There is a partition of $\ContBase{\base}\base[\Mon]$ into subsets $V_{\tau}$ for $\tau\leq\sigma$ such that
\begin{enumerate}
\item if $P\subseteq P'$ are in $\ContBase{\base}\base[\Mon]$, then they are in the same $V_{\tau}$,
\item $V_\tau\cong\ContBase{\base}\base[\Lambda\cap\tau^\perp]$, and
\item under the map $\Phi:\ContBase{\base}\base[\Mon]\to N_{\R}(\sigma)$ defined in Section~\ref{sect: geom-crown}, $V_\tau$ is the inverse image of the stratum of $N_{\R}(\sigma)$ corresponding to $\tau$.
\end{enumerate}
Moreover, this partition is given by the equivalence relation on $\ContBase{\base}\base[\Mon]$ given by $P\sim P'$ if $P$ and $P'$ have the same ideal-kernel.
\end{prop}

\begin{remark}
Recall that the stratification of $N_{\R}(\sigma)$ is defined as follows. Given $\tau\leq\sigma$, the stratum of $N_{\R}(\sigma)=\Hom(\Mon,\T)$ corresponding to $\tau$ consists of those $\phi:\Mon\to\T$ such that $\phi^{-1}(0_\T)=\{u\in\Mon\;:\;u\notin\tau^\perp\}$.
\end{remark}

\begin{proof}[Proof of Proposition~\ref{prop:AdicToricStratification}]
For $\tau\leq\sigma$ we let $K_\tau$ be the set of those terms $a\chi^u$ in $\base[\Mon]$ such that $u\notin\tau^\perp$ and let $\calK_\tau=\left\langle m\sim 0_{\base}\;:\; m\in K_\tau\right\rangle$ be the congruence on $\base[\Mon]$ generated by setting each element of $K_\tau$ to be $0_{\base}$. For the zero cone $\tau=\{0_{N_{\R}}\}$, we have  $K_\tau=\emptyset$ and $\calk_\tau=\Delta$ is the trivial congruence on $\base[\Mon]$.

For any prime $P$ on $\base[\Mon]$ there is a face $\tau \leq \sigma$ such that the restriction of the ideal-kernel of $P$ to the set of terms $\Terms{\base[\Mon]}$ is $K_\tau$. We let $V_\tau$ denote the set of such $P\in\ContBase{\base}\base[\Mon]$. 

If $P\in V_\tau$ then $P\supseteq\calk_\tau$ and, by Remark~\ref{remark:IdealKernelAndGenerators}, the ideal-kernel of $\calK_\tau$ is the same as the ideal-kernel of $P$. Thus, for $P\in\ContBase{\base}\base[\Mon]$, $P\in V_\tau$ if and only if $P\supseteq\calk_\tau$ and the prime $\wt{P}$ on $\base[\Mon]/\calk_\tau$ corresponding to $P$ has trivial ideal-kernel. By Proposition~\ref{prop:primes-total-sFrac}, this exactly says that $P$ corresponds to a prime congruence on $\Frac(\base[\Mon]/\calk_\tau)$. So we have
\begin{align*}
V_\tau&\cong\ContBase{\base}\Frac(\base[\Mon]/\calk_\tau)\\
&\cong\ContBase{\base}\Frac(\base[\sigma^\vee\cap\Lambda\cap\tau^\perp])\\
&\cong\ContBase{\base}\base[\Lambda\cap\tau^\perp].
\end{align*}
This proves (2).

If $P\subseteq P'$ are in $\ContBase{\base}\base[\Mon]$ then Corolary~\ref{coro:PrimesInChainHaveSameIdealKernel} tells us that $P$ and $P'$ have the same ideal-kernel, so they are in the same $V_\tau$. So we have shown (1).

Towards (3), recall that $\Phi$ is defined as follows. Given $P$, let $\check{P}$ be the maximal element of $\ContBase{\base}\base[\Mon]$ containing $P$, so there is a canonical embedding $\kappa(\check{P})\into\T$. Then $\Phi(P)$ is the map $\Mon\into\base[\Mon]\toup{\pi_{\check{P}}}\kappa(\check{P})\into\T$. Note that $a\chi^u$ is in the ideal-kernel of $P$ if and only if it is in the ideal-kernel of $\check{P}$, which in turn is equivalent to 
$u$ being in $\big(\Phi(P)\big)^{-1}(0_{\T})$. 
Thus $P\in V_\tau$ if and only if $\Phi(P)$ is in the stratum of $N_{\R}(\sigma)$ corresponding to $\tau$.
\end{proof}


\begin{proposition}\label{prop:rk-of-lattice-dim-of-Ssigma}
Let $\base$ be a sub-semifield of $\T$, let $\Lambda$ be a finitely generated free abelian group, 
let $\sigma$ be a cone in $N_{\R}$, 
and let $\Mon = \sigma^\vee\cap\Lambda$. Then $$\dim_\base\base[\Mon]=\rk \Lambda.$$
\end{proposition}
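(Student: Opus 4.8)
\textbf{Proof proposal for Proposition~\ref{prop:rk-of-lattice-dim-of-Ssigma}.}

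The plan is to reduce the general toric statement to the group case $\Mon\cong\Z^n$, where the result is already available. Recall that by \cite[Theorem 3.16]{JM15} (as invoked in the proof of Proposition~\ref{prop:dim-group}), we have $\dim_\base\base[\Lambda]=\rk\Lambda$ whenever $\Lambda$ is a finitely generated free abelian group. So the whole task is to show that passing from $\Lambda$ to the submonoid $\Mon=\sigma^\vee\cap\Lambda$ does not change the relative dimension.

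First I would use the stratification from Proposition~\ref{prop:AdicToricStratification}. That proposition partitions $\ContBase{\base}\base[\Mon]$ into subsets $V_\tau$ indexed by faces $\tau\leq\sigma$, with the key property (item (1)) that any chain of primes in $\ContBase{\base}\base[\Mon]$ lies entirely within a single $V_\tau$, together with the identification (item (2)) $V_\tau\cong\ContBase{\base}\base[\Lambda\cap\tau^\perp]$. Since $\Lambda\cap\tau^\perp$ is a free abelian group (a saturated subgroup of $\Lambda$), the group case gives that the longest chain of primes inside $V_\tau$ has length $\dim_\base\base[\Lambda\cap\tau^\perp]=\rk(\Lambda\cap\tau^\perp)$. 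Because every chain of primes in $\ContBase{\base}\base[\Mon]$ is contained in some $V_\tau$, we get
$$\dim_\base\base[\Mon]=\max_{\tau\leq\sigma}\rk(\Lambda\cap\tau^\perp).$$
Now $\rk(\Lambda\cap\tau^\perp)$ is maximized by taking $\tau$ as small as possible, namely $\tau=\{0\}$, which gives $\tau^\perp=\Lambda_\R$ and hence $\rk(\Lambda\cap\tau^\perp)=\rk\Lambda$. (For any nonzero face $\tau$, $\tau^\perp$ is a proper subspace, so $\rk(\Lambda\cap\tau^\perp)<\rk\Lambda$.) The stratum $V_{\{0\}}$ is exactly $\ContIntBase{\base}\base[\Mon]$, and it is nonempty — e.g. any prime with a defining matrix whose first column is $e_1$ and whose remaining columns are generic, or more directly the prime corresponding to $\Frac(\base[\Mon])=\base[\Lambda]$ and its sub-primes. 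Thus the maximum is achieved and equals $\rk\Lambda$.

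I expect the only real point requiring care is making sure the identification $V_{\{0\}}\cong\ContBase{\base}\base[\Lambda]$ is set up so that the group-case result genuinely applies — in particular that a longest chain in $\ContBase{\base}\base[\Lambda]$ realizes length $\rk\Lambda$ and not merely that chains have length at most $\rk\Lambda$. This is handled by the explicit recursive construction of a maximal chain in the proof of Proposition~\ref{prop:dim-group} (building primes $P\supsetneq P'\supsetneq\cdots$ by successively refining with functionals $w\in N$), which produces a chain of length $\rk(\ker\pi_P)$ starting from a maximal $P$ with $\kappa(P)^\times/\base^\times$ as large as possible; choosing $P$ so that this quotient is trivial gives $\rk(\ker\pi_P)=\rk\Lambda$. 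So the argument is essentially bookkeeping on top of Proposition~\ref{prop:AdicToricStratification} and Proposition~\ref{prop:dim-group}, with no new obstacle.
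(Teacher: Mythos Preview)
Your proposal is correct and follows essentially the same approach as the paper: both reduce to the group case via the stratification by ideal-kernel (you cite Proposition~\ref{prop:AdicToricStratification} directly, while the paper re-derives the same stratification inline using $\calK_\tau$ and Proposition~\ref{prop:primes-total-sFrac}), then take the maximum over faces $\tau\leq\sigma$ and observe it is attained at $\tau=\{0\}$. Your extra care about actually realizing a chain of length $\rk\Lambda$ in the group case is warranted and correctly handled by appealing to \cite[Theorem 3.16]{JM15} or the constructive half of Proposition~\ref{prop:dim-group}.
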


\begin{proof}
Let $\{V_\tau\ :\ \tau\leq\sigma\}$ be the partition of $\ContBase{\base}\base[\Mon]$ from Proposition~\ref{prop:AdicToricStratification}. By part (1) of that proposition, $\dim_\base \base[\Mon]$ is the maximum over $\tau$ of the largest possible number of strict inclusions in a chain of primes in $V_\tau$. Part (2) then implies that this is $\dim_\base \base[\Mon]=\max\{\dim_\base\base[\Lambda\cap\tau^\perp]\ : \ \tau \leq \sigma \}$. By \cite[Theorem 3.16]{JM15} we know that $\dim_{\base}\base[\Lambda\cap\tau^\perp]=\rk(\Lambda\cap\tau^\perp)=\rk(\Lambda)-\dim\tau$, so we conclude that $\dim_\base \base[\Mon]=\rk(\Lambda)$.
\end{proof}

\begin{proof}[Proof of Theorem \ref{thm:DimIneqsGeneral}]
Write $\Mon =\sigma^\vee\cap\Lambda$ for some cone $\sigma$ in $N_{\R}$. Since $P$ has trivial ideal-kernel, it has a unique extension $\widetilde{P}$ to $\base[\Lambda]$. The induced homomorphism $\kappa(P)\to\kappa(\widetilde{P})$ is an isomorphism. With this notation, we have the following diagram:

\begin{center}
\begin{tikzcd}\text{Terms}\ \base[{\Mon}]\arrow[hookrightarrow, r, ""]\arrow[d, ""]& \text{Terms}\ \base[{\Lambda}] \arrow[d, "\pi_{\widetilde{P}}"]  \\ (\base[\Mon]/P)\setminus \{0_S\} \arrow[hookrightarrow, r, ""] &  \kappa(\widetilde{P})^\times  
\end{tikzcd}
\end{center}
%
From Lemma~\ref{lem:QisZ-flat}, we know that $\rk \Lambda = \rk (\ker\pi_{\widetilde{P}}) + \rk (\kappa(P)^\times/S^\times),$ but $\rk \Lambda = \dim_\base\base[\Mon]$ by Proposition~\ref{prop:rk-of-lattice-dim-of-Ssigma}, implying that $\dim_\base\base[\Mon] \geq \rk (\kappa(P)^\times/S^\times)$. Since, by Theorem~\ref{thm:crown}, the map $\iota^*:\Cont(\Cnvg{P})\to \ContBase{\base}{\base[\Mon]}$ induced by $\iota:\base[\Mon]\to\Cnvg{P}$ is an injection,
we also have that $\dim_\base\base[\Mon] \geq \dim_{\mathrm{top}} \Cnvg{P}$. Combining these statements we get the second half of the theorem, namely 
$$\dim_\base\base[\Mon] \geq \max \{\dim_{\mathrm{top}} \Cnvg{P}, \rk (\kappa(P)^\times/S^\times)\}.$$
It remains to show the inequality $\rk (\kappa(P)^\times/\base^\times) + \dim_{\mathrm{top}} \Cnvg{P} \geq \dim_\base \base[\Mon]$. We have $\HT(P)=\rk(\Lambda)-\rk(\kappa(P)^\times/\base^\times)$ by Lemma~\ref{lemma:HeightInToricCase}.
Considering the canonical extensions of primes under $P$ to $\Cnvg{P}$ as in Example~\ref{ex:CanonicalExtUnderP}, we get $\dim_{\mathrm{top}}\Cnvg{P}\geq\HT(Q_P)=\HT(P)=\rk(\Lambda)-\rk(\kappa(P)^\times/\base^\times)$, where $Q_P$ is the canonical extension of $P$.
\end{proof}

Corollary~\ref{coro:DimWithTCoeffs} provides us with examples where equality is attained in both inequalities of Theorem~\ref{thm:DimIneqsGeneral}. The following examples show that we can have a strict inequality on one side with equality on the other or strict inequality on both sides.

\begin{example}\label{ex:DimEx1}
Let $\base = \Q_{\max}$ and $A = \Q_{\max}[x]$, corresponding to the cone $\sigma=\R_{\leq0}\subseteq\R$. 
By Proposition~\ref{prop:rk-of-lattice-dim-of-Ssigma}, $\dim_\base A = 1$. 
Take $P$ to be the prime congruence on $A$ with defining matrix 
$\begin{pmatrix}1& \sqrt2\end{pmatrix}$. 
The residue semifield $\kappa(P) $ is $(\Q + \Z\sqrt{2})_{\max}$ and so $\rk(\kappa(P)^\times / \base^\times) =1$. We claim that $\dim_{\mathrm{top}} \Cnvg{P}=1$. Since we have $\dim_{\mathrm{top}}\Cnvg{P}\leq\dim_{\base}A=1$ it suffices to exhibit a chain of primes of length one on $\Cnvg{P}$. By Theorem~\ref{thm:crown_geom}, the primes on $A=\Q_{\max}[x]$ defined by the matrices $\begin{pmatrix}1&0\end{pmatrix}$ and $\begin{pmatrix}1&0\\0&1\end{pmatrix}$ extend to primes of $\Cnvg{P}$.\exEnd\end{example}

\begin{example}\label{ex:DimEx2}
Let $\base = \Q_{\max}$ and let $\sigma = \R_{\geq 0}(-1, -1) \subseteq \R^2$. In this case, we have 
$$A = \Q_{\max}[\sigma^\vee\cap\Z^2] = \Q_{\max}[xy, (xy^{-1})^{\pm1}].$$
By Proposition~\ref{prop:rk-of-lattice-dim-of-Ssigma}, $\dim_\base A =2$. 
Take $P$ to be the prime congruence determined by the matrix $\begin{pmatrix} 1 & \sqrt2 & \sqrt3 \end{pmatrix}$. For this $P$ we have that $\rk(\kappa(P)^\times / \base^\times) =2$. 
However, one can show that $\dim_{\mathrm{top}} \Cnvg{P} = 1$. To see the last statement, let $P'$ be the prime on $A$ with defining matrix $\begin{pmatrix} 1& 0&\sqrt3-\sqrt2 \end{pmatrix}$. Here, we get $P'$ by letting the matrix define a prime on $\base[\Z^2]$ and then pulling back along the inclusion $A\into\base[\Z^2]$. We know that $P'$ extends to a prime congruence on $\Cnvg{P}$ by Theorem~\ref{thm:crown_geom}. By
Lemma~\ref{lemma:HeightInToricCase},
$\HT(P')=2-\rk(\kappa(P')^\times/\base^\times)=2-1=1$, showing that $\dim_{\mathrm{top}}\Cnvg{P}\geq1$. 

To see that we cannot get a longer chain, consider any prime $P'$ on $A$ corresponding to a maximal prime $Q'\in\Cont(\Cnvg{P})$. If $Q'\in\ContInt(\Cnvg{P})$ then $P'$ comes from a point of $(\sqrt{2},\sqrt{3})+\R_{\geq0}(-1,-1)$. Any such point has at least one irrational coordinate, giving us $\rk(\kappa(P')^\times/\base^\times)\geq1$, and so $\HT(Q')=\HT(P')\leq 1$. On the other hand, if $Q'\notin\ContInt(\Cnvg{P})$, then $P'$ corresponds to a prime of a toric $\base$-algebra  of strictly smaller dimension than $A$, as in the proof of Proposition~\ref{prop:rk-of-lattice-dim-of-Ssigma}. So in this case we also get that $\HT(Q')=\HT(P')\leq1$. Thus, $\dim_{\mathrm{top}}\Cnvg{P}\leq 1$. \exEnd\end{example}

\begin{example}\label{ex:DimEx3}
Let $\base = \Q_{\max}$ and let $\sigma = \R_{\geq 0}(-1, -1, -1) \subseteq \R^3$. Take $A $ to be the semiring $\Q_{\max}[\sigma^\vee\cap \Z^3]$. In this case $\dim_\base A = 3$. Let $P$ be the prime defined by the matrix $\begin{pmatrix}1& \sqrt2 & \sqrt2 & \sqrt3 \end{pmatrix}$ so $\rk(\kappa(P)^\times / \base^\times) =2$. Similarly to the previous example, we get that the prime $P'$ defined by the matrix $\begin{pmatrix}1& 0& 0& \sqrt3-\sqrt2 \end{pmatrix}$ extends to a $Q'\in\Cont(\Cnvg{P})$. We can compute that $\HT(Q')=2$, and the same type of argument as in the previous example then shows that there is no longer chain. Thus, we see that $\dim_{\mathrm{top}}\Cnvg{P}=2$. \exEnd \end{example}

The last statement of this section is a refinement of Theorem~\ref{thm:DimIneqsGeneral} in the case when $\Mon = \sigma^\vee\cap\Lambda$ and $\sigma$ is a full-dimensional cone.

\begin{proposition}
Let $\base$ be a sub-semifield of $\T$, $\Mon = \sigma^\vee\cap\Lambda$, where $\sigma$ is a full-dimensional cone in $N_{\R}$. If $P \in \ContBaseInt{\base}{\base[\Mon]}$ then 
$$\dim_{\mathrm{top}} \Cnvg{P}=\dim_\base\base[\Mon].$$
\end{proposition}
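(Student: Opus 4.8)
The statement to prove is that when $\sigma$ is full-dimensional, $\dim_{\mathrm{top}}\Cnvg{P} = \dim_\base\base[\Mon]$ for any $P \in \ContBaseInt{\base}{\base[\Mon]}$. By Theorem~\ref{thm:DimIneqsGeneral} we already have $\dim_\base\base[\Mon] \geq \dim_{\mathrm{top}}\Cnvg{P}$, so the content is the reverse inequality $\dim_{\mathrm{top}}\Cnvg{P} \geq \dim_\base\base[\Mon]$. The key observation is that full-dimensionality of $\sigma$ forces $\sigma^\perp = \{0\}$, hence $\Lambda\cap\tau^\perp$ has rank strictly less than $\rk\Lambda$ for every \emph{nonzero} face $\tau\leq\sigma$; equivalently, the only face $\tau$ with $\dim_\base\base[\Lambda\cap\tau^\perp] = \rk\Lambda$ is $\tau = 0$. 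Combined with Proposition~\ref{prop:rk-of-lattice-dim-of-Ssigma}, this says $\dim_\base\base[\Mon] = \rk\Lambda$ and that a longest chain of primes in $\ContBase{\base}\base[\Mon]$ must live entirely in the open stratum $V_{\{0\}} \cong \ContBase{\base}\base[\Lambda]$ of the stratification in Proposition~\ref{prop:AdicToricStratification}.

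First I would reduce to producing a long chain \emph{under} $P$. Recall from Lemma~\ref{lemma:HeightInToricCase} that $\HT(P) = \rk(\Lambda) - \rk(\kappa(P)^\times/\base^\times)$, and from Example~\ref{ex:CanonicalExtUnderP} that every prime $P' \subseteq P$ extends to a unique $Q_{P'} \in \Cont(\Cnvg{P})$, with these extensions nested exactly as the $P'$ are; this already gives $\dim_{\mathrm{top}}\Cnvg{P} \geq \HT(P)$. So it remains to exhibit a chain of primes \emph{above} $P$ in $\Cont(\Cnvg{P})$ of length $\rk(\kappa(P)^\times/\base^\times)$, which then splices onto the chain under $Q_P$ to give total length $\rk\Lambda = \dim_\base\base[\Mon]$. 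By Theorem~\ref{thm:crown}, the primes on $\base[\Mon]$ in the image of $\iota^*$ are exactly those satisfying $\propStar$ with respect to $P$; so I need a chain $P = P^{(0)} \subsetneq P^{(1)} \subsetneq \cdots \subsetneq P^{(s)}$ in $\ContBase{\base}\base[\Mon]$ with $s = \rk(\kappa(P)^\times/\base^\times)$ and every $P^{(i)}$ satisfying $\propStar$ with respect to $P$ (equivalently, by Theorem~\ref{thm:crown_geom}(iii), with $\Phi(P^{(i)}) \in \Phi(P) + \bar\sigma$).

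The cleanest way to build this chain: pass to the associated prime $\wt P$ on $\base[\Lambda]$ (possible since $P$ has trivial ideal-kernel), work with $\leq_{\wt P}$ and the homomorphism $\pi_{\wt P}: \Terms{\base[\Lambda]} \to \kappa(\wt P)^\times$, and run the recursive construction from the proof of Proposition~\ref{prop:dim-group} \emph{upward} instead of downward. Concretely, starting from the maximal prime $\check P$ containing $P$ — which satisfies $\HT(\check P) = \rk\Lambda$ under it when viewed appropriately, but more to the point has $\rk(\kappa(\check P)^\times/\base^\times)$ archimedean quotients above — I would produce refinements of $\leq_{\check P}$ by the coarsening procedure: given a chain with bottom term $P^{(i)} \supseteq P$ corresponding to $\ker\pi_{P^{(i)}}$ of rank $r$, choose $w \in N \setminus \frakE(\ker\pi_{P^{(i)}})^\perp$ and coarsen to get $P^{(i+1)}$ with $\rk\ker\pi_{P^{(i+1)}} = r+1$. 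Iterating down to $\ker = \base^\times$ (which has rank $0$... here one must be careful about direction — I want to go \emph{up} from $P$, coarsening the order, increasing the size of the fibers of $\pi$, which means I start from $P$ and produce $P^{(1)} \supsetneq P$, etc.) yields a chain of length $\rk\Lambda - \HT(P) = \rk(\kappa(P)^\times/\base^\times)$, and since $N_{\R}$ is full-dimensional, each $\Phi(P^{(i)}) - \Phi(P)$ lands in $\bar\sigma$ because we can arrange the coarsening directions $w$ to lie in $\sigma$ (using $\sigma$ full-dimensional, its interior is nonempty).

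\textbf{The main obstacle.} The delicate point is \emph{not} the algebra of ranks — that is essentially Lemma~\ref{lem:QisZ-flat} and the recursive rank-drop argument already in the excerpt — but ensuring that every intermediate prime $P^{(i)}$ in the upward chain satisfies $\propStar$ with respect to $P$, i.e., that $\Phi(P^{(i)}) \in \Phi(P) + \bar\sigma$. This requires choosing the coarsening vectors $w$ not merely outside a hyperplane but inside $\sigma$ (or with the relevant sign conditions), and verifying that the successive coarsenings keep the image point moving within $\Phi(P) + \bar\sigma$ rather than escaping it; the full-dimensionality of $\sigma$ is exactly what gives enough room ($\operatorname{int}\sigma \neq \emptyset$) to make these choices compatibly at every stage. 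Once that geometric bookkeeping is done, Theorem~\ref{thm:crown_geom} converts the chain in $\ContBase{\base}\base[\Mon]$ into a chain in $\Cont(\Cnvg{P})$, and concatenating with the canonical extensions of a longest chain under $P$ (length $\HT(P) = \rk\Lambda - \rk(\kappa(P)^\times/\base^\times)$) produces a chain of length $\rk\Lambda = \dim_\base\base[\Mon]$, completing the proof.
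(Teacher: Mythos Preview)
Your splicing strategy has a genuine gap: the chain you can build through $P$ is too short. Concretely, suppose without loss of generality that $P=\check P$ is maximal (replacing $P$ by $\check P$ changes neither $\Cnvg{P}$ nor property~$(*)$). Then there are \emph{no} primes strictly above $P$ in $\ContBase{\base}\base[\Mon]$, so your ``upward chain'' has length $0$, and your total chain has length $\HT(P)=\rk\Lambda-\rk(\kappa(P)^\times/\base^\times)$, which is exactly the lower bound of Theorem~\ref{thm:DimIneqsGeneral} and in general strictly less than $\rk\Lambda$. For a concrete failure, take $\base=\Q_{\max}$, $\Mon=\N$, $\sigma=\R_{\leq0}$ (full-dimensional), and $P$ maximal with $|x|_P=t^{\sqrt2}$: then $\rk(\kappa(P)^\times/\base^\times)=1$, $\HT(P)=0$, and your chain has length $0<1=\dim_\base\base[\Mon]$. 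Even if you do not pass to $\check P$ first, the same arithmetic shows that any chain through $P$ inside the stratum $V_{\{0\}}$ has length at most $\HT(\check P)=\rk\Lambda-\rk(\kappa(\check P)^\times/\base^\times)$, not $\rk\Lambda$.

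You have also misdiagnosed the obstacle. Any $P^{(i)}\supsetneq P$ automatically has $\Phi(P^{(i)})=\Phi(\check P^{(i)})=\Phi(\check P)=\Phi(P)$, since primes in a chain share the same maximal prime above them; so $\Phi(P^{(i)})\in\Phi(P)+\bar\sigma$ trivially, and choosing $w\in\sigma$ is irrelevant there. The full-dimensionality of $\sigma$ plays no role in your construction.

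The paper's argument avoids $P$ entirely. Since $\sigma$ is full-dimensional, $\Phi(P)+\sigma$ has nonempty interior in $N_\R$ and therefore contains a $\Gamma$-rational point $\omega'$, where $\Gamma=\log(\base^\times)$. The maximal prime $P'$ with $\Phi(P')=\omega'$ then has $\kappa(P')=\base$, so $\rk(\kappa(P')^\times/\base^\times)=0$ and $\HT(P')=\rk\Lambda$ by Lemma~\ref{lemma:HeightInToricCase}. Every prime in a longest chain under $P'$ has $\Phi$-image equal to $\omega'\in\Phi(P)+\bar\sigma$, hence satisfies property~$(*)$ with respect to $P$ by Theorem~\ref{thm:crown_geom}, and the chain transfers to $\Cont(\Cnvg{P})$. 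This is where full-dimensionality is genuinely used: it guarantees a $\Gamma$-rational point in the open cone $\Phi(P)+\sigma$.
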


\begin{proof} 

Given $P$, let $\omega$ be the image of $P$ in $N_{\R}$. Theorem~\ref{thm:crown_geom} tells us that the maximal $P'\in\ContBaseInt{\base}{\base[\Mon]}$ correspond to the points of $\omega+\bar{\sigma}$. 
Let $\Gamma:=\log(\base^\times)$ be the subgroup of $\R$ corresponding to $\base$.
Since $\sigma$ is full-dimensional, $\omega+\sigma$ contains a $\Gamma$-rational point $\omega'$, i.e., a point of $N_{\Gamma}=N\otimes\Gamma$. 
%
%
Let $P'$ be the maximal element of $\Cont(\base[\Mon])$ corresponding to $\omega'$, so  we have that $\kappa(P')=\base$.
By 
Lemma~\ref{lemma:HeightInToricCase}, 
we get $\HT(P')=\rk(\Lambda)-\rk(\kappa(P')^\times/\base^\times)=\rk(\Lambda)$.
Together with Proposition~\ref{prop:rk-of-lattice-dim-of-Ssigma}, this implies that $\dim_{\mathrm{top}}\Cnvg{P}\geq\rk(\Lambda)=\dim_{\base}\base[\Mon]$. Theorem~\ref{thm:DimIneqsGeneral} gives the opposite inequality, so we obtain that
$\dim_{\mathrm{top}} \Cnvg{P}=\dim_\base\base[\Mon]$.
\end{proof}

\appendix
\section{Totally ordered abelian groups and Hahn's embedding theorem}

\subsection{Totally ordered abelian groups}\label{app:TotOrdAbGps1}
Recall that a totally ordered abelian group is an abelian group with a total order such that addition preserves the order.
For any totally ordered abelian group $G$, written additively, and for any $a,b\in G$ with $a,b> 0_G$, we write $a\gg b$ whenever $a>n\cdot b$ for all natural numbers $n$. 

We say that $a,b> 0_G$ are \emph{archimedean equivalent}, and write $a\sim b$, if neither $a\gg b$ nor $b\gg a$. This gives an equivalence relation, whose equivalence classes are called the \emph{archimedean equivalence classes} of $G$. We order the set of archimedean classes $[a]$ of $G$ ``backwards'', i.e., $[a]<[b]$ if $a\gg b$. 

The following definitions can be made in greater generality of nonabelian groups, but we make the definitions only in the context in which we use them.

\begin{defi}
A \textit{convex subgroup} $H$ of a totally ordered abelian group $G$ is a subgroup of $G$ which is also a convex subset with respect to the order.
\end{defi}

See \cite[Remark and Definition 1.7]{Wed12} for equivalent definitions of a convex subgroup.

\par\medskip
Let $G$ and $G'$ be totally ordered abelian groups. A homomorphism of totally ordered abelian groups from $G$ to $G'$ is a group homomorphism $\ph:G\to G'$ which preserves the order. The congruence-kernel of such a homomorphism is a convex subgroup of $G$.

\begin{defi}
A totally ordered abelian group  $G$, is called \emph{archimedean} if, for all $a,b>0_G$, there exists an integer $n\in\N$ such that $n\cdot a\geq b$. That is, $G$ is archimedean if it has only one archimedean equivalence class.
\end{defi}

Archimedean totally ordered abelian groups have no nontrivial proper convex subgroups.\par\medskip

The following classical statement is a consequence of Theorem 1 and Proposition 2 in Chapter 4 of \cite{Fuc63}.
\begin{theorem}\label{thm: rank1-emb}
Let $G$ be an archimedean totally ordered abelian group. Then there is an injective homomorphism of totally ordered abelian groups from $G$ into the additive group of $\R$ (with the usual ordering). Moreover, this embedding is unique up to multiplication by a scalar.
\end{theorem}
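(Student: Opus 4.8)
The statement is the classical Hölder embedding theorem, and the plan is to prove it directly rather than (or in addition to) merely citing Chapter~4 of \cite{Fuc63}. If $G=\{0_G\}$ there is nothing to prove, so assume $G$ is nontrivial and fix an element $a>0_G$; the embedding we construct will send $a$ to $1$, and the freedom in the choice of $a$ will be exactly the scalar ambiguity asserted in the conclusion. For $g\in G$, say that a rational number $r$ is \emph{$g$-small} if, writing $r=p/q$ with $q\in\Z_{>0}$, one has $pa\le qg$; one first checks this does not depend on the chosen representation, using that positive integers and $a>0_G$ are cancellable in the order. Let $L_g$ be the set of $g$-small rationals. Using the archimedean hypothesis one shows $L_g$ is nonempty (for $g<0_G$ pick $m$ with $ma\ge -g$) and bounded above (for $g>0_G$ pick $N$ with $Na\ge g$), so we may define $\varphi(g)=\sup L_g\in\R$.

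Next I would verify that $L_g$ is a Dedekind cut: it is downward closed, and since $\Q=L_g\sqcup(\Q\setminus L_g)$ with $L_g$ lying entirely below its complement, density of $\Q$ in $\R$ forces $\varphi(g)=\sup L_g=\inf(\Q\setminus L_g)$. The heart of the argument is additivity, $\varphi(g+h)=\varphi(g)+\varphi(h)$. For ``$\ge$'': if $p/q\in L_g$ and $p'/q'\in L_h$ then $pq'a\le qq'g$ and $p'qa\le qq'h$ add up to $(pq'+p'q)a\le qq'(g+h)$, so $(pq'+p'q)/(qq')\in L_{g+h}$, and taking suprema gives the inequality. For ``$\le$'' one runs the symmetric argument with $\Q\setminus L_g$ in place of $L_g$, using the strict inequalities $pa>qg$ and the identity $\varphi(g)=\inf(\Q\setminus L_g)$ from the previous sentence. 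Granting additivity, $\varphi$ is a group homomorphism ($\varphi(0_G)=0$ is immediate). It is order-preserving because $g\le h$ implies $L_g\subseteq L_h$; and it is injective because for $g>0_G$ the archimedean property yields $q$ with $qg\ge a$, whence $1/q\in L_g$ and $\varphi(g)\ge 1/q>0$, so $\ker\varphi=\{0_G\}$.

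For uniqueness, suppose $\psi\colon G\to\R$ is any injective homomorphism of totally ordered abelian groups; since $a>0_G$ we have $c:=\psi(a)>0$. If $p/q\in L_g$, then applying $\psi$ to $pa\le qg$ gives $pc\le q\psi(g)$, i.e.\ $p/q\le\psi(g)/c$; taking the supremum, $\varphi(g)\le\psi(g)/c$. The reverse inequality follows by the same computation applied to rationals in $\Q\setminus L_g$, so $\psi=c\,\varphi$, and hence any two such embeddings differ by a positive real scalar. I expect the only genuinely delicate points to be the representation-independence of ``$g$-small'' and the ``$\le$'' half of additivity; the order-preservation, injectivity, and uniqueness arguments are then routine manipulation of the order relation.
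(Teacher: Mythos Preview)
Your proof is correct: it is the classical H\"older argument, and you have handled the usual delicate points (representation-independence, the two halves of additivity via $\sup L_g$ and $\inf(\Q\setminus L_g)$, and strict preservation of order under an injective $\psi$ for the uniqueness step) cleanly.

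The paper, however, does not give a proof at all; it simply records the statement as a consequence of Theorem~1 and Proposition~2 in Chapter~4 of \cite{Fuc63}. So the two ``approaches'' differ in that the paper treats this as background and cites it, whereas you supply a self-contained construction. What your approach buys is independence from the reference and an explicit normalization $\varphi(a)=1$ that makes the scalar ambiguity transparent; what the citation buys is brevity in a paper where this theorem is only a tool. There is nothing to correct.
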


The following statement is well-known, but the details of the construction can be found in \cite{Gau98}. 
Given a totally ordered abelian group $(G,+,\leq)$, the set $G \cup \{-\infty\}$ equipped with the operations $(\max, +)$ forms an totally ordered semifield. 
This plays a key role in the following theorem, whose proof is a routine exercise. Let $\TOS$ denote the category of totally ordered semifields with semiring homomorphisms, and let $\TOAG$ denote the category of totally ordered abelian groups with order-preserving group homomorphisms.

\begin{thm}\label{thm:TOSAndTOAGEquiv}
The map taking a totally ordered semifield $S$ to the totally ordered abelian group $S^\times$ is an equivalence of categories $\TOS\to\TOAG$. An inverse functor of this functor is given by $G\mapsto G\cup\{-\infty\}$.
\end{thm}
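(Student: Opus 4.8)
The plan is to verify directly that the two assignments described are well-defined functors and are mutually quasi-inverse, with the only mildly delicate point being the interaction of semiring homomorphisms with the added element $-\infty$.

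First I would check that $G \mapsto G \cup \{-\infty\}$ is a functor $\TOAG \to \TOS$. On objects, the cited construction (see \cite{Gau98}) gives that $(G \cup \{-\infty\}, \max, +)$ is a totally ordered semifield, where $-\infty$ is the additive identity and absorbing for $+$, the group identity $0_G$ is the multiplicative identity, and distributivity of $+$ over $\max$ is exactly the order-compatibility of addition in $G$; since $G$ is a group, every nonzero element is invertible, so it really is a semifield. On morphisms, given an order-preserving group homomorphism $\ph : G \to G'$, extend it by $\ph(-\infty) = -\infty$; this clearly preserves $+$ (with the absorbing convention) and preserves $\max$ precisely because $\ph$ is order-preserving, so it is a semiring homomorphism. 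Functoriality (identities to identities, composites to composites) is immediate since the extension is determined on the original group and fixes $-\infty$.

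Next I would check that $S \mapsto S^\times$ is a functor $\TOS \to \TOAG$. Here $S^\times$ denotes the group of nonzero (equivalently, invertible) elements of $S$ under multiplication; it is a totally ordered abelian group under the canonical order coming from the idempotent addition, and order-compatibility of multiplication is part of the totally ordered semifield structure. For a semiring homomorphism $f : S \to S'$, the key point is that $f$ sends $0_S$ to $0_{S'}$ and sends units to units, so $f$ restricts to a group homomorphism $S^\times \to (S')^\times$; and since $f$ is additive it is automatically order-preserving (if $a \geq b$ in $S$, i.e. $a + b = a$, then $f(a) + f(b) = f(a)$), so this restriction lands in $\TOAG$. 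Functoriality is again immediate.

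Finally I would exhibit the natural isomorphisms witnessing that these are quasi-inverse. For a totally ordered semifield $S$, the underlying set of $S^\times \cup \{-\infty\}$ is canonically identified with $S$ (the nonzero elements together with $0_S$, which plays the role of $-\infty$), and one checks this bijection is a semiring isomorphism — multiplication matches by definition of $S^\times$, and addition matches because in an idempotent semifield $a + b = \max(a,b)$ with respect to the canonical order, with $0_S$ absorbing. This identification is natural in $S$. Conversely, for a totally ordered abelian group $G$, one has $(G \cup \{-\infty\})^\times = G$ on the nose, with matching group structure and order, naturally in $G$. Since both round-trips are (naturally) the identity, the functors form an equivalence of categories. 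The only real content — and the step most worth stating carefully rather than "routine" — is that an arbitrary semiring homomorphism between totally ordered semifields automatically respects the order and the distinguished element $0 = -\infty$; everything else is bookkeeping, which is why the authors can reasonably call the proof a routine exercise.
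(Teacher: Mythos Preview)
Your proof is correct and is exactly the routine verification the paper alludes to; the paper gives no proof of its own beyond the remark that it is ``a routine exercise,'' and your argument supplies precisely that exercise. The only point worth noting is that under the paper's standing convention that ``totally ordered semifield'' excludes $\B$, the category $\TOAG$ should correspondingly exclude the trivial group (since $\{0_G\}\cup\{-\infty\}\cong\B$); this is implicit in the paper and does not affect your argument.
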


\subsection{Hahn's embedding theorem}\label{app:TotOrdAbGps2}

Let $G$ be a totally ordered abelian group, written additively. Recall that, for $x\in G$ with $x>0_G$, $[x]$ is the archimedean class of $x$. Also, for any $y\in G$, we have $|y|:=\max(y,-y)$.

Let $I$ be a totally ordered set. The \emph{lexicographic function space on $I$}, denoted $\R^{(I)}$, is the subgroup of $\R^I$ consisting of functions $f:I\to\R$ such that 
$$\supp(f):=\{i\in I: f(i)\neq0_{\R}\}\subseteq I$$
is well-ordered. Using the lexicographic order makes $\R^{(I)}$ a totally ordered abelian group. 
This order is given by specifying that $f<g$ exactly when $f(i_0)<g(i_0)$, where $i_0$ is the least element of $\{i\in I: f(i)\neq g(i)\}$.

\begin{thm}[Hahn's embedding theorem$+\eps$]\label{thm:Hahns-embedding}
Let $G$ be a totally ordered abelian group, and let $I$ be the set of archimedean equivalence classes of $G$. Then there is an embedding $\ph:G\into\R^{(I)}$.
Furthermore, suppose that we have an archimedean abelian group $G'$ and embeddings $\iota:G'\to G$, $\psi:G'\to\R$. 
Let $i_0\in I$ be the common archimedean class of all positive elements of $\iota(G')$, let $\pi:\R^{(I)}\to\R$ denote the projection onto the factor indexed by $i_0$, and let $j:\R\to\R^{(I)}$ be the inclusion of the same factor. 
Then we can choose $\ph$ so that the diagrams
$$\vcenter{\vbox{\xymatrix{
G\ar[r]^{\ph}&\R^{(I)}\ar[d]^{\pi}\\
G'\ar[u]^{\iota}\ar[r]^{\psi}&\R
}}}
\text{\hspace*{1cm} and \hspace*{1cm}}
\vcenter{\vbox{\xymatrix{
G\ar[r]^{\ph}&\R^{(I)}\\
G'\ar[u]^{\iota}\ar[r]^{\psi}&\R\ar[u]_{j}
}}}
$$
commute.
\end{thm}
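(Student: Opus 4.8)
The statement to prove is Hahn's embedding theorem "$+\eps$" --- the classical Hahn embedding together with the extra compatibility with a given archimedean subgroup $G'$ and its real embedding $\psi$.

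My plan is to follow a standard proof of Hahn's embedding theorem and track how the construction can be arranged to respect $\iota$ and $\psi$. First I would recall the skeleton of one of the usual proofs: for each archimedean class $i \in I$, form the convex subgroup $G_{\leq i}$ generated by elements of class $\geq i$ and the convex subgroup $G_{< i}$ generated by elements of strictly larger class (recall classes are ordered "backwards"), so that the quotient $G_{\leq i}/G_{< i}$ is archimedean and hence, by Theorem~\ref{thm: rank1-emb}, embeds into $\R$ --- uniquely up to a positive scalar. A choice of such embeddings $\theta_i : G_{\leq i}/G_{< i} \to \R$ for all $i$, together with a transfinite/Zorn's lemma argument selecting compatible "leading coefficient" data, produces the embedding $\ph : G \hookrightarrow \R^{(I)}$ whose $i$-th coordinate of $\ph(x)$ records the $\theta_i$-image of the leading part of $x$ in class $i$. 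The point that $\supp(\ph(x))$ is well-ordered is exactly the content of the classical theorem and I would cite it rather than reprove it.

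The new content is the compatibility. By Lemma~\ref{lemma:LeastArchClassOfResidueSemifield}-type reasoning (or directly: $G'$ is archimedean, so all its positive elements share one class), $i_0$ is well-defined, and since $\iota$ is an embedding of totally ordered abelian groups, $i_0$ really is the class of $\iota(G')$ inside $G$; moreover $i_0$ is the \emph{smallest} element of $I$ because no element of $G$ can be $\gg$ an element of $\iota(G')$ --- wait, that last point needs the hypothesis; in fact in the intended application this follows from Lemma~\ref{lemma:LeastArchClassOfResidueSemifield}, but for the bare group statement I should only claim $i_0 \in I$, not that it is least, unless the hypotheses of the theorem force it (re-reading: the theorem as stated does not assert $i_0$ is least, it just names the projection/inclusion of that factor, so no issue). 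The key manoeuvre: when running the construction, at the class $i_0$ I am free to choose the archimedean embedding $\theta_{i_0} : G_{\leq i_0}/G_{< i_0} \to \R$; I will choose it so that its restriction to the image of $G'$ agrees with $\psi$ (using the uniqueness-up-to-scalar clause of Theorem~\ref{thm: rank1-emb} --- fix the scalar by matching one nonzero element of $G'$). For all other classes $i \neq i_0$, I note $\iota(G') \cap G_{\leq i} \subseteq \iota(G') \cap G_{<i_0}$ consists only of elements of class $\neq i$... more precisely $\iota(G')$ lies entirely in class $i_0$ (plus $0$), so for $i \neq i_0$ the leading-coordinate-at-$i$ of any $\iota(g')$ is $0$. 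This gives commutativity of the right-hand (inclusion) square: $\ph(\iota(g'))$ is supported only at $i_0$ and equals $j(\psi(g'))$. Commutativity of the left-hand (projection) square is then immediate: $\pi(\ph(\iota(g'))) = \psi(g')$.

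The main obstacle I anticipate is making the "choose $\theta_{i_0}$ compatibly with $\psi$, then extend to all of $G$" step rigorous, i.e.\ checking that fixing the embedding on one archimedean quotient does not obstruct completing it to a full Hahn embedding on $G$. This is really a statement that the freedom in the classical construction is exactly the choice of one positive scalar per archimedean class, so pinning down one of them is harmless; I would either cite a reference where the proof is presented with this flexibility visible (the excerpt says "we describe how to modify one proof of it"), or spell out the Zorn's lemma poset of partial embeddings with the $i_0$-coordinate pre-constrained and verify the maximal element has domain all of $G$ by the usual one-element-extension argument. A secondary, purely bookkeeping, point is confirming that $\iota(G')$ really maps into the subgroup where the $i_0$-coordinate is the \emph{only} possibly-nonzero one --- this is just that every nonzero element of $\iota(G')$ has archimedean class exactly $i_0$, hence lies in $G_{\leq i_0}$, hence has $\ph(\cdot)(i) = 0$ for $i < i_0$ trivially and for $i > i_0$ because such elements are not in $G_{<i}$... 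I need to double check the direction of the ordering convention here, but it works out since elements of a \emph{fixed} class contribute to exactly one coordinate. Once that is in hand, both squares commute by construction and the proof is complete.
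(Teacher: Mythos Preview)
Your overall strategy is right, and you correctly isolate the one genuine choice in the Hahn construction: the scaling of the archimedean embedding at the distinguished class $i_0$. But the point you flag at the end as ``secondary, purely bookkeeping'' is exactly where the argument breaks, and your proposed resolution --- that ``elements of a fixed class contribute to exactly one coordinate'' --- is false.

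Take $G=\Z^2$ with the lexicographic order, $G'=\Z$, $\iota(n)=(n,n)$, and $\psi(n)=n$. Then $i_0$ is the class of $(1,0)$, and there is a second class $i_1>i_0$ (that of $(0,1)$). The identity inclusion $\Z^2\into\R^2$ is a perfectly good Hahn embedding whose induced $\theta_{i_0}$ agrees with $\psi$ on $\iota(G')$, yet $\ph(\iota(n))=(n,n)$ while $j(\psi(n))=(n,0)$: the inclusion square fails. An element of class $i_0$ can have nonzero Hahn coordinates at every $i>i_0$; only the coordinates at $i<i_0$ vanish automatically. Your quotient description $\theta_i(\bar x)$ correctly pins down the \emph{leading} coordinate of $\ph(x)$, not the higher ones --- so your argument actually does deliver the projection square, but not the inclusion square.

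The paper's fix is to pass to the divisible hull $V=G\otimes_\Z\Q$ and, in the Clifford-style construction via complements $K_i$ of $H_{>i}$ in $H_{\geq i}$, to choose the complement $K_{i_0}$ so that it \emph{contains} $\iota(V')$; this is possible precisely because $\iota(V')\cap H_{>i_0}=0$. Since $\ph$ restricted to each summand $K_i$ lands in the single $\R$-factor at index $i$ by construction, having $\iota(V')\subseteq K_{i_0}$ forces $\ph\circ\iota=j\circ\ph_{i_0}\circ\iota$ on the nose; matching $\ph_{i_0}\circ\iota$ with $\psi$ at one nonzero element then yields both squares simultaneously. In the toy example this amounts to taking $K_{i_0}=\Q\cdot(1,1)$ rather than $\Q\cdot(1,0)$, producing the embedding $(a,b)\mapsto(a,b-a)$ instead of the identity.
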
\begin{proof}
The first claim is Hahn's embedding theorem, as proven, for example, in \cite{HahnEmbeddingProof}. We briefly outline the construction of $\ph$ given there in order to show how to use it to prove the second claim as well.

First, $G$ has a canonical embedding into the divisible hull $V=G\otimes_{\Z}\Q$ of $G$. There is a unique ordering on $V$ which makes the inclusion $G\into V$ into an embedding of totally ordered abelian groups. Moreover, this embedding induces an order-isomorphism on sets of archimedean equivalence classes. This reduces the problem to finding an embedding $\ph:V\to\R^{(I)}$.

For each $i\in I$, let $H_{\geq i}:=\{x\in V:[|x|]\geq i\}\cup\{0\}$ and $H_{>i}:=\{x\in V:[|x|]> i\}\cup\{0\}$, both of which are convex rational subspaces of $V$. Since $H_{>i}\subseteq H_{\geq i}$ is an inclusion of $\Q$-vector spaces, there is a complement $K_i$ of $H_{>i}$ in $H_{\geq i}$, in the sense that $H_{\geq i}=H_{>i}\oplus K_i$.

The induced order on $K_i$ as a subgroup of $V$ makes $K_i$ archimedean, so there is an embedding $\ph_i:K_i\into\R$. These give an embedding of $\doplus_{i\in I}K_i\subseteq V$ into $\doplus_{i\in I}\R\subseteq\R^{(I)}$, and then it is shown that this can be extended to an embedding of $V$ into $\R^{(I)}$.

We now turn to the second claim. If $G'=\{0\}$ then any choice of $\ph$ as above works, so we assume from now on that $G'$ is nontrivial. Let $V'= G'\otimes_{\Z}\Q$ be the divisible hull of $G'$. The embeddings $G'\toup{\iota}G$ and $G'\toup{\psi}\R$ extend uniquely to embeddings $V'\toup{\iota}V$ and $V'\toup{\psi}\R$. These maps make the diagram
\comd{
G\inj[r]&V\\
G'\inj[r]\ar[u]^{\iota}&V'\ar[u]_{\iota}
}
commute, so it suffices to show that we can choose $\ph:V\to\R^{(I)}$ such that the diagrams
$$\vcenter{\vbox{\xymatrix{
V\ar[r]^{\ph}&\R^{(I)}\ar[d]^{\pi}\\
V'\ar[u]^{\iota}\ar[r]^{\psi}&\R
}}}
\text{\hspace*{1cm} and \hspace*{1cm}}
\vcenter{\vbox{\xymatrix{
V\ar[r]^{\ph}&\R^{(I)}\\
V'\ar[u]^{\iota}\ar[r]^{\psi}&\R\ar[u]_{j}
}}}
$$
commute.

We start by choosing $K_{i_0}$ appropriately. Note that $\iota(V')\subseteq H_{\geq i}$ and $\iota(V')\cap H_{>i}=\{0\}$. We have an inclusion $H_{>i}\oplus\iota(V')\subseteq H_{\geq i}$ of $\Q$-vector spaces, so there is a complement $K'$ of $H_{>i}\oplus\iota(V')$ in $H_{\geq i}$. We let $K_{i_0}=\iota(V')\oplus K'$.

Fix some nonzero $a\in V'$. Choose the embedding $\ph_{i_0}:K_{i_0}\to\R$ so that $\ph_{i_0}(\iota(a))=\psi(a)$. Then continue the construction of $\ph$ as above. We have that $\pi(\ph(\iota(a)))=\ph_{i_0}(\iota(a))=\psi(a)$. Since by Theorem~\ref{thm: rank1-emb} there is a unique embedding of $V'$ into $\R$ up to scalar multiplication, and $\pi\circ\ph\circ\iota$ and $\psi$ are both embeddings of $V'$ into $\R$ which agree on the nonzero element $a$, we have $\pi\circ\ph\circ\iota=\psi$. Because $\ph_{i_0}$ is the restriction of $\pi\circ\ph$ to $K_{i_0}$, we also have $\ph_{i_0}\circ\iota=\psi$.

By the construction of $\ph$, its restriction to $K_{i_0}$ is $j\circ\ph_{i_0}$. So $\ph\circ\iota=j\circ\ph_{i_0}\circ\iota=j\circ\psi$.
\end{proof}


\section{A universal topology on semirings}\label{sect: universal-topology}

Recall that a topological semiring is a semiring that is also a topological space on which the addition and multiplication maps are continuous. 
The main content of this appendix is the following theorem.

\begin{thm}\label{thm:UniversalTopOnSemiring}
Let $\psi:X\to A$ be a function with $X$ a topological space and $A$ a semiring. Then there is a topology $\tau$ on $A$ such that $(A,\tau)$ is a topological semiring and, for any semiring homomorphism $\ph:A\to B$ with $B$ a topological semiring, $\ph$ is continuous with respect to $\tau$ if and only if the composition $X\toup{\psi}A\toup{\ph}B$ is continuous. Moreover, $\psi:X\to(A,\tau)$ is continuous.
\end{thm}

To prove Theorem \ref{thm:UniversalTopOnSemiring}, we need a few elementary lemmas from point-set topology.

If $X\toup{\ph} Y$ is a function from a set $X$ to a topological space $Y$, recall that the initial topology on $X$ (with respect to $\ph$) is the coarsest topology on $X$ which makes $\ph$ continuous. Explicitly, a subset of $X$ is open in this topology if and only if it is of the form $\ph^{-1}(U)$ with $U\subset Y$ open.

\begin{lemma}\label{lemma:InitialMakesTopSemiring}
Let $\ph:A\to B$ be a homomorphism of semirings with $B$ a topological semiring. The initial topology on $A$ with respect to $\ph$ makes $A$ into a topological semiring.
\end{lemma}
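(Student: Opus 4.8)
Here is the plan to prove Lemma~\ref{lemma:InitialMakesTopSemiring}.

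The goal is to show that, with $A$ carrying the initial topology with respect to $\ph:A\to B$, both the addition map $\alpha:A\times A\to A$ and the multiplication map $\mu:A\times A\to A$ are continuous. First I would recall the explicit description of the initial topology: a subset of $A$ is open exactly when it has the form $\ph^{-1}(U)$ for $U\subseteq B$ open. Consequently, to check continuity of a map $f:Z\to A$ out of a topological space $Z$, it suffices to check that $\ph\circ f:Z\to B$ is continuous; this is the universal property of the initial topology, and it is the only point-set fact I will need.

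The key step is then a diagram chase. Consider the addition map $\alpha:A\times A\to A$. Because $\ph$ is a semiring homomorphism, $\ph\circ\alpha = \alpha_B\circ(\ph\times\ph)$, where $\alpha_B:B\times B\to B$ is the addition of $B$. Now $\ph\times\ph:A\times A\to B\times B$ is continuous: each component $A\times A\to A\toup{\ph}B$ is continuous (the projection is continuous, and post-composing with $\ph$ is continuous since $\ph:A\to B$ is continuous for the initial topology by construction), so the map into the product is continuous. Since $B$ is a topological semiring, $\alpha_B$ is continuous, so the composite $\alpha_B\circ(\ph\times\ph)=\ph\circ\alpha$ is continuous. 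By the universal property of the initial topology applied to $\alpha:A\times A\to A$, this forces $\alpha$ itself to be continuous. The identical argument with $\alpha_B$ replaced by the multiplication $\mu_B$ of $B$ shows $\mu:A\times A\to A$ is continuous. Hence $(A,\tau_{\mathrm{init}})$ is a topological semiring.

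There is no real obstacle here — the only thing to be a little careful about is the standard subtlety that the universal property of the initial topology is stated for maps \emph{into} $A$ from an arbitrary space, and one must apply it with the source space taken to be $A\times A$ (with its product topology), not try to argue coordinatewise on $A$ itself. Once that is set up correctly, continuity of $\ph\times\ph$ and of the structure maps of $B$ does all the work. I would write the proof in essentially the two-sentence form above for each of $\alpha$ and $\mu$, perhaps unifying them by saying "let $\beta_B$ be either structure map of $B$" to avoid repetition.
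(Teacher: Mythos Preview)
Your proof is correct and follows essentially the same approach as the paper: both use the commutative square $\ph\circ\alpha=\alpha_B\circ(\ph\times\ph)$ together with continuity of $\ph\times\ph$ and of the structure maps on $B$. The only cosmetic difference is that you invoke the universal property of the initial topology, whereas the paper unwinds it explicitly by writing an arbitrary open $V\subseteq A$ as $\ph^{-1}(U)$ and computing $m^{-1}(V)=(\ph\times\ph)^{-1}(m^{-1}(U))$.
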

\begin{proof}
We show that the multiplication map $m:A\times A\to A$ is continuous; the proof that the addition map is continuous is analogous.

Note that we have a commutative diagram
\comd{
A\times A\ar[r]^{m}\ar[d]_{\ph\times\ph}&A\ar[d]^{\ph}\\
B\times B\ar[r]^{m}&B.
}
If $V\subset A$ is open, then $V=\ph^{-1}(U)$ for some open set $U\subset B$. Then we have that $m^{-1}(V)=(\ph\times\ph)^{-1}(m^{-1}(U))$. Since $m:B\times B\to B$ and $\ph\times\ph:A\times A\to B\times B$ are both continuous, $(\ph\times\ph)^{-1}(m^{-1}(U))$ is open in $A\times A$.
\end{proof}

Given a set $\frakT$ of topologies on a set $X$, the supremum of $\frakT$ is the smallest topology $\tau$ which contains every 
$\calT\in\frakT$. 
Explicitly, if $\frakT$ is nonempty, the open sets of $\tau$ are arbitrary unions of finite intersections of sets in $\bigcup_{\calT\in \frakT}\calT$. That is, $\bigcup_{\calT\in \frakT}\calT$ is a subbase for $\tau$.

\begin{lemma}\label{lemma:SupPreservesContinuity}
Let $\ph:X\to Y$ be a function, let $X$ be a topological space, and let $\frakT$ be a set of topologies on $Y$, each of which make $\ph$ continuous. Let $\tau$ be the supremum of $\frakT$. Then the map $\ph:X\to(Y,\tau)$ is continuous.
\end{lemma}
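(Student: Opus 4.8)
The plan is to reduce continuity of $\ph:X\to(Y,\tau)$ to checking preimages of subbasic open sets, since a map into a space is continuous as soon as the preimage of every member of a subbase is open. Recall that by the explicit description of the supremum topology, $\bigcup_{\calT\in\frakT}\calT$ is a subbase for $\tau$. So first I would invoke the standard point-set fact that a function is continuous if and only if preimages of subbasic opens are open; this is the only nontrivial topological input, and it is entirely routine.

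Next, fix a subbasic open set $U$ of $(Y,\tau)$. By definition of the subbase, $U\in\calT$ for some $\calT\in\frakT$. By hypothesis, the map $\ph:X\to(Y,\calT)$ is continuous, so $\ph^{-1}(U)$ is open in $X$. Since this holds for every subbasic $U$, the criterion from the previous step gives that $\ph:X\to(Y,\tau)$ is continuous, which is exactly the claim.

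There is essentially no obstacle here: the lemma is a direct unwinding of the definition of the supremum topology together with the subbase criterion for continuity. The only point requiring a word of care is the degenerate case $\frakT=\emptyset$, in which case $\tau$ is the indiscrete topology on $Y$ and every map into it is continuous; I would dispatch this in one sentence (or simply note that in the intended application $\frakT$ is nonempty). I expect the write-up to be three or four sentences long.
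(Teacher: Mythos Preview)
Your proof is correct and matches the paper's approach: both check that preimages of subbasic opens from $\bigcup_{\calT\in\frakT}\calT$ are open, with the paper spelling out the subbase criterion explicitly (preimages commute with unions and finite intersections) rather than citing it. Your handling of the $\frakT=\emptyset$ case is a nice touch the paper omits.
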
\begin{proof}
Every open set in $(Y,\tau)$ is a union of finite intersections of sets in $\bigcup_{\calT\in \frakT}\calT$. Since unions and intersections are preserved by taking inverse images, and because $\ph^{-1}(U)$ is open in $X$ for each $U\in\bigcup_{\calT\in \frakT}\calT$, we get that the inverse image of any open set in $(Y,\tau)$ is open in $X$.
\end{proof}

For any topology $\calT$ on a set $X$, we let $\calT\times\calT$ denote the product topology on $X\times X$.

\begin{lemma}\label{lemma:SupPreservesTopSemiring}
Let $\frakT$ be a collection of topologies on a semiring $A$ such that $(A,\calT)$ is a topological semiring for every $\calT\in\frakT$. Let $\tau$ be the supremum of $\frakT$. Then $(A,\tau)$ is a topological semiring.
\end{lemma}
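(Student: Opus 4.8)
The plan is to verify that the addition map $+\colon A\times A\to A$ and the multiplication map $m\colon A\times A\to A$ are continuous when the target carries $\tau$ and the source carries the product topology $\tau\times\tau$. I will only write out the argument for $m$, since the case of addition is word-for-word identical. The first reduction is the standard point-set fact that, to check continuity of a map into a space, it suffices to check that preimages of subbasic open sets are open. By the description of the supremum recalled just before the statement, $\bigcup_{\calT\in\frakT}\calT$ is a subbase for $\tau$, so it is enough to show that $m^{-1}(U)\in\tau\times\tau$ for every $U$ lying in some $\calT\in\frakT$.

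So I would fix such a $U\in\calT$. Since $(A,\calT)$ is assumed to be a topological semiring, $m^{-1}(U)$ is open in the product topology $\calT\times\calT$. The proof then reduces to the point-set observation that $\calT\times\calT$ is coarser than $\tau\times\tau$. This is just monotonicity of the product-topology construction: because $\calT\subseteq\tau$, every subbasic open set of $\calT\times\calT$, i.e.\ one of the form $V\times A$ or $A\times V$ with $V\in\calT$, is also a subbasic open set of $\tau\times\tau$; hence $\calT\times\calT\subseteq\tau\times\tau$. Therefore $m^{-1}(U)\in\tau\times\tau$, as wanted.

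Finally, since taking preimages commutes with arbitrary unions and with finite intersections, and every $\tau$-open set is by definition a union of finite intersections of sets from $\bigcup_{\calT\in\frakT}\calT$, the preimage under $m$ of an arbitrary $\tau$-open set is a union of finite intersections of $(\tau\times\tau)$-open sets, hence $(\tau\times\tau)$-open. This shows $m$ is continuous; running the same argument for $+$ completes the proof that $(A,\tau)$ is a topological semiring. I do not anticipate a genuine obstacle here — the argument is elementary — and the only subtlety worth flagging is not to confuse $\tau\times\tau$ with the supremum of the product topologies $\{\calT\times\calT : \calT\in\frakT\}$, which in general it is not; we only need, and only use, the easy inclusion $\calT\times\calT\subseteq\tau\times\tau$.
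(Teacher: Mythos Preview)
Your proof is correct and follows essentially the same approach as the paper: reduce to checking preimages of subbasic opens $U\in\bigcup_{\calT\in\frakT}\calT$, then use that $m^{-1}(U)\in\calT\times\calT\subseteq\tau\times\tau$ for the particular $\calT$ containing $U$. You have simply written out in more detail the inclusion $\calT\times\calT\subseteq\tau\times\tau$ and the subbase reduction that the paper takes for granted.
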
\begin{proof}
We will show that the multiplication map $m:A\times A\to A$ is continuous with respect to $\tau$; the proof that the addition map is continuous is analogous. Since $\bigcup_{\calT\in\frakT}\calT$ is a subbase for $\tau$, it suffices to check that $m^{-1}(U)$ is in $\tau\times\tau$ for every $U\in\bigcup_{\calT\in\frakT}\calT$. Given such a $U$, fix $\calT\in\frakT$ such that $U\in\calT$. Then $m^{-1}(U)\in\calT\times\calT\subseteq\tau\times\tau$.
\end{proof}

\begin{proof}[Proof of Theorem \ref{thm:UniversalTopOnSemiring}]
Let $\frakT$ be the set of topologies on $A$ which are the initial topology on $A$ with respect to some semiring homomorphism $\ph:A\to B$ with $B$ a topological semiring and $X\toup{\psi}A\toup{\ph}B$ continuous. Note that $\frakT$ is not empty because it contains the trivial topology $\{\emptyset, A\}$. Let $\tau$ be the supremum of $\frakT$.

Lemma~\ref{lemma:InitialMakesTopSemiring} tells us that $(A,\calT)$ is a topological semiring for each $\calT\in\frakT$. Thus, by  Lemma~\ref{lemma:SupPreservesTopSemiring}, we have that $(A,\tau)$ is a topological semiring.

Since $\psi:X\to (A,\calT)$ is continuous for all $\calT\in\frakT$, Lemma \ref{lemma:SupPreservesContinuity} gives us that the map $\psi:X\to(A,\tau)$ is continuous. In particular, if $\ph:(A,\tau)\to B$ is a continuous semiring homomorphism, then $X\toup{\psi}(A,\tau)\toup{\ph}B$ is continuous. For the other direction, suppose that $\ph:A\to B$ is a semiring homomorphism such that $X\toup{\psi}A\toup{\ph}B$  is continuous, and let $\calT$ be the initial topology on $A$ with respect to $\ph$. Since $\tau$ contains $\calT$, then the map $\id_A:(A,\tau)\to(A,\calT)$ is continuous. Because $\ph:(A,\calT)\to B$ is continuous, this gives us that $\ph:(A,\tau)\to B$ is continuous.
\end{proof}

\begin{remark}
The proofs in this appendix do not use the full structure of semirings. The same argument goes through for an arbitrary topological algebraic structure, by which we mean an algebraic structure with a topology on the underlying set which makes the operations continuous. In particular, these results apply to non-idempotent semirings.
\end{remark}


\section{Transcendence degree of a totally ordered semifield extension}\label{app: trdeg}

We propose a definition of transcendence degree for an extension of totally ordered semifields based on the definition of algebraic closure for semifields and we relate it to a quantity that occurs in our study of dimension in \ref{sect: dim-Cnvg}. 

\begin{definition}

Let $L/S$ be an extension of totally ordered semifields. We say that the elements $a_1, \ldots, a_n \in L$ are \textit{algebraically dependent over $S$} if there is a $b\in S$ and some integers $u_1, \ldots, u_n$, not all $0$, such that $a_1^{u_1}\cdots a_n^{u_n}=b$. 
If $a_1, \ldots, a_n \in L$ are not algebraically dependent over $S$, then we say that they are \emph{algebraically independent over $S$}.
\end{definition}

Note that, if $\overline{S}$ 
is 
the algebraic closure of $S$ in $L$, then $a_1, \ldots, a_n $ are algebraically independent over $S$ if and only if they are algebraically independent over $\overline{S}$. 

The extension of $S$ generated by $a_1, \ldots, a_n$ is $\{b a_1^{u_1}\cdots a_n^{u_n} : b\in S,  u_1, \ldots, u_n \in \Z \}$ because any polynomial (sum) will equal one of its terms. We will denote it by $S(a_1, \ldots, a_n)$.

\begin{prop}\label{prop: indepIFFalg}

Let $a_1, \ldots, a_n \in L$ be algebraically independent over $S$. Then $a_{n+1}$ is algebraic over $S(a_1, \ldots, a_n)$ if and only if the set $\{a_1, \ldots, a_n, a_{n+1}\}$ is algebraically dependent over $S$. 
\end{prop}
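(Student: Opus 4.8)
The plan is to prove the two implications separately, with the reverse direction being essentially immediate from the definitions and the forward direction requiring a small computation with exponents.

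First I would handle the ``if'' direction. Suppose $\{a_1,\ldots,a_n,a_{n+1}\}$ is algebraically dependent over $S$, so there exist $b\in S$ and integers $u_1,\ldots,u_{n+1}$, not all zero, with $a_1^{u_1}\cdots a_n^{u_n}a_{n+1}^{u_{n+1}}=b$. If $u_{n+1}=0$, then $a_1,\ldots,a_n$ would be algebraically dependent over $S$, contradicting the hypothesis; hence $u_{n+1}\neq0$. Then, working in $L$ (a semifield, so we may invert and we may raise to negative integer powers), we get $a_{n+1}^{u_{n+1}}=b\,a_1^{-u_1}\cdots a_n^{-u_n}\in S(a_1,\ldots,a_n)$, with $u_{n+1}\neq0$, which is exactly the statement that $a_{n+1}$ is algebraic over $S(a_1,\ldots,a_n)$ (using the definition of algebraic closure recalled in the excerpt: $x$ is algebraic over a semifield iff some positive power of $x$ lies in that semifield; and if $a_{n+1}^{u_{n+1}}$ is in the subsemifield then so is $a_{n+1}^{|u_{n+1}|}$).

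For the ``only if'' direction, suppose $a_{n+1}$ is algebraic over $S(a_1,\ldots,a_n)$, so there is a positive integer $u$ with $a_{n+1}^u\in S(a_1,\ldots,a_n)$. By the explicit description of $S(a_1,\ldots,a_n)$ recalled just before the proposition — namely every element is a single term $b\,a_1^{u_1}\cdots a_n^{u_n}$ with $b\in S$ and $u_i\in\Z$ — we can write $a_{n+1}^u=b\,a_1^{u_1}\cdots a_n^{u_n}$ for some such $b$ and exponents. Rearranging gives $a_1^{-u_1}\cdots a_n^{-u_n}a_{n+1}^u=b\in S$, and since $u>0$ the tuple of exponents $(-u_1,\ldots,-u_n,u)$ is not all zero. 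This is precisely the assertion that $\{a_1,\ldots,a_n,a_{n+1}\}$ is algebraically dependent over $S$.

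I do not anticipate a serious obstacle here; the only point requiring a little care is making sure the ``single term'' description of $S(a_1,\ldots,a_n)$ is used correctly — this is where idempotency is essential, since it is what collapses a sum of terms to its (unique, because $L$ is totally ordered) largest term, and it is why the extension generated by finitely many elements is so simple. One should also be mildly careful that the definition of ``algebraic over a semifield'' from the preliminaries is phrased in terms of a \emph{positive} power lying in the subsemifield, whereas the definition of algebraic (in)dependence allows arbitrary integer exponents; passing between the two just uses that in a semifield $x^{-k}\in T \iff x^k\in T$ for a subsemifield $T$, together with replacing an exponent by its absolute value.
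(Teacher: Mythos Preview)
Your proof is correct and follows essentially the same route as the paper's own argument: both directions are handled by unpacking the single-term description of $S(a_1,\ldots,a_n)$ and rearranging exponents, with the key observation that $u_{n+1}\neq 0$ by the assumed independence of $a_1,\ldots,a_n$. The only difference is cosmetic (you treat the two implications in the opposite order and are slightly more explicit about the sign of $u_{n+1}$).
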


\begin{proof}
If $a_{n+1}$ is algebraic over $S(a_1, \ldots, a_n)$, then $a_{n+1}^{u_{n+1}}= c$, for some nonzero $u_{n+1} \in \Z$ and $c\in S(a_1, \ldots, a_n)$, then $c =  b a_1^{u_1}\cdots a_n^{u_n}$, for some $b\in S$ and $u_1, \ldots, u_n \in \Z$ and so $b = a_1^{-u_1}\cdots a_n^{-u_n}a_{n+1}^{u_{n+1}}$ implying that $a_1, \ldots, a_n, a_{n+1}$ are algebraically dependent over $S$.

For the other direction, 
say $a_1, \ldots, a_{n+1}$ are algebraically dependent over $S$, so for some $b\in S$ $a_1^{u_1}\cdots a_{n+1}^{u_n+1}=b$  and $u_1, \ldots, u_n \in \Z$, not all 0. Since $a_1, \ldots, a_n$ are algebraically independent over $S$, then $u_{n+1} \neq 0$, and so $a_{n+1}^{u_{n+1}} = b a_1^{-u_1}\cdots a_n^{-u_n}$, 
showing 
that $a_{n+1}^{u_{n+1}}$ is algebraic over $S(a_1, \ldots, a_n)$.
\end{proof}

\begin{defi}
An extension $L/S$ of totally ordered semifields 
\emph{has a finite transcendence degree} 
if there exist $a_1, \ldots, a_n \in L$ such that $L = \overline{S(a_1, \ldots, a_n)}$.
\end{defi}

\begin{prop}

Let $a_1, \ldots, a_n \in L$ be algebraically independent over $S$. Then $L = \overline{S(a_1, \ldots, a_n)}$ if and only if $\{ a_1, \ldots, a_n \}$ is maximal among algebraically independent sets over $S$.
\end{prop}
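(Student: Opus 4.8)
The plan is to prove the two implications separately, using Proposition~\ref{prop: indepIFFalg} as the main tool to translate between "maximal algebraically independent set" and "generates up to algebraic closure".

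First I would prove the forward direction: assume $L = \overline{S(a_1,\ldots,a_n)}$ and show $\{a_1,\ldots,a_n\}$ is maximal among algebraically independent sets over $S$. It is already algebraically independent by hypothesis, so it suffices to show that for any $a_{n+1} \in L$, the set $\{a_1,\ldots,a_n,a_{n+1}\}$ is algebraically dependent over $S$. Since $a_{n+1} \in L = \overline{S(a_1,\ldots,a_n)}$, by definition of algebraic closure there is some $m \in \Z_{>0}$ such that $a_{n+1}^m \in S(a_1,\ldots,a_n)$; in particular $a_{n+1}$ is algebraic over $S(a_1,\ldots,a_n)$. Then Proposition~\ref{prop: indepIFFalg} (its "only if" direction) immediately gives that $\{a_1,\ldots,a_n,a_{n+1}\}$ is algebraically dependent over $S$. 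Hence no algebraically independent set over $S$ properly contains $\{a_1,\ldots,a_n\}$.

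Next I would prove the reverse direction: assume $\{a_1,\ldots,a_n\}$ is maximal among algebraically independent sets over $S$, and show $L = \overline{S(a_1,\ldots,a_n)}$. The inclusion $\overline{S(a_1,\ldots,a_n)} \subseteq L$ is automatic. For the reverse inclusion, take any $a_{n+1} \in L$. If $a_{n+1}$ were algebraically independent from $\{a_1,\ldots,a_n\}$ over $S$ — i.e., if $\{a_1,\ldots,a_n,a_{n+1}\}$ were algebraically independent over $S$ — this would contradict maximality; so $\{a_1,\ldots,a_n,a_{n+1}\}$ is algebraically dependent over $S$. By the "if" direction of Proposition~\ref{prop: indepIFFalg}, $a_{n+1}^{u_{n+1}}$ is algebraic over $S(a_1,\ldots,a_n)$ for some nonzero $u_{n+1}$; chasing through, $a_{n+1}$ itself is algebraic over $S(a_1,\ldots,a_n)$, so $a_{n+1} \in \overline{S(a_1,\ldots,a_n)}$. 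Thus $L \subseteq \overline{S(a_1,\ldots,a_n)}$.

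I do not expect a serious obstacle here: both directions are essentially a direct application of Proposition~\ref{prop: indepIFFalg}, and the only point requiring a little care is keeping the bookkeeping of exponents straight (the algebraic closure is defined via $x^m \in S$, so "algebraic over" must be unwound through a power), together with noting that the case $a_{n+1} \in \{a_1,\ldots,a_n\}$ is trivially dependent and so causes no issue. One could also remark that transitivity of algebraic dependence is not needed for this particular statement — it would only be needed if one wanted to conclude that all maximal independent sets have the same cardinality, which is not asserted here.
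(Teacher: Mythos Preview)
Your proposal is correct and follows essentially the same approach as the paper: both reduce the equivalence to Proposition~\ref{prop: indepIFFalg}. The only cosmetic difference is that the paper packages the argument as a single chain of contrapositive equivalences ($L\neq\overline{S(a_1,\ldots,a_n)}$ iff some $a_{n+1}$ is not algebraic over $S(a_1,\ldots,a_n)$ iff $\{a_1,\ldots,a_{n+1}\}$ is independent iff $\{a_1,\ldots,a_n\}$ is not maximal), whereas you unwind the two directions separately; the content is identical.
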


\begin{proof}

By definition, $L \neq \overline{S(a_1, \ldots, a_n)}$ if and only if there is some $a_{n+1}\in L$ which is not algebraic over $S(a_1, \ldots, a_n)$. By Proposition~\ref{prop: indepIFFalg}, this is equivalent to there being some $a_{n+1}\in L$ such that $\{a_1, \ldots, a_{n+1}\}$ is algebraically independent over $S$. But this exactly says that $\{a_1, \ldots, a_{n}\}$ is not maximal among algebraically independent sets over $S$.
\end{proof}

\begin{prop}
Let $L = \overline{S(a_1, \ldots, a_n)}$. Then  $\{a_1, \ldots, a_{n}\}$ is algebraically independent if and only if $\{a_1, \ldots, a_{n}\}$ is minimal among sets $\{b_1, \ldots, b_m \}\subseteq L$, such that $L = \overline{S(b_1, \ldots, b_m)}$.
\end{prop}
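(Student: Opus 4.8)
The plan is to prove both implications by relating minimality of a generating set to algebraic independence, using Proposition~\ref{prop: indepIFFalg} as the main engine, exactly as in the earlier propositions of this appendix.

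\smallskip

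\noindent\textbf{Forward direction.} Suppose $\{a_1,\ldots,a_n\}$ is algebraically independent over $S$ and $L=\overline{S(a_1,\ldots,a_n)}$. I would show it is minimal among generating sets. If not, then after reindexing there is a proper subset, say $\{a_1,\ldots,a_{n-1}\}$, with $L=\overline{S(a_1,\ldots,a_{n-1})}$. In particular $a_n\in L=\overline{S(a_1,\ldots,a_{n-1})}$, so $a_n$ is algebraic over $S(a_1,\ldots,a_{n-1})$. But $\{a_1,\ldots,a_{n-1}\}$ is algebraically independent over $S$ (a subset of an independent set), so Proposition~\ref{prop: indepIFFalg} gives that $\{a_1,\ldots,a_{n-1},a_n\}$ is algebraically dependent over $S$, contradicting the independence of $\{a_1,\ldots,a_n\}$. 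One subtlety to address: a minimal generating set need not a priori have $n$ elements, so I should phrase this as ``no proper subset of $\{a_1,\ldots,a_n\}$ generates $L$,'' which is what the preceding argument actually establishes for an arbitrary dropped element $a_i$ (replace the index $n$ by $i$ throughout).

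\smallskip

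\noindent\textbf{Reverse direction.} Suppose $\{a_1,\ldots,a_n\}$ is minimal among subsets $\{b_1,\ldots,b_m\}\subseteq L$ with $L=\overline{S(b_1,\ldots,b_m)}$, but assume for contradiction that $\{a_1,\ldots,a_n\}$ is algebraically dependent over $S$. Then there exist $b\in S$ and integers $u_1,\ldots,u_n$, not all zero, with $a_1^{u_1}\cdots a_n^{u_n}=b$; reindex so that $u_n\neq0$. Then $a_n^{u_n}=b\,a_1^{-u_1}\cdots a_{n-1}^{-u_{n-1}}\in S(a_1,\ldots,a_{n-1})$, so $a_n$ is algebraic over $S(a_1,\ldots,a_{n-1})$, i.e.\ $a_n\in\overline{S(a_1,\ldots,a_{n-1})}$. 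I then need the (routine) observation that $\overline{S(a_1,\ldots,a_{n-1})}$ is itself a totally ordered semifield closed under taking roots in $L$ and containing $S$, $a_1,\ldots,a_{n-1}$, and now $a_n$; hence it contains $S(a_1,\ldots,a_n)$ and, being algebraically closed in $L$, contains $\overline{S(a_1,\ldots,a_n)}=L$. Therefore $L=\overline{S(a_1,\ldots,a_{n-1})}$, contradicting minimality of $\{a_1,\ldots,a_n\}$.

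\smallskip

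\noindent\textbf{Main obstacle.} The only non-formal point is the transitivity-type fact used in the reverse direction: that if $a_n$ is algebraic over $S(a_1,\ldots,a_{n-1})$ then $\overline{S(a_1,\ldots,a_n)}\subseteq\overline{S(a_1,\ldots,a_{n-1})}$. This amounts to checking that ``algebraic over'' is transitive for these semifield extensions — if $x^k\in S(a_1,\ldots,a_n)$ and each generator $a_i$ ($i\le n$) satisfies $a_n^{u_n}\in S(a_1,\ldots,a_{n-1})$, then some power of $x$ lies in $S(a_1,\ldots,a_{n-1})$ — which follows from the explicit description $S(a_1,\ldots,a_n)=\{b\,a_1^{v_1}\cdots a_n^{v_n}: b\in S,\ v_i\in\Z\}$ recorded before Proposition~\ref{prop: indepIFFalg}: raise a general element to the power $u_n$ to clear the $a_n$ exponent. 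I expect this to be a short computation rather than a genuine difficulty, and it can be isolated as a one-line lemma if desired.
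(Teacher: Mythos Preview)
Your proof is correct and follows essentially the same route as the paper's. The paper compresses both directions into a single chain of equivalences (``not minimal $\iff$ some $a_i\in\overline{S(a_1,\ldots,\hat a_i,\ldots,a_n)}$ $\iff$ $a_i^{u_i}=b\prod_{j\ne i}a_j^{-u_j}$ with $u_i\neq0$ $\iff$ algebraically dependent''), whereas you split the argument into two contrapositive directions and invoke Proposition~\ref{prop: indepIFFalg} explicitly in the forward one. You are in fact more careful than the paper on one point: the implication ``$a_i$ algebraic over $S(a_1,\ldots,\hat a_i,\ldots,a_n)$ $\Rightarrow$ $\overline{S(a_1,\ldots,\hat a_i,\ldots,a_n)}=L$'' (your ``main obstacle'') is asserted without comment in the paper's first equivalence, while you isolate it and give the short argument (raise to the $|u_n|$-th power to clear the $a_n$ exponent).
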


\begin{proof}

The set $\{a_1, \ldots, a_{n}\}$ is not minimal if and only if, for some $i$, $a_i \in \overline{S(a_1, \ldots, \hat{a_i}, \ldots a_{n})}$, where $\{a_1, \ldots, \hat{a_i}, \ldots, a_{n}\}$ denotes the set $\{a_1, \ldots, a_{n}\}$ with $a_i$ removed from it. This is equivalent to saying that there is some $i$ and $u_i\in\Z_{>0}$ such that $a_i^{u_i} = b\prod_{j\neq i} a_j^{-u_j}$, for some $u_j\in\Z$ and $b\in S$. This happens exactly if $b= a_1^{u_1}\cdots a_i^{u_i} \cdots a_n^{u_n}$, with $b\in S$ and $u_1, \ldots, u_n \in \Z$ not all 0, i.e., if $a_1, \ldots, a_n \in S$ are algebraically dependent over $S$.
\end{proof}

\begin{defi}
A \textit{transcendence basis} of $L/S$ is $\{ a_1, \ldots, a_n  \}$ such that any of the following equivalent statements holds:
\begin{itemize}
    \item The set $\{a_1, \ldots, a_{n}\}$ is algebraically independent over $S$ and $L = \overline{S(a_1, \ldots, a_n)}$.
    \item The set $\{a_1, \ldots, a_{n}\}$ is maximally algebraically independent over $S$.
    \item The set $\{a_1, \ldots, a_{n}\}$ is minimal such that $L = \overline{S(a_1, \ldots, a_n)}$.
\end{itemize}
\end{defi}

\begin{lemma}\label{lemma:algIndepUniqueRep}
Let $a_1,\ldots,a_n\in L^\times$. Then $a_1,\ldots,a_n$ are algebraically independent over $S$ if and only if each element of $L^\times$ can be written as $ca_1^{u_1}\cdots a_n^{u_n}$ with $c\in S^\times$ and $u_i\in\Z$ in at most one way. That is, if $ca_1^{u_1}\cdots a_n^{u_n}=da_1^{z_1}\cdots a_n^{z_n}$ implies $c=d$ and $u_i=z_i$ for $1\leq i\leq n$.
\end{lemma}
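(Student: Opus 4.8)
The statement is an elementary ``uniqueness of representation'' fact, so the plan is to unwind both directions through the definition of algebraic independence. Recall that $a_1,\dots,a_n\in L^\times$ are algebraically independent over $S$ means: there is no $b\in S$ and no integers $u_1,\dots,u_n$ \emph{not all zero} with $a_1^{u_1}\cdots a_n^{u_n}=b$. Note that the $b$ appearing here is automatically in $S^\times$, since the left-hand side is a unit (each $a_i\in L^\times$), so I may as well phrase the independence condition with $b\in S^\times$ from the start; I would open the proof with that remark.

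For the forward direction, suppose $a_1,\dots,a_n$ are algebraically independent and that some $x\in L^\times$ admits two representations $ca_1^{u_1}\cdots a_n^{u_n}=x=da_1^{z_1}\cdots a_n^{z_n}$ with $c,d\in S^\times$ and $u_i,z_i\in\Z$. Since $L$ is a semifield, every nonzero element is invertible, so I can divide one representation by the other to get $(c/d)\,a_1^{u_1-z_1}\cdots a_n^{u_n-z_n}=1_L$, i.e.\ $a_1^{u_1-z_1}\cdots a_n^{u_n-z_n}=d/c\in S^\times$. By algebraic independence, the exponents $u_i-z_i$ must all be zero, whence $u_i=z_i$ for all $i$, and then $c/d=1_L$ forces $c=d$ (using that $S\hookrightarrow L$, or just that $c,d$ are units and their ratio is $1$).

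For the converse, I argue contrapositively: if $a_1,\dots,a_n$ are \emph{not} algebraically independent, there are $b\in S^\times$ and integers $u_1,\dots,u_n$ not all zero with $a_1^{u_1}\cdots a_n^{u_n}=b$. Then the element $b\in L^\times$ has (at least) the two representations $b=b\cdot a_1^0\cdots a_n^0$ and $b=1_S\cdot a_1^{u_1}\cdots a_n^{u_n}$, which are genuinely distinct because some $u_i\neq0$. Hence uniqueness of representation fails, completing the equivalence.

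There is essentially no obstacle here — the only things to be careful about are (i) that $L$ is a semifield so the division step is legitimate, (ii) that the element $b$ produced by failure of independence is indeed a unit so it lies in $L^\times$ and the two representations are comparable in the stated form, and (iii) keeping track of the distinction $S$ vs.\ $S^\times$. I would keep the write-up to a short paragraph per direction, as above, with no displayed computations beyond the single division identity.
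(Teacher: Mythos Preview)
Your proposal is correct and follows essentially the same approach as the paper: both directions are handled by dividing two representations to reduce to a relation $a_1^{v_1}\cdots a_n^{v_n}\in S^\times$, and the converse is done by the contrapositive, exhibiting two distinct representations (the paper uses $1_S$ where you use $b$, a cosmetic difference). Your care in noting that the $b$ from a dependence relation is automatically a unit is a nice touch that the paper glosses over.
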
\begin{proof}
Suppose that $a_1,\ldots,a_n$ are algebraically dependent, so there are $u_1,\ldots,u_n\in\Z$ and $c\in S$, not all zero, such that $a_1^{u_1}\cdots a_n^{u_n}=c$. Then $c^{-1}a_1^{u_1}\cdots a_n^{u_n}=1_S=1_S a_1^0\cdots a_n^0$ with some $u_i\neq0$.

Now suppose that $a_1,\ldots,a_n$ are algebraically independent and $ca_1^{u_1}\cdots a_n^{u_n}=da_1^{z_1}\cdots a_n^{z_n}$ with $c,d\in S^\times$ and $u_i,z_i\in\Z$. Then $a_1^{u_1-z_1}\cdots a_n^{u_n-z_n}=\frac{d}{c}\in S^\times$, so $u_i-z_i=0$ for all $i$. Thus $z_i=u_i$ and $\frac{d}{c}=a_1^0\cdots a_n^0$, so $d=c$.
\end{proof}

\begin{prop}\label{prop:BasisExchangeAxiom}
Let $\{a_1, \ldots, a_{n}\}$ and $\{b_1, \ldots, b_{m}\}$ be transcendence bases of $L/S$. Then for some $1\leq j\leq m$ the set $\{b_j, a_2, \ldots, a_{n}\}$ is a transcendence basis.  
\end{prop}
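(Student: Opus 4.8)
This is the classical Steinitz exchange lemma for the (pre-)matroid of algebraic independence, adapted to totally ordered semifields. The plan is to use the characterization of transcendence bases via the equivalent conditions in the preceding definition, together with Lemma~\ref{lemma:algIndepUniqueRep} and Proposition~\ref{prop: indepIFFalg}.

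\begin{proof}[Proof proposal]
The plan is as follows. First I would fix a transcendence basis $\{a_1,\ldots,a_n\}$ and a transcendence basis $\{b_1,\ldots,b_m\}$ of $L/S$. Since $\{a_1,\ldots,a_n\}$ is maximally algebraically independent, for each $j$ the set $\{b_j,a_1,\ldots,a_n\}$ is algebraically dependent over $S$: there is $c\in S$ and integers $v_j,u_{j,1},\ldots,u_{j,n}$, not all zero, with $b_j^{v_j}a_1^{u_{j,1}}\cdots a_n^{u_{j,n}}=c$. In fact $v_j\neq 0$, for otherwise the $a_i$ would be algebraically dependent over $S$. Next I claim that there is some $j$ with $u_{j,1}\neq 0$. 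Suppose not; then for every $j$, $b_j^{v_j}$ lies in the sub-semifield $\overline{S(a_2,\ldots,a_n)}$, so each $b_j$ is algebraic over $S(a_2,\ldots,a_n)$, hence $L=\overline{S(b_1,\ldots,b_m)}\subseteq\overline{S(a_2,\ldots,a_n)}$. But then $\{a_2,\ldots,a_n\}$ would be a set with $L=\overline{S(a_2,\ldots,a_n)}$ that is strictly smaller than $\{a_1,\ldots,a_n\}$, contradicting minimality of the transcendence basis $\{a_1,\ldots,a_n\}$. So fix such a $j$ with $u_{j,1}\neq 0$; I will show $\{b_j,a_2,\ldots,a_n\}$ is a transcendence basis.

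For algebraic independence of $\{b_j,a_2,\ldots,a_n\}$ over $S$: suppose $b_j^{w}a_2^{z_2}\cdots a_n^{z_n}=d\in S$ with $w,z_2,\ldots,z_n$ not all zero. If $w=0$ this contradicts independence of $\{a_2,\ldots,a_n\}\subseteq\{a_1,\ldots,a_n\}$, so $w\neq 0$. Raising the defining relation $b_j^{v_j}a_1^{u_{j,1}}\cdots a_n^{u_{j,n}}=c$ to the power $w$ and the new relation to the power $v_j$ and dividing, I can eliminate $b_j$ to obtain a relation of the form $a_1^{w u_{j,1}}a_2^{(\cdots)}\cdots a_n^{(\cdots)}=(\text{element of }S)$ with the exponent $w u_{j,1}$ of $a_1$ nonzero (since $w\neq 0$ and $u_{j,1}\neq 0$). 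This contradicts algebraic independence of $\{a_1,\ldots,a_n\}$ over $S$. Hence $\{b_j,a_2,\ldots,a_n\}$ is algebraically independent over $S$.

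For $L=\overline{S(b_j,a_2,\ldots,a_n)}$: from $b_j^{v_j}a_1^{u_{j,1}}\cdots a_n^{u_{j,n}}=c$ with $u_{j,1}\neq 0$ I get $a_1^{u_{j,1}}=c\,b_j^{-v_j}a_2^{-u_{j,2}}\cdots a_n^{-u_{j,n}}\in S(b_j,a_2,\ldots,a_n)$, so $a_1$ is algebraic over $S(b_j,a_2,\ldots,a_n)$, i.e.\ $a_1\in\overline{S(b_j,a_2,\ldots,a_n)}$. Consequently $S(a_1,a_2,\ldots,a_n)\subseteq\overline{S(b_j,a_2,\ldots,a_n)}$, and taking algebraic closures (using that the algebraic closure operation is idempotent, which follows from the definition of algebraic closure via $x^n\in S$) gives $L=\overline{S(a_1,\ldots,a_n)}\subseteq\overline{S(b_j,a_2,\ldots,a_n)}\subseteq L$. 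Thus $\{b_j,a_2,\ldots,a_n\}$ is both algebraically independent over $S$ and generates $L$ up to algebraic closure, so it is a transcendence basis.

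I expect the main obstacle to be the careful bookkeeping with exponent vectors when eliminating $b_j$ from two multiplicative relations — in particular keeping track of which exponents are guaranteed nonzero — and making sure that all manipulations stay within the multiplicative group $L^\times$, where Lemma~\ref{lemma:algIndepUniqueRep} applies cleanly. One should note at the outset that all the $a_i$, $b_j$ may be taken in $L^\times$ (a transcendence basis consists of units, since $0_L$ is not algebraic over anything), so that negative exponents and the uniqueness-of-representation lemma are available throughout. A minor subtlety worth a sentence is the idempotency of the algebraic-closure operator, which is used implicitly when chaining the closure inclusions together.
\end{proof}
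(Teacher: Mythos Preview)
Your proposal is correct and follows essentially the same Steinitz-exchange argument as the paper. The only cosmetic differences are that the paper obtains the contradiction in the first step by explicitly writing $a_1^w=d\,b_1^{z_1}\cdots b_m^{z_m}$ and substituting to show $a_1\in\overline{S(a_2,\ldots,a_n)}$, whereas you invoke minimality of $\{a_1,\ldots,a_n\}$ directly; and for independence the paper phrases the elimination as showing $b_j\notin\overline{S(a_2,\ldots,a_n)}$ via Lemma~\ref{lemma:algIndepUniqueRep}, whereas you eliminate $b_j$ between two relations to get a nontrivial relation among the $a_i$---but these amount to the same computation.
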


\begin{proof}
We will first show that, if $\{a_1, \ldots, a_{n}\}$ and $\{b_1, \ldots, b_{m}\}$ are transcendence bases of $L/S$, then, for some $1\leq j\leq m$, $b_j^v = c a_1^{u_1}\cdots a_n^{u_n}$ with $c\in S$,  $u_1, \ldots, u_n \in \Z$ and $v, u_1 \neq 0$.
Suppose this were not true. Since each $b_j \in \overline{S(a_1, \ldots, a_n)}$ there is $v_j\in\Z_{>0}$ such that $b_j^{v_j}\in S(a_1,\ldots,a_n)$, so we can write $b_j^{v_j} = c_j a_1^{u_{j,1}}\cdots a_n^{u_{j,n}} = c_j a_2^{u_{j,2}}\cdots a_n^{u_{j,n}},$ where the last equality holds because $u_{j,1} = 0$ by hypothesis. By raising the previous equality to large enough power, we can assume without loss of generality that all $v_j$ are equal to some $v$, i.e., $b_j^{v} = c_j a_2^{u_{j,2}}\cdots a_n^{u_{j,n}}$.
Since $a_1 \in \overline{S(b_1, \ldots, b_m)}$, we have $a_1^w = d b_1^{z_1}\cdots b_m^{z_m}$ for some $d\in S$ and $w, z_1, \ldots, z_m \in \Z$, with $w>0$. Thus,
$a_1^{wv} = d^v (b_1^v)^{z_1}\cdots(b_m^v)^{z_m} = d^v \prod_{j=1}^m (c_j a_2^{u_{j,2}}\cdots a_n^{u_{j,n}})^{z_j},$ so $a_1 \in \overline{S(a_2, \ldots, a_n)}$, contradicting the fact that the set $\{a_1, \ldots, a_{n}\}$ is algebraically independent.

Note that $a_1\in\overline{S(b_j,a_2,\ldots,a_n)}$ because $a_1^{-u_1}=cb_j^{-v}a_2^{u_2}\cdots a_n^{u_n}$ with $u_1\neq0$. Thus 
$$\overline{S(b_j,a_2,\ldots,a_n)}\supseteq\overline{S(a_1,a_2,\ldots,a_n)}=L.$$

Since the set $\{a_2, \ldots, a_{n}\}$ is algebraically independent, to show that $\{b_j, a_2, \ldots, a_{n}\}$ is algebraically independent, it suffices to see that $b_j \not\in \overline{S(a_2, \ldots, a_n)}$. Assume for contradiction that $b_j^w = d a_2^{z_2} \cdots a_n^{z_n}$ for some $d\in S^\times$ and $w,z_2,\ldots,z_n\in\Z$ with $w \neq 0$. Then 
$$
c^w a_1^{wu_1}\cdots a_n^{wu_n}=
b_j^{wv}
=d^v a_2^{vz_2} \cdots a_n^{vz_n}
$$
where $wu_1\neq0$, which is impossible by Lemma~\ref{lemma:algIndepUniqueRep}.
\end{proof}

\begin{coro}
If $\{a_1,\ldots,a_n\}$ and $\{b_1,\ldots,b_m\}$ are transcendence bases of $L/S$, then $m=n$.
\end{coro}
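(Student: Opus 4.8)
The statement to prove is the standard invariance-of-dimension result for transcendence bases: if $\{a_1,\ldots,a_n\}$ and $\{b_1,\ldots,b_m\}$ are both transcendence bases of $L/S$, then $m=n$. This is the semifield analogue of the classical fact from field theory (or from matroid theory / the theory of abstract dependence relations), and since all the infrastructure has been built up — in particular the exchange property in Proposition~\ref{prop:BasisExchangeAxiom} — the proof should be a routine Steinitz exchange argument.

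The plan is to run the standard Steinitz exchange argument. First I would observe that, by symmetry, it suffices to show $m \leq n$ (then the reverse inequality follows by swapping the roles of the two bases). I would induct: the claim is that for each $k$ with $0 \leq k \leq n$, after reindexing the $b_j$'s we can form a transcendence basis of the shape $\{b_1,\ldots,b_k,a_{k+1},\ldots,a_n\}$, and moreover $k \leq m$. The base case $k=0$ is trivial since $\{a_1,\ldots,a_n\}$ is itself a transcendence basis. For the inductive step, suppose $\{b_1,\ldots,b_k,a_{k+1},\ldots,a_n\}$ is a transcendence basis with $k \leq m$. If $k = n$ we are done showing the shape; if $k < n$, then this basis is not equal to $\{b_1,\ldots,b_m\}$ since it still contains $a_{k+1}$, and in particular $n > k$ means we are not yet forced to stop, but we must check $k < m$: if $k = m$ then $\{b_1,\ldots,b_m\}$ would be a transcendence basis properly contained in (or at least related to) $\{b_1,\ldots,b_k,a_{k+1},\ldots,a_n\}$ — more precisely, since $\{b_1,\ldots,b_m\}$ generates $L$ up to algebraic closure and $\{b_1,\ldots,b_k,a_{k+1},\ldots,a_n\}$ is a \emph{minimal} such set, having $k=m$ with $k<n$ would contradict minimality because $\{b_1,\dots,b_k\}$ would already suffice. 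Hence $k < m$, and there is a $b_{k+1}$ among the remaining $b$'s not yet used.

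Now I would apply Proposition~\ref{prop:BasisExchangeAxiom} to the two transcendence bases $\{a_{k+1}, a_{k+2},\ldots,a_n, b_1,\ldots,b_k\}$ (reordering so that $a_{k+1}$ sits in the first slot) and $\{b_{k+1},\ldots,b_m,b_1,\ldots,b_k\}$ — or more carefully, I need a version that lets me replace $a_{k+1}$ by one of the $b_j$'s with $j > k$ while keeping $b_1,\ldots,b_k$ and $a_{k+2},\ldots,a_n$. The proposition as stated swaps out the first element $a_1$ for some $b_j$; applying it with the roles arranged so that $a_{k+1}$ is the element being ejected and the other basis is a suitable reordering of the $b$'s shows that for some $j$, the set obtained by replacing $a_{k+1}$ with $b_j$ is again a transcendence basis. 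A small point to verify is that the $b_j$ produced can be taken from among $b_{k+1},\ldots,b_m$ rather than from $b_1,\ldots,b_k$ — but if $b_j \in \{b_1,\ldots,b_k\}$ then the resulting "basis" would have a repeated element, hence be algebraically dependent (by Lemma~\ref{lemma:algIndepUniqueRep}, any element has a unique representation, which fails if two generators coincide), contradicting that it is a transcendence basis; so $j > k$. Reindex so $b_j = b_{k+1}$, and we have advanced $k$ to $k+1$ with $k+1 \leq m$. This completes the induction, and taking $k = n$ yields $n \leq m$; symmetry gives $m = n$.

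The main obstacle is bookkeeping rather than mathematical depth: Proposition~\ref{prop:BasisExchangeAxiom} is phrased for swapping the \emph{first} listed element, so I need to be careful to set up the orderings of the two bases at each inductive stage so that the element to be ejected is in the first position and the "incoming" basis is a permutation of $\{b_1,\ldots,b_m\}$. I should also double-check the edge-case reasoning that forces $k < m$ in the inductive step — this uses the minimality characterization of a transcendence basis (the third bullet in the definition), namely that no proper subset $T \subsetneq \{b_1,\ldots,b_k,a_{k+1},\ldots,a_n\}$ has $L = \overline{S(T)}$, together with the fact that $\{b_1,\ldots,b_m\}$ does generate $L$ up to algebraic closure. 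Once these two small points are handled carefully, the argument is the textbook Steinitz exchange and requires no new computation.
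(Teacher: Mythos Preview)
Your proposal is correct: the Steinitz exchange argument you outline works, and the bookkeeping concerns you flag (placing the element to be ejected in the first slot, and checking that the incoming $b_j$ is not already among $b_1,\ldots,b_k$) are handled exactly as you suggest, using the minimality characterization of a transcendence basis. One small point: your justification that a ``repeated element'' forces algebraic dependence is really the observation that the resulting set would be a proper subset of the basis $\{b_1,\ldots,b_k,a_{k+1},\ldots,a_n\}$, contradicting minimality; framing it that way is cleaner than invoking Lemma~\ref{lemma:algIndepUniqueRep}.

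The paper takes a shorter route: it observes that Proposition~\ref{prop:BasisExchangeAxiom} is precisely the basis exchange axiom for a matroid on the finite ground set $\{a_1,\ldots,a_n,b_1,\ldots,b_m\}$, and then invokes the standard fact that all bases of a matroid have the same cardinality. Your argument is what one gets by unpacking that matroid-theoretic black box, so the two proofs have identical mathematical content; the paper's version buys brevity at the cost of assuming the reader knows the matroid result, while yours is self-contained but longer.
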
\begin{proof}
Proposition~\ref{prop:BasisExchangeAxiom} shows that the transcendence bases of $L/S$ contained in 
$$\{a_1,\ldots,a_n,b_1,\ldots,b_m\}$$ 
form the bases of a matroid. Thus, any two transcendence bases have the same size.
\end{proof}

\begin{defi}
Let $L/S$ be an extension of totally ordered semifields which has a finite transcendence degree. Then the \emph{transcendence degree of $L/S$}, denoted $\trdeg(L/S)$, is the size of any transcendence basis of $L/S$. 
\end{defi}

\begin{prop}\label{prop:TrDegIsRk}
If either $\rk(L^\times/S^\times)$ or $\trdeg(L/S)$ is finite, then $\rk(L^\times/S^\times) = \trdeg(L/S)$.
\end{prop}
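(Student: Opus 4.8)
The statement to prove is Proposition~\ref{prop:TrDegIsRk}: if either $\rk(L^\times/S^\times)$ or $\trdeg(L/S)$ is finite, then $\rk(L^\times/S^\times)=\trdeg(L/S)$. The plan is to show that a subset $\{a_1,\dots,a_n\}\subseteq L^\times$ is a transcendence basis of $L/S$ if and only if the images $\bar a_1,\dots,\bar a_n$ in the abelian group $L^\times/S^\times$ form a $\Z$-basis of a free subgroup of full rank, i.e.\ a maximal $\Z$-linearly independent subset that generates a finite-index-free... more precisely a maximal rationally independent subset whose span has finite index — but in fact the cleanest route is to match up the two ``basis'' notions directly via their defining independence/spanning conditions.

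First I would translate algebraic independence over $S$ into a group-theoretic statement. By Lemma~\ref{lemma:algIndepUniqueRep}, $a_1,\dots,a_n\in L^\times$ are algebraically independent over $S$ exactly when the map $\Z^n\to L^\times/S^\times$ sending $(u_1,\dots,u_n)\mapsto \overline{a_1^{u_1}\cdots a_n^{u_n}}$ is injective, i.e.\ exactly when $\bar a_1,\dots,\bar a_n$ are $\Z$-linearly independent in $L^\times/S^\times$. Next, the condition $L=\overline{S(a_1,\dots,a_n)}$ unwinds to: for every $b\in L^\times$ there is some $v\in\Z_{>0}$ with $b^v\in S(a_1,\dots,a_n)$, equivalently $v\,\bar b$ lies in the subgroup $\langle\bar a_1,\dots,\bar a_n\rangle$ of $L^\times/S^\times$; that is, $\langle\bar a_1,\dots,\bar a_n\rangle$ has the property that $L^\times/S^\times$ modulo it is torsion. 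Combining: $\{a_1,\dots,a_n\}$ is a transcendence basis of $L/S$ iff $\bar a_1,\dots,\bar a_n$ are $\Z$-independent and span a subgroup of $L^\times/S^\times$ with torsion quotient — which is precisely the statement that $\{\bar a_1,\dots,\bar a_n\}$ is a maximal $\Z$-independent subset of the abelian group $L^\times/S^\times$, and the size of any such subset is by definition $\rk(L^\times/S^\times)$.

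With this correspondence in hand the proposition is nearly immediate, but I would handle the finiteness bookkeeping carefully since the hypothesis only assumes \emph{one} of the two quantities is finite. If $\trdeg(L/S)=n<\infty$, pick a transcendence basis $a_1,\dots,a_n$; then by the above $\bar a_1,\dots,\bar a_n$ is a maximal independent set, so $\rk(L^\times/S^\times)=n$. Conversely, if $\rk(L^\times/S^\times)=n<\infty$, choose $\Z$-independent $\bar a_1,\dots,\bar a_n$ in $L^\times/S^\times$ spanning a subgroup with torsion quotient (such a set exists: take any maximal $\Z$-independent subset, which has size $n$, and maximality forces the quotient to be torsion); lifting to $a_i\in L^\times$ and invoking the correspondence shows $\{a_1,\dots,a_n\}$ is a transcendence basis, so $\trdeg(L/S)=n$. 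Either way the two numbers agree.

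The main obstacle — really the only non-bookkeeping point — is verifying cleanly that ``$\{\bar a_1,\dots,\bar a_n\}$ maximal $\Z$-independent in an abelian group $A$'' is equivalent to ``$\Z$-independent and $A/\langle\bar a_1,\dots,\bar a_n\rangle$ torsion,'' and that all maximal independent subsets have the same cardinality equal to $\rk A:=\dim_\Q(A\otimes_\Z\Q)$. This is standard linear algebra over $\Q$ after tensoring, but one must be slightly careful that ``$v\bar b\in\langle\bar a_i\rangle$ for some $v>0$'' is exactly the condition placing $\bar b$ in the $\Q$-span, and that the spanning/independence conditions translate without loss when passing between $L^\times$ and $L^\times/S^\times$ — the point being that $S^\times$ injects into $L^\times$ so a term $c a_1^{u_1}\cdots a_n^{u_n}$ with $c\in S^\times$ is trivial iff all $u_i=0$ and $c=1$, which is precisely Lemma~\ref{lemma:algIndepUniqueRep} again. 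I would also note that the equivalence of the three bullet characterizations of a transcendence basis, already established in the excerpt, lets me freely use whichever form is most convenient at each step.
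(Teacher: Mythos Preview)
Your proposal is correct and follows essentially the same approach as the paper: both arguments rest on the dictionary that algebraic independence over $S$ corresponds to $\Z$-linear independence of the images in $L^\times/S^\times$ (via Lemma~\ref{lemma:algIndepUniqueRep}), and that $L=\overline{S(a_1,\dots,a_n)}$ corresponds to the quotient $(L^\times/S^\times)/\langle\bar a_1,\dots,\bar a_n\rangle$ being torsion. The paper presents this slightly more tersely, handling the case $\trdeg(L/S)=n$ finite by directly reading off $\rk\leq n$ from the torsion quotient and $\rk\geq n$ from independence, then reducing the case $\rk$ finite to the first case; your version makes the ``transcendence basis $\Leftrightarrow$ maximal $\Z$-independent set'' correspondence more explicit, but the content is the same.
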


\begin{proof}
Let $\trdeg(L/S)=n$ be finite. Let $\{ a_1, \ldots, a_n \}$ be a transcendence basis for $L/S$, and let $\left< a_1, \ldots, a_n\right>$ denote the subgroup of $L^\times/S^\times$ generated by $ a_1, \ldots, a_n$. Then the group $(L^\times/S^\times)/\left< a_1, \ldots, a_n\right> \cong L^\times/S( a_1, \ldots, a_n)^\times$ is torsion and $\rk(L^\times/S^\times) \leq n = \trdeg(L/S).$ 

On the other hand, since $a_1, \ldots, a_n$ are algebraically independent over $S$, they are $\Z$-linearly independent in $L^\times/S^\times$, so $\rk(L^\times/S^\times) \geq n = \trdeg(L/S)$.
Now assume that $\rk(L^\times/S^\times)$ is finite; we need to show that $\trdeg(L/S)$ is also finite. 
There are elements $ a_1, \ldots, a_m \in L^\times$ such that $(L^\times/S^\times)/\left< a_1, \ldots, a_m\right> \cong L^\times/S( a_1, \ldots, a_m)^\times$ is torsion. 
This shows that $L=\overline{S(a_1, \ldots, a_{m})}$, showing that $L/S$ has a finite transcendence degree.
\end{proof}

\section{The adic analytification of an affine variety}\label{app:AdicAnalytification}
\newcommand{\ad}{\mathrm{ad}}
\newcommand{\spa}{\operatorname{Spa}}
\renewcommand{\Spa}{\spa}
\newcommand{\fraka}{\mathfrak{a}}
\newcommand{\im}{\operatorname{Im}}

Let $X=\Spec(B)$ be an affine variety over a field with a nontrivial valuation $v:K\onto S_v\subseteq\T$. We endow $K$ with the topology induced by this valuation and let $K^\circ$ be the valuation ring of $K$. The goal of this appendix is to prove the following result which is well known to experts but for which we could not find a reference.

\begin{prop}\label{prop:AdicAnalytification}
The adic analytification $X^{\ad}$ of $X$ can be seen as the set of valuations $w:B\to S$ where $S$ is a totally ordered semifield, $w|_K$ is continuous, and $w(a)\leq1_S$ for all $a\in K^\circ$, modulo equivalence of valuations.
\end{prop}

We first recall the definition of $X^{\ad}$. Our presentation combines pieces from \cite[Section 5.4]{Bos14} 
and 
\cite[Section 4]{Hub94}. 

For any affinoid ring\footnote{An \emph{affinoid ring} is a pair $(R,R^+)$ where $R$ is an f-adic ring and $R^+\subseteq R$ is a subring that is open and integrally closed in $R$ and every element of $R^+$ is power bounded. The only aspects of this that we will need to know are that $R$ is a topological ring and that $R^+$ is a subring.} $(R,R^+)$, the \emph{adic spectrum} of $(R,R^+)$ is the set $\spa(R,R^+)$ of valuations $w:R\to S$ where $S$ is a totally ordered semifield, $w$ is continuous, and $w(b)\leq 1_S$ for all $b\in R^+$, modulo equivalence of valuations. 
If $\ph:R_1\to R_2$ is a continuous ring homomorphism with $\ph(R_1^+)\subseteq R_2^+$ then $\ph$ induces a map $\ph^*:\Spa(R_2,R_2^+)\to\Spa(R_1,R_1^+)$.

In light of the previous paragraph, we can restate Proposition~\ref{prop:AdicAnalytification} as follows: $X^{\ad}$ can be seen as the set of those valuations $w$ on $B$ such that $w|_K\in\Spa(K,K^\circ)$.

Fix a presentation $B\cong K[x_1,\ldots,x_n]/\fraka$ of $B$.

Given $r\in\T^\times$, let $\|\bullet\|_r:K[x_1,\ldots,x_n]\to\T$ be given by 
$$\left\|\dsum_{I=(i_1,\ldots,i_n)\in\N^n}a_Ix^I\right\|_r=\dsum_{I=(i_1,\ldots,i_n)\in\N^n}v(a_I)r^{i_1+\cdots+i_n}.$$
Note that, if $r\in S_v$, then $\|\bullet\|_r$ takes values in $S_v$.
We let $A_r$ denote $K[x_1,\ldots,x_n]$ with the topology induced by $\|\bullet\|_r$ and $A_r^+=\{f\in A\;:\;\|f\|_r\leq 1_{\T}\}$. Then $\Spa(A_r,A_r^+)$ is the adic ball of radius $r$.

If $r_1\leq r_2$ then, for any $f\in K[x_1,\ldots,x_n]$, $\|f\|_{r_1}\leq\|f\|_{r_2}$; this shows that the identity map $A_{r_2}\to A_{r_1}$ is continuous and $A_{r_2}^+\subseteq A_{r_1}^+$. 
We therefore have inclusion maps $\spa(A_{r_1},A_{r_1}^+)\to \spa(A_{r_2},A_{r_2}^+)$ which make the collection of $\Spa(A_r,A_r^+)$s into a directed system; the direct limit of this system is $(\A^n)^{\ad}$. Similarly, there are inclusion maps $\spa\left(\dfrac{A_{r_1}}{\fraka}, \dfrac{A_{r_1}^+}{\fraka\cap A_{r_1}^+} \right)\to \spa\left(\dfrac{A_{r_2}}{\fraka}, \dfrac{A_{r_2}^+}{\fraka\cap A_{2_1}^+} \right)$ making the collection of $\spa\left(\dfrac{A_{r}}{\fraka}, \dfrac{A_{r}^+}{\fraka\cap A_{r}^+} \right)$s into a directed system; the direct limit of this system is $X^{\ad}$.
Because the maps in these directed systems are inclusions, these direct limits are unions.

We now observe that it suffices to show Proposition~\ref{prop:AdicAnalytification} in the case where $X=\A^n$. To see this, note that the effect of quotienting by $\fraka$ is that we only consider valuations on $K[x_1,\ldots,x_n]$ which vanish on $\fraka$. Since this is the same for both the union of the $\Spa(A_r,A_r^+)$s and the set of valuations $w:K[x_1,\ldots,x_n]\to S$ with $w|_K\in\Spa(K,K^\circ)$, Proposition~\ref{prop:AdicAnalytification} will follow for $X=\Spec(K[x_1,\ldots,x_n]/\fraka)$ once we show it for $X=\Spec(K[x_1,\ldots,x_n])$.

The inclusion $(K,K^\circ)\into(A_r,A_r^+)$ induces the restriction map $\Spa(A_r,A_r^+)\to\Spa(K,K^\circ)$. So if $w\in X^{\ad}=\dcup_{r\in\T^\times}\Spa(A_r,A_r^+)$ then $w|_K\in\Spa(K,K^\circ)$.

Towards seeing the other inclusion, let $w:K[x_1,\ldots,x_n]\to S$ be any valuation with $w|_K\in\Spa(K,K^\circ)$. We will write $|f|$ in place of $w(f)$. We start by relating $w$ and $v$ directly.

\begin{lemma}\label{lemma:EmbedValueGroups}
We can factor $w|_K:K\to S$ as $K\toup{v} S_v \toup{\theta} S$ with $\theta:S_v\to S$ a continuous homomorphism of totally ordered semifields.
\end{lemma}
\begin{proof}
We start by observing that, if $a\in K$ has $v(a)=1$, then $|a|=1$. To see this, note that $a,a^{-1}\in K^\circ$, so we must have $|a|\leq 1$ and $|a^{-1}|\leq 1$, i.e., $|a|\leq 1$ and $|a|\geq 1$.


Consider the quotient $K/\{a\in K\;:\;v(a)=1\}$ of multiplicative monoids. Recall that $S_v\cong K/\{a\in K\;:\;v(a)=1\}$ and that, putting the quotient topology on $K/\{a\in K\;:\;v(a)=1\}$, this identifies the topologies as well (see footnote~\ref{footnote:HuberTopology} on page \pageref{footnote:HuberTopology}). Since $w|_K:K\to S$ is a morphism of multiplicative monoids and $\{a\in K\;:\;v(a)=1\}$ is contained in the kernel of this morphism, $w|_K$ factors as $K\to K/\{a\in K\;:\;v(a)=1\}\toup{\theta} S$ with $\theta$ a homomorphism of multiplicative monoids. Because we have endowed $K/\{a\in K\;:\;v(a)=1\}$ with the quotient topology, $\theta$ is continuous.

It remains only to show that $\theta$ is additive. Say $b,c\in K$ and let $[b]$ and $[c]$ denote their classes in $K/\{a\in K\;:\;v(a)=1\}$; we want to show that $\theta([b]+[c])=\theta([b])+\theta([c])$. If $b$ or $c$ is $0_K$ this is trivial, so assume that $b$ and $c$ are both nonzero. Note that $\theta([b])=|b|$ and $\theta([c])=|c|$. 

Since $K/\{a\in K\;:\;v(a)=1\}\cong S_v$ is totally ordered, we may assume without loss of generality that $[b]+[c]=[b]$, i.e., $[c]\leq[b]$. This says that $[cb^{-1}]\leq 1$, meaning that $cb^{-1}\in K^\circ$. Therefore, $|cb^{-1}|\leq 1$, i.e., $|c|\leq|b|$. So we have
$$\theta([b]+[c])=\theta([b])=|b|=|b|+|c|,$$
as desired.
\end{proof}

In order to find $r$ for which we will show that $w\in\Spa(A_r,A_r^+)$, we use the following brief observation.

\begin{lemma}\label{lemma:SvHasEnoughElements}
For any $\alpha\in S^\times$ there are $\beta_1,\beta_2\in S_v^\times$ with $\theta(\beta_1)<\alpha<\theta(\beta_2)$.
\end{lemma}\begin{proof}
Since $\theta:S_v\to S$ is continuous,  Lemma~\ref{lemma:ContsHomOfTotOrdSemifields} tells us that there are $\beta,\beta'\in S_v^\times$ with $\theta(\beta)<\alpha$ and  $\theta(\beta')<\alpha^{-1}$. So, taking $\beta_1=\beta$ and $\beta_2=(\beta')^{-1}$, we have the desired result.
\end{proof}

If $|x_\ell|=0_S$ for all $\ell=1,\ldots,n$ then $w$ is in $\Spa(A_r,A_r^+)$ for all $r$, so $w\in X^{\ad}$. So we consider the case where $|x_\ell|\neq0_S$ for some $\ell$, i.e., $\rho:=\dsum_{\ell=1}^n|x_\ell|\in S^\times$. By Lemma~\ref{lemma:SvHasEnoughElements} there is some $r\in S_v$ with $\theta(r)>\rho$. Fix such an $r$, so $\theta(r)>|x_\ell|$ for all $\ell=1,\ldots,n$. This allows us to relate $w$ and $\|\bullet\|_r$.

\begin{lemma}\label{lemma:BoundTheNorm}
For all $f\in K[x_1,\ldots,x_n]$, $|f|\leq\theta(\|f\|_r)$.
\end{lemma}\begin{proof}
Write $f=\dsum_{I=(i_1,\ldots,i_n)\in\N^n}a_Ix^I$. Then
\begin{align*}
|f|&\leq \dsum_{I=(i_1,\ldots,i_n)\in\N^n}|a_I|\dprod_{\ell=1}^n |x_\ell|^{i_\ell} 
=\dsum_{I=(i_1,\ldots,i_n)\in\N^n}\theta(v(a_I))\dprod_{\ell=1}^n |x_\ell|^{i_\ell} 
\\
&\leq\dsum_{I=(i_1,\ldots,i_n)\in\N^n}\theta(v(a_I))\dprod_{\ell=1}^n \theta(r)^{i_\ell} 
=\theta\left(\dsum_{I=(i_1,\ldots,i_n)\in\N^n}v(a_I)\dprod_{\ell=1}^n r^{i_\ell}\right)\\
&=\theta(\|f\|_r).
\end{align*}
\end{proof}

We are now in a position to prove the two criteria for $w$ to be in $\Spa(A_r,A_r^+)$.

\begin{lemma}\label{lemma:ThisRMakesCont}
The map $w:A_r\to S$ is continuous.
\end{lemma}\begin{proof}
Let $A^w$ denote $K[x_1,\ldots,x_n]$ with the topology induced by $w$. Then $w:A^w\to S$ is continuous, so it suffices to show that the identity map $A_r\to A^w$ is continuous. Fix $\eps\in S^\times$. By Lemma~\ref{lemma:SvHasEnoughElements} there is some $\delta\in S_v^\times$ with $\theta(\delta)<\eps$. Then, for any $f,g\in A_r$ with $\|f-g\|_r<\delta$, we have $|f-g|\leq\theta(\|f-g\|_r)\leq\theta(\delta)<\eps$.
\end{proof}

\begin{lemma}
For any $f\in A_r^+$, $|f|\leq 1_S$.
\end{lemma}\begin{proof}
We have $|f|\leq \theta(\|f\|_r)\leq\theta(1_{S_v})=1_S$.
\end{proof}

Thus, $w|_K\in\Spa(A_r,A_r^+)\subseteq X^{\ad}$. This completes the proof of Proposition~\ref{prop:AdicAnalytification}.







\renewcommand{\bibliofont}{\small}
\bibliographystyle{alpha}

\end{document}